\documentclass[reqno, 11pt]{article}
\usepackage[
a4paper,
margin = 2.54cm
]
{geometry}

\usepackage{amsmath}
\usepackage{amsfonts}
\usepackage{amsthm}
\usepackage{amssymb}
\usepackage{latexsym}
\usepackage{enumitem} 
\usepackage{mathrsfs}
\usepackage{mathtools}
\usepackage[hidelinks]{hyperref}
\usepackage{xcolor}

\def\A{\mathcal{A}}

\def\R{\mathbb{R}}

\def\A{\mathscr{A}}

\DeclareMathOperator{\Var}{Var}

\DeclareMathOperator{\Bernoulli}{Bernoulli}
\DeclareMathOperator{\Exp}{Exponential}

\DeclareMathOperator{\TV}{TV}
\DeclareMathOperator{\HC}{HC}

\DeclareMathOperator{\ess}{ess}

\newtheorem{proposition}{Proposition}
\newtheorem{lemma}{Lemma}
\newtheorem{theorem}{Theorem}
\newtheorem{corollary}{Corollary}

\theoremstyle{definition}
\newtheorem{definition}{Definition}
\newtheorem{assumption}{Assumption}

\newtheorem{remark}{Remark}

\title{Statistical Limits of Sparse Mixture Detection}
\author{Subhodh Kotekal \\ \\ \textit{University of Chicago}}
\date{}
\linespread{1.15}

\begin{document}
    \maketitle
    \begin{abstract}
        We consider the problem of detecting a general sparse mixture and obtain an explicit characterization of the phase transition under some conditions, generalizing the univariate results of Cai and Wu. Additionally, we provide a sufficient condition for the adaptive optimality of a Higher Criticism type testing statistic formulated by Gao and Ma. In the course of establishing these results, we offer a unified perspective through the large deviations theory. The phase transition and adaptive optimality we establish are direct consequences of the large deviation principle of the normalized log-likelihood ratios between the null and the signal distributions. 
    \end{abstract}

    \section{Introduction}
    Modern technological advancements have ushered in a new scientific regime in which researchers simultaneously take measurements of a very large number of units with only a small fraction of units potentially exhibiting a signal. Typical examples include microarrays in genomics \cite{efronMicroarraysEmpiricalBayes2008} and microwave probes in cosmology \cite{vielvaComprehensiveOverviewCold2010}; this new regime is ubiquitous in modern science \cite{efronLargeScaleInferenceEmpirical2010}. Moreover, in many applications the signal is believed to be not only sparse but also sufficiently weak such that consistent identification of signal exhibiting units is impossible. In such a regime, two statistical problems immediately come to mind. First is the detection problem: for which sparsity levels can the presence of a signal be consistently detected? Second is the adaptation problem: does there exist a test which can detect a detectable signal without knowledge of the signal sparsity?

    The detection problem is formally stated as a sparse mixture testing problem 
    \begin{align}\label{problem:sparse_mixture_detection_1}
        H_0^{(n)} &: X_1,...,X_n \overset{iid}{\sim} P_n, \\
        H_1^{(n)} &: X_1,...,X_n \overset{iid}{\sim} (1-\varepsilon) P_n + \varepsilon Q_n
        \label{problem:sparse_mixture_detection_2}
    \end{align}
    where \(\varepsilon \in (0, 1)\) and \(\{P_n\}\) and \(\{Q_n\}\) are collections of probability distributions on, say for convenience, a separable metric space \(\mathcal{X}\). A consistent sequence of tests for testing (\ref{problem:sparse_mixture_detection_1})-(\ref{problem:sparse_mixture_detection_2}) is a sequence of measurable functions \(\varphi_n : \mathcal{X}^n \to \{0, 1\}\) such that
    \begin{equation*}
        \lim_{n \to \infty} P_{H_0^{(n)}}(\varphi_n(X_1,...,X_n) = 1) + P_{H_1^{(n)}}(\varphi_n(X_1,...,X_n) = 0) = 0.
    \end{equation*}
    To model the sparsity of the signal, the sparse mixture detection literature has adopted the calibration
    \begin{equation}\label{eqn:beta_sparsity}
        \varepsilon = n^{-\beta}
    \end{equation}
    with \(0 < \beta < 1\). The detection problem is to characterize, for fixed \(\{P_n\}\) and \(\{Q_n\}\), the values \(\beta\) such that there exist a sequence of consistent tests for the testing problem (\ref{problem:sparse_mixture_detection_1})-(\ref{problem:sparse_mixture_detection_2}). For such \(\beta\), the collection of mixtures \(\{(1-n^{-\beta}) P_n + n^{-\beta}Q_n\}\) is said to be detectable. Note that by the Neyman-Pearson lemma, the likelihood ratio test is consistent whenever the collection of mixtures \(\{(1-n^{-\beta})P_n + n^{-\beta} Q_n\}\) is detectable. However, the likelihood ratio test is not a solution to the adaptation problem as it requires knowledge of \(\beta\). The adaptation problem remains of practical interest.
    
    Arguably the prototypical sparse mixture detection problem is the sparse normal mixture detection problem considered by Ingster \cite{ingsterProblemsHypothesisTesting1996} and Jin \cite{jinDetectingEstimatingSparse2003,jinDetectingTargetVery2004}. Specifically, this is the testing problem (\ref{problem:sparse_mixture_detection_1})-(\ref{problem:sparse_mixture_detection_2}) under calibration (\ref{eqn:beta_sparsity}) with \(P_n = N(0, 1)\) and \(Q_n = N(\mu_n, 1)\) where \(\mu_n = \sqrt{2r\log n}\) and \(r \in (0, 1)\). Ingster \cite{ingsterProblemsHypothesisTesting1996} and Jin \cite{jinDetectingEstimatingSparse2003,jinDetectingTargetVery2004} independently derived a subtle phase transition in this seemingly simple detection problem. A delicate asymptotic analysis showed that if \(\beta < \beta^*(r)\), then there exists a sequence of consistent tests to test (\ref{problem:sparse_mixture_detection_1})-(\ref{problem:sparse_mixture_detection_2}) where 
    \begin{equation*}
        \beta^*(r) = 
        \begin{cases}
            \frac{1}{2} + r &\text{if } r \leq \frac{1}{4}, \\
            1 - (1-\sqrt{r})_{+}^2 &\text{if } r > \frac{1}{4}.
        \end{cases}
    \end{equation*}
    Here, the notation \((x)_+ := \max\{x, 0\}\) for \(x \in \R\) is used. Additionally, Ingster and Jin independently showed that if \(\beta > \beta^*(r)\), then no sequence of tests is consistent for testing (\ref{problem:sparse_mixture_detection_1})-(\ref{problem:sparse_mixture_detection_2}). The existence of a subtle phase transition in an apparently simple detection problem sparked subsequent research interest in sparse mixture detection. Phase transitions have been discovered in a variety of other sparse mixture detection problems beyond the sparse normal mixture setting \cite{gaoFiveShadesGrey2019,arias-castroSparsePoissonMeans2015,donohoHigherCriticismDetecting2004,caiOptimalDetectionHeterogeneous2011,donohoTwosampleTestingLarge2020,gaoTestingEquivalenceClustering2019}. In investigating the asymptotic consequences of signal rarity and strength on various statistical tasks, a theoretical framework called the \emph{Asymptotic Rare/Weak} (ARW) model has been introduced \cite{jinRareWeakEffects2016,donohoHigherCriticismLargeScale2015}. Detection of sparse mixtures is only one instance of a statistical task in the ARW model; sparse signal recovery (i.e. identifying which \(X_i\) follow the signal distribution \(Q_n\)) is another. The primary focus in the ARW model is on determining the phase transition of signal rarity and strength which separate success and failure of the statistical task under consideration. The framework's introduction has been followed by an active research program arguably spearheaded by Jin and collaborators \cite{donohoHigherCriticismLargeScale2015,jinRareWeakEffects2016,hallInnovatedHigherCriticism2010,caiOptimalDetectionHeterogeneous2011,jinPhaseTransitionsHigh2017a,jinImpossibilitySuccessfulClassification2009,jinOptimalityGraphletScreening2014}. We refer the reader to the review articles \cite{donohoHigherCriticismLargeScale2015,jinRareWeakEffects2016} for a detailed treatment.
    
    In the context of sparse mixture detection, the phase transitions were initially obtained through delicate asymptotic analyses of the likelihood ratio test. Later, a unified approach to deriving phase transitions for general sparse mixtures on \(\R\) was put forth by Cai and Wu \cite{caiOptimalDetectionHeterogeneous2011}. Cai and Wu characterized the exact asymptotic order of the Hellinger distance between \(P_n\) and \((1-n^{-\beta}) P_n + n^{-\beta} Q_n\) in terms of \(\beta\), and it turns out the exact asymptotic order fundamentally determines the phase transition (under some regularity conditions). Many of the phase transitions in the literature follow directly from the results of Cai and Wu (see Section V of \cite{caiOptimalDetectionSparse2014}). Ditzhaus \cite{ditzhausSignalDetectionPhidivergences2019} extended the results of Cai and Wu \cite{caiOptimalDetectionSparse2014} to a larger class of univariate sparse mixtures beyond those satisfying the regularity conditions of \cite{caiOptimalDetectionSparse2014}. 
    
    In the adaptation problem, Donoho and Jin \cite{donohoHigherCriticismDetecting2004} delivered a key construction when investigating the sparse normal mixture detection problem. Donoho and Jin formulated a sequence of tests based on Tukey's Higher Criticism statistic that is consistent whenever \(\beta < \beta^*(r)\) and adapts to not only the sparsity level \(\beta\) but also to the signal strength \(r\). Specifically, Donoho and Jin considered the sequence of tests 
    \begin{equation}\label{eqn:donoho_jin_HC_test}
        \psi_{\HC_n} := \mathbf{1}_{\left\{ \HC_n > \sqrt{2(1+\delta) \log \log n} \right\}}
    \end{equation}
    where \(\delta > 0\) is an arbitrary constant and the Higher Criticism statistic is defined as
    \begin{equation}\label{eqn:donoho_jin_HC}
        \HC_n := \sup_{t \in \R} \frac{\left| \sum_{i=1}^{n} \mathbf{1}_{\{X_i \leq t\}} - n\Phi(t)\right|}{\sqrt{n \Phi(t)(1-\Phi(t))}}.
    \end{equation}
    Here, \(\Phi\) is the cumulative distribution function of the standard normal distribution. Calculating the \(p\)-value \(p_i = 1-\Phi(X_i)\), a change of variable yields a more evocative form
    \begin{equation*}
        \HC_n = \sup_{u \in (0, 1)} \frac{\left|\sum_{i=1}^{n} \mathbf{1}_{\{p_i \leq u\}} - n u\right|}{\sqrt{n u(1-u)}}.
    \end{equation*}
    With the formulation in terms of \(p\)-values in mind, Higher Criticism is attractive in that it can be widely applied to sparse mixture detection beyond the initial sparse normal mixture setting. One need only craft \(p\)-values \(\{p_i\}_{1 \leq i \leq n}\) from the observations \(\{X_i\}_{1 \leq i \leq n}\) to use in \(\HC_n\). In some detection problems there is a natural construction of \(p\)-values, whereas it is a challenge in other problems. We refer the reader to the review articles \cite{donohoHigherCriticismLargeScale2015,jinRareWeakEffects2016} for a detailed discussion of Higher Criticism and its applications beyond the sparse normal mixture detection problem originally in mind. 
    
    Remarkably, Higher Criticism achieves the detection boundary in many other sparse mixture detection problems beyond the sparse normal mixture setting. For example, the optimality of Higher Criticism for signal detection in the heteroscedastic sparse normal mixture with \(P_n = N(0, 1)\) and \(Q_n = N(\sqrt{2r\log n}, \sigma^2)\) was established by Cai, Jeng, and Jin \cite{caiOptimalDetectionHeterogeneous2011}. In the case of Gaussian null \(P_n = N(0, 1)\) and general \(Q_n\) (under some conditions), Cai and Wu proved that Higher Criticism is optimal. Later, Ditzhaus \cite{ditzhausSignalDetectionPhidivergences2019} proved that Higher Criticism is optimal for general distributions \(P_n\) and \(Q_n\) on \(\R\) (again, under some conditions). 
    
    As Donoho and Jin \cite{donohoHigherCriticismDetecting2004} note, Higher Criticism has a goodness-of-fit interpretation in which the empirical distribution of the \(p\)-values is compared to the uniform distribution i.e. the null distribution. Leveraging this goodness-of-fit interpretation, Jager and Wellner \cite{jagerGoodnessoffitTestsPhidivergences2007} formulate a class of testing statistics based on \(\varphi\)-divergences which are adaptive to \(\beta\); Higher Criticism is included as a special case. Jager and Wellner showed, in the sparse normal mixture detection problem, their class of testing statistics furnish a sequence of tests which are consistent whenever the collection of mixtures is detectable. Ditzhaus \cite{ditzhausSignalDetectionPhidivergences2019} proved the optimality of Jager and Wellner's tests in the setting of general sparse mixtures on \(\R\) (under some technical conditions).
    
    The existing literature has largely focused on the setting where the probability distributions \(P_n\) and \(Q_n\) are on \(\R\). However, the data measured from each unit in many modern applications is usually multivariate or structured in some manner (e.g. graph, partition). To apply the results of Cai and Wu \cite{caiOptimalDetectionSparse2014}, one would need to reduce the multivariate or structured measurement from each unit into a univariate summary statistic and deduce a phase transition from the resulting univariate sparse mixture. However, it is not clear that a safe reduction to a univariate summary statistic can be done without loss of inferentially-relevant information. The phase transition deduced from the univariate statistic may not be the true phase transition for the original sparse mixture testing problem. Of course, an alternative approach is to directly examine the asymptotics of the likelihood ratio test to derive a phase transition, but the analysis must be tailored on a problem-by-problem basis. Cai and Wu's results are attractive precisely because they offer a unified perspective on deriving phase transitions in the univariate setting. It is of theoretical interest to obtain analogous perspective in the multivariate case. 
    
    In the multivariate sparse mixture setting, it's unclear that Higher Criticism can be adapted to furnish adaptively optimal tests. Higher Criticism requires a reduction to a univariate summary statistic; the typical reduction to keep in mind is the \(p\)-value. Indeed, in their review article, Donoho and Jin \cite{donohoHigherCriticismLargeScale2015} carefully craft relevant \(p\)-values to apply Higher Criticism to multidimensional sparse mixtures testing problems. As a practical matter, it need not be clear how the univariate summary statistic should be constructed; often it is done in an ad hoc manner. Furthermore, it may not even be the case \emph{a priori} that there exists a univariate reduction such that the corresponding Higher Criticism test is optimal. It is of practical interest to find an optimal test which adapts to the signal sparsity without requiring ad hoc constructions. 

    The goal of the present paper is twofold. Firstly, we seek to generalize the results of Cai and Wu \cite{caiOptimalDetectionSparse2014} by establishing phase transitions for the detection of general sparse mixtures beyond the univariate case. The proofs of \cite{caiOptimalDetectionSparse2014} rely on examining the asymptotic behavior of the likelihood ratio \(\frac{dQ_n}{dP_n}\) evaluated at suitable quantiles of \(P_n\); the fact that \(P_n\) and \(Q_n\) are distributions on \(\R\) is put to good use by virtue of examining quantiles. We provide a broader perspective beyond the univariate case through the theory of large deviations. The core idea of Cai and Wu \cite{caiOptimalDetectionSparse2014} in characterizing the sharp Hellinger asymptotics is crucial to our analysis; the large deviations theory gives suitable tools to treat the general case with Cai and Wu's idea in hand. Secondly, we give a sufficient condition ensuring the optimality of a Higher Criticism type testing statistic proposed by Gao and Ma (Section 3.2 of \cite{gaoTestingEquivalenceClustering2019}). Specifically, Gao and Ma's statistic is obtained by applying Higher Criticism to the empirical likelihood ratios \(\left\{ \frac{dQ_n}{dP_n}(X_i)\right\}_{i=1}^{n}\). While generically requiring knowledge of both \(P_n\) and \(Q_n\), the statistic is adaptive to the signal sparsity and can be broadly applied without the need for ad hoc constructions.

    \subsection*{Organization}
    The remainder of the paper is organized as follows. Section \ref{section:detection} reviews the connection between Hellinger asymptotics and phase transitions established by Cai and Wu \cite{caiOptimalDetectionSparse2014}, states some requisite definitions from large deviations theory, and presents our main result characterizing the phase transition in general sparse mixture testing problems (under some technical conditions). Section \ref{section:HC} reviews a Higher Criticism type testing statistic proposed by Gao and Ma (Section 3.2 of \cite{gaoTestingEquivalenceClustering2019}) and presents a sufficient condition under which this statistic furnishes optimal tests that are adaptive to the signal sparsity. Section \ref{section:examples} illustrates our results and typical methods of calculation through some examples. Section \ref{section:discussion} discusses our results and a few directions of further work. Proofs not presented in the main text are found in Section \ref{section:proofs}. 

    \subsection*{Notation}
    We use the following notation throughout the paper. For \(a, b \in [-\infty, \infty]\), denote \(a \vee b = \max\{a, b\}\) and \(a \wedge b = \min\{a, b\}\). For sequences \(\{a_n\}, \{b_n\} \subset [-\infty, \infty]\), denote \(a_n = o(b_n)\) if \(\frac{a_n}{b_n} \to 0\) as \(n \to \infty\). Further, denote \(a_n = \omega(b_n)\) if \(b_n = o(a_n)\). For positive sequences \(\{a_n\}, \{b_n\}\), denote \(a_n \lesssim b_n\) if there exists a constant \(C > 0\) not depending on \(n\) such that \(a_n \leq C b_n\). Denote \(a_n \gtrsim b_n\) if \(b_n \lesssim a_n\), and denote \(a_n \asymp b_n\) if \(a_n \lesssim b_n\) and \(a_n \gtrsim b_n\). For \(a \in [-\infty, \infty]\), denote \((a)_{+} = \max\{a, 0\}\). Denote \(\R_{+} = [0, \infty)\). For a probability measure \(P\), let \(P^n\) denote the \(n\)-fold product measure of \(P\). A probability measure \(Q\) on a measurable space is said to be absolutely continuous with respect to a probability measure \(P\) on the same measurable space if \(P(A) = 0\) implies \(Q(A) = 0\) for every measurable set \(A\). We denote this as \(Q \ll P\). The total variation distance between \(P\) and \(Q\) is given by \(\TV(P, Q) = \sup_{A} |P(A) - Q(A)|\). The Hellinger distance between \(P\) and \(Q\) is given by \(H(P, Q) = \left( \int \left(\sqrt{dP/d\nu} - \sqrt{dQ/d\nu}\right)^2 \right)^{1/2}\) where \(\nu\) is a measure such that \(P, Q \ll \nu\). 

    \section{A large deviations perspective on detection limits}\label{section:detection}
    To determine the fundamental detection limits, we follow the approach laid out by Cai and Wu \cite{caiOptimalDetectionSparse2014} in characterizing the Hellinger asymptotics. Cai and Wu \cite{caiOptimalDetectionSparse2014} only obtain results in the univariate case, namely where \(\{P_n\}\) and \(\{Q_n\}\) are probability distributions on \(\R\). We generalize their results and offer a unified perspective through the theory of large deviations. 
    
    \subsection{Preliminaries}

    Without loss of generality, we will take \(Q_n \ll P_n\) for all \(n \geq 1\). No generality is lost as argued in Section III.C in \cite{caiOptimalDetectionSparse2014}. Assume further that \(\{P_n\}\) and \(\{Q_n\}\) are dominated by a common measure on \(\mathcal{X}\) and so admit densities \(\{p_n\}\) and \(\{q_n\}\). We bundle these assumptions together as Assumption \ref{assumption:probs}, which will be in force for the remainder of the paper.

    \begin{assumption}\label{assumption:probs}
        The probability distributions \(\{P_n\}\) and \(\{Q_n\}\) are dominated by a common measure on a separable metric space \(\mathcal{X}\) and admit densities \(\{p_n\}\) and \(\{q_n\}\). Furthermore, \(Q_n \ll P_n\) for all \(n \geq 1\).
    \end{assumption}

    The following definition introduces precise quantities \(\underline{\beta}^*\) and \(\overline{\beta^*}\) which enable a precise statement of the detection problem. Specifically, the detection problem, for fixed \(\{P_n\}\) and \(\{Q_n\}\), is the problem of explicitly characterizing \(\underline{\beta}^*\) and \(\overline{\beta}^*\).
    
    \begin{definition}
        Consider the testing problem (\ref{problem:sparse_mixture_detection_1})-(\ref{problem:sparse_mixture_detection_2}) with calibration (\ref{eqn:beta_sparsity}). Define
        \begin{align*}
            \overline{\beta}^* &:= \inf\left\{\beta \geq 0 : \lim_{n \to \infty} \TV(P_n^n, ((1-n^{-\beta})P_n + n^{-\beta}Q_n)^n) = 0\right\}, \\
            \underline{\beta}^* &:= \sup\left\{\beta \geq 0 : \lim_{n \to \infty} \TV(P_n^n, ((1-n^{-\beta})P_n + n^{-\beta}Q_n)^n) = 1\right\}
        \end{align*}
        where \(\TV\) denotes the total variation distance.
    \end{definition}
    
    By the Neyman-Pearson lemma, \(\overline{\beta}^*\) is the smallest number such that if \(\beta > \overline{\beta}^*\), then every sequence of tests for testing (\ref{problem:sparse_mixture_detection_1})-(\ref{problem:sparse_mixture_detection_2}) has a sum of Type I and Type II errors converging to one. Likewise, \(\underline{\beta}^*\) is the largest number such that if \(\beta < \underline{\beta}^*\), then there exists a sequence of tests for testing (\ref{problem:sparse_mixture_detection_1})-(\ref{problem:sparse_mixture_detection_2}) with vanishing sum of Type I and Type II errors. The quantities \(\overline{\beta}^*\) and \(\underline{\beta}^*\) can be equivalently characterized by the asymptotics of the Hellinger distance. The analysis is much more amenable due to the tensorization of the Hellinger distance, and the following lemma gives a clean characterization \cite{caiOptimalDetectionSparse2014}.
    
    \begin{lemma}[Equations (25) and (26) - \cite{caiOptimalDetectionSparse2014}]\label{lemma:Hellinger}
        Consider the testing problem (\ref{problem:sparse_mixture_detection_1})-(\ref{problem:sparse_mixture_detection_2}) with calibration (\ref{eqn:beta_sparsity}). Let \(H_n^2(\beta) := H^2(P_n, (1-n^{-\beta})P_n + n^{-\beta}Q_n)\) where \(H\) is the Hellinger distance. Then
        \begin{align*}
            \overline{\beta}^* &= \inf\{\beta \geq 0 : H_n^2(\beta) = o(n^{-1})\}, \\
            \underline{\beta}^* &= \sup\{\beta \geq 0 : H_n^2(\beta) = \omega(n^{-1})\}.
        \end{align*}
    \end{lemma}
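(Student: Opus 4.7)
The plan is to derive Lemma~\ref{lemma:Hellinger} directly from two classical facts: Le Cam's inequalities relating total variation and Hellinger distance, namely $\tfrac{1}{2} H^2(\mu,\nu) \leq \TV(\mu,\nu) \leq H(\mu,\nu)$, and the Hellinger tensorization identity
\begin{equation*}
    1 - \frac{1}{2}H^2(P_n^n, M_n^n) = \left(1 - \frac{1}{2} H_n^2(\beta)\right)^n,
\end{equation*}
where I write $M_n := (1-n^{-\beta})P_n + n^{-\beta}Q_n$. Thanks to Le Cam, $\TV(P_n^n, M_n^n) \to 0$ is equivalent to $H^2(P_n^n, M_n^n) \to 0$, and $\TV(P_n^n, M_n^n) \to 1$ is equivalent to $H^2(P_n^n, M_n^n) \to 2$ (the upper bound $\TV \le H\sqrt{1-H^2/4}$ forces $H^2 \to 2$ in the latter case). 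Combined with tensorization, the two TV limits translate into $(1-H_n^2(\beta)/2)^n \to 1$ and $(1-H_n^2(\beta)/2)^n \to 0$ respectively.

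The first thing I would record is that $H_n^2(\beta) \to 0$ for every fixed $\beta > 0$: since $\TV(P_n, M_n) \leq n^{-\beta} \TV(P_n, Q_n) \leq n^{-\beta}$ by the mixture structure, Le Cam gives $H_n^2(\beta) \leq 2n^{-\beta} \to 0$. With this control in hand, a Taylor expansion of $\log(1 - H_n^2(\beta)/2)$ converts the limits of $(1-H_n^2(\beta)/2)^n$ into asymptotic statements about $n H_n^2(\beta)$: specifically, $(1-H_n^2(\beta)/2)^n \to 1$ iff $nH_n^2(\beta) \to 0$ iff $H_n^2(\beta) = o(n^{-1})$, and $(1-H_n^2(\beta)/2)^n \to 0$ iff $nH_n^2(\beta) \to \infty$ iff $H_n^2(\beta) = \omega(n^{-1})$.

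At this point it remains only to unfold the definitions. The identity
\begin{equation*}
    \{\beta \geq 0 : \lim_{n\to\infty} \TV(P_n^n, M_n^n) = 0\} = \{\beta \geq 0 : H_n^2(\beta) = o(n^{-1})\}
\end{equation*}
implies $\overline{\beta}^* = \inf\{\beta \geq 0 : H_n^2(\beta) = o(n^{-1})\}$, and the analogous identity for the $\TV \to 1$ regime gives the claimed expression for $\underline{\beta}^*$.

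The only mildly delicate step is the equivalence $\TV(P_n^n, M_n^n) \to 1 \iff H^2(P_n^n, M_n^n) \to 2$, which uses the full strength of Le Cam's upper bound rather than the crude $\TV \le H$; beyond this, the argument is essentially bookkeeping. Since the statement is attributed to Cai and Wu, one could alternatively simply cite equations~(25)--(26) of \cite{caiOptimalDetectionSparse2014} in place of this short derivation.
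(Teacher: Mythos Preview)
Your proposal is correct. The paper does not actually prove this lemma: it is stated with an attribution to equations (25)--(26) of Cai and Wu \cite{caiOptimalDetectionSparse2014} and then used without further argument, exactly as you suggest in your final sentence. Your derivation via Le Cam's inequalities and Hellinger tensorization is the standard one and is sound; the key step you flag --- using the sharp upper bound $\TV \leq H\sqrt{1-H^2/4}$ to force $H^2(P_n^n, M_n^n) \to 2$ from $\TV(P_n^n, M_n^n) \to 1$ --- is indeed the only nontrivial point. One small remark: the a priori bound $H_n^2(\beta) \to 0$ for $\beta > 0$ is a convenience rather than a necessity, since in each direction of each equivalence the hypothesis itself already forces $H_n^2(\beta)$ small (for instance, $(1-H_n^2(\beta)/2)^n \to 1$ forces the base to tend to $1$), so the Taylor step is justified without restricting to $\beta > 0$.
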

    Lemma 1 of \cite{caiOptimalDetectionSparse2014} establishes the result
    \begin{equation*}
        0 \leq \underline{\beta}^* \leq \overline{\beta}^* \leq 1,
    \end{equation*}
    confirming the intuition that \(\underline{\beta}^* \leq \overline{\beta}^*\) and additionally establishing that any phase transition must occur in \([0, 1]\).

    \subsection{Main results}
    We present our main results in this section. First, we briefly state a few definitions which are special cases of definitions formulated in the general large deviations theory \cite{demboLargeDeviationsTechniques2010}. 

    \begin{definition}
        Let \(\mathcal{Y}\) be a separable metric space. A \textit{rate function} \(I : \mathcal{Y} \to [0, \infty]\) is a lower semicontinuous function. A rate function \(I\) is \textit{good} if the sublevel sets \(\{y \in \mathcal{Y} : I(y) \leq \alpha\}\) are compact for all \(\alpha \geq 0\).
    \end{definition}

    \begin{definition}\label{def:general_ldp}
        Let \(\{\mu_n\}\) be a family of probability measures on \((\mathcal{Y}, \mathcal{B})\) where \(\mathcal{Y}\) is a separable metric space and \(\mathcal{B}\) is the completed Borel field on \(\mathcal{Y}\). We say that \(\{\mu_n\}\) satisfies the \textit{large deviation principle} with speed \(\{a_n\}\) and rate function \(I\) if for all \(\Gamma \in \mathcal{B}\),
        \begin{equation*}
            -\inf_{y \in \Gamma^\circ} I(y) \leq \liminf_{n \to \infty} a_n \log \mu_n (\Gamma) \leq \limsup_{n \to \infty} a_n \log \mu_n (\Gamma) \leq -\inf_{y \in \overline{\Gamma}} I(y).
        \end{equation*}
        Here, \(\{a_n\}\) is a sequence of reals with \(a_n \to 0\). Additionally, \(\Gamma^\circ\) and \(\overline{\Gamma}\) denote the interior and closure of \(\Gamma\) respectively.
    \end{definition}

    We specialize the definition of the large deviation principle further to a form most frequently used in our arguments. Note that the following definition requires that \(I\) be a good rate function, whereas the general definition of the large deviation principle does not have such a requirement.

    \begin{definition}\label{def:ldp}
        Suppose \(\{P_n\}\) and \(\{Q_n\}\) satisfy Assumption \ref{assumption:probs}. We say that the sequence of (normalized) log-likelihood ratios \(\left\{\frac{\log \frac{q_n}{p_n}}{\log n}\right\}\) satisfies the \textit{large deviation principle under the null} if there exists a good rate function \(I : \R \to [0, \infty]\) and for all Borel sets \(\Gamma \subset \R\) we have 
        \begin{align*}
            - \inf_{t \in \Gamma^\circ} I(t) &\leq \liminf_{n \to \infty}\frac{1}{\log n} \cdot \log P\left(\frac{\log \frac{q_n}{p_n}(X_n)}{\log n} \in \Gamma \right) \\
            &\leq \limsup_{n \to \infty} \frac{1}{\log n} \cdot \log P\left(\frac{\log \frac{q_n}{p_n}(X_n)}{\log n} \in \Gamma \right) \leq - \inf_{t \in \overline{\Gamma}} I(t)
        \end{align*}
        where \(X_n \sim P_n\). 
    \end{definition}

    Intuitively, the rate function \(I\) quantifies the asymptotic order by which \(Q_n\) deviates from \(P_n\). In particular, values of \(t \in \R\) such that \(I(t)\) is small are relatively unlikely values for the normalized log-likelihood ratio \(\frac{\log \frac{q_n}{p_n}(X_n)}{\log n}\) when the observation \(X_n\) is truly from the null, i.e. \(X_n \sim P_n\). Roughly speaking, observing values of the normalized log-likelihood ratio for which \(I\) is small constitutes as ``evidence" against the null. In this rough manner of speaking, the rate function can be thought of as quantifying the ``magnitude of evidence" against the null across all possible values the normalized log likelihood ratio may take on. In our main result, the rate function \(I\) is the fundamental object determining the phase transition. 
    
    \begin{theorem}\label{thm:beta_upper}
        Suppose \(\{P_n\}\) and \(\{Q_n\}\) are probability distributions that satisfy Assumption \ref{assumption:probs} for the testing problem (\ref{problem:sparse_mixture_detection_1})-(\ref{problem:sparse_mixture_detection_2}) with calibration (\ref{eqn:beta_sparsity}). Suppose there exists some \(\gamma > 1\) such that
        \begin{equation}\label{eqn:varadhan_tail_condition}
            \limsup_{n \to \infty} \frac{1}{\log n} \cdot \log E\left[ \left( \frac{q_n}{p_n}(X_n) \right)^\gamma \right] < \infty
        \end{equation}
        where \(X_n \sim P_n\). Suppose further that \(\left\{\frac{\log \frac{q_n}{p_n}}{\log n}\right\}\) satisfies the large deviation principle under the null. Let \(I : \R \to [0, \infty]\) be the associated good rate function. Define 
        \begin{equation}
            \overline{\beta}^\# := \frac{1}{2} + 0 \vee \sup_{t \geq 0} \left\{ t - I(t) + \frac{1 \wedge I(t)}{2} \right\}.
        \end{equation}
        Then \(\overline{\beta}^* \leq \overline{\beta}^\#\).
    \end{theorem}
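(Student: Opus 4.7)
The plan is to use Lemma \ref{lemma:Hellinger}: it suffices to show $H_n^2(\beta) = o(n^{-1})$ whenever $\beta > \overline{\beta}^\#$. Write $L_n := q_n/p_n$, $\varepsilon := n^{-\beta}$, and $W_n := 1 - \varepsilon + \varepsilon L_n$, so that $H_n^2(\beta) = E_{P_n}[(1-\sqrt{W_n})^2]$. Throughout I use two elementary bounds, both valid since $W_n \geq 0$: $(1-\sqrt{W_n})^2 \leq (1-W_n)^2$ (from $(1+\sqrt{W_n})^2 \geq 1$) and $(1-\sqrt{W_n})^2 \leq 1 + W_n$ (from $-2\sqrt{W_n} \leq 0$). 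Setting $\phi(t) := t - I(t) + (1 \wedge I(t))/2$, the hypothesis $\beta > \overline{\beta}^\#$ is equivalent to $\beta > 1/2$ together with $\beta - 1/2 > \phi(t)$ for every $t \geq 0$.

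The contribution of $\{L_n \leq 1\}$ is at most $\varepsilon^2 E_{P_n}[(L_n - 1)^2 \mathbf{1}_{L_n \leq 1}] \leq \varepsilon^2 = n^{-2\beta} = o(n^{-1})$ since $\beta > 1/2$. For the rest, fix $T > 0$ large and $\delta > 0$ small (to be chosen) and partition $\{1 < L_n \leq n^T\}$ into dyadic bands $A_k := \{n^{t_k} < L_n \leq n^{t_{k+1}}\}$ with $t_k := k\delta$ for $k = 0, \ldots, \lceil T/\delta \rceil$. The LDP under the null, combined with lower semicontinuity of $I$, gives $P_{P_n}(A_k) \leq n^{-I(t_k) + o(1)}$ uniformly over this finite collection (for $\delta$ small enough). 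On each band I apply whichever elementary inequality is sharper, the choice being dictated by $I(t_k)$. If $I(t_k) \leq 1$, the bound $(1-W_n)^2 \leq \varepsilon^2 n^{2 t_{k+1}}$ yields band contribution $\leq n^{2 t_k - 2\beta - I(t_k) + O(\delta) + o(1)}$; since $\phi(t_k) = t_k - I(t_k)/2$ in this case, $\beta - 1/2 > \phi(t_k)$ forces $2 t_k - 2\beta - I(t_k) < -1$, giving $o(n^{-1})$ for $\delta$ small. If instead $I(t_k) > 1$, the bound $1 + W_n \leq 2 + n^{t_{k+1} - \beta}$ yields band contribution $\leq n^{-I(t_k) + o(1)} + n^{t_k - \beta - I(t_k) + O(\delta) + o(1)}$; the first term is $o(n^{-1})$ because $I(t_k) > 1$, and since now $\phi(t_k) = t_k - I(t_k) + 1/2$, the hypothesis $\beta - 1/2 > \phi(t_k)$ makes the second term $o(n^{-1})$ too. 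Summing over the $O(T/\delta)$ bands preserves the rate.

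The main obstacle is controlling the tail $\{L_n > n^T\}$, where the LDP alone is insufficient and the moment condition (\ref{eqn:varadhan_tail_condition}) is indispensable. Using $(1-\sqrt{W_n})^2 \leq 1 + W_n$, the tail contribution is bounded by $2 P_{P_n}(L_n > n^T) + \varepsilon E_{P_n}[L_n \mathbf{1}_{L_n > n^T}]$. Markov's inequality with the $\gamma$-th moment gives $P_{P_n}(L_n > n^T) \leq n^{C - \gamma T + o(1)}$, where $C$ is the finite $\limsup$ from (\ref{eqn:varadhan_tail_condition}), and H\"older's inequality with exponent $\gamma$ yields $\varepsilon E_{P_n}[L_n \mathbf{1}_{L_n > n^T}] \leq n^{-\beta + C - (\gamma - 1) T + o(1)}$; taking $T$ large enough that both exponents lie below $-1$ (possible because $\gamma > 1$) makes the tail $o(n^{-1})$, completing the bound on $H_n^2(\beta)$. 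A secondary, more routine, issue is ensuring the LDP upper bound $P_{P_n}(A_k) \leq n^{-I(t_k) + o(1)}$ is uniform across the finite collection of bands; this rests on lower semicontinuity of the good rate function $I$, which permits the replacement of $\inf_{s \in [t_k, t_{k+1}]} I(s)$ by $I(t_k) - \eta$ for arbitrarily small $\eta > 0$ once $\delta$ is small.
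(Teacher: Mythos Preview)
Your approach is correct in outline and amounts to proving by hand the instance of Varadhan's lemma that the paper invokes directly. The paper bounds the integrand on $\{\ell_n \geq 0\}$ via $(\sqrt{1+u}-1)^2 \leq u \wedge u^2$ (Lemma~\ref{lemma:hellinger_lemma}) with $u = n^{\ell_n/\log n - \beta}$, obtaining
\[
E\bigl[(\sqrt{h_n/p_n}-1)^2\,\mathbf{1}_{\{\ell_n \geq 0\}}\bigr] \;\leq\; E\bigl[n^{\varphi_\beta(\ell_n/\log n)}\,\mathbf{1}_{\{\ell_n/\log n \geq 0\}}\bigr], \qquad \varphi_\beta(t) := 2(t-\beta)\wedge(t-\beta),
\]
and then applies Lemma~\ref{lemma:Varadhan_subset} to get $\limsup_n \frac{1}{\log n}\log E[\cdots] \leq \sup_{t\geq 0}\{\varphi_\beta(t)-I(t)\}$, which is $<-1$ exactly when $\beta > \tfrac12 + \sup_{t\geq 0}\phi(t)$. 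Your dyadic bands plus tail truncation unfold this step: the two elementary inequalities $(1-\sqrt{W})^2\leq(1-W)^2$ and $(1-\sqrt{W})^2\leq 1+W$ stand in for the two branches $u^2$ and $u$, and the H\"older/Markov tail bound is precisely the moment hypothesis built into Varadhan's lemma. The paper's route is shorter; yours is more self-contained.

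There is one slip. Your claim that lower semicontinuity allows replacing $\inf_{s\in[t_k,t_{k+1}]}I(s)$ by $I(t_k)-\eta$ uniformly in $k$ once $\delta$ is small is not justified: the neighborhood on which $I(s)>I(t_k)-\eta$ depends on $t_k$, but the grid points $t_k=k\delta$ themselves depend on $\delta$, so the argument is circular. Concretely, if $I$ has an isolated deep minimum strictly inside some band, $I(t_k)$ can vastly overstate the infimum there, your ``$I(t_k)>1$'' branch is selected, and the term $2P_{P_n}(A_k)$ need not be $o(n^{-1})$. The fix is simple: set $J_k:=\inf_{s\in[t_k,t_{k+1}]}I(s)$, which is attained at some $s_k^*\in[t_k,t_{k+1}]$ by compactness plus lower semicontinuity; base the dichotomy on whether $J_k\leq 1$, and apply the hypothesis $\phi(s_k^*)<\beta-\tfrac12$ at $s_k^*$ rather than at $t_k$. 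Since $|t_{k+1}-s_k^*|\leq\delta$, the $O(\delta)$ slack you already carry absorbs the shift, and the rest of your argument goes through unchanged.
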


    \begin{theorem}\label{thm:beta_lower}
        Consider the setting of Theorem \ref{thm:beta_upper}. Define 
        \begin{equation}
            \underline{\beta}^\# := \frac{1}{2} + \sup_{t > 0} \left\{ t - I(t) + \frac{1 \wedge I(t)}{2} \right\}.
        \end{equation}
        If the conditions of Theorem \ref{thm:beta_upper} hold, then \(\underline{\beta}^\# \leq \underline{\beta}^*\).
    \end{theorem}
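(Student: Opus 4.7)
The plan is to invoke Lemma \ref{lemma:Hellinger}, so that it suffices to show $H_n^2(\beta) = \omega(n^{-1})$ for every $\beta < \underline{\beta}^\#$. Writing $L_n = \frac{q_n}{p_n}(X_n)$ with $X_n \sim P_n$, I have
\[
    H_n^2(\beta) = E\!\left[\left(1 - \sqrt{1 + n^{-\beta}(L_n - 1)}\right)^2\right].
\]
The crucial first step is an algebraic reformulation of the functional $g$. Checking the two cases $I(t) \leq 1$ and $I(t) > 1$ separately reveals
\[
    g(t) \;=\; t - I(t) + \frac{1 \wedge I(t)}{2} \;=\; \min\!\left\{\, t - \tfrac{I(t)}{2},\; t - I(t) + \tfrac{1}{2}\, \right\},
\]
so that $\beta < \underline{\beta}^\#$ is equivalent to the existence of a single $t^* > 0$ at which the two inequalities
\[
    \beta \;<\; \tfrac{1}{2} + t^* - \tfrac{I(t^*)}{2} \qquad \text{and} \qquad \beta \;<\; 1 + t^* - I(t^*)
\]
hold \emph{simultaneously}. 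These two inequalities are precisely the conditions needed for $H_n^2(\beta)$ to dominate $n^{-1}$ in the quadratic regime (where $n^{-\beta}L_n \to 0$, so $(1 - \sqrt{1+y})^2 \asymp y^2$) and in the linear regime (where $n^{-\beta}L_n \to \infty$, so $(1 - \sqrt{1+y})^2 \asymp y$), respectively.

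Next I would restrict the Hellinger expectation to the event $A_n = \{\log L_n / \log n \in (t^* - \delta, t^* + \delta)\}$ for a small $\delta > 0$ to be fixed later. The LDP lower bound, together with the trivial estimate $\inf_{(t^* - \delta, t^* + \delta)} I \leq I(t^*)$, yields $P(A_n) \geq n^{-I(t^*) - \eta}$ eventually, for any prescribed $\eta > 0$. On $A_n$ one has $L_n \in (n^{t^* - \delta}, n^{t^* + \delta})$, and the universal pointwise inequality $(1 - \sqrt{1 + y})^2 \geq c \min\{y, y^2\}$ for $y \geq 0$ leads to three subcases. When $t^* < \beta$, take $\delta$ small enough that $t^* + \delta < \beta$, placing us in the quadratic regime with $H_n^2(\beta) \gtrsim n^{2(t^* - \delta) - 2\beta - I(t^*) - \eta}$; the first of the two displayed inequalities makes this $\omega(n^{-1})$ once $\delta, \eta$ are small. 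When $t^* > \beta$, take $\delta$ small enough that $t^* - \delta > \beta$, placing us in the linear regime with $H_n^2(\beta) \gtrsim n^{t^* - \delta - \beta - I(t^*) - \eta}$; the second inequality makes this $\omega(n^{-1})$. When $t^* = \beta$, both inequalities collapse to $I(t^*) < 1$; on $A_n$ the integrand is at least $c\min\{y, y^2\} \gtrsim n^{-2\delta}$, so $H_n^2(\beta) \gtrsim n^{-2\delta - I(t^*) - \eta} = \omega(n^{-1})$.

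The main obstacle, or at least the essential insight, is recognizing that the formula $g(t) = t - I(t) + (1 \wedge I(t))/2$ is exactly the minimum of the quadratic- and linear-regime exponents; this is what allows a single witness $t^*$ to drive both analyses without our having to know in advance whether $t^* < \beta$ or $t^* > \beta$. Once this reformulation is in hand, the LDP lower bound on $P(A_n)$ together with the elementary lower bound on $(1 - \sqrt{1 + y})^2$ combine routinely. The tail integrability condition \eqref{eqn:varadhan_tail_condition} imposed in Theorem \ref{thm:beta_upper} is carried through as a hypothesis but plays no actual role in this lower-bound direction.
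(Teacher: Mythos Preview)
Your proof is correct, and it takes a genuinely different route from the paper. The paper restricts the Hellinger integral to the half-line $\{\ell_n/\log n > \delta\}$, applies the pointwise lower bound from Lemma~\ref{lemma:hellinger_lemma} to obtain an integrand of the form $n^{\varphi_\beta(\ell_n/\log n)}$ with $\varphi_\beta(t)=(t-\beta)\wedge 2(t-\beta)$, and then invokes the open-set half of Varadhan's lemma (Lemma~\ref{lemma:Varadhan_subset}) to compute $\liminf_n \frac{1}{\log n}\log E[\cdots] \geq \sup_{t>\delta}\{\varphi_\beta(t)-I(t)\}$; finally it lets $\delta\downarrow 0$ and checks that $\sup_{t>0}\{\varphi_\beta(t)-I(t)\}>-1$ is equivalent to $\beta<\underline{\beta}^\#$. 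Your argument instead localizes: you pick a single witness $t^*>0$ furnished by the algebraic identity $g(t)=\min\{t-I(t)/2,\,t-I(t)+1/2\}$, restrict to the ball $A_n=\{\log L_n/\log n\in(t^*-\delta,t^*+\delta)\}$, and use only the raw LDP lower bound $P(A_n)\geq n^{-I(t^*)-\eta}$ together with the elementary inequality $(\sqrt{1+y}-1)^2\gtrsim y\wedge y^2$. What your approach buys is that it is entirely self-contained---no appeal to Varadhan's lemma---and, as you correctly note, it makes transparent that the tail condition \eqref{eqn:varadhan_tail_condition} is irrelevant for this direction (the paper's proof, as written, invokes it to license Lemma~\ref{lemma:Varadhan_subset}, though the lower-bound half of that lemma does not actually require it). The paper's approach, on the other hand, is more uniform with the proof of Theorem~\ref{thm:beta_upper} and avoids the case split on the sign of $t^*-\beta$.
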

    The most interesting situation is when the upper and lower bounds meet, yielding a detection boundary.

    \begin{corollary}\label{corollary:tight_limit}
        Consider the setting of Theorem \ref{thm:beta_upper}. If the conditions of Theorem \ref{thm:beta_upper} hold and
        \begin{equation}
            0 \vee \sup_{t \geq 0}\left\{ t - I(t) + \frac{1 \wedge I(t)}{2} \right\} = \sup_{t > 0} \left\{ t - I(t) + \frac{1 \wedge I(t)}{2} \right\},
        \end{equation}
        then \(\underline{\beta}^* = \overline{\beta}^* = \beta^*\) where
        \begin{equation}\label{eqn:tight_beta}
            \beta^* := \frac{1}{2} + 0 \vee \sup_{t \geq 0}\left\{ t - I(t) + \frac{1 \wedge I(t)}{2} \right\}. 
        \end{equation}
    \end{corollary}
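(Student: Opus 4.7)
The plan is to recognize this as an immediate corollary obtained by chaining the two preceding theorems with the a priori inequality $\underline{\beta}^* \leq \overline{\beta}^*$ noted after Lemma \ref{lemma:Hellinger} (via Lemma 1 of Cai--Wu). First I would apply Theorem \ref{thm:beta_upper} to obtain $\overline{\beta}^* \leq \overline{\beta}^\#$ and Theorem \ref{thm:beta_lower} to obtain $\underline{\beta}^\# \leq \underline{\beta}^*$. Together with the sandwich $\underline{\beta}^* \leq \overline{\beta}^*$, these produce the chain
\[
\underline{\beta}^\# \leq \underline{\beta}^* \leq \overline{\beta}^* \leq \overline{\beta}^\#.
\]

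Next I would observe that the hypothesis of the corollary is precisely the assertion that the suprema appearing in $\underline{\beta}^\#$ and $\overline{\beta}^\#$ coincide: adding $\tfrac{1}{2}$ to both sides of the assumed identity gives $\overline{\beta}^\# = \underline{\beta}^\#$. All four quantities in the chain must then agree, which forces $\underline{\beta}^* = \overline{\beta}^* = \beta^*$ with the stated formula for $\beta^*$.

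There is essentially no analytic obstacle here; the work has already been carried out inside Theorems \ref{thm:beta_upper} and \ref{thm:beta_lower}. The only point requiring mild care is bookkeeping around the two slightly different suprema appearing in the upper and lower bounds: $\overline{\beta}^\#$ uses $0 \vee \sup_{t \geq 0}$ while $\underline{\beta}^\#$ uses $\sup_{t > 0}$ without the zero truncation. The corollary's hypothesis exactly rules out the degenerate scenarios in which these two expressions could disagree — for instance, where the supremum is attained only at the boundary $t = 0$ (contributing to the upper bound but not the lower), or where the unrestricted supremum is negative so that the $0 \vee$ clamp becomes active. Under the stated equality these pathologies are excluded, so the conclusion follows from a single line of algebra after invoking the two theorems.
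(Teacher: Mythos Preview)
Your proposal is correct and is essentially identical to the paper's own proof: the paper simply notes that the hypothesis forces $\underline{\beta}^\# = \overline{\beta}^\# = \beta^*$, invokes Theorems \ref{thm:beta_upper} and \ref{thm:beta_lower} to obtain the chain $\underline{\beta}^\# \leq \underline{\beta}^* \leq \overline{\beta}^* \leq \overline{\beta}^\#$, and concludes equality throughout.
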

    Corollary \ref{corollary:tight_limit} is our main result concerning phase transitions in the general sparse mixture detection problem. As mentioned before, the rate function \(I\) is fundamental in that it fully determines the detection boundary (provided the conditions of Corollary \ref{corollary:tight_limit} hold). The following corollary is a repackaging of Corollary \ref{corollary:tight_limit} with a sufficient condition that may be easier for ``off-the-shelf'' use.

    \begin{corollary}\label{corollary:nice_tight_limit}
        Consider the setting of Theorem \ref{thm:beta_upper}. If the conditions of Theorem \ref{thm:beta_upper} hold, \(I\) is right continuous at \(0\) i.e. \(\lim_{t \to 0^+} I(t) = I(0)\), and if there exists \(t^* \geq 0\) such that 
        \begin{equation}
            t^* - I(t^*) + \frac{1 \wedge I(t^*)}{2} \geq 0,
        \end{equation}
        then \(\underline{\beta}^* = \overline{\beta}^* = \beta^*\) where \(\beta^*\) is given by equation (\ref{eqn:tight_beta}). 
    \end{corollary}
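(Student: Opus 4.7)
The plan is to deduce this corollary directly from Corollary \ref{corollary:tight_limit} by verifying the latter's hypothesis
\[
0 \vee \sup_{t \geq 0} f(t) = \sup_{t > 0} f(t),
\qquad \text{where } f(t) := t - I(t) + \frac{1 \wedge I(t)}{2}.
\]
Once this identity is in hand, Corollary \ref{corollary:tight_limit} delivers $\underline{\beta}^* = \overline{\beta}^* = \beta^*$ with no further work.

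First I would use the assumed existence of $t^* \geq 0$ with $f(t^*) \geq 0$ to conclude $\sup_{t \geq 0} f(t) \geq 0$, so the outer $0 \vee (\cdot)$ is redundant. The task is thereby reduced to showing $\sup_{t \geq 0} f(t) = \sup_{t > 0} f(t)$, i.e., that dropping the single point $t=0$ from the domain does not change the supremum.

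Next I would invoke right continuity: the hypothesis $\lim_{t \to 0^+} I(t) = I(0)$ combined with the continuity of $u \mapsto 1 \wedge u$ on $[0, \infty]$ implies $\lim_{t \to 0^+} f(t) = f(0)$. Passing the supremum past this limit gives $\sup_{t > 0} f(t) \geq f(0)$, and combined with the elementary decomposition $\sup_{t \geq 0} f(t) = \max\{f(0), \sup_{t > 0} f(t)\}$ this yields the required equality.

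There is no real obstacle; the argument is a short deduction rather than a calculation, and the main role of the hypotheses is bookkeeping. The one edge case worth noting is $I(0) = +\infty$, in which $f(0) = -\infty$ and the displayed equality is automatic (the hypothesized $t^*$ then necessarily satisfies $t^* > 0$), so the right continuity of $I$ is only genuinely used when $I(0)$ is finite.
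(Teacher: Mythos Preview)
Your proposal is correct and follows essentially the same approach as the paper: both arguments use right continuity of $I$ at $0$ to identify $\sup_{t>0} f(t)$ with $\sup_{t\geq 0} f(t)$, and the existence of $t^*$ to ensure this supremum is nonnegative, after which Corollary~\ref{corollary:tight_limit} applies directly. The only differences are the order of the two steps and your explicit treatment of the $I(0)=\infty$ edge case, which the paper leaves implicit.
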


    It is worth pausing to compare the role of the rate function with an analogous function in the framework of Cai and Wu \cite{caiOptimalDetectionSparse2014}. We state the main theorem regarding phase transitions from \cite{caiOptimalDetectionSparse2014} below with notational modifications to fit our context. 
    \begin{theorem}[Theorem 3 - \cite{caiOptimalDetectionSparse2014}]
        Consider the testing problem (\ref{problem:sparse_mixture_detection_1})-(\ref{problem:sparse_mixture_detection_2}) with calibration (\ref{eqn:beta_sparsity}). Suppose Assumption \ref{assumption:probs} holds. Let \(F_n\) and \(z_n\) denote the cumulative distribution function and quantile function of \(P_n\) respectively, i.e. \(z_n(p) = \inf \{y \in \R : F_n(y) \geq p\}\) for \(p \in [0, 1]\). Assume that the log-likelihood ratio \(\ell_n := \log\frac{q_n}{p_n}\) satisfies 
        \begin{equation*}
            \lim_{n \to \infty} \sup_{s \geq (\log_2 n)^{-1}} \left| \frac{\ell_n(z_n(n^{-s})) \vee \ell_n(z_n(1-n^{-s}))}{\log n} - \gamma(s) \right| = 0
        \end{equation*}
        as \(n \to \infty\) uniformly in \(s \in \R_{+}\) for some measurable function \(\gamma : \R^{+} \to \R\). If \(\gamma > 0\) on a set of positive Lebesgue measure, then 
        \begin{equation*}
            \beta^* = \frac{1}{2} + 0 \vee \ess \sup_{s \geq 0} \left\{ \gamma(s) - s + \frac{s \wedge 1}{2} \right\}.
        \end{equation*}
        Here, \(\ess\sup\) denotes the essential supremum with respect to Lebesgue measure on \(\R\).
    \end{theorem}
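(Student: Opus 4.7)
My plan is to characterize \(\beta^*\) via the Hellinger asymptotic of Lemma \ref{lemma:Hellinger}. Writing \(L_n := q_n/p_n\), a direct expansion gives
\begin{equation*}
H_n^2(\beta) = E_{P_n}\left[\left(1 - \sqrt{1 + n^{-\beta}(L_n(X) - 1)}\right)^2\right] = \int_0^1 \left(1 - \sqrt{1 + n^{-\beta}(L_n(z_n(u)) - 1)}\right)^2 du,
\end{equation*}
where the second equality uses the univariate quantile identity \(X \stackrel{d}{=} z_n(U)\) for \(U \sim \Uniform[0,1]\). I would split this integral into the two tails --- parametrized by \(u = n^{-s}\) on the left and \(u = 1 - n^{-s}\) on the right, both with Jacobian \(n^{-s}\log n\, ds\) --- and a bulk region \(u \in [n^{-1/\log_2 n}, 1 - n^{-1/\log_2 n}]\), then track the logarithmic order \(\frac{1}{\log n}\log H_n^2(\beta)\).

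The tail analysis hinges on the identity \((1 - \sqrt{1+x})^2 = x^2/(1+\sqrt{1+x})^2\), which is of order \(x^2/4\) for small \(x\) and of order \(x\) for large \(x\). The hypothesis yields \(\log L_n(z_n(n^{-s})) = (\gamma(s) + o(1))\log n\) uniformly in \(s \geq (\log_2 n)^{-1}\) (and similarly for the upper tail), so \(x = n^{-\beta}(L_n - 1) = n^{\gamma(s) - \beta + o(1)}\), and the integrand at scale \(s\) has logarithmic order \(\gamma(s) - \beta\) when \(\gamma(s) > \beta\) and \(2(\gamma(s) - \beta)\) when \(\gamma(s) < \beta\). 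After folding in the Jacobian \(n^{-s}\), the tail contribution at scale \(s\) is of order \(n^{\eta(s,\beta) + o(1)}\) where
\begin{equation*}
\eta(s, \beta) := (\gamma(s) - \beta) + (\gamma(s) - \beta) \wedge 0 - s.
\end{equation*}
A Laplace-type upgrade then produces \(\frac{1}{\log n}\log(\text{tails}) \to \ess\sup_{s \geq 0} \eta(s, \beta)\), where the essential supremum appears because the integral is insensitive to Lebesgue-null modifications of \(\gamma\). The bulk contribution is controlled by \((1 - \sqrt{1+x})^2 \leq x^2\) (valid for \(x \geq -1\)) together with the normalization \(E_{P_n}[L_n] = 1\), yielding a bulk of order at most \(n^{-2\beta + o(1)}\). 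Combining, \(\frac{1}{\log n}\log H_n^2(\beta) \to -2\beta \vee \ess\sup_{s \geq 0} \eta(s, \beta)\), and by Lemma \ref{lemma:Hellinger} the critical \(\beta^*\) is the value at which this limit equals \(-1\). Solving \(\eta(s, \beta^*) = -1\) in the regime \(\gamma(s) \geq \beta^*\) gives \(\beta^* = \gamma(s) - s + 1\) (self-consistent when \(s \geq 1\)); in the regime \(\gamma(s) < \beta^*\) it gives \(\beta^* = \gamma(s) - s/2 + 1/2\) (self-consistent when \(s < 1\)). Unifying these two branches with the bulk threshold \(\beta^* \geq 1/2\) yields
\begin{equation*}
\beta^* = \frac{1}{2} + 0 \vee \ess\sup_{s \geq 0}\left\{\gamma(s) - s + \frac{s \wedge 1}{2}\right\}.
\end{equation*}

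The hard part will be rigorously upgrading the scale-by-scale heuristic to a uniform-in-\(n\) Laplace bound on the tail integral. The upper bound by \(\ess\sup_s \eta(s, \beta)\) follows from termwise control, but the matching lower bound requires identifying a Lebesgue-positive set of scales on which \(\eta(s, \beta)\) approaches the essential supremum and then transferring the uniform convergence \(\ell_n(z_n(n^{-s}))/\log n \to \gamma(s)\) into an integral lower bound on that set; the hypothesis that \(\gamma > 0\) on a set of positive Lebesgue measure is precisely what makes this essential supremum nontrivially approachable. A secondary technical point is that the bulk lies outside the regime where the uniform quantile hypothesis applies, but the crude bound \((1 - \sqrt{1+x})^2 \leq x^2\) with the moment identity \(E_{P_n}[L_n] = 1\) suffices to keep the bulk controlled without any finer input.
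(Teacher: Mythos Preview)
The paper does not prove this theorem; it is quoted verbatim from Cai and Wu \cite{caiOptimalDetectionSparse2014} for comparison with the paper's own large-deviations results. So there is no ``paper's own proof'' to compare against. Your proposal is essentially a reconstruction of Cai and Wu's original argument: push the Hellinger integral through the quantile transform, use the pointwise two-sided bound \((\sqrt{1+x}-1)^2 \asymp x \wedge x^2\) (this is exactly Lemma~\ref{lemma:hellinger_lemma}), and run a Laplace-type analysis on the resulting integral over \(s\). The paper's own Theorems~\ref{thm:beta_upper} and~\ref{thm:beta_lower} follow the same Hellinger roadmap but replace the quantile-and-Laplace step by the assumption of an LDP for \(\ell_n/\log n\) and an appeal to Varadhan's lemma (Theorem~\ref{thm:Varadhan} and Lemma~\ref{lemma:Varadhan_subset}); the decomposition there is according to the sign of \(\ell_n\) rather than by quantile region.

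One small observation on your write-up: the ``bulk'' region you set aside, \(u \in [n^{-1/\log_2 n},\, 1 - n^{-1/\log_2 n}]\), is the single point \(\{1/2\}\), since \(n^{-(\log_2 n)^{-1}} = e^{-\log 2} = 1/2\). So the two tail parametrizations already cover all of \((0,1)\) and there is no bulk to control separately; your appeal to \(E_{P_n}[L_n]=1\) for that purpose is unnecessary (and would not have sufficed anyway, since bounding \(\int (L_n-1)^2\) requires a second moment, not a first). This simplifies the argument but does not affect its correctness.
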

    In Theorem 3 of \cite{caiOptimalDetectionSparse2014}, the fundamental object determining the phase transition is the function \(\gamma : \R_{+} \to \R\), which is determined by the asymptotics of the log likelihood ratio \(\frac{\log \frac{q_n}{p_n}}{\log n}\) evaluated at the \(n^{-s}\) and \(1-n^{-s}\) quantiles of \(P_n\) across \(s \geq (\log_2 n)^{-1}\). Intuitively, the function \(\gamma\) quantifies the order at which \(Q_n\) deviates from \(P_n\). Roughly speaking, the points \(s\) at which \(\gamma(s) > 0\) are those \(P_n\)-quantiles which are ``relatively more likely'' under \(Q_n\) compared to \(P_n\); roughly, \(\gamma\) quantifies the order of ``more likely''.

    It's clear that the univariate nature of \(P_n\) and \(Q_n\) is heavily exploited as it is the likelihood ratio's asymptotic behavior at \emph{quantiles} of \(P_n\) that determines the fundamental object \(\gamma\). In the abstract setting, it is the rate function of a large deviation principle that precisely quantifies the asymptotic order of the likelihood ratio at various regions in the sample space. The rate function gives us a way to measure how much \(Q_n\) ``deviates'' from \(P_n\) asymptotically, thus allowing us to lift the core ideas of Cai and Wu (namely the idea to sharply characterize Hellinger asymptotics) to the abstract setting. We investigate the relationship between the rate function \(I\) and the function \(\gamma\) below in Proposition \ref{prop:weaker_than_cai_wu}. In fact, we can show that our condition that the normalized log-likelihood ratios satisfy the large deviation principle under the null (Definition \ref{def:ldp}) is implied, under some constraints, by the condition formulated in Theorem 3 of \cite{caiOptimalDetectionSparse2014}. 

    \begin{proposition}\label{prop:weaker_than_cai_wu}
        Suppose \(\{P_n\}\) and \(\{Q_n\}\) are probability distributions on \(\R\) satisfying Assumption \ref{assumption:probs} for the testing problem (\ref{problem:sparse_mixture_detection_1})-(\ref{problem:sparse_mixture_detection_2}). Let \(z^{(n)}\) denote the quantile function of \(P_n\). Assume there exist measurable functions \(\alpha_0 : \R_+ \to \R\) and \(\alpha_1 : \R_{+} \to \R\) such that 
        \begin{align}
            \lim_{n \to \infty} \sup_{s \geq (\log_2 n)^{-1}} \left| \frac{\log \frac{q_n}{p_n}(z^{(n)}(n^{-s}))}{\log n} - \alpha_0(s) \right| &= 0, \label{eqn:unif_convergence_0}\\
            \lim_{n \to \infty} \sup_{s \geq (\log_2 n)^{-1}} \left| \frac{\log \frac{q_n}{p_n}(z^{(n)}(1-n^{-s}))}{\log n} - \alpha_1(s) \right| &= 0. \label{eqn:unif_convergence_1}
        \end{align}
        If \(\alpha_0\) and \(\alpha_1\) are continuous, then \(\left\{\frac{\log \frac{q_n}{p_n}}{\log n} \right\} \) satisfies the large deviation principle under the null with good rate function
        \begin{equation*}
            I(t) = I_{0}(t) \wedge I_{1}(t)
        \end{equation*}
        where \(I_{0}\) and \(I_{1}\) are good rate functions given by
        \begin{align*}
            I_{0}(t) &= \inf\{s \geq 0 : t = \alpha_0(s)\}, \\
            I_{1}(t) &= \inf\{s \geq 0 : t = \alpha_1(s)\}.
        \end{align*}
        We use the convention that \(\inf \emptyset = \infty\).
    \end{proposition}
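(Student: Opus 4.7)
The plan is to push the problem through the quantile representation $X_n \stackrel{d}{=} z^{(n)}(U)$ with $U \sim \Uniform(0,1)$, which rewrites the normalized log-likelihood ratio $L_n := (\log n)^{-1}\log(q_n/p_n)(X_n)$ as a deterministic function of a single uniform random variable. I would decompose $P(L_n \in \Gamma) = P(L_n \in \Gamma, U \leq 1/2) + P(L_n \in \Gamma, U > 1/2)$, the two halves corresponding to the lower and upper tail parametrizations in \eqref{eqn:unif_convergence_0} and \eqref{eqn:unif_convergence_1}. On $\{U \leq 1/2\}$ set $S_n := -\log(U)/\log n \geq (\log_2 n)^{-1}$; hypothesis \eqref{eqn:unif_convergence_0} then reads $|L_n - \alpha_0(S_n)| \leq \epsilon_n$ uniformly on this set, with $\epsilon_n \to 0$. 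A direct computation gives $P(S_n \geq a, U \leq 1/2) = n^{-a} \wedge (1/2)$ for $a > 0$, whence $(\log n)^{-1} \log P(a \leq S_n \leq b, U \leq 1/2) \to -a$ for $0 < a < b$, which is the elementary LDP on this half with rate function $J(s) = s$ on $[0, \infty)$. The symmetric setup on $\{U \geq 1/2\}$ uses $T_n := -\log(1-U)/\log n$ and $\alpha_1$.

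Before pushing the LDP through, I would verify that $I_0$ and $I_1$ are good rate functions, whence $I = I_0 \wedge I_1$ is also good (its sublevel sets being finite unions of compact sets). Continuity of $\alpha_0$ identifies $\{t \in \R : I_0(t) \leq \alpha\}$ with the continuous image $\alpha_0([0,\alpha])$, hence compact; lower semicontinuity of $I_0$ follows from a subsequence argument, using that the infimum in the definition of $I_0(t)$ is attained on the closed set $\alpha_0^{-1}(\{t\}) \cap [0, \infty)$. With these in hand, the heart of the proof is a contraction-principle-style transfer from the LDP for $S_n$ to one for $\alpha_0(S_n)$, followed by absorption of the error $\delta_n := L_n - \alpha_0(S_n)$. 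For the LDP lower bound, given $t_0 \in \Gamma^\circ$ with $I_0(t_0) < \infty$, pick $s_0 \geq 0$ attaining $I_0(t_0)$; continuity of $\alpha_0$ at $s_0$ together with $|\delta_n| \leq \epsilon_n$ yields $\{|S_n - s_0| < \eta\} \cap \{U \leq 1/2\} \subset \{L_n \in \Gamma\}$ for all large $n$ and sufficiently small $\eta$, and the probability of the left-hand event is at least $n^{-(s_0 - \eta)} - n^{-(s_0 + \eta)}$; normalization and $\eta \to 0$ then yield $-I_0(t_0)$. For the upper bound, the inclusion $\{L_n \in \Gamma, U \leq 1/2\} \subset \{S_n \in \alpha_0^{-1}(\Gamma_{\epsilon_n})\}$, where $\Gamma_{\epsilon_n}$ denotes the closed $\epsilon_n$-neighborhood of $\Gamma$, combined with the LDP upper bound for $S_n$ and lower semicontinuity of $I_0$, yields the bound $-\inf_{t \in \overline{\Gamma}} I_0(t)$ after sending $\epsilon_n \to 0$.

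Running the same argument on $\{U \geq 1/2\}$ with $T_n$ and $\alpha_1$ gives analogous bounds with $I_1$. Combining the two halves via the principle of the largest term, namely $(\log n)^{-1}\log(a_n + b_n) = \max\{(\log n)^{-1}\log a_n, (\log n)^{-1}\log b_n\} + o(1)$, then delivers the full LDP for $L_n$ with rate function $I_0 \wedge I_1 = I$. The main obstacle is the quantitative handling of the perturbation $\delta_n$: the hypothesis only provides $L_n = \alpha_0(S_n) + \delta_n$ with $\delta_n$ small but nonzero, so the contraction principle cannot be invoked verbatim. Managing this through closed $\epsilon_n$-neighborhoods in the upper bound and continuity of $\alpha_0$ at the minimizer $s_0$ in the lower bound is the key technical step; the elementary LDP for $S_n$, the goodness of $I_0$ and $I_1$, and the combination of the two halves are routine.
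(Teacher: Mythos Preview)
Your proposal is correct and follows essentially the same roadmap as the paper's proof: split into lower- and upper-quantile halves, represent each via an exponential-type variable whose LDP has rate $J(s)=s$, use the uniform-convergence hypothesis to approximate $L_n$ by $\alpha_i$ of that variable, push the LDP through via contraction, and recombine via the principle of the largest term. The only real difference is packaging: the paper invokes the contraction principle and the exponential-equivalence theorem (Theorems~\ref{thm:contraction_principle} and~\ref{thm:same_ldp}) as black boxes, whereas you reprove their content in context via the $\epsilon_n$-neighborhood and continuity-at-the-minimizer arguments; your direct use of $S_n=-\log(U)/\log n$ on $\{U\le 1/2\}$ also sidesteps the paper's small shift $W_n=Y_n+\log_n 2$.
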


    Note that if the conditions of Corollary \ref{corollary:tight_limit} also hold, then the detection boundary is 
    \begin{align*}
        \beta^* &= \frac{1}{2} + 0 \vee \sup_{t \geq 0} \left\{ t - I(t) + \frac{1 \wedge I(t)}{2}\right\} \\
        &= \frac{1}{2} + 0 \vee \sup_{t \geq 0} \sup_{s: s = I(t)} \left\{t - s + \frac{1 \wedge s}{2} \right\} \\
        &= \frac{1}{2} + 0 \vee \sup_{s \in I(\R)} \sup_{t : I(t) = s} \left\{t - s + \frac{1 \wedge s}{2} \right\} \\
        &= \frac{1}{2} + 0 \vee \sup_{s \in I(\R)} \left\{\alpha_0(s) \vee \alpha_1(s) - s + \frac{1 \wedge s}{2} \right\} \\
        &= \frac{1}{2} + 0 \vee \sup_{s \geq 0} \left\{\alpha_0(s) \vee \alpha_1(s) - s + \frac{1 \wedge s}{2}\right\}
    \end{align*}
    where the final equality follows from the observation that \(-s + \frac{1 \wedge s}{2} \leq 0\) for all \(s \geq 0\). Observe that this is the same formula that appears in Theorem 3 of \cite{caiOptimalDetectionSparse2014} with \(\gamma(s) = \alpha_0(s) \vee \alpha_1(s)\). Note that essential supremum and supremum coincide as we assumed \(\alpha_0\) and \(\alpha_1\) are continuous. However, note that the conditions of Theorem 3 in \cite{caiOptimalDetectionSparse2014} are not exactly the same as the conditions in Proposition \ref{prop:weaker_than_cai_wu}, and so Proposition \ref{prop:weaker_than_cai_wu} does not fully import the results of Theorem 3 in \cite{caiOptimalDetectionSparse2014}.
    
    \section{A Higher Criticism type statistic}\label{section:HC}
    Beyond the detection problem, the adaptation problem is of practical interest. Namely, it's of interest to furnish an optimal sequence of tests that adapts to the unknown signal sparsity \(\beta\). The existing literature of sparse mixture detection has focused on the setting where \(P_n\) and \(Q_n\) are distributions on \(\R\), and a useful idea in this setting is to compare the empirical distribution of the data to the null distribution, usually after some transformation. Typically, this transformation takes the form of calculating a \(p\)-value \(p_i\) from each univariate data point \(X_i\). In the multivariate setting, this idea can be mimicked by calculating a \(p\)-value \(p_i\) from some univariate statistic of the multivariate observation \(X_i\). As mentioned in the introduction, Higher Criticism can be interpreted as evaluating the goodness-of-fit between the empirical \(p\)-value distribution and the null distribution. Recall that the associated test is given by \(\psi_{\HC_n} = \mathbf{1}_{\left\{\HC_n > \sqrt{2(1+\delta) \log \log n}\right\}}\) where \(\delta > 0\) is an arbitrary constant and the Higher Criticism statistic is 
    \begin{equation*}
        \HC_n = \sup_{u \in (0, 1)} \frac{\left|\sum_{i=1}^{n} \mathbf{1}_{\{p_i \leq u\}} - n u \right|}{\sqrt{n u (1-u)}}.    
    \end{equation*}
    Ditzhaus \cite{ditzhausSignalDetectionPhidivergences2019} established that Higher Criticism is adaptively optimal (meaning it achieves the detection boundary without needing knowledge of \(\beta\)) in a wide class of univariate sparse mixture detection problems beyond the original sparse normal mixture problem considered by Donoho and Jin \cite{donohoHigherCriticismDetecting2004}.

    To apply Higher Criticism to a multivariate sparse mixture detection problem, a univariate \(p\)-value \(p_i\) from the multivariate observation \(X_i\) must be constructed; these constructions are usually very specific to the problem at hand. In fact, it is usually not clear how a \(p\)-value should be crafted without loss of information resulting in a suboptimal testing procedure. However, the broad applicability of Higher Criticism is quite attractive, and so it is of interest to reconcile these difficulties. An approach to reconciliation is given by Gao and Ma in Section 3.2 of \cite{gaoTestingEquivalenceClustering2019}, the key idea being to apply Higher Criticism to the univariate statistics \(\left\{\frac{q_n}{p_n}(X_i)\right\}_{i=1}^{n}\). In this section, we review Gao and Ma's construction and give a sufficient condition for optimality.
    
    For notational convenience, we will refer to the null and alternative hypotheses in (\ref{problem:sparse_mixture_detection_1})-(\ref{problem:sparse_mixture_detection_2}) as \(H_0\) and \(H_1\), suppressing the dependence on \(n\) as the context makes it clear. Gao and Ma formulate a Higher Criticism type testing statistic for the testing problem (\ref{problem:sparse_mixture_detection_1})-(\ref{problem:sparse_mixture_detection_2}) when the distributions \(\{P_n\}\) and \(\{Q_n\}_{n\geq 1}\) are known but \(\beta\) is unknown. We reproduce their formulation here. For a collection of events \(\A\), define 
    \begin{equation}
        \HC_n(\A) := \sup_{A \in \A} |T_n(A)|
    \end{equation}
    where 
    \begin{equation}
        T_n(A) := \frac{\sum_{i=1}^{n} \mathbf{1}_{\{X_i \in A\}} - nP_n(A)}{\sqrt{n P_n(A) (1-P_n(A))}}.
    \end{equation}
    As Gao and Ma note, the supremum in the definition of \(\HC_n(\A)\) is largely for the sake of adapting to the unknown sparsity parameter \(\beta\). Focusing attention on \(T_n\), fix \(A \in \A\) and consider the test
    \begin{equation}
        \varphi_A := \mathbf{1}_{\{|T_n(A)| > c_n\}}
    \end{equation}
    where \(c_n\) is a positive diverging sequence to be specified. Observing that \(E_{H_0}(T_n(A)) = 0\) and \(\Var_{H_0}(T_n(A)) = 1\) for all \(n \geq 1\), consider 
    \begin{align*}
        P_{H_0}\{\varphi_A = 1\} = P_{H_0}\{|T_n(A)| \geq c_n\} \leq \frac{1}{c_n^2}
    \end{align*}
    by Chebyshev's inequality, and so the Type I error goes to zero. Turning attention to the Type II error, observe that if \(c_n\) diverges at a slower order than \(|E_{H_1}(T_n(A))|\) diverges, then for all sufficiently large \(n\), it follows that
    \begin{align*}
        P_{H_1}\{\varphi_A = 0\} &= P_{H_1}\{|T_n(A)| \leq c_n\} \\
        &\leq P_{H_1}\{|E_{H_1}(T_n(A))| - |T_n(A) - E_{H_1}(T_n(A))| \leq c_n \} \\
        &= P_{H_1}\{|E_{H_1}(T_n(A))| - c_n \leq |T_n(A) - E_{H_1}(T_n(A))|\} \\
        &\leq \frac{\Var_{H_1}(T_n(A))}{(|E_{H_1}(T_n(A))| - c_n)^2} \\
        &\asymp \frac{\Var_{H_1}(T_n(A))}{(E_{H_1}(T_n(A)))^2}.
    \end{align*}
    Therefore, if \(c_n\) diverges sufficiently slowly and \(\frac{(E_{H_1}(T_n(A)))^2}{\Var_{H_1}(T_n(A))} \to \infty\), then both the Type I and Type II error go to zero and so the test \(\varphi_A\) is consistent. Thus, characterizing the consistency of the test \(\varphi_A\) boils down to characterizing when \(\frac{(E_{H_1}(T_n(A)))^2}{\Var_{H_1}(T_n(A))} \to \infty\). 
    
    Gao and Ma directly calculate (equation (33) in \cite{gaoTestingEquivalenceClustering2019})
    \begin{equation*}
        \frac{(E_{H_1} T_n(A))^2}{\Var_{H_1}(T_n(A))} \asymp \frac{(n\varepsilon (Q_n(A) - P_n(A)))^2}{nP_n(A) + n\varepsilon Q_n(A)}
    \end{equation*}
    and so if \(\frac{(n\varepsilon Q_n(A))^2}{nP_n(A) + n\varepsilon Q_n(A)} \to \infty\), then \(\frac{(E_{H_1} T_n(A))^2}{\Var_{H_1}(T_n(A))} \to \infty\). (As Gao and Ma note, the condition \(n\varepsilon^2 P_n(A) \to \infty\) is also sufficient but has the strong and uninteresting requirement \(\beta < \frac{1}{2}\).) Equivalently, if both
    \begin{align*}
        \frac{n\varepsilon^2 Q_n(A)^2}{P_n(A)} &\to \infty, \\
        n\varepsilon Q_n(A) &\to \infty
    \end{align*}
    hold, then \(\frac{(E_{H_1} T_n(A))^2}{\Var_{H_1}(T_n(A))} \to \infty\). The conditions are equivalent to 
    \begin{equation}\label{eqn:HC_beta}
        \beta < \frac{1}{2} + \frac{\log Q_n(A)}{\log n} + \frac{1}{2} \min\left(1, -\frac{\log P_n(A)}{\log n} \right).
    \end{equation}
    In other words, if \(\beta\) satisfies condition (\ref{eqn:HC_beta}) for all \(n\) sufficiently large for some sequence of events \(\{A_n\} \subset \A\) and \(c_n\) is a positive sequence diverging sufficiently slowly, then the sequence of tests \(\varphi_{A_n}\) is consistent for testing (\ref{problem:sparse_mixture_detection_1})-(\ref{problem:sparse_mixture_detection_2}). To maximize the set of \(\beta\) satisfying condition (\ref{eqn:HC_beta}) for all \(n\) sufficiently large, one should select an event \(A_n\) that maximizes the right hand side of (\ref{eqn:HC_beta}) for each \(n\). Since the right hand side of (\ref{eqn:HC_beta}) is increasing in \(Q_n(A)\) and decreasing in \(P_n(A)\), Gao and Ma argue that the Neyman-Pearson lemma implies that the maximum is achieved by the event \(A_n = \left\{ x \in \mathcal{X} : \frac{q_n}{p_n}(x) > t_n \right\}\) for some \(t_n > 0\). With this observation in mind, Gao and Ma naturally select the collection of events 
    \begin{equation}\label{eqn:event_collection}
        \A^*_n := \left\{ \left\{ x \in \mathcal{X} : \frac{q_n}{p_n}(x) > t \right\} : t > 0\right\}
    \end{equation}
    and define the general HC-type statistic
    \begin{equation}\label{eqn:HC_star}
        \HC_n^* := \HC_n(\A^*_n) = \sup_{t > 0} \frac{\left|\sum_{i=1}^{n} \mathbf{1}_{\left\{\frac{q_n}{p_n}(X_i) > t\right\}} - nP\left(\frac{q_n}{p_n}(Y_n) > t\right)\right|}{\sqrt{n P\left( \frac{q_n}{p_n}(
            Y_n) > t\right)P\left( \frac{q_n}{p_n}(Y_n) \leq t \right)}}
    \end{equation}
    where \(Y_n \sim P_n\) are independent of the data \(\{X_i\}_{i=1}^{n}\). The corresponding higher criticism test is 
    \begin{equation}\label{eqn:HC_test}
        \psi_{\HC_n^*} := \mathbf{1}_{\left\{\HC_n^* > \sqrt{2(1+\delta)\log \log(n)}\right\}}
    \end{equation}
    where \(\delta > 0\) is an arbitrary constant. Note that the cutoff \(\sqrt{2(1+\delta) \log \log n}\) is the same cutoff used in Donoho and Jin's original formulation of the Higher Criticism test (\ref{eqn:donoho_jin_HC_test}); this choice of cutoff is not at all surprising since \(\HC^*_n\) is precisely Higher Criticism applied to univariate statistics. We now formally define a quantity \(\underline{\beta}^{\HC}\) which demarcates the sparsity levels for which \(\psi_{\HC_n^*}\) consistently tests (\ref{problem:sparse_mixture_detection_1})-(\ref{problem:sparse_mixture_detection_2}).

    \begin{definition}\label{def:beta_HC}
        Consider the testing problem (\ref{problem:sparse_mixture_detection_1})-(\ref{problem:sparse_mixture_detection_2}) with calibration (\ref{eqn:beta_sparsity}). Define
        \begin{equation}
            \underline{\beta}^{\HC} := \frac{1}{2} + \sup \, \liminf_{n \to \infty} \left\{\frac{\log Q_n(A_n)}{\log n} + \frac{1}{2}\min\left(1, -\frac{\log P_n(A_n)}{\log n} \right) \right\}
        \end{equation}
        where the supremum runs over sequences of events \(\{A_n\}\) with \(A_n \in \A_n^*\). Here, \(\A_n^*\) is given by (\ref{eqn:event_collection}).
    \end{definition}

    \begin{proposition}\label{prop:HC_upper_bound}
        Consider the testing problem (\ref{problem:sparse_mixture_detection_1})-(\ref{problem:sparse_mixture_detection_2}) with calibration (\ref{eqn:beta_sparsity}). If \(\beta < \underline{\beta}^{\HC}\), then \(\psi_{\HC_n^*}\) is consistent. Here, \(\psi_{\HC_n^*}\) is given by (\ref{eqn:HC_test}).
    \end{proposition}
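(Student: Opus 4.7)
The plan is to separately establish vanishing Type I and Type II errors for the test $\psi_{\HC_n^*}$ at the threshold $c_n := \sqrt{2(1+\delta)\log\log n}$.

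For the Type I error, under $H_0$ the data are iid $P_n$, so the scalars $L_i := \frac{q_n}{p_n}(X_i)$ are iid with some distribution function $G_n$ under $P_n$, and $P_n(A_t) = P_n\{L > t\}$ for the level sets $A_t = \{x : \frac{q_n}{p_n}(x) > t\} \in \A_n^*$ indexing the supremum in (\ref{eqn:HC_star}). Thus, under $H_0$, $\HC_n^*$ is exactly the normalized uniform empirical process indexed by upper level sets of the sample $\{L_i\}$. Applying the probability integral transform to $1 - G_n(L_i)$, together with a straightforward domination argument to handle possible atoms of $G_n$, the statistic $\HC_n^*$ is pointwise dominated under $H_0$ by the classical Donoho-Jin Higher Criticism statistic applied to iid $\Uniform(0,1)$ variables. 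Classical results on the normalized uniform empirical process (the Eicker-Jaeschke theorem, equivalently the null asymptotics used in \cite{donohoHigherCriticismDetecting2004}) then yield $P_{H_0}(\HC_n^* > c_n) \to 0$ for every $\delta > 0$.

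For the Type II error, fix $\eta > 0$ small enough that $\beta + 2\eta < \underline{\beta}^{\HC}$. By the supremum in Definition \ref{def:beta_HC}, one can produce a sequence $\{A_n\}$ with $A_n \in \A_n^*$ and
\begin{equation*}
\beta + \eta \;<\; \frac{1}{2} + \frac{\log Q_n(A_n)}{\log n} + \frac{1}{2}\min\!\left(1, -\frac{\log P_n(A_n)}{\log n}\right)
\end{equation*}
for all sufficiently large $n$. This is the condition (\ref{eqn:HC_beta}) with uniform slack $\eta$; tracing through the chain of equivalences reproduced just before (\ref{eqn:HC_beta}) yields $(E_{H_1} T_n(A_n))^2/\Var_{H_1}(T_n(A_n)) \gtrsim n^{2\eta}$. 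Since $\varepsilon = n^{-\beta} \to 0$, a direct computation of $\Var_{H_1}(T_n(A_n)) = \frac{\pi_n(1-\pi_n)}{P_n(A_n)(1-P_n(A_n))}$ with $\pi_n := (1-\varepsilon)P_n(A_n) + \varepsilon Q_n(A_n)$ shows $\Var_{H_1}(T_n(A_n)) \gtrsim 1$, so $|E_{H_1} T_n(A_n)| \gtrsim n^\eta$, which grows much faster than $c_n = \sqrt{2(1+\delta)\log\log n}$. The Chebyshev-triangle argument already laid out in Section \ref{section:HC} then delivers
\begin{equation*}
P_{H_1}(|T_n(A_n)| \leq c_n) \;\leq\; \frac{\Var_{H_1}(T_n(A_n))}{(|E_{H_1} T_n(A_n)| - c_n)^2} \;\lesssim\; \frac{\Var_{H_1}(T_n(A_n))}{(E_{H_1} T_n(A_n))^2} \to 0,
\end{equation*}
and since $\HC_n^* \geq |T_n(A_n)|$ by construction of $\HC_n^*$ as a supremum over $\A_n^*$, the Type II error of $\psi_{\HC_n^*}$ vanishes as well.

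The main obstacle is the Type I step: while the underlying uniform empirical process theory is classical, pinning down the precise constant $\sqrt{2}$ in the $\sqrt{\log\log n}$ rate and carefully handling possible atoms of the null likelihood-ratio distribution $G_n$ both require some technical care. The Type II step, by contrast, is essentially a direct transcription of the Gao-Ma computation once the uniform slack $\eta$ has been extracted from the supremum-liminf defining $\underline{\beta}^{\HC}$.
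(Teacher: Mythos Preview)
Your proposal is correct and follows essentially the same approach as the paper: both handle Type I error by reducing $\HC_n^*$ to the classical Higher Criticism null asymptotics of Donoho--Jin (the paper simply cites their Theorem 1.1, while you spell out the probability integral transform), and both handle Type II error by extracting from the $\sup\liminf$ defining $\underline{\beta}^{\HC}$ a sequence $\{A_n\}$ satisfying (\ref{eqn:HC_beta}) for all large $n$, then invoking the Chebyshev--triangle argument already laid out in Section~\ref{section:HC}. The only stylistic difference is that the paper argues directly that $|E_{H_1}T_n(\widetilde{A}_n)|$ diverges at a polynomial rate, whereas you reach the same conclusion via the ratio $(E_{H_1}T_n)^2/\Var_{H_1}(T_n)\gtrsim n^{2\eta}$ together with the variance lower bound $\Var_{H_1}(T_n)\geq (1-\varepsilon)^2\to 1$; both routes are fine.
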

    \begin{proof}
        The choice of threshold \(\sqrt{2(1+\delta)\log \log (n)}\) is given by Theorem 1.1 of \cite{donohoHigherCriticismDetecting2004}. Theorem 1.1 of \cite{donohoHigherCriticismDetecting2004} implies that this choice of threshold results in a vanishing Type I error of \(\psi_{\HC_n^*}\). 
        
        Turning attention to the Type II error, if \(\beta\) in the calibration (\ref{eqn:beta_sparsity}) satisfies condition (\ref{eqn:HC_beta}) for all \(n\) sufficiently large for some sequence of events \(\widetilde{A}_n \in \A^*_n\), then it immediately follows that 
        \begin{align}
            P_{H_1}\{\psi_{\HC_n^*} = 0\} &= P_{H_1}\left\{\sup_{A_n \in \A^*_n} |T_n(A_n)| \leq \sqrt{2(1+\delta)\log\log(n)}\right\} \\
            &\leq P_{H_1}\left\{|T_n(\widetilde{A}_n)| \leq \sqrt{2(1+\delta)\log\log(n)}\right\} \label{eqn:HC_Type2}\\
            &= P_{H_1}\{\varphi_{\widetilde{A}_n} = 0\}
        \end{align}
        where \(c_n = \sqrt{2(1+\delta) \log\log(n)}\). Now, observe that \(|E_{H_1}(T_n(\widetilde{A}_n))| = \left| \frac{\varepsilon \sqrt{n}Q_n(\widetilde{A}_n)}{\sqrt{P_n(\widetilde{A}_n)}} - \frac{\varepsilon \sqrt{nP_n(\widetilde{A}_n)}}{\sqrt{1-P_n(\widetilde{A}_n)}} \right|\). Since \(\beta\) satisfies condition (\ref{eqn:HC_beta}) for \(\widetilde{A}_n\) for all sufficiently large \(n\), it immediately follows that \(|E_{H_1}(T_n(\widetilde{A}_n))|\) diverges at a polynomial rate. Moreover, it follows that \(c_n\) diverges sufficiently slowly as \(c_n\) grows at a sub-polynomial rate. Hence, \(P_{H_1}\{\varphi_{\widetilde{A}_n} = 0\}\) converges to zero and so the Type II error of \(\psi_{\HC^*_n}\) vanishes. Therefore, it has been shown that if \(\beta\) in the calibration (\ref{eqn:beta_sparsity}) satisfies condition (\ref{eqn:HC_beta}) for some sequence of events \(\widetilde{A}_n \in \A^*_n\) for all \(n\) sufficiently large, then \(\psi_{\HC^*_n}\) is consistent.
    \end{proof}

    At first glance, it seems that \(\HC^*_n\) requires full knowledge of both the null \(\{P_n\}\) and signal \(\{Q_n\}\) distributions. In contrast, Donoho and Jin's formulation of Higher Criticism (\ref{eqn:donoho_jin_HC}) for the sparse normal mixture detection problem does not require knowledge of the signal strength \(r\). The key observation is that the computation of \(\HC^*_n\) actually only requires knowledge of the collection \(\A^*_n\) so that one may calculate \(\sup_{A \in \A^*_n} |T_n(A)|\). In some cases, it's possible to calculate the supremum without knowing \(\{Q_n\}\) explicitly. For example, consider the sparse normal mixture detection problem with \(P_n = N(0, 1)\) and \(Q_n = N(\sqrt{2r \log n}, 1)\) with \(0 < r \leq 1\). A direct calculation shows 
    \begin{align*}
        \A_n^* &= \left\{ \left\{x \in \R : \frac{q_n}{p_n}(x) > t \right\} : t > 0 \right\} \\
               &= \left\{ \left\{x \in \R : \exp\left(x \sqrt{2r \log n} - r \log n\right) > t \right\} : t > 0 \right\} \\
               &= \left\{ \left\{ x \in \R : x > t \right\} : t \in \R \right\}.
    \end{align*}
    Therefore, \(\HC^*_n\) reduces to Donoho and Jin's Higher Criticism statistic \(\HC_n\) given in (\ref{eqn:donoho_jin_HC}), and so knowledge of the signal strength \(r\) is not required. In many problems, it may be the case that computation of \(\HC^*_n\) does not require full knowledge of the signal distribution \(Q_n\) even though Gao and Ma's construction gives that impression. 
    
    With the \(\HC^*_n\) testing statistic in hand, the next challenge is to investigate \(\underline{\beta}^{\HC}\). When \(\left\{\frac{\log \frac{q_n}{p_n}}{\log n}\right\}\) satisfies the large deviation principle under the null, a lower bound can be derived for \(\underline{\beta}^{\HC}\).

    \begin{proposition}\label{prop:HC_bound_formula}
        Consider the testing problem (\ref{problem:sparse_mixture_detection_1})-(\ref{problem:sparse_mixture_detection_2}) with calibration (\ref{eqn:beta_sparsity}). Suppose there exists some \(\gamma > 1\) such that
        \begin{equation*}\label{eqn:HC_bound_moment_condition}
            \limsup_{n \to \infty} \frac{1}{\log n} \cdot \log E\left[ \left( \frac{q_n}{p_n}(X_n) \right)^\gamma \right] < \infty
        \end{equation*}
        where \(X_n \sim P_n\). Suppose further that \(\left\{\frac{\log \frac{q_n}{p_n}}{\log n}\right\}\) satisfies the large deviation principle under the null. Let \(I : \R \to [0, \infty]\) be the associated good rate function. Then,
        \begin{equation}\label{eqn:beta_HC_lower_bound}
            \underline{\beta}^* \geq \underline{\beta}^{\HC} \geq \frac{1}{2} + \sup_{c \geq 0} \left\{ \sup_{t > c} \{t - I(t)\} + \frac{1 \wedge \inf_{t \geq c} I(t)}{2} \right\}.    
        \end{equation}
    \end{proposition}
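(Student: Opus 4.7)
The inequality splits into two parts. The first, $\underline{\beta}^* \geq \underline{\beta}^{\HC}$, is immediate from Proposition \ref{prop:HC_upper_bound}: for any $\beta < \underline{\beta}^{\HC}$, the test $\psi_{\HC_n^*}$ is consistent, which by Le Cam's inequality is equivalent to $\TV(P_n^n, ((1-n^{-\beta})P_n + n^{-\beta} Q_n)^n) \to 1$, giving $\beta \leq \underline{\beta}^*$. For the main bound on $\underline{\beta}^{\HC}$, the plan is to exhibit, for each $c \geq 0$, a specific sequence $\{A_n\} \subset \A_n^*$ whose $\liminf$ hits the claimed quantity. The natural choice is $A_n := \{x \in \mathcal{X} : \frac{q_n}{p_n}(x) > n^c\}$, which lies in $\A_n^*$ since $n^c > 0$ for every $n$.

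Let $W_n := \frac{\log (q_n/p_n)(X_n)}{\log n}$ with $X_n \sim P_n$, so that $P_n(A_n) = P(W_n > c)$. The LDP upper bound applied to the closed set $[c, \infty)$ gives
\[
\limsup_{n \to \infty} \frac{\log P_n(A_n)}{\log n} \leq -\inf_{t \geq c} I(t),
\]
and hence $\liminf_n \min\!\left(1, -\frac{\log P_n(A_n)}{\log n}\right) \geq 1 \wedge \inf_{t \geq c} I(t)$. For $Q_n(A_n) = E_{P_n}[n^{W_n} \mathbf{1}_{\{W_n > c\}}]$, I would fix any $t_0 > c$ and small $\eta \in (0, t_0 - c)$, restrict the expectation to the open neighborhood $(t_0 - \eta, t_0 + \eta) \subset (c, \infty)$, and use $n^{W_n} > n^{t_0 - \eta}$ on that set to obtain
\[
Q_n(A_n) \geq n^{t_0 - \eta} \, P(W_n \in (t_0 - \eta, t_0 + \eta)).
\]
The LDP lower bound on the open set $(t_0 - \eta, t_0 + \eta)$ combined with $\inf_{t \in (t_0 - \eta, t_0 + \eta)} I(t) \leq I(t_0)$ yields $\liminf_n \frac{\log Q_n(A_n)}{\log n} \geq t_0 - \eta - I(t_0)$. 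Sending $\eta \to 0^+$ and then taking the supremum over $t_0 > c$ produces $\liminf_n \frac{\log Q_n(A_n)}{\log n} \geq \sup_{t > c}\{t - I(t)\}$.

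Adding the two $\liminf$ bounds (superadditivity of $\liminf$ applies since both terms are bounded below) and substituting into the definition of $\underline{\beta}^{\HC}$ gives the inequality for the chosen sequence $\{A_n\}$; finally taking the supremum over $c \geq 0$ completes the proof. The main subtleties are the careful bookkeeping around open and closed sets in the LDP statements and the order of limits $\eta \to 0^+$ before the supremum in $t_0$. The moment condition (\ref{eqn:HC_bound_moment_condition}) is not used directly in this lower-bound argument; it enters implicitly by forcing $I$ to grow at least linearly with slope strictly greater than one, ensuring $\sup_{t > c}\{t - I(t)\} < \infty$ so the stated formula is meaningful. No full-strength Varadhan analysis is required here, since restricting the expectation defining $Q_n(A_n)$ to a small open neighborhood of $t_0$ suffices for the lower bound.
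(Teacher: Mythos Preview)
Your proof is correct and follows the same overall architecture as the paper: both fix $c \geq 0$, take the events $A_n = \{x : \frac{q_n}{p_n}(x) > n^c\} \in \A_n^*$, bound $\limsup_n \frac{\log P_n(A_n)}{\log n}$ via the LDP upper bound on $[c,\infty)$, bound $\liminf_n \frac{\log Q_n(A_n)}{\log n}$ from below, combine via superadditivity of $\liminf$, and optimize over $c$. The first inequality $\underline{\beta}^* \geq \underline{\beta}^{\HC}$ is handled identically via Proposition~\ref{prop:HC_upper_bound}.

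The one genuine difference is in the treatment of $Q_n(A_n) = E_{P_n}[n^{W_n}\mathbf{1}_{\{W_n > c\}}]$. The paper invokes the Varadhan-type Lemma~\ref{lemma:Varadhan_subset} on the open set $(c,\infty)$, and this is precisely where the moment condition~(\ref{eqn:HC_bound_moment_condition}) is consumed. You instead localize to a small open interval $(t_0-\eta,t_0+\eta)\subset(c,\infty)$, bound $n^{W_n}\geq n^{t_0-\eta}$ there, apply only the raw LDP lower bound, and then let $\eta\downarrow 0$ and optimize over $t_0$. This elementary route is exactly the standard proof of the lower half of Varadhan's lemma and does \emph{not} require the moment condition at all; your observation that the hypothesis is unused for this direction is correct (though your heuristic that it forces $I$ to have superlinear growth is a side remark, not something the argument relies on). What the paper's approach buys is brevity once Lemma~\ref{lemma:Varadhan_subset} is in hand; what your approach buys is a self-contained argument and the sharper observation that the moment hypothesis is superfluous for this particular inequality.
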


    When the rate function associated to the large deviation principle is convex (and satisfies some further constraints) and the conditions of Corollary \ref{corollary:tight_limit} hold, it can be shown that the lower and upper bounds in (\ref{eqn:beta_HC_lower_bound}) match.

    \begin{theorem}\label{thm:HC_tight}
        Consider the setting of Proposition \ref{prop:HC_bound_formula} and suppose that the conditions of Corollary \ref{corollary:tight_limit} hold. Suppose \(I\) is convex. Let \(D := \{t \in \R : I(t) < \infty\}\) and note that \(D\) is an interval with some left endpoint \(\underline{d}\) and some right endpoint \(\overline{d}\). Suppose further that \(I\) is such that if \(\underline{d} \in D\), we have that \(I\) is right-continuous at \(\underline{d}\) and if \(\overline{d} \in D\), we have that \(I\) is left-continuous at \(\overline{d}\). Let \(I_{-}'(t)\) be the left derivative of \(I\) (see Definition \ref{def:left_right_deriv}) with the domain of definition extended as in the statement of Theorem \ref{thm:rockafellar24.1}. Define 
        \begin{equation}
            t_0 := \sup\{t \geq 0 : I_{-}'(t) \leq 0\}
        \end{equation}
        and set \(t_0 = 0\) if \(\{t \geq 0 : I_{-}'(t) \leq 0\} = \emptyset\). Likewise, define 
        \begin{equation}
            t_1 := \sup\{t \geq 0 : I_{-}'(t) \leq 1\}
        \end{equation}
        and set \(t_1 = 0\) if \(\{t \geq 0 : I_{-}'(t) \leq 1\} = \emptyset\).  \newline
        
        If \(t_0 \vee t_1 < \infty\), then 
        \begin{equation*}
            \underline{\beta}^{\HC} = \underline{\beta}^* = \overline{\beta}^* = \beta^* = \frac{1}{2} + 0 \vee \sup_{t \geq 0} \left\{t - I(t) + \frac{1 \wedge I(t)}{2} \right\}.
        \end{equation*}
    \end{theorem}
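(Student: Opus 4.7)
The plan is to combine Corollary~\ref{corollary:tight_limit}, which delivers \(\underline{\beta}^* = \overline{\beta}^* = \beta^*\), with the lower bound of Proposition~\ref{prop:HC_bound_formula}. The trivial direction \(\underline{\beta}^{\HC} \leq \underline{\beta}^* = \beta^*\) follows from Proposition~\ref{prop:HC_upper_bound}: if \(\beta < \underline{\beta}^{\HC}\) then \(\psi_{\HC_n^*}\) is consistent, hence \(\TV \to 1\) and \(\beta \leq \underline{\beta}^*\). The substantive task is the matching lower bound \(\underline{\beta}^{\HC} \geq \beta^*\), and by Proposition~\ref{prop:HC_bound_formula} this reduces to a purely analytic claim about the rate function \(I\).

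Set \(g(t) := t - I(t) + \frac{1 \wedge I(t)}{2}\) and \(h(c) := \sup_{t > c}(t - I(t)) + \frac{1 \wedge \inf_{s \geq c} I(s)}{2}\). It suffices to prove \(\sup_{c \geq 0} h(c) \geq \sup_{t \geq 0} g(t)\) whenever the right-hand side is positive, together with the separate bound \(\underline{\beta}^{\HC} \geq \frac{1}{2}\) to accommodate the \(0 \vee\) term in the formula for \(\beta^*\). The latter is immediate: taking events \(A_n := \{q_n/p_n > t_n\} \in \A_n^*\) with \(t_n \downarrow 0\) forces \(P_n(A_n), Q_n(A_n) \to 1\), so the liminf in Definition~\ref{def:beta_HC} is at least \(0\).

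For the main inequality, I first show that under convexity of \(I\) and \(t_0 \vee t_1 < \infty\), the supremum of \(g\) is attained at some finite \(t^*\) with \(t^* \geq t_1 \geq t_0\). For \(t < t_1\), convexity and the definition of \(t_1\) give \(I'_+(t) \leq 1\) (pick any \(u \in (t, t_1)\) with \(I'_-(u) \leq 1\) and use \(I'_+(t) \leq I'_-(u)\)); hence \(g'_+(t) \geq \frac{1}{2}\) when \(I(t) < 1\) and \(g'_+(t) \geq 0\) when \(I(t) > 1\), so \(g\) is non-decreasing on \([0, t_1]\). For \(t > t_1\), \(I'_-(t) > 1\); once \(I(t) \geq 1\) one has \(g'_+(t) = 1 - I'_+(t) < 0\), and if \(I(t) < 1\) persists past \(t_1\) then, because \(I\) grows with slope strictly greater than \(1\), it reaches \(1\) at some finite \(s_+ \leq t_1 + (1 - I(t_1))\) (or \(g\) has already started decreasing at \(t_2\) if \(t_2 < s_+\)). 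In either case \(g\) eventually strictly decreases and \(\sup g\) is attained at some \(t^* \geq t_1\).

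Setting \(c := t^*\): since \(t^* \geq t_0\) and \(I\) is non-decreasing on \([t_0, \infty)\) by convexity, \(\inf_{s \geq t^*} I(s) = I(t^*)\); since \(t^* \geq t_1\) and \(t \mapsto t - I(t)\) is non-increasing on \([t_1, \infty)\), right-continuity of \(I\) at \(t^*\) (by convexity on the interior of the effective domain combined with the boundary continuity hypotheses at \(\underline{d}, \overline{d}\)) yields \(\sup_{t > t^*}(t - I(t)) = t^* - I(t^*)\). Therefore \(h(t^*) = (t^* - I(t^*)) + \frac{1 \wedge I(t^*)}{2} = g(t^*) = \sup_t g(t)\), delivering the matching bound. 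The main obstacle is the attainment step, specifically the interplay between the kink of \(\min(1, I(t))\) at points where \(I(t) = 1\) and possible non-differentiability of \(I\) there; an additional subtlety is the boundary case \(t^* = \overline{d}\), which requires approaching via \(c \to \overline{d}^-\) rather than taking \(c = \overline{d}\) directly (for which \(h\) would formally be \(-\infty\)). Convexity together with the boundary continuity hypotheses are precisely what make these refinements rigorous.
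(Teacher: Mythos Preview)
Your approach is correct and closely related to the paper's, but the execution differs in a useful way. The paper first proves two auxiliary results giving closed forms for \(\inf_{t \geq c} I(t)\) and \(\sup_{t > c}\{t - I(t)\}\) in terms of \(t_0\) and \(t_1\), then decomposes both \(\sup_{c \geq 0} h(c)\) and \(\sup_{t \geq 0} g(t)\) over the three regimes \([0,t_0]\), \([t_0,t_1]\), \((t_1,\infty)\cap D^\circ\), and shows each equals \(\bigl[g(t_1)\bigr]\vee\bigl[\sup_{(t_1,\infty)\cap D^\circ} g\bigr]\). You instead argue that \(\sup g\) is attained at some \(t^*\geq t_1\geq t_0\) and then verify directly that \(h(c)=g(c)\) for \(c\) at (or approaching) \(t^*\). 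Your route is more economical --- a single well-chosen \(c\) does the job --- but leans on an attainment claim that needs the boundary care you flag; the paper's route sidesteps attainment entirely by matching formulas, at the cost of the extra propositions. Both arrive at the same identity \(h(c)=g(c)\) on \([t_1,\infty)\cap D^\circ\), which is the heart of the matter.

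Two small remarks. First, your separate argument that \(\underline{\beta}^{\HC}\geq\tfrac12\) is unnecessary: the Corollary~\ref{corollary:tight_limit} hypothesis \(0\vee\sup_{t\geq 0}g(t)=\sup_{t>0}g(t)\) already forces \(\sup_{t>0} g(t)\geq 0\), so the \(0\vee\) is inactive and your main inequality \(\sup_c h(c)\geq\sup_t g(t)\) finishes the job directly. (The \(t_n\downarrow 0\) sketch also has a gap: \(P_n(q_n/p_n>0)\) need not equal \(1\), since \(q_n\) may vanish on a set of positive \(P_n\)-measure.) Second, your attainment argument can be bypassed altogether: since \(h(c)=g(c)\) for every \(c\in(t_1,\infty)\cap D^\circ\) and \(h(c)\to g(t_1)\) as \(c\uparrow t_1\), one gets \(\sup_c h(c)\geq g(t_1)\vee\sup_{(t_1,\infty)\cap D^\circ}g\) without ever locating a maximizer, which is exactly how the paper proceeds and which cleanly handles the case \(t^*=\overline{d}\).
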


    The G{\"a}rtner-Ellis Theorem gives general conditions ensuring the convexity of the rate function. We state a special case of the G{\"a}rtner-Ellis Theorem (Theorem 2.3.6 in \cite{demboLargeDeviationsTechniques2010}) specialized for our use in the sparse mixture detection problem. In many problems, the G{\"a}rtner-Ellis Theorem greatly simplifies the work needed in determining whether the large deviation principle under the null holds and computing the corresponding rate function. First, a regularity condition (Definition 2.3.5 from \cite{demboLargeDeviationsTechniques2010}) is needed.

    \begin{definition}
        Let \(\Lambda : \R \to (-\infty, \infty]\) be a convex function and let \(D_\Lambda := \{\lambda \in \R : \Lambda(\lambda) < \infty\}\). We say \(\Lambda\) is \textit{essentially smooth} if \(D_\Lambda^\circ \neq \emptyset\), \(\Lambda\) is differentiable on \(D_\Lambda^\circ\), and \(\lim_{n \to \infty} |\Lambda'(\lambda_n)| = \infty\) for any sequence \(\{\lambda_n\} \subset D_\Lambda^\circ\) converging to a point on the boundary of \(D_\Lambda^\circ\). 
    \end{definition}

    The following statement of the the G{\"a}rtner-Ellis Theorem follows the presentation of Theorem 2.3.6 in \cite{demboLargeDeviationsTechniques2010} with modifications to suit our setting.

    \begin{theorem}[G{\"a}rtner-Ellis]\label{thm:gartner_ellis}
        Suppose \(\{P_n\}\) and \(\{Q_n\}\) are probability measures for the testing problem (\ref{problem:sparse_mixture_detection_1})-(\ref{problem:sparse_mixture_detection_2}). For \(\lambda \in \R\), define
        \begin{equation}
            \Lambda_n(\lambda) := \frac{1}{\log n} \cdot \log E\left[\left(\frac{q_n}{p_n}(X_n) \right)^\lambda \right]
        \end{equation}
        where \(X_n \sim P_n\). Assume that the limit
        \begin{equation}\label{eqn:Lambda_n_limit}
            \lim_{n \to \infty} \Lambda_n(\lambda) =: \Lambda(\lambda)
        \end{equation}
        exists in \([-\infty, \infty]\) for \(\lambda \in \R\). If \(\Lambda\) is essentially smooth, is a lower semicontinuous function, and \(0 \in D_{\Lambda}^\circ\), then \(\left\{\frac{\log \frac{q_n}{p_n}}{\log n} \right\}\) satisfies the large deviation principle under the null with good, convex rate function 
        \begin{equation}
            \Lambda^*(t) := \sup_{\lambda \in \R} \left\{\lambda t - \Lambda(\lambda) \right\}.
        \end{equation}
    \end{theorem}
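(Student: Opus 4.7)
The plan is to reduce the statement to the classical Gärtner--Ellis Theorem (Theorem 2.3.6 of \cite{demboLargeDeviationsTechniques2010}). The key observation is that the ``LDP under the null'' of Definition \ref{def:ldp} is nothing but the general LDP of Definition \ref{def:general_ldp} with speed $a_n = 1/\log n$ applied to the sequence of laws $\mu_n$ of the random variables $Y_n := \frac{\log \frac{q_n}{p_n}(X_n)}{\log n}$ where $X_n \sim P_n$. So once the LDP is established for $\{\mu_n\}$ at speed $1/\log n$ in the classical form, the conclusion of the theorem follows immediately by reinterpretation.

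First I would compute the scaled logarithmic moment generating functions of the $\mu_n$. With $a_n = 1/\log n$, a direct calculation gives
\[
a_n \log \int_{\R} e^{(\lambda / a_n) y} \, d\mu_n(y) \;=\; \frac{1}{\log n} \log E\!\left[\exp\!\left(\lambda \log \tfrac{q_n}{p_n}(X_n)\right)\right] \;=\; \Lambda_n(\lambda),
\]
which by hypothesis converges to $\Lambda(\lambda)$ pointwise on $\R$. Hölder's inequality applied to $E[(q_n/p_n)^\lambda]$ shows that each $\Lambda_n$ is convex in $\lambda$, so $\Lambda$ inherits convexity as a pointwise limit of convex functions. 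Combined with the hypotheses that $\Lambda$ is essentially smooth, lower semicontinuous, and satisfies $0 \in D_\Lambda^\circ$, this verifies the full list of assumptions of the classical Gärtner--Ellis theorem as stated in Theorem 2.3.6 of \cite{demboLargeDeviationsTechniques2010}.

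Next I would invoke that theorem to conclude that $\{\mu_n\}$ satisfies the LDP with speed $1/\log n$ and good rate function $\Lambda^*(t) = \sup_{\lambda \in \R}\{\lambda t - \Lambda(\lambda)\}$, with $\Lambda^*$ automatically convex as the Legendre--Fenchel transform of $\Lambda$. Translating this LDP back through the identification $Y_n \sim \mu_n$ shows exactly the two-sided inequality in Definition \ref{def:ldp} with rate function $\Lambda^*$, finishing the proof.

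There is no real obstacle here beyond careful bookkeeping: the theorem is a packaging of Dembo--Zeitouni's result in the notation natural to the sparse mixture detection problem. The only items demanding attention are (i) the correspondence of speeds and scalings so that the hypothesis $\Lambda_n(\lambda) \to \Lambda(\lambda)$ matches the form required by the classical statement, and (ii) confirming that convexity and goodness of the rate function are automatic outputs of the classical theorem rather than extra conditions to be verified.
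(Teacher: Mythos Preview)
Your proposal is correct and follows essentially the same route as the paper: both identify $\Lambda_n$ as the scaled logarithmic moment generating function of the laws of $Y_n$ at speed $1/\log n$, check that the hypotheses of Assumption~2.3.2 and Theorem~2.3.6 in \cite{demboLargeDeviationsTechniques2010} are met, and then invoke the classical G\"artner--Ellis theorem. The paper packages the convexity of $\Lambda$ and the goodness/convexity of $\Lambda^*$ by citing Lemma~2.3.9 of \cite{demboLargeDeviationsTechniques2010}, whereas you argue convexity directly via H\"older and the Legendre--Fenchel transform, but this is a cosmetic difference.
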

    \begin{proof}
        Since \(\lim_{n \to \infty} \Lambda_n(\lambda)\) exists in \([-\infty, \infty]\) and \(0 \in D_\Lambda^\circ\), it follows that Assumption 2.3.2 of \cite{demboLargeDeviationsTechniques2010} is satisfied. Then, Lemma 2.3.9 of \cite{demboLargeDeviationsTechniques2010} yields the convexity of \(\Lambda\) as well as establishing that \(\Lambda(\lambda) > -\infty\) for all \(\lambda\) and that \(\Lambda^*\) is a good convex rate function. Finally, Theorem 2.3.6 (G{\"a}rtner-Ellis) in \cite{demboLargeDeviationsTechniques2010} implies that \(\left\{\frac{\log \frac{q_n}{p_n}}{\log n} \right\}\) satisfies the large deviation principle under the null with rate function \(\Lambda^*\).
    \end{proof}

    \begin{remark} \label{remark:exponential_family}
        The G{\"a}rtner-Ellis Theorem simplifies calculating the rate function of the large deviation principle in some exponential families. Suppose \(\{f_{\theta} : \theta \in \Theta\}\) is an exponential family in the natural parametrization on a separable metric space with \(\Theta \subset \R^d\). Write 
        \begin{equation*}
            f_\theta(x) = c(\theta) h(x) \exp\left(\langle \theta, T(x)\rangle \right).
        \end{equation*}
        Taking \(p_n = f_{\theta}\) for some \(\theta \in \Theta\) and \(q_n = f_{\theta_n}\) for some sequence \(\{\theta_n\}\) in \(\Theta\), the limit (\ref{eqn:Lambda_n_limit}) becomes  
        \begin{equation*}
            \Lambda(\lambda) := \lim_{n \to \infty} \frac{\lambda \log c(\theta_n) + (1-\lambda) \log c(\theta) - \log c(\lambda(\theta_n - \theta) + \theta)}{\log n}
        \end{equation*}
        where we take \(\log c(\lambda (\theta_n - \theta) + \theta) = -\infty\) if \(\lambda(\theta_n - \theta) + \theta \not \in \Theta\). Of course, one must check that the limit exists and the remaining conditions of Theorem \ref{thm:gartner_ellis} hold.
    \end{remark}

    \section{Examples} \label{section:examples}
    To illustrate our results, we work out a few examples and derive explicit detection boundaries.

    \subsection{Ingster-Donoho-Jin}\label{example:idj}

    \subsubsection{Univariate}\label{example:univariate_idj}
    Consider the testing problem (\ref{problem:sparse_mixture_detection_1})-(\ref{problem:sparse_mixture_detection_2}) with calibration (\ref{eqn:beta_sparsity}) and distributions \(P_n = P = N(0, 1)\) and \(Q_n = N(\mu_n, 1)\). The detection boundary for this testing problem with calibration \(\mu_n = \sqrt{2r \log n}\) for \(0 < r \leq 1\) was obtained by Ingster \cite{ingsterProblemsHypothesisTesting1996} and then independently by Jin \cite{jinDetectingEstimatingSparse2003,jinDetectingTargetVery2004}. Donoho and Jin \cite{donohoHigherCriticismDetecting2004} introduced the Higher Criticism testing statistic and established its optimality in this sparse mixture detection problem. Following \cite{caiOptimalDetectionSparse2014, gaoTestingEquivalenceClustering2019,collierMinimaxEstimationLinear2017a}, we refer to the detection boundary as the Ingster-Donoho-Jin detection boundary.

    We illustrate how the large deviations perspective delivers both the Ingster-Donoho-Jin detection boundary 
    \begin{equation}\label{phase:beta_idj}
        \beta_{IDJ}^*(r) :=
        \begin{cases}
            \frac{1}{2} + r & \text{if } 0 < r \leq \frac{1}{4} \\
            1 - (1-\sqrt{r})_{+}^2 &\text{if } r > \frac{1}{4}
        \end{cases}
    \end{equation}
    and the optimality of the Higher Criticism statistic. We use the G{\"a}rtner-Ellis Theorem and Remark \ref{remark:exponential_family}. Before we begin the main computation, note that it is easily verified that the tail condition (\ref{eqn:varadhan_tail_condition}) is satisfied. Adopting the notation of Remark \ref{remark:exponential_family}, consider that \(\{N(\theta, 1) : \theta \in \R\}\) is an exponential family with natural parameter \(\theta\) and \(\log c(\theta) = -\frac{\theta^2}{2}\). Taking \(\theta = 0\) and \(\theta_n = \mu_n\), observe that 
    \begin{align*}
        \Lambda(\lambda) &:= \lim_{n \to \infty} \frac{\lambda \log c(\theta_n) + (1-\lambda) \log c(\theta) - \log c(\lambda(\theta_n - \theta) + \theta)}{\log n} \\ 
        &= \lim_{n \to \infty} \frac{-\lambda r \log n + \lambda^2 r \log n}{\log n} \\
        &= r(\lambda^2-\lambda).
    \end{align*}
    Noting that \(D_\Lambda = \R\), \(\Lambda\) is essentially smooth, and \(\Lambda\) is continuous, it follows from the G{\"a}rtner-Ellis Theorem that \(\left\{\frac{\log \frac{q_n}{p}}{\log n}\right\}\) satisfies the large deviation principle under the null with good convex rate function \(\Lambda^*(t) := \sup_{\lambda \in \R} \{ \lambda t - \Lambda(\lambda)\}\). Direct calculation yields \(\Lambda^*(t) = \frac{(t+r)^2}{4r}\). It is easily checked that conditions of Corollary \ref{corollary:nice_tight_limit} are satisfied. Therefore, the detection boundary is given by 
    \begin{equation*}
        \beta^*(r) = \frac{1}{2} + 0\vee \sup_{t\geq 0} \left\{t - \frac{(t+r)^2}{4r}  + \frac{1 \wedge \frac{(t+r)^2}{4r}}{2}\right\}.
    \end{equation*}
    Solving the optimization problem yields \(\beta^*(r) = \beta^*_{IDJ}(r)\).

    Turning our attention to the Higher Criticism statistic, observe that the general HC-type statistic (\ref{eqn:HC_star}) of Gao and Ma reduces to the original Higher Criticism statistic introduced by Donoho and Jin \cite{donohoHigherCriticismDetecting2004}
    \begin{align*}
        \HC^*_n &= \sup_{t >0} \frac{|\sum_{i=1}^{n} \mathbf{1}_{\{\exp(X_i\mu_n - \mu_n^2/2) > t\}} - n P(\exp(X\mu_n - \mu_n^2/2) > t)|}{\sqrt{n P(\exp(X\mu_n - \mu_n^2/2) > t)P(\exp(X\mu_n - \mu_n^2/2) \leq t)}} \\
        &= \sup_{t \in \R} \frac{|\sum_{i=1}^{n} \mathbf{1}_{\{X_i > t\}} - n P(X > t)|}{\sqrt{n P(X > t)P( X \leq t)}}
    \end{align*}
    where \(X \sim N(0, 1)\) is independent of \(\{X_i\}_{i=1}^{n}\). The conditions of Theorem \ref{thm:HC_tight} hold, and so the sequence of tests \(\psi_{\HC^*_n}\) given by (\ref{eqn:HC_test}) achieves the detection boundary while adapting to the parameters \(r\) and \(\beta\). In other words, \(\underline{\beta}^{\HC} = \beta^*\).

    \subsubsection{Multivariate}\label{example:mvn}
    The detection boundary for a multivariate version of the sparse normal mixture testing problem can be obtained in exactly the same fashion. Consider the testing problem (\ref{problem:sparse_mixture_detection_1})-(\ref{problem:sparse_mixture_detection_2}) with \(P_n = P = N(0, \Sigma)\), \(Q_n = N(\mu_n, \Sigma)\) where \(\Sigma \in \R^{d \times d}\) is a positive definite matrix. Further consider the calibration \(\mu_n = \sqrt{2r\log n} \cdot u\) where \(u \in \R^d\) with \(||u|| = 1\). We use the G{\"a}rtner-Ellis Theorem and Remark \ref{remark:exponential_family}. Before we begin the main computation, note that it is easily verified that the tail condition (\ref{eqn:varadhan_tail_condition}) is satisfied. Adopting the notation of Remark \ref{remark:exponential_family}, consider that \(\{N(\theta, \Sigma) : \theta \in \R^d\}\) is an exponential family with natural parameter \(\theta\) and \(\log c(\theta) = -\frac{\langle \theta, \Sigma^{-1} \theta \rangle}{2}\). Taking \(\theta = 0\) and \(\theta_n = \mu_n\), observe that 
    \begin{align*}
        \Lambda(\lambda) &:= \lim_{n \to \infty} \frac{\lambda \log c(\theta_n) + (1-\lambda) \log c(\theta) - \log c(\lambda(\theta_n - \theta) + \theta)}{\log n} \\ 
        &= \lim_{n \to \infty} \frac{-\lambda r (\log n) \cdot \langle u, \Sigma^{-1} u\rangle + \lambda^2 r (\log n) \cdot \langle u, \Sigma^{-1} u\rangle}{\log n}  \\
        &= r(\lambda^2-\lambda) \langle u, \Sigma^{-1} u \rangle.
    \end{align*}
    Repeating the arguments as in the Ingster-Donoho-Jin problem (Example \ref{example:univariate_idj}), it follows that \(\left\{\frac{\log \frac{q_n}{p}}{\log n} \right\}\) satisfies the large deviation principle under the null with rate function \(\Lambda^*(t) = \frac{\left(t + r\langle u, \Sigma^{-1} u \rangle \right)^2}{4r}\). Following the same reasoning as in Example \ref{example:univariate_idj}, we immediately obtain the detection boundary
    \begin{equation*}
        \beta^*(r) = 
        \begin{cases}
            \frac{1}{2} + r \langle u, \Sigma^{-1} u\rangle & \text{if } r \langle u, \Sigma^{-1} u \rangle \leq \frac{1}{4} \\
            1 - \left(1 - \sqrt{r \langle u, \Sigma^{-1} u\rangle} \right)_{+}^2 & \text{otherwise}.
        \end{cases}
    \end{equation*}
    Moreover, Theorem \ref{thm:HC_tight} guarantees that the sequence of tests \(\psi_{\HC^*_n}\) achieves the detection boundary. Consider that the testing statistic \(\HC_n^{*}\) can be written as 
    \begin{align*}
        \HC^*_n &= \sup_{t \in \R} \frac{\left|\sum_{i=1}^{n} \mathbf{1}_{\left\{\left\langle X_i, \frac{\Sigma^{-1} u}{||\Sigma^{-1} u||} \right\rangle > t\right\}} - n P\left( \left\langle X, \frac{\Sigma^{-1} u}{||\Sigma^{-1} u||} \right\rangle > t\right)\right|}{\sqrt{n P\left(\left\langle X, \frac{\Sigma^{-1} u}{||\Sigma^{-1} u||} \right\rangle > t\right)P\left(\left\langle X, \frac{\Sigma^{-1} u}{||\Sigma^{-1} u||} \right\rangle \leq t\right)}}.
    \end{align*}
    Hence, \(\HC^*_n\) is adaptive to \(r\) and \(\beta\). Furthermore, full knowledge of \(\Sigma\) is not needed, rather just knowledge of the signal direction \(\frac{\Sigma^{-1} u}{||\Sigma^{-1} u||}\) is needed.

    \subsubsection{Brownian motion}\label{example:brownian_motion}
    With the large deviations perspective in hand, a phase transition can be derived in a stylized sparse mixture detection problem where our observations are sample paths of Brownian motion with possible drift. In particular, let \(\mathcal{X} = C([0,1])\) be the space of all real-valued continuous functions on \([0,1]\), let \(P_n\) be the probability measure on \(\mathcal{X}\) associated with standard Brownian motion \(\{B_t\}_{t \in [0, 1]}\), and let \(Q_n\) be the probability measure on \(\mathcal{X}\) associated with the Brownian motion with drift \(\{m_n(t) + B_t\}_{t \in [0, 1]}\). Here, we take \(m_n(t) = \sqrt{2r\log n} \cdot f(t)\) for some fixed continuously differentiable \(f : [0, 1] \to \R\) with \(f(0) = 0\) and \(\int_{0}^{1} |f'(t)|^2 \, dt = 1\). With observations \(X_1,...,X_n \in \mathcal{X}\), the problem is to test (\ref{problem:sparse_mixture_detection_1})-(\ref{problem:sparse_mixture_detection_2}). Note that the observations are themselves real-valued functions on \([0, 1]\). Further note that since \(f' \in L^2([0, 1])\), the measures \(P_n\) and \(Q_n\) are mutually absolutely continuous (Example 4 in \cite{StatisticalProblemsTheory}). The normalized log-likelihood ratio is given by \cite{StatisticalProblemsTheory}
    \begin{align*}
        \frac{\log \frac{dQ_n}{dP_n}(X)}{\log n} &= -\frac{1}{2 \log n} \int_{0}^{1} |m_n'(t)|^2\, dt + \frac{1}{\log n}\int_{0}^{1} m'(t) \, dX_t \\
        &= -r + \frac{\sqrt{2r}}{\sqrt{\log n}} \int_{0}^{1} f'(t) \, dX_t.
    \end{align*}
    Under the null, \(\{X_t\}_{t \in [0, 1]}\) is standard Brownian motion. Since \(f\) is deterministic, it follows by the It\^{o} isometry that 
    \begin{equation*}
        \int_{0}^{1} f'(t) \, dX_t \sim N\left(0, \int_{0}^{1} |f'(t)|^2 \, dt\right) = N(0, 1).
    \end{equation*}
    Therefore, under the null we have 
    \begin{equation*}
        \frac{\log \frac{dQ_n}{dP_n}(X)}{\log n} \sim N\left(-r, \frac{2r}{\log n}\right).
    \end{equation*}
    Thus, for any Borel set \(\Gamma \subset \R\), we have under the null
    \begin{align*}
        P\left\{ \frac{\log \frac{dQ_n}{dP_n}(X)}{\log n} \in \Gamma \right\} &= \int_{\Gamma} \frac{\sqrt{2r}}{\sqrt{2\pi \log n}} \exp\left(-\frac{(x+r)^2}{4r} \cdot \log n \right) \, dx. 
    \end{align*}
    An application of Lemma 3 in \cite{caiOptimalDetectionSparse2014} immediately yields 
    \begin{equation*}
        \lim_{n \to \infty} \frac{\log P\left\{ \frac{\log \frac{dQ_n}{dP_n}(X)}{\log n} \in \Gamma \right\}}{\log n} = - \inf_{x \in \Gamma} \frac{(x+r)^2}{4r}.
    \end{equation*}
    Therefore, \(\left\{\frac{\log \frac{dQ_n}{dP_n}}{\log n} \right\}\) satisfies the large deviation principle under the null with good rate function \(I : \R \to [0, 1]\) given by \(I(t) = \frac{(t+r)^2}{4r}\). This is precisely the same rate function as in Example \ref{example:univariate_idj}, and so we immediately obtain exactly the same detection boundary \(\beta^*(r) = \beta^*_{IDJ}(r)\). Furthermore, it is clear that the rate function \(I\) is indeed convex and that the other conditions of Theorem \ref{thm:HC_tight} hold. So the \(\HC^*_n\) testing statistic indeed yields an optimal test. Note that 
    \begin{align*}
        \HC^*_n &= \sup_{t \in \R} \frac{\left|\sum_{i=1}^{n} \mathbf{1}_{\left\{ \int_{0}^{1} f'(s) \, dX_i(s) > t\right\}} - P\left(\int_{0}^{1} f'(s) \, dB_s > t\right)\right|}{\sqrt{n P\left(\int_{0}^{1} f'(s) \, dB_s > t\right) P\left( \int_{0}^{1} f'(s) \, dB_s \leq t \right)}}
    \end{align*}
    where \(\{B_s\}_{s \in [0, 1]}\) is a standard Brownian motion independent of the observations \(\{X_i\}_{i=1}^{n}\). While no knowledge of the signal strength \(r\) is needed to compute \(\HC^*_n\), knowledge of the ``signal direction'' \(f'\) is needed, analogous to the multivariate setting in Example \ref{example:mvn}.
 
    \subsection{Heteroscedastic normal mixture}\label{example:heteroscedastic}
    Cai, Jeng, and Jin \cite{caiOptimalDetectionHeterogeneous2011} consider the testing problem (\ref{problem:sparse_mixture_detection_1})-(\ref{problem:sparse_mixture_detection_2}) in a heteroscedastic normal mixture setting. More specifically, the setting where \(P_n = P = N(0, 1)\) and \(Q_n = N(\mu_n, \sigma^2)\) is considered with calibration \(\mu_n = \sqrt{2r \log n}, r > 0\), and fixed \(\sigma^2 > 0\).  Through an analysis of the likelihood ratio, they obtain the detection boundary 
    \begin{equation*}
        \beta^*(r, \sigma^2) := 
        \begin{cases}
            \frac{1}{2} + \frac{r}{2-\sigma^2} & \text{if } 2\sqrt{r} + \sigma^2 \leq 2, \\
            1 - \frac{(1-\sqrt{r})_{+}^2}{\sigma^2} & \text{if } 2\sqrt{r} + \sigma^2 > 2.
        \end{cases}
    \end{equation*}
    Note that the detection boundary stated in \cite{caiOptimalDetectionSparse2014} is in terms of \(r\) as a function of \(\beta\) and \(\sigma\), whereas the above boundary is in terms of \(\beta\) as a function of \(r\) and \(\sigma\). The boundaries are equivalent (in \cite{caiOptimalDetectionSparse2014}, see (21) and Section V.C). While one can straightforwardly obtain the detection boundary through Theorem 1 of \cite{caiOptimalDetectionSparse2014}, we illustrate a typical calculation under the large deviations perspective. For ease of calculation, let us take \(\sigma^2 \neq 1\) without loss of generality. The case of \(\sigma^2 = 1\) is just the Ingster-Donoho-Jin problem. 

    \subsubsection{Checking the tail condition}
    To ensure that we can ultimately apply Corollary \ref{corollary:tight_limit}, we must check that the tail condition (\ref{eqn:varadhan_tail_condition}) is satisfied. Consider that 
    \begin{align*}
        \frac{\log \frac{q_n}{p}(X)}{\log n} &= -\frac{\log \sigma^2}{2\log n} - \frac{\mu_n^2}{2\sigma^2 \log n} + \frac{X^2}{2\log n} \cdot \frac{\sigma^2 - 1}{\sigma^2} + \frac{X\mu_n}{\sigma^2 \log n} \\
        &= -\frac{\log \sigma^2}{2 \log n} - \frac{r}{\sigma^2} + \frac{\sigma^2 - 1}{\sigma^2} \left[\frac{\left(X + \frac{\mu_n}{\sigma^2 - 1}\right)^2}{2 \log n} - \frac{\mu_n^2}{(\sigma^2-1)^2 \cdot 2 \log n}\right] \\
        &= -\frac{\log \sigma^2}{2 \log n} - \frac{r}{\sigma^2-1} + \frac{\sigma^2-1}{\sigma^2} \left(\frac{X + \frac{\mu_n}{\sigma^2-1}}{\sqrt{2 \log n}} \right)^2.
    \end{align*}
    Consider that for any \(\gamma > 1\), we have under the null \(X \sim P\) 
    \begin{align*}
        E\left[\left(\frac{q_n}{p}(X) \right)^\gamma \right] &= \frac{(\sigma^2)^{-\gamma/2}}{\sqrt{2\pi}} \exp\left(- \gamma\frac{\mu_n^2}{2(\sigma^2 - 1)} \right) \cdot \int_{\R} \exp\left(\gamma \frac{\sigma^2 - 1}{\sigma^2} \frac{\left(x + \frac{\mu_n}{\sigma^2-1} \right)^2}{2} - \frac{x^2}{2}\right) \, dx
    \end{align*}
    It's clear that the integral is finite provided that the coefficient of the quadratic term in the exponential is negative. More specifically, the integral is finite if \(\gamma \frac{\sigma^2-1}{\sigma^2} - 1 < 0\). If \(\sigma^2 < 1\), then any \(\gamma > 1\) satisfies the tail condition (\ref{eqn:varadhan_tail_condition}). If \(\sigma^2 > 1\), then any \(1 < \gamma < \frac{\sigma^2}{\sigma^2-1}\)  satisfies the tail condition (\ref{eqn:varadhan_tail_condition}).

    \subsubsection{Finding the rate function}
    To show that \(\frac{\log \frac{q_n}{p}}{\log n}\) satisfies a large deviation principle under the null, we will first obtain a large deviation principle for \(\frac{X + \frac{\mu_n}{\sigma^2-1}}{\sqrt{2 \log n}}\) under the null \(X \sim P\). Then we apply the contraction principle (Theorem \ref{thm:contraction_principle}) and exponential equivalence (Definition \ref{def:exp_equiv}) to deduce the large deviation principle for \(\frac{\log \frac{q_n}{p}}{\log n}\).

    Consider that under the null, 
    \begin{align*}
        \frac{X + \frac{\mu_n}{\sigma^2 - 1}}{\sqrt{2 \log n}} &\sim N\left(\frac{\mu_n}{(\sigma^2-1)\sqrt{2 \log n}}, \frac{1}{2\log n}\right) \\
        &= N\left(\frac{\sqrt{r}}{\sigma^2-1}, \frac{1}{2 \log n} \right)
    \end{align*}
    Therefore, for any Borel set \(\Gamma \subset \R\),
    \begin{align*}
        P\left(\frac{X + \frac{\mu_n}{\sigma^2 - 1}}{\sqrt{2 \log n}} \in \Gamma \right) &= \int_{\Gamma} \frac{\sqrt{2 \log n}}{\sqrt{2\pi}} \exp\left(-(\log n) \cdot \left(x - \frac{\sqrt{r}}{\sigma^2-1} \right)^2 \right) \, dx. 
    \end{align*}
    Applying Lemma 3 of \cite{caiOptimalDetectionHeterogeneous2011} yields 
    \begin{align*}
        \lim_{n \to \infty} \frac{1}{\log n} \cdot \log P\left(\frac{X + \frac{\mu}{\sigma^2 - 1}}{\sqrt{2 \log n}} \in \Gamma \right) = - \inf_{x \in \Gamma} \left(x - \frac{\sqrt{r}}{\sigma^2-1}\right)^2
    \end{align*}
    and so under the null, \(\frac{X + \frac{\mu_n}{\sigma^2 - 1}}{\sqrt{2 \log n}}\) satisfies a large deviation principle with good rate function \(J(t) = \left(t - \frac{\sqrt{r}}{\sigma^2-1}\right)^2\) with respect to speed \(\left\{\frac{1}{\log n}\right\}\). 
    
    Applying the contraction principle (Theorem \ref{thm:contraction_principle}) to the function \(f : \R \to \R\) given by \(f(t) = -\frac{r}{\sigma^2-1} + \frac{\sigma^2-1}{\sigma^2} \cdot t^2\), it follows that the \(- \frac{r}{\sigma^2-1} + \frac{\sigma^2-1}{\sigma^2} \left(\frac{X + \frac{\mu_n}{\sigma^2-1}}{\sqrt{2 \log n}} \right)^2\) satisfies the large deviation principle with respect to speed \(\left\{\frac{1}{\log n}\right\}\) and good rate function \(I : \R \to [0, \infty]\) given by
    \begin{align*}
        I(y) &= \inf\{J(t) : y = f(t)\} \\
        &= \inf\left\{J(t) : \frac{\sigma^2y}{\sigma^2-1} + \frac{\sigma^2r}{(\sigma^2-1)^2} = t^2 \right\} \\
        &= 
        \begin{cases} 
            \left(\sqrt{\frac{\sigma^2y}{\sigma^2-1} + \frac{\sigma^2r}{(\sigma^2-1)^2}} - \frac{\sqrt{r}}{\sigma^2-1} \right)^2 & \text{if } y(\sigma^2 - 1) + r \geq 0, \sigma^2 > 1, \\
            \left(-\sqrt{\frac{\sigma^2y}{\sigma^2-1} + \frac{\sigma^2r}{(\sigma^2-1)^2}} - \frac{\sqrt{r}}{\sigma^2-1} \right)^2 & \text{if } y(\sigma^2 - 1) + r \geq 0, \sigma^2 < 1, \\ 
            \infty & \text{if } y(\sigma^2 - 1) + r < 0
        \end{cases} \\
        &= 
        \begin{cases}
            \left(\frac{\sqrt{\sigma^2(\sigma^2-1)y + \sigma^2 r} - \sqrt{r}}{\sigma^2-1}\right)^2 & \text{if } y(\sigma^2-1) + r \geq 0, \sigma^2 > 1, \\
            \left(\frac{\sqrt{\sigma^2(\sigma^2-1)y + \sigma^2 r} - \sqrt{r}}{\sigma^2-1}\right)^2 & \text{if } y(\sigma^2-1) + r \geq 0, \sigma^2 < 1, \\
            \infty & \text{if } y(\sigma^2-1) + r < 0
        \end{cases} \\
        &= 
        \begin{cases}
            \left(\frac{\sqrt{\sigma^2 (\sigma^2-1)y + \sigma^2 r} - \sqrt{r}}{\sigma^2-1} \right)^2 & \text{if } y(\sigma^2-1) + r \geq 0, \\
            \infty & \text{if } y(\sigma^2-1) + r < 0
        \end{cases} \\
        &= 
        \begin{cases}
            \frac{\sigma^2}{(\sigma^2-1)^2}\left(\sqrt{(\sigma^2-1)y + r} - \sqrt{\frac{r}{\sigma^2}} \right)^2 & \text{if } y(\sigma^2-1) + r \geq 0, \\
            \infty & \text{if } y(\sigma^2-1) + r < 0.
        \end{cases}
    \end{align*}
    Now, observe that for any \(\delta > 0\) we have
    \begin{equation*}
        P\left( \left| \frac{\log \frac{q_n}{p}(X)}{\log n} + \frac{r}{\sigma^2-1} - \frac{\sigma^2-1}{\sigma^2} \left(\frac{X + \frac{\mu_n}{\sigma^2-1}}{\sqrt{2 \log n}} \right)^2 \right| > \delta \right) = P\left( \left| -\frac{\log \sigma^2}{2\log n} \right| > \delta \right) = 0
    \end{equation*}
    for all \(n > \exp\left( \frac{\delta \log \sigma^2}{2}\right)\). Therefore,
    \begin{equation*}
        \limsup_{n \to \infty} \frac{1}{\log n} \log P\left( \left| \frac{\log \frac{q_n}{p}(X)}{\log n} + \frac{r}{\sigma^2-1} - \frac{\sigma^2-1}{\sigma^2} \left(\frac{X + \frac{\mu_n}{\sigma^2-1}}{\sqrt{2 \log n}} \right)^2 \right| > \delta \right) = -\infty.
    \end{equation*}
    Hence, \(\frac{\log \frac{q_n}{p}(X)}{\log n}\) and \(- \frac{r}{\sigma^2-1} + \frac{\sigma^2-1}{\sigma^2} \left(\frac{X + \frac{\mu_n}{\sigma^2-1}}{\sqrt{2 \log n}} \right)^2\) are exponentially equivalent with respect to speed \(\left\{\frac{1}{\log n}\right\}\) (see Definition \ref{def:exp_equiv}). Thus it follows by Theorem \ref{thm:same_ldp} that \(\left\{\frac{\log \frac{q_n}{p}}{\log n}\right\}\) satisfies the large deviation principle under the null with good rate function \(I\).

    \subsubsection{Determining the detection boundary}
    We are now ready to determine the detection boundary. Through the calculation, it will be seen that Corollary \ref{corollary:tight_limit} holds. Abusing notation, we have 
    \begin{align*}
        \beta^*(r, \sigma^2) &= \frac{1}{2} + 0 \vee \sup_{t\geq 0} \left\{t - I(t) + \frac{1 \wedge I(t)}{2} \right\} \\
        &= \frac{1}{2} + 0 \vee \sup_{t\geq 0, I(t) < \infty} \left\{ t - I(t) + \frac{1 \wedge I(t)}{2} \right\}.
    \end{align*}
    For ease, consider the change of variable 
    \begin{equation*}
        s = I(t) = \frac{\sigma^2}{(\sigma^2-1)^2}\left(\sqrt{(\sigma^2-1)t + r} - \sqrt{\frac{r}{\sigma^2}} \right)^2
    \end{equation*}
    for \(t\) such that \(t(\sigma^2 - 1) + r \geq 0\). Rearranging gives
    \begin{align*}
        t &= \frac{1}{\sigma^2-1} \left[\frac{(\sqrt{s}(\sigma^2-1) + \sqrt{r})^2}{\sigma^2} - r\right] \\
        &= \frac{1}{\sigma^2-1} \left[ \frac{s(\sigma^2-1)^2 - r(\sigma^2-1) + 2\sqrt{sr}(\sigma^2-1)}{\sigma^2} \right] \\
        &= s - \frac{s+r - 2\sqrt{sr}}{\sigma^2} \\
        &= s - \frac{(\sqrt{s} - \sqrt{r})^2}{\sigma^2}.
    \end{align*}
    
    To deduce the detection boundary, we consider the two cases \(\sigma^2 > 1\) and \(\sigma^2 < 1\) separately. \newline

    \textbf{Case 1:} Suppose \(\sigma^2 > 1\). By direct calculation, it follows that \(t \geq 0\) if and only if \(s \geq \frac{r}{(\sqrt{\sigma^2} + 1)^2}\). It thus follows that
    \begin{align*}
        \beta^*(r, \sigma^2) = \frac{1}{2} + 0 \vee \sup_{s \geq \frac{r}{(\sqrt{\sigma^2} + 1)^2}} \left\{ \left( s - \frac{(\sqrt{s} - \sqrt{r})^2}{\sigma^2}\right) - s + \frac{1 \wedge s}{2}\right\}.    
    \end{align*}
    Note that the objective function is exactly that considered in Section V.C of \cite{caiOptimalDetectionSparse2014}. We now solve the optimization problem to obtain the detection boundary. First, consider that if \(\sqrt{r} \geq 1+\sqrt{\sigma^2}\), then the optimization problem becomes \(\beta^*(r, \sigma^2) = \frac{1}{2} + 0 \vee \sup_{s \geq \frac{r}{(\sqrt{\sigma^2}+1)^2}} \left\{ - \frac{(\sqrt{s} - \sqrt{r})^2}{\sigma^2} + \frac{1}{2}\right\}\).  Clearly the optimum is achieved at \(s = r\), which yields \(\beta^*(r, \sigma^2) = 1\). On the other hand, let us now consider the case \(\sqrt{r} \leq 1 + \sqrt{\sigma^2}\). Then the optimization problem is given by 
    \begin{equation*}
        \beta^*(r, \sigma^2) = \frac{1}{2} + 0 \vee \left\{ E_1 \vee E_2\right\}
    \end{equation*}
    where 
    \begin{align*}
        E_1 &:= \sup_{\frac{r}{(\sqrt{\sigma^2} + 1)^2} \leq s \leq 1} \left\{ - \frac{(\sqrt{s} - \sqrt{r})^2}{\sigma^2} + \frac{s}{2} \right\}, \\
        E_2 &:= \sup_{1 < s} \left\{ - \frac{(\sqrt{s} - \sqrt{r})^2}{\sigma^2} + \frac{1}{2} \right\}.
    \end{align*}
    We examine each term separately. Looking at \(E_1\) first, let us define \(f(s) = - \frac{(\sqrt{s} - \sqrt{r})^2}{\sigma^2} + \frac{s}{2}\) and note that \(f'(s) = -\frac{1}{\sigma^2} + \frac{\sqrt{r}}{\sigma^2 \sqrt{s}} + \frac{1}{2}\). Hence, it follows that \(f'(s) \geq -\frac{1}{\sigma^2} + \frac{\sqrt{r}}{\sigma^2} + \frac{1}{2} = \frac{2\sqrt{r} + \sigma^2 - 2}{2\sigma^2}\) for all \(\frac{r}{(\sqrt{\sigma^2} - 1)^2} \leq s \leq 1\). Hence, if \(2\sqrt{r} + \sigma^2 > 2\), then \(f'(s) > 0\) and so the maximum is achieved at the right endpoint \(s = 1\). This yields \(E_1 = \frac{1}{2} - \frac{(1-\sqrt{r})^2}{\sigma^2}\). On the other hand, if \(2\sqrt{r} + \sigma^2 \leq 2\), then the maximum is achieved at \(s = \left(\frac{2\sqrt{r}}{2-\sigma^2}\right)^2\). This yields \(E_1 = \frac{r}{2-\sigma^2}\).
    Hence, we've shown that 
    \begin{align*}
        E_1 = 
        \begin{cases}
            \frac{r}{2-\sigma^2} & \text{if } 2\sqrt{r} + \sigma^2 \leq 2,\\
            \frac{1}{2} - \frac{(1-\sqrt{r})^2}{\sigma^2} &\text{if } 2\sqrt{r} + \sigma^2 > 2.
        \end{cases}
    \end{align*}
    Turning our attention to \(E_2\), consider that if \(r \geq 1\), then we immediately have \(E_2 = \frac{1}{2}\). If \(r < 1\), then we have \(E_2 = \frac{1}{2} - \frac{(1-\sqrt{r})^2}{\sigma^2}\). In particular, for any \(r > 0\), we have \(E_2 = \frac{1}{2} - \frac{(1-\sqrt{r})_{+}^2}{\sigma^2}\). By direct comparison, 
    \begin{equation*}
        E_1 \vee E_2 = 
        \begin{cases}
            \frac{r}{2-\sigma^2} & \text{if } 2\sqrt{r} + \sigma^2 \leq 2, \\
            \frac{1}{2} - \frac{(1-\sqrt{r})_{+}^2}{\sigma^2} &\text{if } 2\sqrt{r} + \sigma^2 > 2.
        \end{cases}
    \end{equation*}
    Thus, we've proved that if \(\sqrt{r} \leq 1 + \sqrt{\sigma^2}\), then 
    \begin{equation*}
        \beta^*(r, \sigma^2) = 
        \begin{cases}
            \frac{1}{2} + \frac{r}{2-\sigma^2} & \text{if } 2\sqrt{r} + \sigma^2 \leq 2, \\
            1 - \frac{(1-\sqrt{r})_{+}^2}{\sigma^2} &\text{if } 2\sqrt{r} + \sigma^2 > 2.
        \end{cases}
    \end{equation*}
    Putting this together with our earlier result that \(\beta^*(r, \sigma^2) = 1\) when \(\sqrt{r} > 1 + \sqrt{\sigma^2}\), we have the detection boundary 
    \begin{equation*}
        \beta^*(r, \sigma^2) = 
        \begin{cases}
            \frac{1}{2} + \frac{r}{2-\sigma^2} & \text{if } 2\sqrt{r} + \sigma^2 \leq 2, \\
            1 - \frac{(1-\sqrt{r})_{+}^2}{\sigma^2} &\text{if } 2\sqrt{r} + \sigma^2 > 2
        \end{cases}
    \end{equation*}
    when \(\sigma^2 > 1\). 

    \textbf{Case 2:} Suppose \(\sigma^2 < 1\). It follows that \(t \geq 0\) if and only if \(\sqrt{r} \left(\frac{\sqrt{\sigma^2} - 1}{\sigma^2 - 1}\right) \leq \sqrt{s} \leq \sqrt{r} \left( \frac{1 + \sqrt{\sigma^2}}{1-\sigma^2}\right)\). Noting that \(\frac{1+\sqrt{\sigma^2}}{1-\sigma^2} = \frac{1}{1-\sqrt{\sigma^2}}\) and \(\frac{\sqrt{\sigma^2}-1}{\sigma^2-1} = \frac{1}{\sqrt{\sigma^2} + 1}\), it follows that \(t \geq 0\) if and only if \(\frac{r}{(1+\sqrt{\sigma^2})^2} \leq s \leq \frac{r}{(1-\sqrt{\sigma^2})^2}\). Thus
    \begin{align*}
        \beta^*(r, \sigma^2) = \frac{1}{2} + 0 \vee \sup_{\frac{r}{(1+\sqrt{\sigma^2})^2} \leq s \leq \frac{r}{(1-\sqrt{\sigma^2})^2}} \left\{ \left( s - \frac{(\sqrt{s} - \sqrt{r})^2}{\sigma^2}\right) - s + \frac{1 \wedge s}{2}\right\}.    
    \end{align*}
    As in Case 1, if \(\sqrt{r} > 1+\sqrt{\sigma^2}\), then it immediately follows that the maximum is achieved at \(s = r\) yielding \(\beta^*(r, \sigma^2) = 1\). It should be noted that \(\sqrt{r} > 1 + \sqrt{\sigma^2}\) implies \(2\sqrt{r} + \sigma^2 > 2\). With this case out of the way, let us consider the remaining case \(\sqrt{r} \leq 1 + \sqrt{\sigma^2}\). \newline
    
    Suppose we also have \(\sqrt{r} + \sqrt{\sigma^2} \leq 1\). Then we can write
    \begin{equation*}
        \beta^*(r, \sigma^2) = \frac{1}{2} + 0 \vee \sup_{\frac{r}{(1+\sqrt{\sigma^2})^2} \leq s \leq \frac{r}{(1-\sqrt{\sigma^2})^2}} \left\{ \left(- \frac{(\sqrt{s} - \sqrt{r})^2}{\sigma^2}\right) + \frac{s}{2}\right\}.
    \end{equation*}
    Letting \(f(s) := - \frac{(\sqrt{s} - \sqrt{r})^2}{\sigma^2} + \frac{s}{2}\), observe that \(f'(s) = -\frac{1}{\sigma^2} + \frac{\sqrt{r}}{\sigma^2 \sqrt{s}} + \frac{1}{2}\). Finding the root, we see that the maximum is achieved when \(s = \left(\frac{2 \sqrt{r}}{2-\sigma^2}\right)^2\), which immediately yields \(\beta^*(r, \sigma^2) = \frac{r}{2-\sigma^2}\). Note that the two conditions \(\sqrt{r} + \sqrt{\sigma^2} \leq 1\) and \(\sqrt{r} \leq 1 + \sqrt{\sigma^2}\) are redundant in that only the first \(\sqrt{r} \leq 1 - \sqrt{\sigma^2}\) is binding. Now since we also have \(\sigma^2 < 1\), it follows that \(2\sqrt{r} + \sigma^2 \leq 2\). \newline

    Suppose now that \(\sqrt{r} + \sqrt{\sigma^2} > 1\). Then we have 
    \begin{equation*}
        \beta^*(r, \sigma^2) = \frac{1}{2} + 0 \vee \left\{F_1 \vee F_2\right\}
    \end{equation*}
    where 
    \begin{align*}
        F_1 &= \sup_{\frac{r}{(1+\sqrt{\sigma^2})^2} \leq s \leq 1} \left\{ - \frac{(\sqrt{s} - \sqrt{r})^2}{\sigma^2} + \frac{s}{2} \right\}, \\
        F_2 &= \sup_{1 \leq s \leq \frac{r}{(1-\sqrt{\sigma^2})^2}} \left\{ -\frac{(\sqrt{s} - \sqrt{r})^2}{\sigma^2} + \frac{1}{2} \right\}. 
    \end{align*}
    We examine each term separately. The analysis for \(F_1\) is exactly the same as the analysis for \(E_1\) in Case 1 above. Thus, 
    \begin{align*}
        F_1 =
        \begin{cases}
            \frac{r}{2-\sigma^2} &\text{if } 2\sqrt{r} + \sigma^2 \leq 2, \\
            \frac{1}{2} - \frac{(1-\sqrt{r})^2}{\sigma^2} &\text{if } 2\sqrt{r} + \sigma^2 > 2.
        \end{cases}
    \end{align*}
    Examining \(F_2\), let us set \(f(s) = -\frac{(\sqrt{s} - \sqrt{r})^2}{\sigma^2} + \frac{1}{2}\). Observe that \(f'(s) = -\frac{1}{\sigma^2} + \frac{\sqrt{r}}{\sigma^2 \sqrt{s}}\). Observe that for \(1 \leq s \leq \frac{r}{(1-\sqrt{\sigma^2})^2}\), we have \(f'(s) \leq -\frac{1}{\sigma^2} + \frac{\sqrt{r}}{\sigma^2}\). Therefore, if \(r \leq 1\), then \(f'(s) \leq 0\) for all \(1 \leq s \leq \frac{r}{(1-\sqrt{\sigma^2})^2}\) and so the maximum is achieved at the left endpoint \(s = 1\), yielding \(f(1) = \frac{1}{2} - \frac{(1-\sqrt{r})^2}{\sigma^2}\). If \(r \geq 1\), then the maximum is achieved at \(s = r\), which yields \(F_2 = \frac{1}{2}\). Consequently, we have \(F_2 = \frac{1}{2} - \frac{(1-\sqrt{r})_{+}^2}{\sigma^2}\) for all \(r > 0\). Therefore, as in Case 1, we have by direct comparison 
    \begin{align*}
        F_1 \vee F_2 = 
        \begin{cases}
            \frac{r}{2-\sigma^2} &\text{if } 2 \sqrt{r} + \sigma^2 \leq 2, \\
            \frac{1}{2} - \frac{(1-\sqrt{r})_{+}^2}{\sigma^2} &\text{if } 2 \sqrt{r} + \sigma^2 > 2.
        \end{cases}
    \end{align*}
    Hence, we've shown that if \(\sqrt{r} \leq 1 + \sqrt{\sigma^2}\) and \(\sqrt{r} + \sqrt{\sigma^2} > 1\), then 
    \begin{equation*}
        \beta^*(r, \sigma^2) = 
        \begin{cases}
            \frac{1}{2} + \frac{r}{2-\sigma^2} &\text{if } 2 \sqrt{r} + \sigma^2 \leq 2, \\
            1 - \frac{(1-\sqrt{r})_{+}^2}{\sigma^2} &\text{if } 2 \sqrt{r} + \sigma^2 > 2.
        \end{cases}
    \end{equation*}

    Assembling all of the pieces together, we have shown that if \(\sigma^2 < 1\), then 
    \begin{equation*}
        \beta^*(r, \sigma^2) = 
        \begin{cases}
            \frac{1}{2} + \frac{r}{2-\sigma^2} &\text{if } 2 \sqrt{r} + \sigma^2 \leq 2, \\
            1 - \frac{(1-\sqrt{r})_{+}^2}{\sigma^2} &\text{if } 2 \sqrt{r} + \sigma^2 > 2.
        \end{cases}
    \end{equation*}

    Thus, we've shown in both cases, (i.e. \(\sigma^2 > 1\) or \(\sigma^2 < 1\)), the detection boundary is given by
    \begin{equation*}
        \beta^*(r, \sigma^2) = 
        \begin{cases}
            \frac{1}{2} + \frac{r}{2-\sigma^2} & \text{if } 2\sqrt{r} + \sigma^2 \leq 2, \\
            1 - \frac{(1-\sqrt{r})_{+}^2}{\sigma^2} & \text{if } 2\sqrt{r} + \sigma^2 > 2.
        \end{cases}
    \end{equation*}
    Thus we've exactly recovered the detection boundary obtained by Cai, Jeng, and Jin \cite{caiOptimalDetectionHeterogeneous2011}. 
    
    \subsubsection{Higher Criticism}
    It can be directly checked that the rate function \(I\) is indeed convex and that the other conditions of Theorem \ref{thm:HC_tight} hold. Hence, the test \(\psi_{\HC^*_n}\) defined in (\ref{eqn:HC_test}) achieves the detection boundary.  

    \subsection{Mixture of a mixture I}
    A sparse mixture detection problem making more use of the multivariate setting is the following. Consider the testing problem (\ref{problem:sparse_mixture_detection_1})-(\ref{problem:sparse_mixture_detection_2}) with \(P_n = P = N(0, I_d)\) and \(Q_n = \frac{1}{2} N(\mu_1, I_d) + \frac{1}{2} N(\mu_2, I_d)\) where \(\mu_1 = \sqrt{2r\log n} \cdot u_1\) and \(\mu_2 = \sqrt{2r\log n} \cdot u_2\) where \(u_1, u_2\) are fixed and linearly independent unit vectors in \(\R^d\). Note that the tail condition (\ref{eqn:varadhan_tail_condition}) is easily verified. 

    \subsubsection{Finding the rate function}
    To deduce the detection boundary, we directly establish a large deviation principle and calculate the associated rate function. Consider that under the null,
    \begin{align*}
        \frac{\log \frac{q_n}{p}(X)}{\log n} &= -r - \frac{\log 2}{\log n} + \frac{\log \left(\exp\left(\langle X, \mu_1 \rangle \right) + \exp\left(\langle X, \mu_2 \rangle \right) \right)}{\log n} \\
        &= -r - \frac{\log 2}{\log n} + \frac{\langle X, \mu_1\rangle \vee \langle X, \mu_2 \rangle}{\log n} + \frac{\log \left(1 + \frac{\min_{i \in \{1, 2\}} \exp\left(\langle X, \mu_i \rangle\right)}{\max_{i \in \{1, 2\}} \exp\left(\langle X, \mu_i \rangle\right)} \right)}{\log n} \\
        &= -r + \left\langle X, \frac{\mu_1}{\log n} \right\rangle \vee \left\langle X, \frac{\mu_2}{\log n} \right\rangle + o_P(1).
    \end{align*}
    To deduce a large deviation principle, we will first deduce a large deviation principle for \(\left\langle X, \frac{\mu_1}{\log n} \right\rangle \vee \left\langle X, \frac{\mu_2}{\log n} \right\rangle\), then argue by contraction principle (see Theorem \ref{thm:contraction_principle}), exponential equivalence (see Definition \ref{def:exp_equiv}), and Theorem \ref{thm:same_ldp} to obtain a large deviation principle for the sequence of normalized log likelihood ratios.
    Under the null,
    \begin{equation*}
        Z_n := \left(\begin{matrix} \langle X, \frac{\mu_1}{\log n}\rangle \\ \langle X, \frac{\mu_2}{\log n}\rangle \end{matrix}\right) \sim N\left(0, \Sigma \right)
    \end{equation*}
    where 
    \begin{equation*}
        \Sigma = \frac{2r}{\log n} \left(\begin{matrix} 1 & \langle u_1, u_2\rangle \\ \langle u_1, u_2 \rangle & 1 \end{matrix}\right).
    \end{equation*}
    Now consider that for any Borel set \(\Gamma \subset \R^2\),
    \begin{equation*}
        \frac{\log P(Z_n \in \Gamma)}{\log n} = \frac{1}{\log n} \log \int_{\Gamma} \frac{\log n}{2\pi \cdot 2r \sqrt{1 - \langle u_1, u_2\rangle^2}} \exp\left(- \frac{z_1^2 + z_2^2 - 2z_1 z_2 \langle u_1, u_2 \rangle}{4r(1-\langle u_1, u_2 \rangle^2)} \cdot \log n \right) \, dz.
    \end{equation*}
    Applying Lemma 3 of \cite{caiOptimalDetectionSparse2014}, we immediately see that the \(Z_n\) satisfies the large deviation principle with respect to speed \(\left\{\frac{1}{\log n}\right\}\) and good rate function \(J : \R^2 \to [0, \infty]\) given by
    \begin{equation*}
        J(t) = \frac{t_1^2 + t_2^2 - 2t_1t_2 \langle u_1, u_2 \rangle}{4r(1-\langle u_1, u_2\rangle^2)}.
    \end{equation*}
    Consider that the function \(f : \R^2 \to \R\) with \(f(v,w) = v \vee w\) is continuous. Thus, by applying the contraction principle (Theorem \ref{thm:contraction_principle}) to \(f(Z_n)\), it follows that \(f(Z_n)\) satisfies the large deviation principle with respect to speed \(\left\{\frac{1}{\log n}\right\}\) and good rate function 
    \begin{align*}
        K(y) &= \inf\left\{J(t) : y = f(t) \right\} \\
        &= \inf \left\{ \frac{t_1^2 + t_2^2 - 2t_1t_2 \langle u_1, u_2 \rangle}{4r(1-\langle u_1, u_2 \rangle^2)} : y = t_1 \vee t_2 \right\} \\
        &= \inf\left\{\frac{y^2 + (t_1 \wedge t_2)^2 - 2y(t_1 \wedge t_2)\langle u_1, u_2 \rangle}{4r(1-\langle u_1, u_2 \rangle^2)} : y = t_1 \vee t_2 \right\} \\
        &= \inf\left\{\frac{y^2 + (t_1 \wedge t_2 - y \langle u_1, u_2 \rangle)^2 - y^2 \langle u_1, u_2 \rangle^2}{4r(1-\langle u_1, u_2 \rangle^2)} : y = t_1 \vee t_2 \right\} \\
        &= \inf\left\{\frac{y^2}{4r} + \frac{(t_1 \wedge t_2 - y \langle u_1, u_2 \rangle)^2}{4r(1-\langle u_1, u_2 \rangle^2)} : y = t_1 \vee t_2 \right\} \\
        &=
        \begin{cases}
            \frac{y^2}{4r} & \text{if } y \geq 0, \\
            \frac{y^2}{2r(1+\langle u_1, u_2 \rangle)} &\text{if } y < 0.
        \end{cases}
    \end{align*}
    Applying the contraction principle once again, it follows that \(f(Z_n) - r\) satisfies the large deviation principle with respect to speed \(\left\{\frac{1}{\log n}\right\}\) and good rate function 
    \begin{align*}
        I(y) &= 
        \begin{cases}
            \frac{(y+r)^2}{4r} & \text{if } y \geq -r, \\
            \frac{(y+r)^2}{2r(1+\langle u_1, u_2 \rangle)} &\text{if } y < -r.
        \end{cases}
    \end{align*}
    Now, consider that for any \(\delta > 0\), it follows that 
    \begin{align*}
        \frac{\log P\left( \left| \frac{\log \frac{q_n}{p}(X)}{\log n} - (f(Z_n) - r)\right| > \delta \right)}{\log n} &= \frac{\log P\left( \left| -\frac{\log 2}{\log n} + \frac{\log\left(1 + \frac{\min_{i \in \{1, 2\}} \exp\left(\langle X, \mu_i \rangle \right)}{\max_{i \in \{1, 2\}} \exp\left(\langle X, \mu_i \rangle\right) } \right)}{\log n}\right| > \delta \right)}{\log n}.
    \end{align*}
    Consider that 
    \begin{equation*}
        0 \leq \frac{\log\left(1 + \frac{\min_{i \in \{1, 2\}} \exp\left(\langle X, \mu_i \rangle\right) }{\max_{i \in \{1, 2\}} \exp\left(\langle X, \mu_i \rangle\right) } \right)}{\log n} \leq \frac{\log 2}{\log n}
    \end{equation*}
    almost surely. Hence, 
    \begin{equation*}
        P\left( \left| -\frac{\log 2}{\log n} + \frac{\log\left(1 + \frac{\min_{i \in \{1, 2\}} \exp\left(\langle X, \mu_i \rangle \right)}{\max_{i \in \{1, 2\}} \exp\left(\langle X, \mu_i \rangle\right) } \right)}{\log n}\right| > \delta \right) = 0
    \end{equation*}
    for all \(n\) sufficiently large, and so 
    \begin{equation*}
        \lim_{n \to \infty} \frac{\log P\left( \left| \frac{\log \frac{q_n}{p}(X)}{\log n} - (f(Z_n) - r)\right| > \delta \right)}{\log n} = -\infty.
    \end{equation*}
    Therefore, \(\frac{\log \frac{q_n}{p}(X)}{\log n}\) and \(f(Z_n) - r\) are exponentially equivalent with respect to speed \(\left\{\frac{1}{\log n}\right\}\). By Theorem \ref{thm:same_ldp}, it follows that \(\left\{\frac{\log \frac{q_n}{p}}{\log n}\right\}\) satisfies the large deviation principle under the null with good rate function \(I\). 
    
    \subsubsection{Determining the detection boundary}
    Given the form of \(I\), it immediately follows that the detection boundary is given by the Ingster-Donoho-Jin boundary, namely
    \begin{equation*}
        \beta^*(r) = 
        \begin{cases}
            \frac{1}{2} + r & \text{if } r \leq \frac{1}{4}, \\
            1 - (1-\sqrt{r})_{+}^2 & \text{if } r > \frac{1}{4}.
        \end{cases}
    \end{equation*}

    \subsubsection{Higher Criticism}
    Theorem \ref{thm:HC_tight} guarantees the optimality of \(\psi_{\HC^{*}_n}\). Writing out the testing statistic \(\HC_n^*\) reveals that knowledge of both \(u_1, u_2\) is needed to construct \(\HC^*_n\).

    \subsection{Mixture of a mixture II}
    The following sparse mixture detection problem is inspired by the testing equivalence of clustering problem considered by Gao and Ma (Section 2.3 in \cite{gaoTestingEquivalenceClustering2019}). Let \(u, v \in \R^d\) be orthogonal unit vectors, i.e. \(||u||=||v||=1\) and \(\langle u, v \rangle = 0\). Let \(\mu_n = \sqrt{2r\log n} \cdot u\) and \(\nu_n = \sqrt{2r\log n} \cdot v\) with fixed \(0 < r \leq 1\). Consider the testing problem (\ref{problem:sparse_mixture_detection_1})-(\ref{problem:sparse_mixture_detection_2}) with 
    \begin{align*}
        P_n &= \frac{1}{2} N(\mu_n, I_d) + \frac{1}{2} N(-\mu_n, I_d), \\
        Q_n &= \frac{1}{2}N(\nu_n, I_d) + \frac{1}{2}N(-\nu_n, I_d).
    \end{align*}
    In this subsection, we will show that the normalized log-likelihood ratio satisfies the large deviation principle under the null and that the testing statistic \(\HC_n^*\) furnishes an optimal test. 
    
    \subsubsection{Checking the tail condition} 
    First, we show that the tail condition (\ref{eqn:varadhan_tail_condition}) is satisfied. Consider that for any \(\gamma > 1\), 
    \begin{align*}
        \left(\frac{q_n}{p_n}(x) \right)^\gamma &= \frac{\left(\exp\left(-\frac{||x-\nu_n||^2}{2} \right) + \exp\left(-\frac{||x+\nu_n||^2}{2}\right)\right)^\gamma}{\left(\exp\left(-\frac{||x-\mu_n||^2}{2} \right) + \exp\left(-\frac{||x+\mu_n||^2}{2}\right)\right)^\gamma} \\
        &= \frac{\left(\exp\left(\langle x, -\nu_n\rangle \right) + \exp\left(\langle x, \nu_n\rangle \right)\right)^\gamma}{\left(\exp\left(\langle x, -\mu_n\rangle\right) + \exp\left(\langle x, \mu_n\rangle\right)\right)^\gamma} \\
        &= \frac{\left(\exp\left(\langle x, -\nu_n\rangle \right) + \exp\left(\langle x, \nu_n\rangle \right)\right)^\gamma}{\left(\exp\left(|\langle x, \mu_n\rangle|\right) + \exp\left(-|\langle x, \mu_n\rangle|\right)\right)^\gamma} \\
        &\leq \left(\exp\left(\langle x, -\nu_n\rangle \right) + \exp\left(\langle x, \nu_n\rangle \right)\right)^\gamma \\
        &\leq 2^{\gamma - 1} \left(\exp\left(-\gamma \langle x, \nu_n\rangle \right) + \exp\left( \gamma \langle x, \nu_n \rangle \right) \right) \\
        &= 2^{\gamma - 1}\left(\exp\left(-\gamma\sqrt{2r\log n} \langle x, v\rangle \right) + \exp\left(\gamma \sqrt{2r\log n} \langle x, v\rangle \right)\right).
    \end{align*}
    Using the moment generating function for multivariate normal random variables, we have under the null
    \begin{align*}
        & E\left[\left(\frac{q_n}{p_n}(X)\right)^\gamma \right] \\
        &\leq 2^{\gamma - 1} \left(E\left[\exp\left(-\gamma \sqrt{2r\log n} \langle X, v\rangle \right)\right] + E\left[\exp\left(\gamma \sqrt{2r\log n} \langle X, v\rangle\right) \right]\right) \\
        &= 2^{\gamma - 1}\left[ \frac{1}{2} \exp\left(-\gamma \sqrt{2r\log n} \langle \mu_n, v\rangle + \gamma^2 r \log n \right) + \frac{1}{2} \exp\left(-\gamma \sqrt{2r \log n} \langle -\mu_n, v\rangle + \gamma^2 r \log n\right) \right. \\
        &\;\; \left. + \frac{1}{2} \exp\left(\gamma \sqrt{2r \log n} \langle \mu_n, v\rangle + \gamma^2 r \log n \right) + \frac{1}{2} \exp\left(\gamma \sqrt{2r\log n} \langle -\mu_n, v\rangle + \gamma^2 r \log n \right)\right] \\
        &= 2^{\gamma} \exp\left(\gamma^2 r \log n\right).
    \end{align*}
    It immediately follows that 
    \begin{equation*}
        \limsup_{n \to \infty} \frac{1}{\log n} \log E\left[\left(\frac{q_n}{p_n}(X)\right)^\gamma \right] < \infty
    \end{equation*}
    and so the tail condition (\ref{eqn:varadhan_tail_condition}) is satisfied.
    
    \subsubsection{Finding the rate function}
    With the tail condition established, we now show that the large deviation principle under the null is satisfied. Observe that 
    \begin{align*}
        \log \frac{q_n}{p_n}(x) &= \log\left(\frac{\exp\left(-\frac{||x-\nu_n||^2}{2}\right) + \exp\left(-\frac{||x+\nu_n||^2}{2}\right)}{\exp\left(-\frac{||x-\mu_n||^2}{2}\right) + \exp\left(-\frac{||x+\mu_n||^2}{2}\right)} \right) \\
        &= \log\left(\frac{\exp\left(\langle x, -\nu_n\rangle \right) + \exp\left(\langle x, \nu_n\rangle \right)}{\exp\left(\langle x, -\mu_n\rangle\right) + \exp\left(-\langle x, \mu_n\rangle\right)} \right) \\
        &= \log\left(\frac{\exp\left(|\langle x, \nu_n\rangle| \right) + \exp\left(-|\langle x, \nu_n\rangle| \right)}{\exp\left(|\langle x, \mu_n\rangle|\right) + \exp\left(-|\langle x, \mu_n\rangle|\right)} \right) \\
        &= |\langle x, \nu_n\rangle| + \log\left(1 + \exp\left(-2|\langle x, \nu_n\rangle|\right)\right) - |\langle x, \mu_n \rangle| - \log\left(1 + \exp\left(-2|\langle x, \mu_n\rangle|\right)\right).
    \end{align*}
    Thus, under the null 
    \begin{align*}
        \frac{\log \frac{q_n}{p_n}(X)}{\log n} &= \frac{\sqrt{2r}}{\sqrt{\log n}} \left(|\langle X, v \rangle| - |\langle X, u \rangle|\right) + \frac{\log\left(\frac{1 + \exp(-2 |\langle X, v\rangle| \cdot \sqrt{2r \log n})}{1 + \exp(-2|\langle X, u \rangle| \cdot \sqrt{2r\log n})} \right)}{\log n}.
    \end{align*}
    To establish a large deviation principle, we will use the contraction principle along with exponential equivalence. First, consider that under the null 
    \begin{align*}
        \frac{\sqrt{2r}}{\sqrt{\log n}} 
        \left(\begin{matrix} \langle X, v \rangle \\ \langle X, u\rangle  \end{matrix} \right) \sim \frac{1}{2}N\left( \left(\begin{matrix} 0 \\ 2r\end{matrix} \right), \frac{2r}{\log n} I_2\right) + \frac{1}{2}N\left( \left(\begin{matrix} 0 \\ -2r \end{matrix} \right), \frac{2r}{\log n} I_2\right).
    \end{align*}
    Observe that the density is given by \(g : \R^2 \to \R\) with 
    \begin{align*}
        g(y) &= \frac{1}{2} \frac{\log n}{4\pi r} \exp\left(-\frac{y_1^2 + (y_2 - 2r)^2}{4r} \cdot \log n \right) + \frac{1}{2} \frac{\log n}{4\pi r} \exp\left(-\frac{y_1^2 + (y_2+2r)^2}{4r} \cdot \log n \right) \\
        &= \frac{1}{2} \frac{\log n}{4\pi r} \exp\left(-\frac{y_1^2}{4r} \log n\right) \exp\left(- \frac{(y_2+2r)^2 \wedge (y_2-2r)^2}{4r} \cdot \log n \right) \\
        &\;\; \cdot \left[1 + \exp\left( - \frac{\left[(y_2+2r)^2 \vee (y_2-2r)^2 \right] - \left[(y_2 + 2r)^2 \wedge (y_2-2r)^2\right]}{4r} \log n\right) \right].
    \end{align*}
    An application of Lemma 3 in \cite{caiOptimalDetectionSparse2014} implies that, under the null, \(\frac{\sqrt{2r}}{\sqrt{\log n}} \left(\langle X, v \rangle, \langle X, u\rangle \right)\) satisfies a large deviation principle with respect to speed \(\left\{\frac{1}{\log n} \right\}\) and good rate function \(J : \R^2 \to [0, \infty]\) given by
    \begin{equation*}
        J(y) = \frac{y_1^2 + (y_2 + 2r)^2 \wedge (y_2 - 2r)^2}{4r}.
    \end{equation*}
    Consider the function \(f : \R^2 \to \R\) given by \(f(a, b) = |a| - |b|\). Note that \(f\) is continuous, and so we can apply the contraction principle to derive a large deviation principle for \(\frac{\sqrt{2r}}{\sqrt{\log n}} \left(|\langle X, v\rangle| - |\langle X, u \rangle| \right)\). By the contraction principle (Theorem \ref{thm:contraction_principle}), it follows that the corresponding rate function is \(I : \R \to [0, \infty]\) given by 
    \begin{align*}
        I(t) = \inf\{J(y) : t = f(y)\}. 
    \end{align*}
    For \(t = f(y) = |y_1| - |y_2|\), we have
    \begin{align*}
        J(y) &= \frac{y_1^2 + (y_2+2r)^2 \wedge (y_2-2r)^2}{4r} \\
        &= \frac{y_1^2 + (y_2^2 + 4ry_2 + 4r^2) \wedge (y_2^2 - 4ry_2 + 4r^2)}{4r} \\
        &= \frac{y_1^2 + y_2^2 + 4r^2 + 4r\left[(-y_2)\wedge(y_2)\right]}{4r} \\
        &= \frac{y_1^2 + y_2^2 + 4r^2 - 4r|y_2|}{4r} \\
        &= \frac{y_1^2 + y_1^2 - 2|y_1|t + t^2 + 4r^2 - 4r|y_1| + 4rt}{4r} \\
        &= \frac{(t+2r)^2}{4r} + \frac{2y_1^2 - 2(t+2r)|y_1|}{4r} \\
        &= \frac{(t+2r)^2}{4r} + \frac{y_1^2 - (t+2r)|y_1|}{2r}.
    \end{align*}
    Since \(t + |y_2| = |y_1|\) and \(|y_2|, |y_1| \geq 0\), we have the constraint that \(|y_1| \geq t\). So for \(t \in \R\), we have
    \begin{align*}
        I(t) &= \inf\left\{J(y) : t = f(y) \right\} \\
        &= \inf\left\{ \frac{(t+2r)^2}{4r} + \frac{y_1^2 - (t+2r)|y_1|}{2r} : |y_1| \geq t \right\} \\
        &= \begin{cases} \frac{(t+2r)^2}{4r}& \text{if } t < -2r, \\ \frac{(t+2r)^2}{8r} & \text{if } |t| \leq 2r, \\  \frac{(t+2r)^2}{4r} - t & \text{if } t > 2r.\end{cases}
    \end{align*}
    Thus, we've showed that \(\frac{\sqrt{2r}}{\sqrt{\log n}} (|\langle X, v\rangle| - |\langle X, u \rangle|)\) satisfies a large deviation principle with respect to speed \(\left\{\frac{1}{\log n}\right\}\) and good rate function \(I\). Now, consider that for every \(\delta > 0\), we have under the null
    \begin{align*}
        &\lim_{n \to \infty} \frac{\log P\left(\left| \frac{\log \frac{q_n}{p_n}(X)}{\log n} - \frac{\sqrt{2r}}{\sqrt{\log n}} (|\langle X, v \rangle| - |\langle X, u \rangle|) \right| > \delta \right)}{\log n} \\
        &= \lim_{n \to \infty} \frac{\log P\left(\left|\frac{\log\left(\frac{1 + \exp(-2 |\langle X, v\rangle| \cdot \sqrt{2r \log n})}{1 + \exp(-2|\langle X, u \rangle| \cdot \sqrt{2r\log n})} \right)}{\log n}\right| > \delta \right)}{\log n} \\
        &\leq \lim_{n \to \infty} \frac{\log P\left(\frac{2 \log 2}{\log n} > \delta \right)}{\log n}. 
    \end{align*}
    Clearly \(\frac{2 \log 2}{\log n} \leq \delta\) for all sufficiently large \(n\), and so it immediately follows that \(\lim_{n \to \infty} \frac{\log P\left(\frac{2 \log 2}{\log n} > \delta \right)}{\log n} = -\infty\), yielding
    \begin{equation*}
        \lim_{n \to \infty} \frac{\log P\left(\left| \frac{\log \frac{q_n}{p_n}(X)}{\log n} - \frac{\sqrt{2r}}{\sqrt{\log n}} (|\langle X, v \rangle| - |\langle X, u \rangle|) \right| > \delta \right)}{\log n} = -\infty.
    \end{equation*}
    Hence, under the null \(\frac{\log \frac{q_n}{p_n}(X)}{\log n}\) and \(\frac{\sqrt{2r}}{\sqrt{\log n}}\) are exponentially equivalent (see Definition \ref{def:exp_equiv}) with respect to speed \(\left\{\frac{1}{\log n}\right\}\). By Theorem \ref{thm:same_ldp}, it follows that \(\left\{\frac{\log \frac{q_n}{p_n}}{\log n} \right\}\) satisfies the large deviation principle under the null with good rate function \(I\). 

    \subsubsection{Determining the detection boundary}
    Since the tail condition (\ref{eqn:varadhan_tail_condition}) is satisfied and the conditions of Corollary \ref{corollary:nice_tight_limit} can be straight-forwardly checked to hold, it follows that the detection boundary is given by 
    \begin{equation*}
        \beta^*(r) = \frac{1}{2} + 0 \vee \sup_{t \geq 0} \left\{t - I(t) + \frac{1 \wedge I(t)}{2} \right\}.
    \end{equation*}
    We now explicitly evaluate the detection boundary. Due to the structure of \(I\), we must break the optimization problem into two pieces, namely 
    \begin{align*}
        & \sup_{t \geq 0} \left\{t - I(t) + \frac{1 \wedge I(t)}{2} \right\} \\
        &= \left[ \sup_{t \in [0, 2r]} \left\{t - \frac{(t+2r)^2}{8r} + \frac{1 \wedge \left(\frac{(t+2r)^2}{8r}\right)}{2} \right\} \right] \vee \left[ \sup_{t > 2r} \left\{t - \left(\frac{(t+2r)^2}{4r} - t\right) + \frac{1 \wedge \left(\frac{(t+2r)^2}{4r} - t\right)}{2} \right\} \right] \\
        &=: A \vee B.
    \end{align*}
    We examine each piece separately. We first examine \(A\). Consider that \(\frac{(t+2r)^2}{8r} \leq 1\) for all \(t \in [0, 2r]\) if and only if \(r \leq \frac{1}{2}\). Therefore, for \(r \leq \frac{1}{2}\) we have
    \begin{align*}
        A &= \sup_{t \in [0, 2r]} \left\{t - \frac{(t+2r)^2}{8r} + \frac{(t+2r)^2}{16r} \right\} \\
        &= \sup_{t \in [0, 2r]} \left\{t - \frac{(t+2r)^2}{16r} \right\} \\
        &= r.
    \end{align*}
    Alternatively, if \(\frac{1}{2} < r\), then 
    \begin{align*}
        A &= \left[\sup_{t \in [0, \sqrt{8r}-2r]} \left\{t - \frac{(t+2r)^2}{16r}\right\}\right] \vee \left[\sup_{t \in [\sqrt{8r}-2r, 2r]} \left\{t - \frac{(t+2r)^2}{8r} + \frac{1}{2} \right\}\right] \\
        &=: a_1 \vee a_2.
    \end{align*}
    Observe that since \(r > \frac{1}{2}\), we have \(a_1 = \sqrt{8r}-2r-\frac{1}{2}\) and \(a_2 = \frac{1}{2}\). Therefore, \(a_1 \vee a_2 = a_2\), and so \(A = \frac{1}{2}\). In particular, it has been shown that 
    \begin{equation*}
        A = \begin{cases}
            r &\text{if } r \leq \frac{1}{2}, \\
            \frac{1}{2} & \text{if } r > \frac{1}{2}.
        \end{cases}
    \end{equation*}
    We now turn our attention to \(B\). Note that \( \frac{(t+2r)^2}{4r} - t \leq 1\) if and only if \(t \leq 2\sqrt{r-r^2}\). Thus, we can write 
    \begin{align*}
        B &= \left[\sup_{t \in [2r, 2\sqrt{r-r^2}]} \left\{t - \left(\frac{(t+2r)^2}{4r} - t \right) + \frac{\frac{(t+2r)^2}{4r} - t}{2} \right\} \right] \vee \left[\sup_{t > 2\sqrt{r-r^2}} \left\{t - \left(\frac{(t+2r)^2}{4r} - t \right) + \frac{1}{2} \right\} \right] \\
        &=: b_1 \vee b_2.
    \end{align*}
    where we use the convention that a supremum of a function over an empty set results in \(-\infty\). Observe that the interval \([2r, 2\sqrt{r-r^2}]\) is nonempty if and only if \(r \leq \frac{1}{2}\). A direct calculation reveals
    \begin{align*}
        b_1 &= \sup_{t \in [2r, 2\sqrt{r-r^2}]} \left\{ \frac{3}{2}t - \frac{(t+2r)^2}{8r} \right\} \\
        &= \begin{cases} \frac{3}{2}r & \text{if } r \leq \frac{1}{5}, \\
        2\sqrt{r-r^2} - \frac{1}{2} &\text{if } \frac{1}{5} < r \leq \frac{1}{2}, \\
        -\infty & \text{if } \frac{1}{2} < r. \end{cases}
    \end{align*}
    Examining the quantity \(b_2\), it follows that
    \begin{align*}
        b_2 &= \sup_{t > 2\sqrt{r-r^2}} \left\{ 2t - \frac{(t+2r)^2}{4r} + \frac{1}{2} \right\} \\
        &=
        \begin{cases}
            2\sqrt{r-r^2} - \frac{1}{2} & \text{if } r \leq \frac{1}{2}, \\
            \frac{1}{2} &\text{if } r > \frac{1}{2}.
        \end{cases}
    \end{align*}
    With these calculations in hand, it immediately follows that 
    \begin{align*}
        B = 
        \begin{cases}
            \frac{3}{2} r &\text{if } r \leq \frac{1}{5}, \\
            2\sqrt{r-r^2} - \frac{1}{2} &\text{if } \frac{1}{5} < r \leq \frac{1}{2}, \\
            \frac{1}{2} &\text{if } r > \frac{1}{2}.
        \end{cases}
    \end{align*}
    A direct comparison shows
    \begin{equation*}
        A \vee B = 
        \begin{cases}
            \frac{3}{2} r &\text{if } r \leq \frac{1}{5}, \\
            2\sqrt{r-r^2} - \frac{1}{2} &\text{if } \frac{1}{5} < r \leq \frac{1}{2}, \\
            \frac{1}{2} &\text{if } r > \frac{1}{2}.
        \end{cases}
    \end{equation*}
    Therefore, reexpressing terms into an equivalent yet more evocative form yields the detection boundary
    \begin{equation*}
        \beta^*(r) = \frac{1}{2} + 0 \vee (A \vee B) = 
        \begin{cases}
            \frac{3}{2}r + \frac{1}{2} & \text{if } r \leq \frac{1}{5}, \\
            \sqrt{1-(1-2r)_{+}^2} &\text{if } r > \frac{1}{5}.
        \end{cases}
    \end{equation*}
    We have exactly recovered the detection boundary stated in Theorem 2.3 of \cite{gaoTestingEquivalenceClustering2019}. In fact, this detection boundary holds in a more general setting than that originally considered by Gao and Ma \cite{gaoTestingEquivalenceClustering2019}.

    \subsubsection{Higher Criticism}
    Examining the rate function, it can directly seen that \(I\) is convex as \(I\) is differentiable and \(I'\) is a monotone increasing function. Furthermore, it is straight-forward to check that the other conditions of Theorem \ref{thm:HC_tight} hold, and so the test \(\psi_{\HC^*_n}\) given in (\ref{eqn:HC_test}) achieves the detection boundary.   
    
    \subsection{Detection of a low-rank perturbation}\label{example:low_rank_perturbation}
    Consider the testing problem (\ref{problem:sparse_mixture_detection_1})-(\ref{problem:sparse_mixture_detection_2}) with calibration (\ref{eqn:beta_sparsity}) and distributions \(P_n = N(0, I_p), Q_n = N(0, I_p + H)\) where \(H\) is a rank \(k < p\) symmetric matrix with its \(k\) nonzero eigenvalues equal to  \(r > 0\). Note that we can write \(H = Q \cdot r A_k \cdot Q^\intercal\) where \(Q \in \R^{p \times p}\) is an orthogonal matrix and \(A_k\) is a diagonal matrix with the first \(k\) entries on the diagonal equal to one and the remaining diagonal entries equal to zero.

    \subsubsection{Checking the tail condition}
    We first check that the tail condition \ref{eqn:varadhan_tail_condition} holds. Consider that 
    \begin{align*}
        p_n(x) &= (2\pi)^{-p/2} \exp\left(-\frac{||x||^2}{2} \right), \\
        q_n(x) &= (2\pi)^{-p/2} (1+r)^{-k/2} \exp\left(-\frac{1}{2} \langle x, (I_p+H)^{-1} x\rangle \right) \\
        &= (2\pi)^{-p/2} (1+r)^{-k/2} \exp\left(-\frac{1}{2} \left\langle Q^\intercal x, \left(I_p - \frac{r}{r+1} A_k \right)Q^\intercal x \right\rangle \right).
    \end{align*}
    Therefore, it follows that 
    \begin{align*}
        \frac{q_n}{p_n}(x) &= (1+r)^{-k/2} \exp\left(\frac{1}{2} \cdot \frac{r}{r+1} \langle Q^\intercal x, A_k Q^\intercal x \rangle \right).
    \end{align*}
    Note that for \(1 < \gamma < \frac{r+1}{r}\), we have for \(X_n \sim P_n\)
    \begin{align*}
        E\left[\left( \frac{q_n}{p_n}(X_n) \right)^\gamma \right] &= (r+1)^{-\gamma k/2} (2\pi)^{-p/2} \int_{\R^p} \exp\left( \frac{\gamma r}{r+1} \cdot \frac{\langle Q^\intercal x, A_k Q^\intercal x \rangle }{2} - \frac{||x||^2}{2} \right) \, dx \\
        &\leq  (r+1)^{-\gamma k/2} (2\pi)^{-p/2} \int_{\R^p} \exp\left( \left[\frac{\gamma r}{r+1} - 1 \right] \cdot \frac{||x||^2}{2} \right) \, dx \\
        & < \infty
    \end{align*}
    where the final inequality follows from the fact that \(1 < \gamma < \frac{r+1}{r}\) implies \(\frac{\gamma r}{r+1} - 1 < 0\). Hence, the tail condition (\ref{eqn:varadhan_tail_condition}) holds. 

    \subsubsection{Finding the rate function}
    We now establish that the normalized log likelihood ratios \(\left\{\frac{\log \frac{q_n}{p_n}}{\log n}\right\}\) satisfy the large deviation principle under the null and we identify the corresponding rate function. Consider that 
    \begin{align*}
        \frac{\log \frac{q_n}{p_n}(X)}{\log n} &= -\frac{k \log(1+r)}{2 \log n} + \frac{\langle Q^\intercal X, A_k Q^\intercal X \rangle}{2 \log n} \cdot \frac{r}{r+1}.
    \end{align*}
    To show that \(\left\{\frac{\log \frac{q_n}{p}}{\log n} \right\}\) satisfies a large deviation principle under the null, we will apply the contraction principle (Theorem \ref{thm:contraction_principle}). Under the null \(X_n \sim P_n\), consider that \(\frac{X_n}{\sqrt{2 \log n}} \sim N(0, (2 \log n)^{-1} I_p)\). Consider that for any Borel set \(\Gamma \subset \R^p\) we have
    \begin{align}\label{example:gaussian_ldp}
        \frac{\log P\left( \frac{X_n}{\sqrt{2 \log n}} \in \Gamma \right)}{\log n} &= \frac{1}{\log n} \cdot \log \int_{\Gamma} \frac{(2\log n)^{p/2}}{\sqrt{2\pi}} \exp\left(- ||x||^2 \cdot \log n \right) \, dx.
    \end{align}
    Applying Lemma 3 of \cite{caiOptimalDetectionSparse2014}, it follows that \(\frac{1}{\log n}\log P\left( \frac{X_n}{\sqrt{2 \log n}} \in \Gamma \right) \to - \inf_{t \in \Gamma} ||t||^2\). Thus, \(\frac{X_n}{\sqrt{2 \log n}}\) satisfies a large deviation principle with respect to speed \(\left\{\frac{1}{\log n}\right\}\) with rate function \(J(t) = ||t||^2\). Applying the contraction principle (Theorem \ref{thm:contraction_principle}) to the function \(f : \R^p \to \R\) given by \(f(x) = \frac{r}{r+1} \langle x, A_k x\rangle \), it follows that \(\frac{r}{r+1} \cdot \frac{\langle X_n, A_k X_n \rangle}{2 \log n}\) satisfies a large deviation principle with respect to speed \(\left\{\frac{1}{\log n}\right\}\) and good rate function \(I : \R \to [0, \infty]\),
    \begin{align*}
        I(y) &= \inf\{J(t) : y = f(t)\} \\
        &= \inf\left\{||t||^2 : y = \frac{r}{r+1} \langle t, A_k t\rangle \right\} \\
        &=  \inf\left\{||t||^2 : \frac{r+1}{r} y = \langle t, A_k t \rangle \right\} \\
        &= 
        \begin{cases}
            \frac{r+1}{r} y & \text{if } y \geq 0, \\
            \infty & \text{otherwise}.
        \end{cases}
    \end{align*}
    Under the null \(X_n \sim P_n\), consider that \(Q^\intercal X_n \overset{d}{=} X_n\) by the rotational invariance of \(N(0, I_p)\). Therefore, it immediately follows that \(\frac{r}{r+1} \cdot \frac{\langle Q^\intercal X_n, A_k Q^\intercal X_n\rangle }{\log n}\) satisfies the large deviation principle under the null with good rate function \(I\) and with respect to speed \(\left\{\frac{1}{\log n}\right\}\). Consider that since \(-k\log(1+r)/(2 \log n) \to 0\), it follows that \(\frac{\log \frac{q_n}{p_n}(X_n)}{\log n}\) and \(\frac{\langle Q^\intercal X_n, A_k Q^\intercal X_n\rangle}{2 \log n} \cdot \frac{r}{r+1}\) are exponentially equivalent with respect to speed \(\left\{\frac{1}{\log n}\right\}\) (see Definition \ref{def:exp_equiv}). Thus by Theorem \ref{thm:same_ldp}, it follows that \(\left\{\frac{\log \frac{q_n}{p_n}}{\log n}\right\}\) satisfies the large deviation principle under the null with good rate function \(I\). 
    
    \subsubsection{Determining the detection boundary}
    We now determine the detection boundary. It's easily checked that the conditions of Corollary \ref{corollary:tight_limit} hold. Solving the optimization problem (\ref{eqn:tight_beta}), it follows that the detection boundary is given by 
    \begin{equation*}
        \beta^*(r) = 
        \begin{cases}
            \frac{1}{2} & \text{if } r \leq 1, \\
            1 - \frac{1}{1+r} & \text{if } r > 1.
        \end{cases}
    \end{equation*}
    Interestingly, there is no dependence on the rank \(k\) of \(H\) nor the dimension \(p\). Consequently, the detection boundary is exactly the same as in the heteroscedastic normal mixture testing problem with \(\mu = 0\) and \(\sigma^2 = 1+r\). 

    \subsubsection{Higher Criticism}
    Observe that the rate function \(I\) satisfies the conditions of Theorem \ref{thm:HC_tight}, and so \(\underline{\beta}^{\HC} = \beta^*(r)\). Letting \(X \sim N(0, I_p)\) be independent of the data \(\{X_i\}_{i=1}^{n}\), observe that 
    \begin{align*}
        \HC^*_n &= \sup_{t >0} \frac{|\sum_{i=1}^{n} \mathbf{1}_{\{(q_n/p)(X_i) > t\}} - n P((q_n/p)(X) > t)|}{\sqrt{n P((q_n/p)(X) > t)P((q_n/p)(X) \leq t)}} \\
        &= \sup_{t \in \R} \frac{|\sum_{i=1}^{n} \mathbf{1}_{\{\langle Q^\intercal X_i, A_k Q^\intercal X_i\rangle > t\}} - n P( \langle Q^\intercal X, A_k Q^\intercal X \rangle > t)|}{\sqrt{n P(\langle Q^\intercal X, A_k Q^\intercal X \rangle > t)P(\langle Q^\intercal X, A_k Q^\intercal X \rangle \leq t)}} \\
        &= \sup_{t \in \R} \frac{|\sum_{i=1}^{n} \mathbf{1}_{\{\langle X_i, Q A_k Q^\intercal X_i\rangle > t\}} - n P( \langle X, A_k X \rangle > t)|}{\sqrt{n P(\langle X, A_k X \rangle > t)P(\langle X, A_k X \rangle \leq t)}} \\
        &= \sup_{t \in \R} \frac{|\sum_{i=1}^{n} \mathbf{1}_{\{\langle X_i, Q A_k Q^\intercal X_i\rangle > t\}} - n P( \chi^2_k > t)|}{\sqrt{n P(\chi^2_k > t)P(\chi^2_k \leq t) }}
    \end{align*}
    where the penultimate equality follows from the fact that \(Q^\intercal X \overset{d}{=} X\) under the null and the final equality follows from the fact that \(A_k\) is a rank \(k\) projection matrix. Consider that \(\HC^{*}_n\) is adaptive to the parameters \(r\) and \(\beta\), but requires knowledge of \(Q A_k Q^\intercal\). In other words, the testing statistic requires the knowledge of which subspace in \(\R^p\) the covariance matrix exhibits the perturbation.
    
    \subsection{Detection of sparse correlated pairs}
    In Section 5 of the review article \cite{donohoHigherCriticismLargeScale2015}, Donoho and Jin consider the problem of detecting the presence of a small collection of correlated pairs. More specifically, the testing problem (\ref{problem:sparse_mixture_detection_1})-(\ref{problem:sparse_mixture_detection_2}) with \(P_n = P = N(0, I_2)\) and \(Q_n = N(\mu_n \mathbf{1}_2, \Sigma)\) is considered where \(\mu_n = \sqrt{r \log n}\) for \(r > 0\) and 
    \begin{equation*}
        \mathbf{1}_2 = \left( \begin{matrix} 1 \\ 1 \end{matrix} \right), 
        \;\;\;\; \Sigma = 
        \left(
        \begin{matrix}
            1 & \rho \\
            \rho & 1
        \end{matrix}
        \right)
    \end{equation*}
    with \(-1 < \rho < 1\). Without loss of generality, we take \(\rho \neq 0\). The case \(\rho = 0\) reduces to a special case of the sparse multivariate normal mixture studied in Example \ref{example:mvn}. Donoho and Jin illustrate the applicability of their original formulation of the Higher Criticism statistic and perform some simulations. We will deduce the detection boundary and investigate the behavior of Gao and Ma's Higher Criticism type testing statistic. It turns out that the detection boundary is highly related to the detection boundary in Example \ref{example:heteroscedastic}.

    \subsubsection{Finding the rate function}
    First, consider that \(\Sigma\) can be diagonalized as \(\Sigma = Q^\intercal W Q\) where
    \begin{equation*}
        Q = \left(\begin{matrix} \frac{1}{\sqrt{2}} & \frac{1}{\sqrt{2}} \\ \frac{1}{\sqrt{2}} & -\frac{1}{\sqrt{2}} \end{matrix} \right), 
        \;\;\;\; W = 
        \left(
        \begin{matrix}
            1+\rho & 0 \\
            0 & 1 - \rho
        \end{matrix}
        \right).
    \end{equation*}
    Thus, it follows that 
    \begin{align*}
        \frac{q_n}{p}(x) &= \frac{1}{\sqrt{1-\rho^2}} \exp\left(-\frac{1}{2} (Qx)^\intercal (W^{-1} - I_2) Qx + (Qx)^\intercal W^{-1} Q (\mu_n \mathbf{1}_2) \right) \exp\left( -\frac{\mu_n^2 (Q\mathbf{1})^\intercal W^{-1} (Q\mathbf{1})}{2} \right) \\
        &= \frac{1}{\sqrt{1-\rho^2}} \exp\left(-\frac{1}{2} (Qx)^\intercal (W^{-1} - I_2) Qx + (Qx)^\intercal W^{-1} Q (\mu_n \mathbf{1}_2) \right) \exp\left( - \frac{r}{1+\rho} \cdot \log n \right) \\
        &= \frac{1}{\sqrt{1-\rho^2}} \exp\left(-\frac{1}{2} \left(Qx -  M(\mu_n \cdot \mathbf{1}_2) \right)(W^{-1} - I_2)\left(Qx - M(\mu_n \cdot \mathbf{1}_2) \right) \right) \\
        &\;\; \cdot \exp\left(\frac{\mu_n^2 \mathbf{1}_2^\intercal Q^\intercal W^{-1}(W^{-1}-I_2)^{-1}W^{-1} Q\mathbf{1}_2}{2} \right) \exp\left( - \frac{r}{1+\rho} \cdot \log n \right) \\
        &= \frac{1}{\sqrt{1-\rho^2}} \exp\left(-\frac{1}{2} \left(Qx -  M(\mu_n \cdot \mathbf{1}_2) \right)(W^{-1} - I_2)\left(Qx - M(\mu_n \cdot \mathbf{1}_2) \right) \right) \\
        &\;\; \cdot \exp\left(-\frac{r}{\rho(1+\rho)} \cdot \log n\right) \exp\left( - \frac{r}{1+\rho} \cdot \log n \right) \\
        &= \frac{1}{\sqrt{1-\rho^2}} \exp\left(-\frac{1}{2} \left(Qx -  M(\mu_n \cdot \mathbf{1}_2) \right)(W^{-1} - I_2)\left(Qx - M(\mu_n \cdot \mathbf{1}_2) \right) \right) \exp\left(-\frac{r}{\rho} \cdot \log n \right)
    \end{align*}
    where \(M = (W^{-1} - I_2)^{-1}W^{-1}Q\). Further simplification yields 
    \begin{align*}
        & \frac{q_n}{p}(x) \\
        &= \frac{1}{\sqrt{1-\rho^2}} \exp\left(-\frac{1}{2} \left(Qx + \frac{\sqrt{2r\log n}}{\rho} e_1 \right) (W^{-1} - I_2) \left(Qx + \frac{\sqrt{2r\log n}}{\rho} e_1 \right) \right) \exp\left(-\frac{r}{\rho} \cdot \log n \right)
    \end{align*}
    where \(e_1 = (1, 0)\) is the first standard basis vector in \(\R^2\). A multivariate analogue of the argument in Example \ref{example:heteroscedastic} establishes that the tail condition (\ref{eqn:varadhan_tail_condition}) is satisfied. Thus, we can leverage Corollay \ref{corollary:tight_limit} to obtain the detection boundary. 
    
    Consider that 
    \begin{align*}
        \frac{\log \frac{q_n}{p}(X)}{\log n} &= -\frac{\log(1-\rho^2)}{2 \log n} - \frac{\left(QX + \frac{\sqrt{2r\log n}}{\rho} e_1 \right) (W^{-1} - I_2) \left(QX + \frac{\sqrt{2r\log n}}{\rho} e_1 \right)}{2 \log n} - \frac{r}{\rho}
    \end{align*}
    To deduce a large deviation principle for \(\left\{\frac{\log \frac{q_n}{p}}{\log n}\right\}\), we apply the contraction principle (Theorem \ref{thm:contraction_principle}). Consider that under the null, 
    \begin{equation*}
        \frac{\left(QX + \frac{\sqrt{2r \log n}}{\rho} e_1 \right)}{\sqrt{2 \log n}} \sim N\left(\frac{\sqrt{r}}{\rho} e_1, \frac{1}{2\log n} I_2 \right).
    \end{equation*}
    Following an argument similar to that of (\ref{example:gaussian_ldp}) in Example \ref{example:low_rank_perturbation}, it follows that \(\frac{\left(QX + \frac{\sqrt{2r \log n}}{\rho} e_1 \right)}{\sqrt{2 \log n}}\) satisfies a large deviation principle with respect to speed \(\left\{\frac{1}{\log n}\right\}\) and good rate function \(J : \R^2 \to [0, \infty]\) given by 
    \begin{align*}
        J(t) = \left| \left|t - \frac{\sqrt{r}}{\rho} e_1 \right|\right|^2. 
    \end{align*}
    Consider the function \(f : \R^2 \to \R\) given by \(f(x) = -\frac{r}{\rho} - t^\intercal (W^{-1} - I_2) t\). Applying the contraction principle (Theorem \ref{thm:contraction_principle}), it follows that \(-\frac{\left(QX + \frac{\sqrt{2r\log n}}{\rho} e_1 \right) (W^{-1} - I_2) \left(QX + \frac{\sqrt{2r\log n}}{\rho} e_1 \right)}{2 \log n} -\frac{r}{\rho}\) satisfies a large deviation principle with good rate function 
    \begin{align*}
        I(y) &:= \inf\{J(t) : y = f(t)\} \\
        &= \inf\left\{\left|\left| t - \frac{\sqrt{r}}{\rho}e_1 \right|\right|^2 : y = -\frac{r}{\rho} - t^\intercal (W^{-1} - I_2) t \right\}
    \end{align*}
    We can expand to obtain 
    \begin{equation}\label{eqn:correlated_pairs_I}
        I(y) = \inf\left\{t_1^2 + t_2^2 - \frac{2\sqrt{r}}{\rho}t_1 + \frac{r}{\rho^2} : y = -\frac{r}{\rho} + \frac{\rho}{1+\rho}t_1^2 - \frac{\rho}{1-\rho}t_2^2 \right\}
    \end{equation}
    Since \(-\frac{\log(1-\rho^2)}{2 \log n}\) is a deterministic sequence converging to zero, it follows that, under the null, \(\frac{\log \frac{q_n}{p}(X)}{\log n}\) is exponentially equivalent to \(-\frac{\left(QX + \frac{\sqrt{2r\log n}}{\rho} e_1 \right) (W^{-1} - I_2) \left(QX + \frac{\sqrt{2r\log n}}{\rho} e_1 \right)}{2 \log n} -\frac{r}{\rho}\) with respect to speed \(\left\{\frac{1}{\log n}\right\}\). Hence, \(\left\{\frac{\log \frac{q_n}{p}(X)}{\log n}\right\}\) satisfies the large deviation principle under the null with good rate function \(I\). To obtain an explicit expression for \(I\), we consider the two cases \(\rho > 0\) and \(\rho < 0\) separately. 

    \textbf{Case 1:} Suppose \(\rho > 0\). Examining the optimization problem (\ref{eqn:correlated_pairs_I}), note that the constraint implies that 
    \begin{equation*}
        t_2^2 = -\frac{1-\rho}{\rho} \left(y + \frac{r}{\rho} - \frac{\rho}{1+\rho}t_1^2\right). 
    \end{equation*}
    Since \(t_2^2 \geq 0\), we have the constraint \(-\frac{1-\rho}{\rho} \left(y + \frac{r}{\rho} - \frac{\rho}{1+\rho}t_1^2\right) \geq 0\). Since \(0 < \rho < 1\), it follows that we must have \(t_1^2 \geq \frac{1+\rho}{\rho^2}(\rho y + r)\). In other words, the problem (\ref{eqn:correlated_pairs_I}) is equivalently written as 
    \begin{equation}\label{eqn:correlated_pairs_opt}
        \min_{t_1 \in \R} \, \left\{ t_1^2 - \frac{1-\rho}{\rho} \left(y + \frac{r}{\rho} - \frac{\rho}{1+\rho}t_1^2\right) - \frac{2\sqrt{r}}{\rho}t_1 + \frac{r}{\rho^2}\right\} \text{ s.t. } t_1^2 \geq \frac{1+\rho}{\rho^2}\left(\rho y + r\right).
    \end{equation}
    If \(\rho y + r < 0\), then the constraint is trivial as we always have \(t_1^2 \geq 0\). Then (\ref{eqn:correlated_pairs_opt}) is an unconstrained optimization problem. Taking a derivative of the objective function with respect to \(t_1\) and finding the zero shows that the minimum is achieved at \(t_1 = \frac{1+\rho}{\rho} \frac{\sqrt{r}}{2}\). Consequently, we have \(I(y) = \frac{(\rho - 1)(r+2\rho y)}{2\rho^2}\). On the other hand, if \(\rho y + r \geq 0\), then the constraint is no longer trivial. Letting \(f(t_1)\) denote the objective function, observe that 
    \begin{equation*}
        f'(t_1) = \frac{4}{1+\rho} t_1 - \frac{2\sqrt{r}}{\rho}.
    \end{equation*}
    The unique zero of \(f'\) is \(\frac{1+\rho}{\rho}\frac{\sqrt{r}}{2}\). Noting that our constraint is \(t_1^2 \geq \frac{1+\rho}{\rho^2}(\rho y + r)\), observe that \(\frac{(1+\rho)^2}{\rho^2} \frac{r}{4} \geq \frac{1+\rho}{\rho^2}(\rho y + r)\) if and only if \(\rho y + r \leq \frac{(1 + \rho)r}{4}\). Therefore, it follows that if \(\rho y + r \leq \frac{(1+\rho)r}{4}\), then (\ref{eqn:correlated_pairs_opt}) is achieved at \(t_1 = \frac{1+\rho}{\rho} \frac{\sqrt{r}}{2}\) and so \(I(y) = \frac{(\rho-1)(r+2\rho y)}{2\rho^2}\). If, however, \(\rho y + r > \frac{(1+\rho) r}{4}\), then it follows that \(\frac{(1+\rho)}{\rho} \frac{\sqrt{r}}{2} < \sqrt{\frac{1+\rho}{\rho^2}(\rho y + r)}\). Consequently \(f'(t_1) \geq f'\left(\frac{1+\rho}{\rho} \frac{\sqrt{r}}{2}\right) = 0\) for all \(t_1 \geq \sqrt{\frac{1+\rho}{\rho^2}(\rho y + r)}\). Furthermore, since \(\frac{(1+\rho)}{\rho} \frac{\sqrt{r}}{2} \geq 0 > -\sqrt{\frac{1+\rho}{\rho^2}(\rho y + r)}\), it follows that \(f'(t_1) \leq 0\) for all \(t_1 \leq -\sqrt{\frac{1+\rho}{\rho^2}(\rho y + r)}\). Direct comparison shows that (\ref{eqn:correlated_pairs_opt}) is achieved at \(t_1 = \sqrt{\frac{(1+\rho)}{\rho^2}(\rho y + r)}\) which yields \(I(y) = \frac{1+\rho}{\rho^2} \left(\sqrt{\rho y + r} - \sqrt{\frac{r}{1+\rho}}\right)^2\). To summarize, we have shown that if \(\rho > 0\), then 
    \begin{equation}\label{eqn:positive_corr_rate}
        I(y) = 
        \begin{cases}
            \frac{(\rho-1)(r+2\rho y)}{2\rho^2} & \text{if } \rho y + r \leq \frac{(1+\rho)r}{4}, \\
            \frac{1+\rho}{\rho^2} \left(\sqrt{\rho y + r} - \sqrt{\frac{r}{1+\rho}}\right)^2 & \text{if } \rho y + r > \frac{(1+\rho) r}{4}.
        \end{cases}
    \end{equation}

    \textbf{Case 2:} Suppose \(\rho < 0\). As in Case 1, we have \(t_2^2 = -\frac{1-\rho}{\rho} \left(y + \frac{r}{\rho} - \frac{\rho}{1+\rho} t_1^2\right)\). Likewise, the constraint \(t_2^2 \geq 0\) along with \(-1 < \rho < 0\) implies that we have the constraint \(t_1^2 \geq \frac{1+\rho}{\rho^2}(\rho y + r)\). Therefore, the problem (\ref{eqn:correlated_pairs_I}) is again equivalently given by (\ref{eqn:correlated_pairs_opt}). Of course, the difference here compared to Case 1 is that \(\rho < 0\). Nonetheless, the analysis proceeds similarly. If \(\rho y + r < 0\), then the constraint is trivial, and so \(I(y) = \frac{(\rho - 1)(r+2\rho y)}{2\rho^2}\) as argued in Case 1. On the other hand, if \(\rho y + r \geq 0\), then the constraint is nontrivial. As in Case 1, if we additionally have \(\rho y + r \leq \frac{(1+\rho) r}{4}\), then it follows that \(I(y) = \frac{(\rho - 1)(r+ 2\rho y)}{2\rho^2}\). 
    
    Now suppose we instead have \(\rho y + r > \frac{(1+\rho) r}{4}\). Let \(f(t_1)\) denote the objective function of (\ref{eqn:correlated_pairs_opt}) and note that \(f'(t_1) = \frac{4}{1+\rho} t_1 - \frac{2\sqrt{r}}{\rho}\). Then it follows per the analysis in Case 1 that \(\frac{(1+\rho)^2}{\rho^2} \frac{r}{4} < \frac{1+\rho}{\rho^2} (\rho y + r)\). Since \(-1 < \rho < 0\), it follows that \(0 > \frac{1+\rho}{\rho} \frac{\sqrt{r}}{2} > - \sqrt{\frac{1+\rho}{\rho^2}(\rho y + r)}\). Consequently, we have \(f'(t_1) \leq 0\) for all \(t_1 \leq -\sqrt{\frac{(1+\rho)}{\rho^2} (\rho y + r)}\) and \(f'(t_1) \geq 0\) for all \(t_1 \geq \sqrt{\frac{(1+\rho)}{\rho^2}(\rho y + r)}\). Hence, (\ref{eqn:correlated_pairs_opt}) is achieved at either \(t_1 = \pm \sqrt{\frac{(1+\rho)}{\rho^2}(\rho y + r)}\). Since \(\rho < 0\), direct comparison indicates that the minimum is achieved at \(t_1 = -\sqrt{\frac{(1+\rho)}{\rho^2}(\rho y + r)}\), yielding \(I(y) = \frac{1+\rho}{\rho^2}\left(\sqrt{\rho y + r} - \sqrt{\frac{r}{1+\rho}}\right)^2\). To summarize, we have shown that if \(\rho < 0\), then 
    \begin{equation} \label{eqn:negative_corr_rate}
        I(y) = 
        \begin{cases}
            \frac{(\rho-1)(r+2\rho y)}{2\rho^2} & \text{if } \rho y + r \leq \frac{(1+\rho)r}{4}, \\
            \frac{1+\rho}{\rho^2} \left(\sqrt{\rho y + r} - \sqrt{\frac{r}{1+\rho}}\right)^2 & \text{if } \rho y + r > \frac{(1+\rho) r}{4}.
        \end{cases}
    \end{equation}

    We see that the rate function is exactly the same in both cases, that is, we have exact equality of (\ref{eqn:positive_corr_rate}) and (\ref{eqn:negative_corr_rate}).

    \subsubsection{Determining the detection boundary}
    We can now deduce the detection boundary. We consider the cases \(\rho > 0\) and \(\rho < 0\) separately.
    
    \textbf{Case 1:} Suppose \(\rho > 0\). Note that \(t \geq 0\) implies \(\rho t + r \geq r > \frac{(1+\rho) r}{4}\) because \(r > 0\) and \(0 < \rho < 1\). Referring to (\ref{eqn:positive_corr_rate}), we see that \(I(t) = \frac{1+\rho}{\rho^2} \left(\sqrt{\rho t + r} - \sqrt{\frac{r}{1+\rho}}\right)^2\) for \(t \geq 0\). Remarkably, this is precisely the rate function obtained in Example \ref{example:heteroscedastic} with \(\rho = \sigma^2 - 1\). Therefore, we obtain the detection boundary of Cai, Jeng, and Jin \cite{caiOptimalDetectionHeterogeneous2011} with \(\rho = \sigma^2 - 1\), that is,
    \begin{equation*}
        \beta^*(r, \rho) =
        \begin{cases}
            \frac{1}{2} + \frac{r}{1 - \rho} & \text{if } 2\sqrt{r} + \rho \leq 1, \\
            1 - \frac{(1-\sqrt{r})_{+}^2}{1+\rho} &\text{if } 2\sqrt{r} + \rho > 1.
        \end{cases}
    \end{equation*}
    We warn the reader that we have parametrized the mean \(\mu = \sqrt{r \log n}\) here, whereas the mean has parametrization \(\sqrt{2r \log n}\) in Example \ref{example:heteroscedastic}. This difference in a factor of \(\sqrt{2}\) reflects the difference in dimension between the two settings, namely that \(||\mathbf{1}_2|| = \sqrt{2}\).
        
    \textbf{Case 2:} Suppose \(\rho < 0\). The course of the calculation will reveal that Corollary \ref{corollary:tight_limit} holds and so
    \begin{equation*}
        \beta^*(r, \rho) = \frac{1}{2} + 0 \vee \sup_{t \geq 0} \left\{t - I(t) + \frac{1 \wedge I(t)}{2} \right\}.
    \end{equation*}
    Examining (\ref{eqn:negative_corr_rate}), observe that \(\rho y + r \leq \frac{(1+\rho)r}{4}\) if and only if \(t \geq \frac{\rho - 3}{\rho} \frac{r}{4}\). Consequently, we can write 
    \begin{equation*}
        \beta^*(r, \rho) = \frac{1}{2} + 0 \vee E \vee F
    \end{equation*}
    where 
    \begin{align*}
        E &= \sup_{t \geq \frac{\rho-3}{\rho} \frac{r}{4}} \left\{ t - \frac{(\rho - 1)(r + 2\rho t)}{2\rho^2} + \frac{1 \wedge \left[\frac{(\rho - 1)(r + 2\rho t)}{2\rho^2} \right]}{2} \right\}, \\
        F &= \sup_{0 \leq t < \frac{\rho-3}{\rho} \frac{r}{4}} \left\{ t -  \left(\frac{1+\rho}{\rho^2} \left(\sqrt{\rho t + r} - \sqrt{\frac{r}{1+\rho}}\right)^2\right) + \frac{1 \wedge\left[\frac{1+\rho}{\rho^2} \left(\sqrt{\rho t + r} - \sqrt{\frac{r}{1+\rho}}\right)^2 \right]}{2}\right\}.
    \end{align*}
    We examine each optimization problem separately. Examining \(E\) first, it's clear that the supremum is always achieved at the left endpoint, that is, at \(t = \frac{\rho-3}{\rho} \frac{r}{4}\). Examining (\ref{eqn:negative_corr_rate}), it is easily checked that \(I\) is continuous. Since the supremum is achieved at the left endpoint for \(E\), it immediately follows that 
    \begin{equation}\label{eqn:neg_corr_opt}
        E \vee F = \sup_{0 \leq t \leq \frac{\rho - 3}{\rho} \frac{r}{4}} \left\{ t -  \left(\frac{1+\rho}{\rho^2} \left(\sqrt{\rho t + r} - \sqrt{\frac{r}{1+\rho}}\right)^2\right) + \frac{1 \wedge\left[\frac{1+\rho}{\rho^2} \left(\sqrt{\rho t + r} - \sqrt{\frac{r}{1+\rho}}\right)^2 \right]}{2}\right\}.
    \end{equation}
    Following our approach in Example \ref{example:heteroscedastic}, let us make the change of variable for \(0 \leq t \leq \frac{\rho - 3}{\rho} \frac{r}{4}\)
    \begin{equation*}
        s = I(t) = \frac{1+\rho}{\rho^2} \left(\sqrt{\rho t + r} - \sqrt{\frac{r}{1+\rho}}\right)^2. 
    \end{equation*}
    Rearranging, we have 
    \begin{align*}
        t &= \frac{1}{\rho} \left( \frac{\rho \sqrt{s}}{\sqrt{1+\rho}} + \frac{\sqrt{r}}{\sqrt{1+\rho}} \right)^2 - \frac{r}{\rho} \\
          &= \frac{\rho^2 s + 2\rho \sqrt{sr} + r - r (1+\rho)}{\rho (1+\rho)} \\
          &= \frac{\rho s + 2\sqrt{sr} - r}{1+\rho} \\
          &= s - \frac{s - 2\sqrt{sr} + r}{1+\rho} \\
          &= s - \frac{(\sqrt{s} - \sqrt{r})^2}{1+\rho}.
    \end{align*}
    Since \(\rho < 0\) and \(s > 0\), it follows by some rearrangement that \(t \geq 0\) if and only if \(s \geq r \frac{(1-\sqrt{1+\rho})^2}{\rho^2}\). Likewise, we have that \(t \leq \frac{\rho - 3}{\rho} \frac{r}{4}\) if and only if \(s \geq \frac{(\rho+3)^2}{4\rho^2} r\) or \(s \leq \frac{(\rho-1)^2}{4\rho^2} r\). Consequently, the optimization problem (\ref{eqn:neg_corr_opt}) can be written as
    \begin{equation}\label{eqn:neg_final_opt}
        \left[\sup_{\frac{(1-\sqrt{1+\rho})^2}{\rho^2}r \leq s \leq \frac{(\rho-1)^2}{4\rho^2} r} \left\{ \left(s - \frac{(\sqrt{s} - \sqrt{r})^2}{1+\rho}\right) - s + \frac{1 \wedge s}{2}  \right\}\right] \vee \left[ \sup_{s \geq \frac{(\rho+3)^2}{4\rho^2}r} \left\{ \left(s - \frac{(\sqrt{s} - \sqrt{r})^2}{1+\rho}\right) - s + \frac{1 \wedge s}{2}  \right\} \right].
    \end{equation}
    To solve this optimization problem, we will actually show (\ref{eqn:neg_final_opt}) achieves the same value as 
    \begin{equation}\label{eqn:full_opt}
        \sup_{s \geq 0} \left\{ - \frac{(\sqrt{s} - \sqrt{r})^2}{1+\rho} + \frac{1 \wedge s}{2} \right\}.
    \end{equation}
    Clearly the quantity (\ref{eqn:full_opt}) larger than or equal to (\ref{eqn:neg_final_opt}) as the maximization is over a superset. In fact, we will show equality. This is useful in deducing the detection boundary because (\ref{eqn:full_opt}) is precisely the optimization problem that yields the Cai, Jeng, and Jin detection boundary \cite{caiOptimalDetectionHeterogeneous2011} with \(1+\rho = \sigma^2\) as Cai and Wu showed in Section V.C of \cite{caiOptimalDetectionSparse2014}. In particular, we have (Section V.C \cite{caiOptimalDetectionSparse2014})
    \begin{equation*}
        \sup_{s \geq 0} \left\{ - \frac{(\sqrt{s} - \sqrt{r})^2}{1+\rho} + \frac{1 \wedge s}{2} \right\} = 
        \begin{cases}
            \frac{r}{1-\rho} &\text{if } 2\sqrt{r} + (1+\rho)\leq 2, \\
            \frac{1}{2} - \frac{(1-\sqrt{r})_{+}^2}{1+\rho} &\text{if } 2\sqrt{r} + (1+\rho) > 2.
        \end{cases}
    \end{equation*}
    It's clear from this that if \(2\sqrt{r} + (1+\rho) \leq 2\), then a maximizer of (\ref{eqn:full_opt}) is \(s = \frac{4r}{(1-\rho)^2}\). We claim that \(s = \frac{4r}{(1-\rho)^2}\) is also a maximizer of (\ref{eqn:neg_final_opt}) when \(2\sqrt{r} + (1+\rho) \leq 2\). It suffices to show that \(\frac{(1-\sqrt{1+\rho})^2}{\rho^2} r \leq \frac{4r}{(1-\rho)^2} \leq \frac{(\rho-1)^2}{4\rho^2} r\). To see the upper bound, consider that \(16 \leq \frac{(\rho - 1)^4}{\rho^2}\) for all \(\rho < 0\). Consequently, we have \(16\rho^2 \leq (\rho-1)^4\), which yields \(\frac{4}{(1-\rho)^2} \leq \frac{(\rho - 1)^2}{4\rho^2}\). This clearly implies \(\frac{4r}{(1-\rho)^2} \leq \frac{(\rho - 1)^2}{4\rho^2}r\) and so we have the upper bound. To see the lower bound, consider that \(\frac{\rho^2}{(1-\rho)^2(1-\sqrt{1+\rho})^2} \geq \frac{1}{4}\) for all \(\rho > -1\). Consequently, we have \(\frac{4}{(1-\rho)^2} \geq \frac{(1-\sqrt{1+\rho})^2}{\rho^2}\), which immediately implies \(\frac{(1-\sqrt{1+\rho})^2}{\rho^2} r \leq \frac{4r}{(1-\rho)^2}\). Hence, we have shown that (\ref{eqn:full_opt}) and (\ref{eqn:neg_final_opt}) are equal if \(2\sqrt{r} + (1+\rho) \leq 2\).

    Now suppose \(2 \sqrt{r} + (1 + \rho) > 2\) and \(r < 1\). It's clear that a maximizer of (\ref{eqn:full_opt}) is \(s =  1\). We claim \(s = 1\) is also a maximizer of (\ref{eqn:neg_final_opt}). It suffices to show \(\frac{(1-\sqrt{1+\rho})^2}{\rho^2}r \leq 1 \leq \frac{(\rho-1)^2}{4\rho^2} r\). To see the upper bound, consider that \(2\sqrt{r} + (1+\rho) > 2\) implies \(\sqrt{r} \geq \frac{1-\rho}{2}\), and so \(r \geq \frac{(1-\rho)^2}{4}\). Since \(\rho < 0\), it follows that \(1 \leq \frac{(\rho-1)^4}{16\rho^2} \leq \frac{(\rho-1)^2 r}{4\rho^2}\) and so we have the upper bound. To see the lower bound, note that \(r < 1\) implies \(\frac{(1-\sqrt{1+\rho})^2}{\rho^2} r \leq \frac{(1-\sqrt{1+\rho})^2}{\rho^2}\). Since \(-1 < \rho\), it follows that \(\frac{(1-\sqrt{1+\rho})^2}{\rho^2} \leq 1\), and so we have the lower bound. Hence, we have shown that (\ref{eqn:full_opt}) and (\ref{eqn:neg_final_opt}) are equal if \(2\sqrt{r} + (1+\rho) > 2\) and \(r < 1\).

    The remaining case is when \(2 \sqrt{r} + (1+\rho) > 2\) and \(r \geq 1\). It's clear that a maximizer of (\ref{eqn:full_opt}) is \(s = r\). We claim that \(s = r\) is also a maximizer of (\ref{eqn:neg_final_opt}). It suffices to show \(\frac{(1-\sqrt{1+\rho})^2}{\rho^2}r \leq r \leq \frac{(\rho-1)^2}{4\rho^2}r\). The upper bound follows immediately from the fact that \(1 \leq \frac{(\rho-1)^2}{4\rho^2}\) for all \(-1 < \rho < 0\). The lower bound follows immediately from the fact that \(1 \geq \frac{(1-\sqrt{1+\rho})^2}{\rho^2}\) for all \(\rho > -1\). Hence, we have shown that (\ref{eqn:full_opt}) and (\ref{eqn:neg_final_opt}) are equal if \(2\sqrt{r} + (1+\rho) > 2\) and \(r \geq 1\). 

    Thus, we obtain the detection boundary 
    \begin{equation*}
        \beta^*(r, \rho) = 
        \begin{cases}
            \frac{1}{2} + \frac{r}{1-\rho} &\text{if } 2\sqrt{r} + \rho \leq 1, \\
            1 - \frac{(1-\sqrt{r})_{+}^2}{1+\rho} &\text{if } 2\sqrt{r} + \rho > 1
        \end{cases}
    \end{equation*}
    when \(\rho < 0\). Note that this is precisely the detection boundary of Cai, Jeng, and Jin \cite{caiOptimalDetectionHeterogeneous2011} with \(1+\rho = \sigma^2\). Furthermore, the detection boundaries exactly between the two cases \(\rho > 0\) and \(\rho < 0\). Again, we warn the reader that we have parametrized the mean \(\mu = \sqrt{r \log n}\) here, whereas the mean has parametrization \(\sqrt{2r \log n}\) in Example \ref{example:heteroscedastic}. As mentioned earlier, this difference in a factor of \(\sqrt{2}\) is due to the difference of dimension between the two settings, that is, \(||\mathbf{1}_2|| = \sqrt{2}\).
    
    \subsubsection{Higher Criticism}
    It is straight-forward to check that the rate function \(I\) is indeed convex and that the other conditions of Theorem \ref{thm:HC_tight} hold. The test \(\psi_{\HC_n^*}\) given by (\ref{eqn:HC_test}) achieves the detection boundary.

    \subsection{Stochastic Block Model}
    In some areas (such as sociology, political science, and neuroscience), the observational units exhibit relationships amongst one another thereby forming a network. The field of network analysis deals with addressing statistical questions regarding the observed network, such as determining whether there exist latent communities in the network, identifying the communities if they do exist, and possibly estimating parameters of a statistical model. We refer the reader to a recent survey \cite{gaoMinimaxRatesNetwork2021} covering fundamental statistical limits in a number of estimation and testing tasks in network analysis.

    The stochastic block model (SBM) \cite{hollandStochasticBlockmodelsFirst1983} is a popular model for capturing the presence of communities in a network. We refer the reader to the survey \cite{abbeCommunityDetectionStochastic2018} for further background. In the simplest case of two communities, consider \(n\) nodes and let \(z \in \{-1, 1\}^n\) denote the community membership for the \(n\) nodes, i.e. \(z_i\) denotes to which community node \(i\) belongs. The statistician's observation is the random symmetric matrix \(A \in \{0, 1\}^{n \times n}\), where \(A_{ij} = A_{ji} = 1\) denotes the presence of an edge between nodes \(i\) and \(j\). Likewise, \(A_{ij} = A_{ji} = 0\) denotes the absence of an edge. The data generating process is determined by the community structure, namely for \(1 \leq i < j \leq n\), we have independent draws
    \begin{equation*}
        A_{ij} \sim 
        \begin{cases}
            \Bernoulli(p) &\text{if } z_i = z_j, \\
            \Bernoulli(q) &\text{if } z_i \neq z_j. 
        \end{cases}
    \end{equation*}
    We set \(A_{ji} = A_{ij}\) to enforce symmetry and \(A_{ii} = 0\) to disallow self-loops. Here, \(p,q \in (0, 1)\) are parameters. Note that \(p\) gives the probability of an edge between two nodes in the same community and \(q\) gives the probability of an edge between two nodes in different communities. When \(p > q\) the SBM is said to be assortative; otherwise, the SBM is said to be disassortative. There is a litany of statistical tasks associated with the SBM; perhaps the most popular is the community detection problem (that is, estimation of \(z\) under a suitable loss function) \cite{abbeCommunityDetectionStochastic2018, zhangMinimaxRatesCommunity2016}.

    To illustrate an application of our results, we will consider a sparse mixture detection problem related to two sample testing of SBMs. Suppose we observe two independent SBMs on the same set of \(n\) nodes. We have a node of interest, say node \(i^*\), and the statistical problem is to determine whether \(i^*\) is in a community with the same members in each observed network. As an example of a practical application, suppose we observe brain networks consisting of \(n\) neurons from healthy individuals and diseased individuals, and the scientific question of interest is to determine whether the connectivity of neuron \(i^*\) to neighboring neurons differs between healthy and diseased individuals. Another example entails detecting whether the connectivity of gene \(i^*\) in a gene regulatory network is different between healthy and diseased individuals (or control and treatment groups, etc.). An extension of this question is to test whether the community structure of the full network is the same between the two observed SBMs rather than exclusively focusing on node \(i^*\). In this line of work, \cite{gangradeTwoSampleTestingCan2018} has addressed the two community case. 
    
    \subsubsection{Two Sample Testing}
    Let \(z, \sigma \in \{-1, 1\}^{n+1}\) denote community membership for a common set of \(n+1\) nodes (considering \(n+1\) nodes versus \(n\) nodes is only for convenience). Let \(A, B \in \R^{n+1 \times n+1}\) be random symmetric matrices where \(A_{ii} = B_{ii} = 0\) for all \(1 \leq i \leq n+1\) and 
    \begin{align*}
        A_{ij} &\sim \begin{cases}
            \Bernoulli(p) & \text{if } \sigma_i = \sigma_j, \\
            \Bernoulli(q) & \text{if } \sigma_i \neq \sigma_j,
        \end{cases}
        \\
        B_{ij} &\sim 
        \begin{cases}
            \Bernoulli(p) &\text{if } z_i = z_j, \\
            \Bernoulli(q) &\text{if } z_i \neq z_j.
        \end{cases}
    \end{align*}
    for \(1 \leq i < j \leq n+1\) are drawn independently. Here, \(p, q \in (0, 1)\) are parameters. To ensure symmetry, set \(A_{ji} := A_{ij}\) and \(B_{ji} := B_{ij}\). As mentioned, the problem of interest is to determine whether node \(i^*\) has the same community members between the two SBMs \(A\) and \(B\). Without loss of generality, let us take \(i^* = 1\). To formally state the testing problem, let us define 
    \begin{align*}
        S_A &:= \left\{ 2 \leq j \leq n+1 : \sigma_j = \sigma_1 \right\}, \\
        S_B &:= \left\{ 2 \leq j \leq n+1 : z_j = z_1 \right\}.
    \end{align*}
    Concretely, the testing problem is to test, for \(\varepsilon > 0\),
    \begin{align}\label{problem:sbm_original}
        H_0 &: S_A = S_B, \\
        H_1 &:  \frac{|S_A \,\Delta\, S_B|}{n} > \varepsilon.
    \end{align}
    Here, \(\Delta\) denotes symmetric difference and so \(S_A \,\Delta\, S_B = \{ 2 \leq j \leq n+1 : \sigma_j = \sigma_1, z_j \neq z_1 \text{ or } \sigma_j \neq \sigma_1, z_j = z_1\}\). For ease of notation, let us denote \(B(\pi) := \Bernoulli(\pi)\) for \(\pi \in (0, 1)\). Under the null hypothesis, 
    \begin{equation*}
        \left( \begin{matrix} A_{1j} \\ B_{1j} \end{matrix}\right) \sim \mathbf{1}_{\{\sigma_1 = \sigma_j\}} B(p) \otimes B(p)  + \mathbf{1}_{\{\sigma_1 \neq \sigma_j\}}B(q) \otimes B(q)
    \end{equation*}
    independently for \(2 \leq j \leq n+1\). Under the alternative, there are two cases,
    \begin{equation*}
        \left(\begin{matrix} A_{1j} \\ B_{1j} \end{matrix} \right) \sim 
        \begin{cases}
            \mathbf{1}_{\{\sigma_1 = \sigma_j\}}  B(p) \otimes B(p) + \mathbf{1}_{\{\sigma_{1} \neq \sigma_j\}} B(q) \otimes B(q) &\text{if } j \in (S_A \, \Delta \, S_B)^c,\\
            \mathbf{1}_{\{\sigma_1 = \sigma_j\}}  B(p) \otimes B(q) + \mathbf{1}_{\{\sigma_1 \neq \sigma_j\}} B(q) \otimes B(p) &\text{if } j \in S_A \, \Delta \, S_B.
        \end{cases}
    \end{equation*}
    Recall that under the alternative, \(n^{-1} |S_A \, \Delta \, S_B| > \varepsilon\). Assuming the two communities in \(A\) are of roughly equal size, we can formulate the related sparse mixture detection problem
    \begin{align}
        H_0 &: \left( \begin{matrix} A_{1j} \\ B_{1j} \end{matrix}\right) \overset{iid}{\sim} \frac{1}{2} \cdot B(p) \otimes B(p) + \frac{1}{2} \cdot B(q) \otimes B(q), \label{problem:sbm_problem1} \\
        H_1 &: \left( \begin{matrix} A_{1j} \\ B_{1j} \end{matrix}\right) \overset{iid}{\sim} (1-\varepsilon) \left[\frac{1}{2} \cdot B(p) \otimes B(p) + \frac{1}{2} \cdot B(q) \otimes B(q) \right] + \varepsilon \left[\frac{1}{2} \cdot B(p) \otimes B(q) + \frac{1}{2} \cdot B(q) \otimes B(p)\right]. \label{problem:sbm_problem2}  
    \end{align}
    Note that the indices run over \(2 \leq j \leq n+1\). Adopting an asymptotic perspective, we use the calibration \(\varepsilon = n^{-\beta}\) as in (\ref{eqn:beta_sparsity}). Furthermore, set \(p = \frac{n^r}{1+n^r}\) and \(q = \frac{1}{1+n^r}\) where \(r > 0\). Taking \(P_n = \frac{1}{2}B(p) \otimes B(p) + \frac{1}{2} B(q) \otimes B(q)\) and \(Q_n = \frac{1}{2} B(p) \otimes B(q) + \frac{1}{2}B(q) \otimes B(p)\), we are exactly in the setting of testing (\ref{problem:sparse_mixture_detection_1})-(\ref{problem:sparse_mixture_detection_2}). 

    \subsubsection{Checking the tail condition}
    We first check that the tail condition (\ref{eqn:varadhan_tail_condition}) holds. Consider that for \((a, b) \in \{0, 1\}^2\),
    \begin{align*}
        \frac{q_n}{p_n}(a, b) &= \frac{\frac{1}{2}\left(\frac{p}{1-p}\right)^a\left(\frac{q}{1-q}\right)^b(1-p)(1-q) + \frac{1}{2}\left(\frac{q}{1-q}\right)^a \left(\frac{p}{1-p}\right)^b(1-p)(1-q)}{\frac{1}{2}\left(\frac{p}{1-p}\right)^{a+b}(1-p)^2 + \frac{1}{2}\left(\frac{q}{1-q}\right)^{a+b}(1-q)^2} \\
        &= \frac{p^a(1-p)^{1-a} q^b(1-q)^{1-b} + p^b (1-p)^{1-b}q^a(1-q)^{1-a}}{p^{a+b}(1-p)^{2-a-b} + q^{a+b}(1-q)^{2-a-b}}.
    \end{align*}
    It immediately follows that 
    \begin{align*}
        \frac{q_n}{p_n}(0, 0) &= \frac{2(1-p)(1-q)}{(1-p)^2 + (1-q)^2} \leq 1,\\
        \frac{q_n}{p_n}(0, 1) &= \frac{(1-p)q + p(1-q)}{p(1-p) + q(1-q)} = \frac{n^{2r}+1}{2n^r}, \\
        \frac{q_n}{p_n}(1, 0) &= \frac{p(1-q) + (1-p)q}{p(1-p) + q(1-q)} = \frac{n^{2r}+1}{2n^r}, \\
        \frac{q_n}{p_n}(1, 1) &= \frac{2pq}{p^2 + q^2} \leq 1.
    \end{align*}
    Thus for any \(\gamma > 1\), we have for \((a, b) \sim P_n\), 
    \begin{align*}
        E\left[ \left(\frac{q_n}{p_n}(a,b)\right)^\gamma \right] &= \left(\frac{q_n}{p_n}(0, 0)\right)^\gamma \left(\frac{p^2 + q^2}{2} \right) + \left(\frac{q_n}{p_n}(0, 1)\right)^\gamma \left( \frac{p(1-p)+q(1-q)}{2} \right) \\
        &\;\; + \left(\frac{q_n}{p_n}(1, 0)\right)^\gamma \left( \frac{p(1-p)+q(1-q)}{2} \right) + \left(\frac{q_n}{p_n}(1, 1)\right)^\gamma \left( \frac{(1-p)^2 + (1-q)^2}{2} \right) \\
        &\leq 2 + \left(\frac{n^{2r}+1}{2n^r}\right)^\gamma \cdot \left(p(1-p) + q(1-q) \right) \\
        &= 2 + \left(\frac{n^{2r}+1}{2n^r}\right)^\gamma \cdot \frac{n^{2r}+1}{(1+n^r)^2} \\
        &= 2 + \frac{(n^{2r}+1)^{\gamma+1}}{2^\gamma n^{\gamma r}(1+n^r)^2}.
    \end{align*}
    It immediately follows that 
    \begin{align*}
        \limsup_{n \to \infty} \frac{\log E\left[ \left(\frac{q_n}{p_n}(a,b)\right)^\gamma \right] }{\log n} \leq \gamma r < \infty
    \end{align*}
    and so the tail condition (\ref{eqn:varadhan_tail_condition}) is satisfied.

    \subsubsection{Finding the rate function}
    We first establish that the normalized log-likelihood ratio satisfies a large deviation principle under the null. Consider that for \((a, b) \in \{0, 1\}^2\),
    \begin{align*}
        \frac{\log \frac{q_n}{p_n}(a,b)}{\log n} &= \frac{\log\left(\frac{n^r}{(1+n^r)^2}\right)}{\log n} + \frac{\log\left(n^{(a-b)r} + n^{(b-a)r}\right)}{\log n} - \frac{\log\left(n^{(a+b)r}(1+n^r)^{-2} + n^{-(a+b)r}n^{2r}(1+n^r)^{-2} \right)}{\log n} \\
        &= r + |a-b| r + \frac{\log\left(1 + n^{-|a-b|r}\right)}{\log n} - \frac{\log(n^{(a+b)r} + n^{-(a+b-2)r})}{\log n} \\
        &= r\left(1 + |a-b| - (a+b)\vee (2-a-b)\right) + \frac{\log\left(1 + n^{-|a-b|r}\right)}{\log n} \\
        &\;\; 
        - \frac{\log\left(1 + n^{[(a+b)r \wedge (2-a-b)r] - [(a+b)r \vee (2-a-b)r]}\right)}{\log n}.
    \end{align*}
    Let us focus on the sequence of random variables \(r(1+|U_n - V_n| - (U_n+V_n) \vee (2 - U_n - V_n))\) under the null \((U_n, V_n) \sim P_n\). The random variables are supported on \(\{-r, r\}\), in particular we have 
    \begin{align*}
        r(1+|U_n - V_n| - (U_n+V_n) \vee (2 - U_n - V_n)) = 
        \begin{cases}
            r &\text{with probability } \frac{2n^r}{(1+n^r)^2}, \\
            -r &\text{with probability } \frac{1 + n^{2r}}{(1+n^r)^2}.
        \end{cases}
    \end{align*}
    It immediately follows that \(r(1+|U_n - V_n| - (U_n+V_n) \vee (2 - U_n - V_n))\) satisfies a large deviation principle with respect to speed \(\left\{ \frac{1}{\log n} \right\}\) and good rate function \(I : \R \to [0, \infty]\) 
    \begin{equation*}
        I(t) =
        \begin{cases}
            0 &\text{if } t = -r, \\
            r &\text{if } t = r, \\
            \infty &\text{otherwise}.
        \end{cases}
    \end{equation*}
    Note that \(I\) is indeed a good rate function since the sublevel sets for \(\alpha \in [0, \infty)\),
    \begin{equation*}
        \{t \in \R : I(t) \leq \alpha\} =
        \begin{cases}
            \{-r, r\} &\text{if } \alpha \geq r, \\
            \{-r\} &\text{otherwise}
        \end{cases}
    \end{equation*}
    are compact. By an argument similar to the ones in previous examples appealing to exponential equivalence (Definition \ref{def:exp_equiv}) and Theorem \ref{thm:same_ldp}, it follows that \(\left\{\frac{\log \frac{q_n}{p_n}}{\log n}\right\}\) satisfies the large deviation principle under the null with good rate function \(I\).

    \subsubsection{Determining the detection boundary}
    We are now able to determine the detection boundary by an application of Corollary \ref{corollary:tight_limit}. Solving the optimization problem (\ref{eqn:tight_beta}) yields the detection boundary 
    \begin{equation}\label{eqn:sbm_beta}
        \beta^*(r) = \frac{1 + 1\wedge r}{2}.
    \end{equation}
    Note Theorem \ref{thm:HC_tight} cannot be applied to deduce optimality of \(\HC^*_n\) since the rate function \(I\) is nonconvex.

    \subsubsection{Univariate reduction without information loss}
    Examining the testing problem once again, a natural idea is to examine the univariate statistic \(U_{j} = A_{1j} + B_{1j} \bmod 2\) for \(2 \leq j \leq n+1\). Note that \(U_{j} = 0\) if and only if \(A_{1j} = B_{1j}\) and \(U_{j} = 1\) if and only if \(A_{1j} \neq B_{1j}\). In other words, \(U_{j}\) indicates whether the matrices \(A\) and \(B\) match at entry \((1, j)\). Observing numerous instances of \(U_{j}\) equal to \(1\) constitutes as evidence against the null. The sparse mixture detection problem corresponding to (\ref{problem:sbm_problem1})-(\ref{problem:sbm_problem2}) is 
    \begin{align}
        H_0 &: U_{j} \overset{iid}{\sim} \frac{1}{2} B(2p(1-p)) + \frac{1}{2} B(2q(1-q)), \label{problem:sbm_reduce1}\\
        H_1 &: U_{j} \overset{iid}{\sim} (1-\varepsilon) \left[\frac{1}{2} B(2p(1-p)) + \frac{1}{2}B(2q(1-q))\right] + \varepsilon B(p(1-q) + q(1-p)) \label{problem:sbm_reduce2}
    \end{align}
    where the indices run over \(2 \leq j \leq n+1\). As before, we take \(\varepsilon = n^{-\beta}\), \(p = \frac{n^r}{1+n^r},\) and \(q = \frac{1}{1+n^r}\) where \(r > 0\). Furthermore, take \(P_n = \frac{1}{2}B(2p(1-p)) + \frac{1}{2}B(2q(1-q))\) and \(Q_n = B(p(1-q) + q(1-p))\). 
    
    The principal question is whether the reduction to \(U_{j}\) incurs a loss of information. Note that \(U_{j}\) is not an invertible function of the vector \((A_{1j}, B_{1j})\), so it is not clear that the detection boundary for testing (\ref{problem:sbm_reduce1})-(\ref{problem:sbm_reduce2}) is equal to the detection boundary (\ref{eqn:sbm_beta}). In fact, one must derive the detection boundary (\ref{eqn:sbm_beta}) in the context of testing (\ref{problem:sbm_reduce1})-(\ref{problem:sbm_reduce2}) in order to even check that a univariate reduction does not incur information loss; without knowing the detection boundary for the testing problem (\ref{problem:sbm_problem1})-(\ref{problem:sbm_problem2}), how can one verify that the reduction to \(U_{j}\) is safe?
    
    It turns out that reducing to \(U_{j}\) does not incur a loss of information, namely the detection boundary corresponding to (\ref{problem:sbm_problem1})-(\ref{problem:sbm_problem2}) is exactly the same as the detection boundary (\ref{eqn:sbm_beta}). We briefly sketch the derivation and omit details for brevity. The analysis follows the typical roadmap showcased in all of our examples. Firstly, it can be shown through direct calculation that for \(u \in \{0, 1\}\)
    \begin{equation*}
        \frac{\log \frac{q_n}{p_n}(u)}{\log n} = (2u-1) \left[\frac{\log 2}{\log n} + \frac{\log(1+n^{-2r})}{\log n}\right] + (2u-1)r.
    \end{equation*}
    Under the null \(U \sim P_n\), it can be shown that 
    \begin{equation*}
        (2U-1) r =
        \begin{cases}
            -r & \text{with probability } \frac{n^{2r} + 1}{(1+n^r)^2}, \\
            r &\text{with probability } \frac{2n^r}{(1+n^r)^2}.
        \end{cases}
    \end{equation*}
    It immediately follows that \((2U-1)r\) satisfies the large deviation principle with respect to speed \(\left\{\frac{1}{\log n}\right\}\) and with good rate function \(I : \R \to [0, \infty]\) given by 
    \begin{equation*}
        I(t) =
        \begin{cases}
            0 &\text{if } t = -r, \\
            r &\text{if } t = r, \\
            \infty &\text{otherwise}.
        \end{cases}
    \end{equation*}
    It can be shown that \((2U-1)r\) and \(\frac{\log \frac{q_n}{p_n}(U)}{\log n}\) are exponentially equivalent with respect to speed \(\left\{\frac{1}{\log n}\right\}\), and so it follows that \(\left\{\frac{\log \frac{q_n}{p_n}}{\log n}\right\}\) satisfies the large deviation principle under the null with good rate function \(I\). After checking the conditions of Corollary \ref{corollary:tight_limit}, we obtain exactly the detection boundary given by (\ref{eqn:sbm_beta}), thus showing that the reduction to \(U_{j}\) does not incur a loss of information.
    
    \subsection{Detection with side information}
    Occasionally in applications, there is additional side information that may be useful in testing the global null hypothesis (\ref{problem:sparse_mixture_detection_1})-(\ref{problem:sparse_mixture_detection_2}). The testing problem (\ref{problem:sparse_mixture_detection_1})-(\ref{problem:sparse_mixture_detection_2}) admits a Bayesian interpretation as the two-groups model \cite{efronMicroarraysEmpiricalBayes2008}. In particular, we have \(n\) individual hypotheses \(\{H_i\}\) corresponding to each observation \(X_i\). We are testing the global null, i.e. whether the individual hypotheses \(H_i\) are null with probability one (that is, \(X_i \sim P_n\)) for all \(1 \leq i \leq n\), against the alternative in which each \(H_i\) has probability \(\varepsilon\) of being non-null (that is, \(X_i \sim (1-\varepsilon)P_n + \varepsilon Q_n\)). With this interpretation, we are in the setting of multiple testing with \(n\) individual hypotheses. This reformulation is usually the situation in which researchers find themselves; for example, each individual hypothesis might correspond to a different gene in a microarray or a different study in a meta-analysis. In such applications, there is usually detailed contextual information attached to each hypothesis and it's desirable to leverage this side information for various hypothesis testing tasks \cite{liAccumulationTestsFDR2017,liMultipleTestingStructureadaptive2019,ignatiadisCovariatePoweredCrossweighted2017,leiAdaPTInteractiveProcedure2018,ferkingstadUnsupervisedEmpiricalBayesian2008,lewingerHierarchicalBayesPrioritization2007,zablockiCovariatemodulatedLocalFalse2014}. 

    We consider a stylized problem to investigate how side information affects fundamental statistical limits. Consider a sequence of \(z\)-scores \(W_i\) for \(1 \leq i \leq n\) which are \(N(0, 1)\) under the null. Under the alternative, an \(\varepsilon\) fraction of the \(W_i\) exhibit elevated mean, that is, \(W_i \sim N(\mu, 1)\) where \(\mu > 0\). The signal detection problem (\ref{problem:sparse_mixture_detection_1})-(\ref{problem:sparse_mixture_detection_2}) with this setup is precisely the sparse normal mixture detection problem considered by Ingster \cite{ingsterProblemsHypothesisTesting1996} as well as Donoho and Jin \cite{donohoHigherCriticismDetecting2004}. However, suppose for each \(1 \leq i \leq n\), we have additional side information (independent of the \(z\)-scores) that provides a clue as to whether \(W_i\) follows the null distribution or the signal distribution. To represent this side information, we will let \(A_i\) denote a Bernoulli random variable in which the outcome \(A_i = 1\) denotes evidence that \(W_i\) follows the signal distribution and the outcome \(A_i = 0\) denotes evidence that \(W_i\) follows the null distribution. For simplicity, say that under the null \(A_i \sim \Bernoulli(1-p)\) and under the alternative \(A_i \sim \Bernoulli(p)\). In other words, the side information correctly identifies both the null and the signal with probability \(p\). While this is a stylized setup, one can think of \(A_i\) as the outcome of a well-trained classifier applied to the side information or an expert's judgement derived from existing scientific knowledge. Stated formally, we have the testing problem
    \begin{align}
        H_0 : \left(\begin{matrix} A_i \\ W_i \end{matrix}\right) &\overset{iid}{\sim} B(1-p) \otimes N(0, 1), \\
        H_1 : \left(\begin{matrix} A_i \\ W_i \end{matrix}\right) &\overset{iid}{\sim} (1-\varepsilon) B(1-p) \otimes N(0, 1) + \varepsilon B(p) \otimes N(\mu, 1)
    \end{align}
    for \(1 \leq i \leq n\). Here, we use the notation \(B(\pi) = \Bernoulli(\pi)\) for \(\pi \in (0, 1)\). Adopting the asymptotic perspective, let us calibrate \(\varepsilon = n^{-\beta}\) as in (\ref{eqn:beta_sparsity}), let us take \(p = \frac{n^r}{1+n^r}\) for \(r > 0\), and let us take \(\mu = \sqrt{2\rho \log n}\) for \(0 < \rho \leq 1\). Furthermore, let us take \(P_n = B(1-p) \otimes N(0, 1)\) and \(Q_n = B(p) \otimes N(\mu, 1)\). Thus, we are in the setting of testing (\ref{problem:sparse_mixture_detection_1})-(\ref{problem:sparse_mixture_detection_2}). We now determine the phase transition. 

    \subsubsection{Checking the tail condition}
    We first check that the tail condition (\ref{eqn:varadhan_tail_condition}) holds. Consider that for \(a \in \{0, 1\}\) and \(w \in \R\),
    \begin{align*}
        \frac{q_n}{p_n}(a, w) &= \frac{\left(\frac{p}{1-p}\right)^{a}(1-p)\exp\left(-\frac{(w-\mu)^2}{2}\right)}{\left(\frac{1-p}{p}\right)^{a}p\exp\left(-\frac{w^2}{2}\right)} \\
        &= \left(\frac{p}{1-p}\right)^{2a-1} \exp\left(w\mu - \frac{\mu^2}{2}\right).
    \end{align*}
    For any \(\gamma > 1\) and when \((a_n, w_n) \sim P_n\), it follows by independence
    \begin{align*}
        E\left[ \left( \frac{q_n}{p_n}(a_n, w_n) \right)^\gamma \right] &= E\left[ \left(\frac{p}{1-p}\right)^{\gamma(2a-1)}\right] E\left[ \exp\left(\gamma w\mu - \frac{\gamma \mu^2}{2}\right) \right] \\
        &= \left[ \left(\frac{p}{1-p}\right)^{-\gamma} \cdot p + \left(\frac{p}{1-p}\right)^{\gamma}\cdot(1-p)\right] \cdot \exp\left(-\frac{\gamma \mu^2}{2}\right) \exp\left( \frac{\gamma^2\mu^2}{2} \right) \\
        &= p \left[\left(\frac{p}{1-p}\right)^{-\gamma} + \left(\frac{p}{1-p}\right)^{\gamma-1} \right] \cdot \exp\left(-\frac{\gamma \mu^2}{2}\right) \exp\left( \frac{\gamma^2\mu^2}{2} \right) \\
        &= \frac{n^r}{1+n^r} \left[ n^{-\gamma r} + n^{\gamma r - r} \right] \cdot \exp\left((\gamma^2 - \gamma) \rho \log n \right).
    \end{align*}
    Since \(\gamma > 1\) and \(r > 0\), it follows that 
    \begin{equation*}
        \limsup_{n \to \infty} \frac{1}{\log n} \log  E\left[ \left( \frac{q_n}{p_n}(a_n, w_n) \right)^\gamma \right] = \gamma r - r + (\gamma^2 - \gamma)\rho < \infty,
    \end{equation*}
    and so the tail condition (\ref{eqn:varadhan_tail_condition}) is satisfied. 

    \subsubsection{Finding the rate function}
    We now establish a large deviation principle under the null and find the corresponding rate function. Consider that for \(a \in \{0, 1\}\) and \(w \in \R\), we have from our above calculation
    \begin{align*}
        \frac{q_n}{p_n}(a, w) &= \left(\frac{p}{1-p}\right)^{2a-1} \exp\left(w\mu - \frac{\mu^2}{2}\right) \\
        &= \exp\left(w\mu - \frac{\mu^2}{2} + (2ar-r)\log n\right).
    \end{align*}
    Therefore, 
    \begin{equation*}
        \frac{\log \frac{q_n}{p_n}(a, w)}{\log n} = \frac{w\mu - \frac{\mu^2}{2}}{\log n} + 2ar - r.
    \end{equation*}
    To derive the large deviations principle, we follow the same method of argument as in previous examples. Recall that if \((a, w) \sim P_n\), then \(w \sim N(0, 1)\) and \(a \sim B(1-p)\) are independent. Since \(\mu = \sqrt{2\rho \log n}\), it follows that 
    \begin{equation*}
        \frac{w\mu - \frac{\mu^2}{2}}{\log n} \sim N\left(-\rho, \frac{2\rho}{\log n}\right).
    \end{equation*}
    By an argument similar to the one in Example \ref{example:brownian_motion}, it follows that \(\left\{\frac{w\mu - \frac{\mu^2}{2}}{\log n}\right\}\) satisfies a large deviation principle with good rate function \(J_1(t) = \frac{(t+\rho)^2}{4\rho}\) and with respect to speed \(\left\{\frac{1}{\log n}\right\}\). Now, observe that \(2ar-r \in \{-r, r\}\) almost surely. Furthermore, since \(a \sim B(1-p)\),
    \begin{align*}
        \frac{\log P\left\{ 2ar-r = r \right\}}{\log n} &= \frac{\log (1-p)}{\log n} = -\frac{\log(1+n^r)}{\log n} \\
        \frac{\log P\left\{ 2ar-r = -r \right\}}{\log n} &= \frac{\log p}{\log n} = r - \frac{\log(1+n^r)}{\log n}
    \end{align*}
    It immediately follows that \(\{2ar-r\}\) satisfies a large deviations principle with respect to speed \(\left\{\frac{1}{\log n}\right\}\) and good rate function \(J_2 : \R \to [0, \infty]\) given by
    \begin{equation*}
        J_2(t) = 
        \begin{cases}
            r &\text{if } t = r, \\
            0 &\text{if } t = -r, \\
            \infty &\text{otherwise}.
        \end{cases}
    \end{equation*}
    To show that \(\left\{\frac{\log \frac{q_n}{p_n}(a, w)}{\log n} \right\}\) satisfies a large deviations principle under the null, consider the function \(f : \R^2 \to \R\) given by \(f(u, v) = u+v\). Consider that \(f\) is a continuous function. Furthermore, recall that \(\frac{w\mu - \frac{\mu^2}{2}}{\log n}\) and \(2ar-r\) are independent. Since \(\left\{\frac{w\mu - \frac{\mu^2}{2}}{\log n}\right\}\) and \(\left\{2ar-r \right\}\) satisfy large deviation principles with good rate functions (and so are exponentially tight by Exercise 1.2.19 of \cite{demboLargeDeviationsTechniques2010}), it follows by Exercise 4.2.7 of \cite{demboLargeDeviationsTechniques2010} that \(\left\{f\left(\frac{w\mu - \frac{\mu^2}{2}}{\log n} , 2ar-r\right)\right\}\) satisfies a large deviation principle with respect to speed \(\left\{\frac{1}{\log n}\right\}\) and good rate function \(I : \R \to [0, \infty]\) given by 
    \begin{align*}
        I(t) &=\inf\left\{J_1(x) + J_2(y) : t = x+y \right\} \\
        &= (J_1(t-r)+r) \wedge J_1(t+r) \\
        &= \left[\frac{(t-r+\rho)^2}{4\rho}+r \right] \wedge \frac{(t+r+\rho)^2}{4\rho}.
    \end{align*}
    It can be directly checked that \(I(t) = \frac{(t-r+\rho)^2}{4\rho} + r\) whenever \(t \geq 0\) since \(r, \rho > 0\). Since \(\frac{\log \frac{q_n}{p_n}}{\log n}(a, w) = f\left(\frac{w\mu - \frac{\mu^2}{2}}{\log n} , 2ar-r\right)\), we have shown that \(\left\{\frac{\log \frac{q_n}{p_n}}{\log n}\right\}\) satisfies a large deviation principle under the null with good rate function \(I\). 

    \subsubsection{Determining the detection boundary}
    We can now determine the detection boundary. By Corollary \ref{corollary:nice_tight_limit}, it follows that the detection boundary is given by
    \begin{equation*}
        \beta^*(r, \rho) = \frac{1}{2} + 0 \vee \sup_{t \geq 0} \left\{t -\frac{(t-r+\rho)^2}{4\rho} - r + \frac{1 \wedge \left(\frac{(t-r+\rho)^2}{4\rho} + r\right)}{2} \right\}
    \end{equation*}
    since \(I(t) = \frac{(t-r+\rho)^2}{4\rho} + r\) whenever \(t \geq 0\). First, consider that if \(r \geq 1\), then we must have \(\frac{(t-r+\rho)^2}{4\rho} + r \geq 1\). Thus, if \(r \geq 1\), then 
    \begin{equation*}
        \beta^*(r, \rho) = \frac{1}{2} + 0 \vee \sup_{t \geq 0} \left\{ t - \frac{(t-r+\rho)^2}{4\rho} - r + \frac{1}{2} \right\}.    
    \end{equation*}
    Direct calculation shows that the maximum is achieved at \(t = \rho + r\). Hence, if \(r \geq 1\), then \(\beta^*(r, \rho) = 1\).\newline
    
    Now let us consider the case \(r < 1\). Note that \(\frac{(t-r+\rho)^2}{4\rho} + r \leq 1\) if and only if \(-2\sqrt{\rho(1-r)} + r - \rho \leq t \leq 2\sqrt{\rho(1-r)} + r - \rho\). Note that since \(0 < r, \rho \leq 1\), we have \(2\sqrt{\rho(1-r)} + r - \rho \geq 0\). We now consider two cases. 
    
    \textbf{Case 1:} Suppose \(-2\sqrt{\rho(1-r)}+r-\rho \geq 0\). Then \(\beta^*(r, \rho) = \frac{1}{2} + 0 \vee \left[ E_1 \vee E_2 \vee E_3 \right]\) where 
    \begin{align*}
        E_1 &:= \sup_{-2\sqrt{\rho(1-r)} + r - \rho \leq t \leq 2\sqrt{\rho(1-r)} + r - \rho} \left\{t -\frac{(t-r+\rho)^2}{8\rho} - \frac{r}{2} \right\} \\
        E_2 &:= \sup_{0 \leq t \leq -2\sqrt{\rho(1-r)} + r - \rho} \left\{t -\frac{(t-r+\rho)^2}{4\rho} - r + \frac{1}{2} \right\} \\
        E_3 &:= \sup_{t \geq 2\sqrt{\rho(1-r)} + r - \rho} \left\{t -\frac{(t-r+\rho)^2}{4\rho} - r + \frac{1}{2} \right\}.
    \end{align*}
    We examine each term separately. Looking at \(E_1\) first, let us define the function \(f\) with \(f(t) := t - \frac{(t-r+\rho)^2}{8\rho} - \frac{r}{2}\). Taking a derivative, we see that \(f'(t) = \frac{3}{4} - \frac{t}{4\rho} + \frac{r}{4\rho}\). Hence, for \(-2\sqrt{\rho(1-r)} + r - \rho \leq t \leq 2\sqrt{\rho(1-r)} + r - \rho\), we have that \(f'(t) \geq 1 - \frac{2\sqrt{\rho(1-r)}}{4\rho}\). Consequently, if \(\rho > \frac{1-r}{4}\), then \(f'(t) > 0\) for all \(t\) which we maximize over. Thus, the maximum is achieved at the right endpoint and so is given by \(f(2\sqrt{\rho(1-r)}+r-\rho) = 2\sqrt{\rho(1-r)} + r - \rho - \frac{1}{2}\). On the other hand, if \(\rho \leq \frac{1-r}{4}\), then the maximum is achieved at the point \(t = 3\rho + r\) which yields \(f(3\rho+r) = \rho + \frac{r}{2}\). To summarize, 
    \begin{equation*}
        E_1 = 
        \begin{cases}
            2\sqrt{\rho(1-r)} + r - \rho - \frac{1}{2} & \text{if } \rho > \frac{1-r}{4}, \\
            \rho + \frac{r}{2} &\text{if } \rho \leq \frac{1-r}{4}.
        \end{cases}
    \end{equation*}

    We now examine \(E_2\). Define the function \(f(t) = t - \frac{(t-r+\rho)^2}{4\rho} - r + \frac{1}{2}\) and consider that \(f'(t) = \frac{1}{2} - \frac{t}{2\rho} + \frac{r}{2\rho}\). Hence, for \(0 \leq t \leq -2\sqrt{\rho(1-r)}+r-\rho\), we have \(f'(t) \geq \frac{1}{2} - \frac{-2\sqrt{\rho(1-r)}+r-\rho}{2\rho} + \frac{r}{2\rho} = 1 + \sqrt{\frac{1-r}{\rho}} > 0\). Consequently, \(f\) is maximized at the right endpoint yielding \(f(-2\sqrt{\rho(1-r)} + r - \rho) = -2\sqrt{\rho(1-r)} - \rho + r - \frac{1}{2}\). To summarize, 
    \begin{equation*}
        E_2 = -2\sqrt{\rho(1-r)} - \rho + r - \frac{1}{2}.
    \end{equation*}
    
    We now turn our attention to \(E_3\). As we mentioned in the analysis of \(E_1\), note that \(2\sqrt{\rho(1-r)} + r - \rho \geq 0\) since \(0 < r, \rho \leq 1\). Thus, the maximization is over \(t \geq 2\sqrt{\rho(1-r)} + r - \rho\) in the definition of \(E_3\). Define \(f(t) = t - \frac{(t-r + \rho)^2}{4\rho} - r + \frac{1}{2}\) and observe that \(f'(t) = \frac{1}{2} - \frac{t}{2\rho} + \frac{r}{2\rho}\). Consequently, we have \(f'(t) \leq \frac{1}{2} - \frac{2\sqrt{\rho(1-r)}+r-\rho}{2\rho} + \frac{r}{2\rho} = 1 - \sqrt{\frac{1-r}{\rho}}\) for all \(t \geq 2\sqrt{\rho(1-r)}+r-\rho\). So if \(\rho \leq 1-r\), then \(f'(t) \leq 0\) for all \(t \geq 2\sqrt{\rho(1-r)}+r-\rho\), meaning that the maximum is achieved at the left endpoint. The maximum is then \(f(2\sqrt{\rho(1-r)}+r-\rho) = 2\sqrt{\rho(1-r)} + r - \rho - \frac{1}{2}\). On the other hand, if \(\rho > 1-r\), the \(f\) is maximized at \(t = \rho + r\), which yields \(f(\rho + r) = \frac{1}{2}\). To summarize,
    \begin{equation*}
        E_3 = 
        \begin{cases}
            2\sqrt{\rho(1-r)} + r - \rho - \frac{1}{2} & \text{if } \rho \leq 1-r,\\
            \frac{1}{2} &\text{if } \rho > 1-r.
        \end{cases}
    \end{equation*}
    Direct comparison yields 
    \begin{equation*}
        E_1 \vee E_3 = 
        \begin{cases}
            \frac{1}{2} & \text{if } \rho > 1-r, \\
            2\sqrt{\rho(1-r)} + r - \rho - \frac{1}{2} & \text{if } \frac{1-r}{4} < \rho \leq 1-r, \\
            \rho + \frac{r}{2} &\text{if } \rho \leq \frac{1-r}{4}. 
        \end{cases}
    \end{equation*}
    Another direct comparison yields 
    \begin{equation*}
        E_1 \vee E_2 \vee E_3 = E_2 \vee (E_1 \vee E_3) = 
        \begin{cases}
            \frac{1}{2} & \text{if } \rho > 1-r, \\
            2\sqrt{\rho(1-r)} + r - \rho - \frac{1}{2} & \text{if } \frac{1-r}{4} < \rho \leq 1-r, \\
            \rho + \frac{r}{2} &\text{if } \rho \leq \frac{1-r}{4}. 
        \end{cases}
    \end{equation*}
    Consequently, if \(-2\sqrt{\rho(1-r)}+r-\rho \geq 0\) and \(r < 1\), then 
    \begin{equation*}
        \beta^*(r, \rho) = 
        \begin{cases}
            1 & \text{if } \rho > 1 - r, \\
            2\sqrt{\rho(1-r)}+r-\rho &\text{if } \frac{1-r}{4} < \rho \leq 1-r, \\
            \frac{1}{2} + \rho+\frac{r}{2} &\text{if } \rho \leq \frac{1-r}{4}.
        \end{cases}
    \end{equation*}
    We move on to the remaining case where \(r < 1\) and \(-2\sqrt{\rho(1-r)} + r - \rho < 0\).

    \textbf{Case 2:} Suppose \(-2\sqrt{\rho(1-r)}+r-\rho < 0\). Then \(\beta^*(r, \rho) = \frac{1}{2} + 0 \vee \left[ F_1 \vee F_2 \right]\) where 
    \begin{align*}
        F_1 &:= \sup_{0\leq t \leq 2\sqrt{\rho(1-r)} + r - \rho} \left\{t -\frac{(t-r+\rho)^2}{8\rho} - \frac{r}{2} \right\} \\
        F_3 &:= \sup_{t \geq 2\sqrt{\rho(1-r)} + r - \rho} \left\{t -\frac{(t-r+\rho)^2}{4\rho} - r + \frac{1}{2} \right\}.
    \end{align*}
    Repeating the analysis in Case 1 immediately yields 
    \begin{equation*}
        F_1 = 
        \begin{cases}
            2\sqrt{\rho(1-r)} + r - \rho - \frac{1}{2} & \text{if } \rho > \frac{1-r}{4}, \\
            \rho + \frac{r}{2} &\text{if } \rho \leq \frac{1-r}{4}.
        \end{cases}
    \end{equation*}
    and 
    \begin{equation*}
        F_2 = 
        \begin{cases}
            2\sqrt{\rho(1-r)} + r - \rho - \frac{1}{2} & \text{if } \rho \leq 1-r,\\
            \frac{1}{2} &\text{if } \rho > 1-r.
        \end{cases}
    \end{equation*}
    A direct comparison gives 
    \begin{equation*}
        F_1 \vee F_2 =
        \begin{cases}
            \frac{1}{2} &\text{if } \rho > 1-r,\\
            2\sqrt{\rho(1-r)}+r-\rho-\frac{1}{2} &\text{if } \frac{1-r}{4} < \rho \leq 1-r, \\
            \rho + \frac{r}{2} &\text{if } \rho \leq \frac{1-r}{4}. 
        \end{cases}
    \end{equation*}
    Consequently, if \(-2\sqrt{\rho(1-r)} + r- \rho < 0\) and \(r < 1\), then 
    \begin{equation*}
        \beta^*(r, \rho) = 
        \begin{cases}
            1 & \text{if } \rho > 1 - r, \\
            2\sqrt{\rho(1-r)}+r-\rho &\text{if } \frac{1-r}{4} < \rho \leq 1-r, \\
            \frac{1}{2} + \rho+\frac{r}{2} &\text{if } \rho \leq \frac{1-r}{4}.
        \end{cases}
    \end{equation*}

    Putting all of the cases together, the final detection boundary (recalling \(r > 0\) and \(0 < \rho \leq 1\)) is given by
    \begin{equation*}
        \beta^*(r, \rho) = 
        \begin{cases} 
            1 & \text{if } r \geq 1, \\
            1 & \text{if } r < 1, \rho > 1 - r, \\
            2\sqrt{\rho(1-r)}+r-\rho &\text{if } r < 1, \frac{1-r}{4} < \rho \leq 1-r, \\
            \frac{1}{2} + \rho+\frac{r}{2} &\text{if } r < 1, \rho \leq \frac{1-r}{4}.
        \end{cases}
    \end{equation*}
    We can reexpress the detection boundary in a more reminiscent form
    \begin{equation}\label{eqn:side_info_boundary}
        \beta^*(r, \rho) =
        \begin{cases}
            1 - (\sqrt{(1-r)_{+}} - \sqrt{\rho})_{+}^2 & \text{if } \frac{1-r}{4} < \rho \leq 1, \\
            \frac{1}{2} + \rho + \frac{r}{2} & \text{if } 0 < \rho \leq \frac{1-r}{4}.
        \end{cases}
    \end{equation}

    The detection boundary \(\beta^*(r, \rho)\) given by (\ref{eqn:side_info_boundary}) is exactly the Ingster-Donoho-Jin detection boundary when we naively ``plug-in'' \(r = 0\) without care (see Example \ref{example:idj}). This is entirely as expected since the case \(r = 0\) corresponds to the setting where the sequence of Bernoulli random variables \(\{A_i\}\) does not provide strong enough information on the location of the sparse signals. Only the sequence of Gaussian variables \(\{W_i\}\) exhibits strong enough signal, and so one can simply throw out the \(\{A_i\}\) sequence without loss of power. On the other hand, consider that simply ``plugging in" \(\rho = 0\) into the detection boundary formula yields \(\beta^*(r, 0) = \frac{1 + 1\wedge r}{2}\). In this setting, the detection boundary is exactly the boundary one would obtain by throwing out the Gaussian variables \(\{W_i\}\) (which now have no signal) and only using the Bernoulli sequence \(\{A_i\}\) for detection. Indeed, \(\beta^*(r, 0) = \frac{1 + 1\wedge r}{2}\) is precisely the boundary one obtains through the rate function \(J_2\) (which is the rate function of the large deviations principle under the null associated with \(\{A_i\}\)).

    In the intermediate regimes \(0 < r < 1\) and \(0 < \rho < 1\), the detection boundary \(\beta^*(r, \rho)\) is larger than the Ingster-Donoho-Jin detection boundary and larger than the boundary \(\frac{1+1\wedge r}{2}\) corresponding to using only the Bernoulli sequence \(\{A_i\}\). In words, using both the Gaussian and the Bernoulli data yields higher detection boundaries (meaning we are able to detect weaker signals in sparser settings) compared to using only the Gaussian data or only the Bernoulli data. This result is intuitively unsurprising, and it's comforting to see this phenomenon play out in the detection boundary. Furthermore, \(\beta^*(r, \rho)\) gives a precise description of how the signal strengths \(r\) and \(\rho\) in the Bernoulli and Gaussian sequences relate to one another and affect the phase transition.

    \subsection{Curie-Weiss Model}\label{example:curie_weiss_mean}
    In this subsection, we briefly cover a sparse mixture detection problem in the larger context of Ising models. There is an existing literature focused on inferential tasks given multiple independent and identically distributed samples from an unknown probabilistic graphical model, such as graph selection \cite{santhanamInformationTheoreticLimitsSelecting2012a,barberHighdimensionalIsingModel2015} and property testing/goodness-of-fit testing \cite{canonneTestingBayesianNetworks2020,bezakovaLowerBoundsTesting2019}. Note that in the context of the sparse mixture detection problems (\ref{problem:sparse_mixture_detection_1})-(\ref{problem:sparse_mixture_detection_2}), we have \(n\) observations in which possibly \((1-\varepsilon)n\) are drawn from the Ising model \(P_n\) and \(\varepsilon n\) are drawn from the Ising model \(Q_n\). The problem, of course, is to detect the presence of observations drawn from the Ising model \(Q_n\).

    We will consider the sample space \(\mathcal{X} = \bigcup_{n=1}^{\infty} \{-1, 1\}^n\). Consider the testing problem (\ref{problem:sparse_mixture_detection_1})-(\ref{problem:sparse_mixture_detection_2}) where for \(x \in \mathcal{X}\),
    \begin{align*}
        p_n(x) &= \frac{1}{Z_N(\theta, 0)} \exp\left( \frac{\theta}{N} \sum_{1 \leq i < j \leq N} x_ix_j \right) \cdot \mathbf{1}_{\{x \in \{-1, 1\}^N\}}, \\
        q_n(x) &= \frac{1}{Z_N(\theta, \mu)} \exp\left(\frac{\theta}{N} \sum_{1 \leq i < j \leq N} x_i x_j + \theta \mu \sum_{i=1}^{N} x_i \right) \cdot \mathbf{1}_{\{x \in \{-1, 1\}^N\}}
    \end{align*}
    where \(N = \lceil \log n \rceil\). Further, \(\theta > 0\) and \(\mu > 0\) are parameters we take to be fixed and unchanging with \(n\). In the parlance of statistical mechanics, \(p_n\) and \(q_n\) are the Curie-Weiss model on \(N\) particles. Each particle takes one of two states \(x_i \in \{-1, 1\}\) for all \(1 \leq i \leq N\). Additionally \(q_n\) models the existence of an external magnetic field with strength \(\mu\) and \(p_n\) models the absence of an external magnetic field. The quantity \(Z_N(\theta, \mu)\) is the normalizing constant or ``partition function'' in the statistical mechanics convention. Note that under both \(p_n\) and \(q_n\), all of the random variables \(x_1,...,x_N\) are all correlated with each other and the parameter \(\theta\) controls the strength of this correlation. Intuitively, the correlation is strong when \(\theta\) is large and the correlation is weak when \(\theta\) is small. Additionally, the parameter \(\mu\) modulates the probability that each particle takes value \(1\) instead of \(-1\). Further background about the Curie-Weiss model (and related Ising models) can be found in Chapter 2 of \cite{mezardInformationPhysicsComputation}. We will refer to some results found there.

    \subsubsection*{Checking the tail condition}
    To apply Corollary \ref{corollary:tight_limit}, we first check that the tail condition (\ref{eqn:varadhan_tail_condition}) is satisfied. Letting \(X_n \sim P_n\), consider that for \(\gamma > 1\), 
    \begin{align*}
        E\left[\left(\frac{q_n}{p_n}(X_n)\right)^\gamma \right] &= \sum_{x \in \{-1, 1\}^N} \frac{Z_N(\theta, 0)^{\gamma-1}}{Z_N(\theta, \mu)^\gamma} \exp\left( \frac{\theta}{N} \sum_{1 \leq i < j \leq N} x_i x_j + \gamma \theta \mu \sum_{i=1}^{N} x_i \right)\\
        &= \frac{Z_N(\theta, 0)^{\gamma - 1}}{Z_N(\theta, \mu)^\gamma} \cdot Z_N(\theta, \gamma \mu).
    \end{align*}
    Hence, 
    \begin{align*}
        &\limsup_{n \to \infty} \frac{1}{\log n} \log E\left[\left(\frac{q_n}{p_n}(X_n)\right)^\gamma \right] \\
        &= \limsup_{n \to \infty} \frac{(\gamma - 1) \log Z_N(\theta, 0)}{\log n} - \frac{\gamma \log Z_N(\theta, \mu)}{\log n} + \frac{\log Z_N(\theta, \gamma \mu)}{\log n} \\
        &= \limsup_{n \to \infty} \frac{N}{\log n} \left[\frac{(\gamma - 1) \log Z_N(\theta, 0)}{N} - \frac{\gamma \log Z_N(\theta, \mu)}{N} + \frac{\log Z_N(\theta, \gamma \mu)}{N} \right] \\
        &< \infty
    \end{align*}
    since \(N = \lceil \log n \rceil\) and by (2.79) in Section 2.5.2 of \cite{mezardInformationPhysicsComputation}. Thus, the tail condition is satisfied.

    \subsubsection*{Finding the rate function}
    We now deduce a large deviation principle under the null. Observe that for \(X \in \{-1, 1\}^N\),
    \begin{align*}
        \frac{\log \frac{q_n}{p_n}(X)}{\log n} &= \frac{\log Z_N(\theta, 0)}{\log n} - \frac{\log Z_N(\theta, \mu)}{\log n} + \frac{\theta \mu \sum_{i=1}^{N} X_i}{\log n} \\
        &= \frac{N}{\log n} \cdot \frac{\log Z_N(\theta, 0)}{N} - \frac{N}{\log n} \cdot \frac{\log Z_N(\theta, \mu)}{N} + \frac{N}{\log n} \cdot \theta \mu \bar{X}.
    \end{align*}
    First, consider that \(\lim_{n \to \infty} \frac{N}{\log n} = 1\). Second, consider that by (2.79) in Section 2.5.2 of \cite{mezardInformationPhysicsComputation} it follows that 
    \begin{equation*}
        \lim_{N \to \infty} \frac{\log Z_N(\theta, \mu)}{N} = \max_{m \in [-1, 1]} \varphi_{mf}(m;\theta, \mu) 
    \end{equation*}
    where 
    \begin{equation*}
        \varphi_{mf}(m;\theta, \mu) = -\frac{\theta}{2} (1 - m^2) + \theta \mu m - \frac{1+m}{2} \log\left(\frac{1+m}{2}\right) - \frac{1-m}{2}\log \left(\frac{1-m}{2} \right).
    \end{equation*}
    For ease of notation, we set \(M^*(\theta, \mu) = \max_{m \in [-1, 1]} \varphi_{mf}(m;\theta,\mu)\). Lastly, consider that by (4.35) in Section 4.3.2 of \cite{mezardInformationPhysicsComputation}, it follows that, under the null, \(\bar{X}\) satisfies a large deviation principle with respect to speed \(\left\{\frac{1}{N}\right\}\) and rate function 
    \begin{equation*}
        J(m) =
        \begin{cases} 
            M^*(\theta, 0) - \varphi_{mf}(m; \theta, 0) &\text{if } |m| \leq 1, \\
            \infty &\text{otherwise}.
        \end{cases}
    \end{equation*}
    Note that \(J\) is a good rate function because \(J\) is continuous for \(|m| \leq 1\) and so the sublevel sets \(J^{-1}([0, \alpha])\) are compact because \([0, \alpha]\) is compact for \(0 \leq \alpha < \infty\).

    Thus, it follows by the contraction principle (Theorem \ref{thm:contraction_principle}) that, under the null, \(M^*(\theta, 0) - M^*(\theta, \mu) + \theta \mu \bar{X}\) satisfies a large deviation principle with good rate function 
    \begin{align*}
        I(t) &= \inf\left\{J(m) : t = M^*(\theta, 0) - M^*(\theta, \mu) + \theta \mu m \right\} \\
        &= \begin{cases}
            M^*(\theta, 0) - \varphi_{mf}\left(\frac{t - M^*(\theta, 0) + M^*(\theta, \mu)}{\theta \mu}; \theta, 0\right) &\text{if } \left|\frac{t - M^*(\theta, 0) + M^*(\theta, \mu)}{\theta \mu} \right| \leq 1, \\
            \infty &\text{otherwise}.
        \end{cases}
    \end{align*}
    Since \(\lim_{n \to \infty} \frac{N}{\log n} = 1\) and \(-1 \leq \bar{X} \leq 1\) almost surely, a straight forward argument via the notion of exponential equivalence (\ref{def:exp_equiv}) and Theorem \ref{thm:same_ldp} establishes that \(\left\{\frac{\log \frac{q_n}{p_n}}{\log n}\right\}\) satisfies a large deviation principle under the null with good rate function \(I\). 

    With the large deviation principle under the null established, one can apply Theorems \ref{thm:beta_upper} and \ref{thm:beta_lower} to obtain \(\overline{\beta}^\#\) and \(\underline{\beta}^\#\) respectively. Then, one must check whether the conditions of Corollary \ref{corollary:tight_limit} or Corollary \ref{corollary:nice_tight_limit} hold to deduce \(\beta^*\). We do not undertake the analysis. It should be noted that the behavior of the Curie-Weiss model has an effect on the rate function \(I\) (and consequently the detection boundary and the applicability of Theorem \ref{thm:HC_tight}). As noted in Example 4.11 of \cite{mezardInformationPhysicsComputation}, the rate function \(J\) is convex when \(\theta < 1\) and non-convex when \(\theta > 1\). Interestingly, this phase transition in the qualitative behavior of the Curie-Weiss model in terms of \(\theta\) has an effect on the detection limits of the sparse mixture detection problem. Similarly, the applicability of Theorem \ref{thm:HC_tight} regarding \(\HC^*_n\) also depends on the phase transition driven by \(\theta\).

    \section{Discussion}\label{section:discussion}
    We have offered a unified perspective on deriving phase transitions in general sparse mixture detection problems via the large deviations theory. The fundamental object determining the phase transition is the the rate function associated to the large deviation principle of the normalized log likelihood ratios. The core phenomenon behind the phase transition lies in the asymptotics of the Hellinger distance between \(P_n\) and \((1-n^{-\beta}) P_n + n^{-\beta}Q_n\) as identified by Cai and Wu \cite{caiOptimalDetectionSparse2014}; the large deviations theory provides suitable machinery to relate Hellinger asymptotics to phase transitions beyond the univariate sparse mixture case.
    
    Additionally, we have obtained sufficient conditions on the rate function to guarantee the optimality of a sequence of tests based on a Higher Criticism type statistic formulated by Gao and Ma (Section 3.2 of \cite{gaoTestingEquivalenceClustering2019}). This statistic \(\HC^*_n\) adapts to the signal sparsity \(\beta\) and can be used ``off-the-shelf''; careful and delicate constructions of univariate \(p\)-values tailored to the detection problem at hand are not needed. Moreover, as we discussed in Section \ref{section:HC}, computation of \(\HC^*_n\) need not require full knowledge of the signal distributions \(\{Q_n\}\). Rather, in some problems it may suffice to consider a certain statistic of the data; of course, considerations will vary on a problem-to-problem basis. 

    We imagine that the large deviations perspective offered here will be useful in deriving phase transitions in more complicated and structured sparse mixture detection problems beyond what can be derived with the existing univariate theory. Further, we imagine that Gao and Ma's testing statistic will be practically useful in light of Theorem \ref{thm:HC_tight}. We conclude with a few remarks. 

    \subsection{Deriving large deviation principles and rate functions}
    The main results regarding detection boundaries we've presented only specify how the detection boundary is determined by the rate function when the normalized log likelihood ratios satisfy a suitable large deviation principle. These results have nothing to say about how to deduce a large deviation principle and calculate the associated rate function. This is not so surprising given the broad setting and the fundamental role of the rate function. Indeed, the main technical work in specific problems is to deduce the large deviation principle and the associated rate function. In the few examples we presented in Section \ref{section:examples}, we have illustrated a a small number of techniques useful to establishing the large deviation principle. Indispensable are the contraction principle (Theorem \ref{thm:contraction_principle}), exponential equivalence (Definition \ref{def:exp_equiv}), and the indistinguishability of the large deviation principle for exponentially equivalent probability measures (Theorem \ref{thm:same_ldp}). Lemma 3 in \cite{caiOptimalDetectionSparse2014} was also quite useful in our examples in calculating the order of some exponential integrals. 

    \subsection{Sparse mixture of exponentials: necessity of a tail condition} \label{example:sparse_exponential}
    Unfortunately, the tail condition (\ref{eqn:varadhan_tail_condition}) in Theorem \ref{thm:beta_upper} can preclude calculation of a detection boundary in some problems. For example, consider the testing problem (\ref{problem:sparse_mixture_detection_1})-(\ref{problem:sparse_mixture_detection_2}) with calibration (\ref{eqn:beta_sparsity}) and \(P_n = P = \Exp(1)\), \(Q_n = \Exp(1+n^r)\) for \(r > 0\) under the scale parameterization (i.e. \(P_n\) has mean \(1\) and \(Q_n\) has mean \(1+n^r)\). With \(X \sim P\), observe that for any \(\gamma > 1\)
    \begin{align*}
        E\left[\left(\frac{q_n}{p}(X) \right)^\gamma \right] &= \left(\frac{1}{1+n^r}\right)^\gamma \int_{0}^{\infty} \exp\left(-x\gamma \left(\frac{1}{1+n^r} - 1\right) \right) \cdot \exp(-x) \, dx \\
        &= \left(\frac{1}{1+n^r}\right)^\gamma \int_{0}^{\infty} \exp\left(x \left(\gamma \frac{n^r}{1+n^r} - 1 \right) \right) \, dx \\
        &= \left(\frac{1}{1+n^r}\right)^\gamma \int_{0}^{\infty} \exp\left(x \left[\gamma - 1 + \gamma \left(\frac{n^r}{1+n^r} - 1\right) \right]\right)\, dx.
    \end{align*}
    Since \(\gamma > 1\) and \(\frac{n^r}{1+n^r} \to 1\), it follows that \(\gamma - 1 + \gamma \left(\frac{n^r}{1+n^r} - 1 \right) > 0\) for all \(n\) sufficiently large. Therefore, 
    \begin{equation*}
        \int_{0}^{\infty} \exp\left(x \left[\gamma - 1 + \gamma \left(\frac{n^r}{1+n^r} - 1\right) \right]\right)\, dx = \infty
    \end{equation*}
    for all \(n\) sufficiently large. Thus, \(\limsup_{n \to \infty} \frac{1}{\log n} \log E\left[\left(\frac{q_n}{p}(X) \right)^\gamma \right] = \infty\) for all \(\gamma > 1\), and so the tail condition (\ref{eqn:varadhan_tail_condition}) fails to hold. However, the testing problem (\ref{problem:sparse_mixture_detection_1})-(\ref{problem:sparse_mixture_detection_2}) with \(P = \Exp(1)\) and \(Q_n = \Exp(1+n^r)\) indeed exhibits a detection boundary. Corollary 4.4 of \cite{ditzhausSignalDetectionPhidivergences2019} indicates that \(\beta^* = \frac{1 + 1 \wedge r}{2}\).

    One might argue that the tail condition is simply too strong and that the results of Theorems \ref{thm:beta_upper}, \ref{thm:beta_lower} and Corollary \ref{corollary:tight_limit} might still hold when \(P_n = P = \Exp(1)\) and \(Q_n = \Exp(1+n^r)\). We show that this is not the case. Consider that 
    \begin{align*}
        \frac{\log \frac{q_n}{p}(X)}{\log n} = -\frac{\log(1+n^r)}{\log n} + \frac{X}{\log n} \cdot \frac{n^r}{1+n^r}.
    \end{align*}
    Consider that \(-\frac{\log(1+n^r)}{\log n} = -r + o(1)\) as \(n \to \infty\) since \(r > 0\). Consider further that \(\frac{X}{\log n}\) and \(\frac{X}{\log n} \frac{n^r}{1+n^r}\) are exponentially equivalent with respect to speed \(\left\{\frac{1}{\log n}\right\}\) (see Definition \ref{def:exp_equiv}) when \(X \sim P\). Indeed for any \(\delta > 0\),
    \begin{align*}
        \limsup_{n \to \infty} \frac{\log P\left(\left|\frac{X}{\log n} - \frac{X}{\log n} \cdot \frac{n^r}{1+n^r} \right| > \delta \right)}{\log n} &= \limsup_{n \to \infty} \frac{\log P\left(\frac{X}{(1+n^r) \log n} > \delta\right)}{\log n} \\
        &= \limsup_{n \to \infty} \frac{- \delta (1+n^r) \log n}{\log n} \\
        &= -\infty.
    \end{align*}
    By Lemma \ref{lemma:ldp_Yn}, it follows that \(\left\{\frac{X}{\log n}\right\}\) satisfies a large deviation principle with speed \(\left\{\frac{1}{\log n}\right\}\) and rate function \(J\) with \(J(t) = t\) for \(t \geq 0\) and \(J(t) = \infty\) for \(t < 0\). A similar argument as the one above shows that \(-r + \frac{X}{\log n}\) is exponentially equivalent to \(- \frac{\log(1+n^r)}{\log n} + \frac{X}{\log n} \cdot \frac{n^r}{1+n^r}\) with respect to speed \(\left\{\frac{1}{\log n}\right\}\). Applying the contraction principle (Theorem \ref{thm:contraction_principle}) establishes that \(-r + \frac{X}{\log n}\) satisfies a large deviation principle with rate function \(I\) with \(I(t) = t+r\) for \(t \geq -r\) and \(I(t) = \infty\) otherwise. When \(r \leq \frac{1}{2}\), the results of Theorems \ref{thm:beta_upper}, \ref{thm:beta_lower} and Corollary \ref{corollary:tight_limit} indicate \(\beta^* = 1 - r\). This is not sensible since \(\beta^*\) decreases as \(r\) increases, yet the testing problem should be easier with larger \(r\). Thus, a tail condition like (\ref{eqn:varadhan_tail_condition}) is indeed necessary.

    The tail condition is crucial to our large deviations approach as the core of our approach relies on Varadhan's integral lemma (Theorem \ref{thm:Varadhan}). It's not clear to us how to treat the sparse exponential mixture testing problem through our large deviations approach.

    \subsection{Further generalizations}
    Theorems \ref{thm:beta_upper} and \ref{thm:beta_lower} only present upper and lower bounds on \(\overline{\beta}^*\) and \(\underline{\beta}^*\) respectively. In this paper, we were only interested in when these bounds meet (Corollaries \ref{corollary:tight_limit} and \ref{corollary:nice_tight_limit}). It is an open problem to give tight characterizations of \(\overline{\beta}^*\) and \(\underline{\beta}^*\). Likewise, it's of interest to furnish an example where normalized log likelihood ratios satisfy a large deviation principle under the null and where \(\underline{\beta}^*\) and \(\overline{\beta}^*\) do not meet. 

    Finally, it's of interest to develop results in the setting where the observations are correlated rather than independent and identically distributed. The approach of characterizing the Hellinger asymptotics is no longer tenable as this method exploited the tensorization property of the Hellinger distance over product measures. Both problems of determining the phase transitions and developing optimal procedures are open. In the normal mixture setting, Hall and Jin \cite{hallInnovatedHigherCriticism2010} develop the Innovated Higher Criticism. Remarkably, Hall and Jin show that signal detection can actually be easier in some cases; the independent noise case is statistically the hardest. We refer the interested reader to further discussion in that paper as well as the review article \cite{jinRareWeakEffects2016}. 

    \section{Proofs}\label{section:proofs}

    Proofs for the results presented in the main body of the paper are stated in this section.

    \subsection{Useful results}\label{subsection:tools}
    A major tool used in establishing the detection limits of Section \ref{section:detection} and investigating the higher criticism type statistic of Section \ref{section:HC} is Varadhan's integral lemma from the theory of large deviations. Varadhan's integral lemma is essentially a generalized version of Laplace's method, and is useful in characterizing the asymptotics of certain integrals in the presence of probability measures satisfying the large deviation principle. The following formulation of Varadhan's integral lemma is found in \cite{demboLargeDeviationsTechniques2010} (Theorem 4.3.1), with specialization to suit our setting. 

    \begin{theorem}(Varadhan)\label{thm:Varadhan}
        Suppose that \(\{P_n\}\) and \(\{Q_n\}\) are probability measures satisfying Assumption 1. Suppose further that that the sequence of (normalized) log-likelihood ratios \(\left\{\frac{\log \frac{q_n}{p_n}}{\log n}\right\}\) satisfies the large deviation principle under the null with good rate function \(I : \R \to [0, \infty]\). For ease of notation, set \(Z_n := \frac{\log \frac{q_n}{p_n}(X_n)}{\log n}\) where \(X_n \sim P_n\). Let \(\varphi : \R \to \R\) be a continuous function. Assume the following moment condition for some \(\gamma > 1\),
        \begin{equation*}\label{eqn:varadhan_moment}
            \limsup_{n \to \infty} \frac{1}{\log n} \log E\left[n^{\gamma \varphi\left(Z_n\right)}  \right] < \infty.
        \end{equation*}
        Then
        \begin{equation*}
            \lim_{n \to \infty} \frac{1}{\log n} \log E\left[n^{\varphi\left(Z_n\right)} \right] = \sup_{t \in \R} \left\{ \varphi(t) - I(t) \right\}.
        \end{equation*}
    \end{theorem}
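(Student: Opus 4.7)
The plan is to observe that this statement is exactly the classical Varadhan integral lemma (e.g. Theorem 4.3.1 in Dembo and Zeitouni) specialized to the LDP with speed $a_n = 1/\log n$: since $n^{\varphi(Z_n)} = \exp\{(\log n)\varphi(Z_n)\}$, the quantity $\frac{1}{\log n}\log E[n^{\varphi(Z_n)}]$ is precisely $a_n \log E[\exp(\varphi(Z_n)/a_n)]$, and the hypothesized moment bound is the Varadhan tail condition. Thus one may invoke the classical theorem directly; alternatively, one gives a self-contained argument along the following lines. The lower bound is easy and comes from the LDP lower bound combined with continuity of $\varphi$; the upper bound is where the real work sits and is the main obstacle.

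For the \textbf{lower bound}, fix $t \in \R$ and $\delta > 0$. By continuity of $\varphi$, choose an open neighborhood $U$ of $t$ on which $\varphi \geq \varphi(t) - \delta$. Then $E[n^{\varphi(Z_n)}] \geq n^{\varphi(t) - \delta}\,P(Z_n \in U)$, so
\begin{equation*}
\liminf_{n \to \infty} \tfrac{1}{\log n}\log E[n^{\varphi(Z_n)}] \;\geq\; \varphi(t) - \delta - \inf_{s \in U} I(s) \;\geq\; \varphi(t) - \delta - I(t),
\end{equation*}
using the LDP lower bound on the open set $U$. Letting $\delta \to 0$ and taking the supremum over $t$ yields the required lower bound.

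For the \textbf{upper bound}, first establish exponential tightness of $\{Z_n\}$ at speed $1/\log n$: since $I$ is good, its sublevel sets are compact and hence bounded, so $\inf_{|x| \geq M} I(x) \to \infty$ as $M \to \infty$; applying the LDP upper bound to the closed set $(-\infty,-M]\cup[M,\infty)$ gives $\limsup \tfrac{1}{\log n}\log P(|Z_n| > M) \leq -\inf_{|x|\geq M} I(x)$, which can be made arbitrarily negative. Now split
\begin{equation*}
E[n^{\varphi(Z_n)}] = E[n^{\varphi(Z_n)} \mathbf{1}_{\{|Z_n| \leq M\}}] + E[n^{\varphi(Z_n)} \mathbf{1}_{\{|Z_n| > M\}}].
\end{equation*}
The tail term is handled by H\"older's inequality with exponents $\gamma$ and $\gamma/(\gamma-1)$:
\begin{equation*}
E[n^{\varphi(Z_n)} \mathbf{1}_{\{|Z_n| > M\}}] \leq E[n^{\gamma\varphi(Z_n)}]^{1/\gamma} \, P(|Z_n| > M)^{1-1/\gamma},
\end{equation*}
and the hypothesized moment condition together with exponential tightness makes $\frac{1}{\log n}\log$ of this bounded above by $C/\gamma - (1-1/\gamma) L$, which tends to $-\infty$ as $M$ (and hence $L$) grows. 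For the bulk term, use the fact that $\varphi$ is uniformly continuous on the compact interval $[-M,M]$: given $\epsilon > 0$, cover $[-M,M]$ with finitely many closed sets $F_1,\ldots,F_k$ each of diameter small enough that $\sup_{F_i}\varphi \leq \inf_{F_i}\varphi + \epsilon$. Bounding $n^{\varphi(Z_n)} \mathbf{1}_{F_i}(Z_n) \leq n^{\sup_{F_i}\varphi}\mathbf{1}_{F_i}(Z_n)$, summing, applying $\lim \frac{1}{\log n}\log \sum \cdot = \max$, and using the LDP upper bound on each closed $F_i$ gives
\begin{equation*}
\limsup_{n \to \infty} \tfrac{1}{\log n}\log E[n^{\varphi(Z_n)} \mathbf{1}_{\{|Z_n|\leq M\}}] \leq \max_i \Bigl\{\sup_{F_i}\varphi - \inf_{F_i} I\Bigr\} \leq \sup_{t \in \R}\{\varphi(t) - I(t)\} + \epsilon.
\end{equation*}
Combining the two estimates and letting $M \to \infty$, then $\epsilon \to 0$, yields $\limsup \frac{1}{\log n}\log E[n^{\varphi(Z_n)}] \leq \sup_t\{\varphi(t) - I(t)\}$, matching the lower bound. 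The delicate point throughout is that the moment condition must be strong enough (with some $\gamma > 1$) to absorb the H\"older loss $(\gamma-1)/\gamma$ against the exponential tightness bound, which is exactly why the hypothesis is phrased as in the statement.
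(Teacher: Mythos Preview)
Your proposal is correct and matches the paper's treatment. The paper does not prove this theorem at all; it is stated as a known result, explicitly identified as Theorem 4.3.1 of Dembo--Zeitouni specialized to the speed $a_n = 1/\log n$, which is exactly your opening observation. Your additional self-contained sketch of the proof (lower bound via the LDP open-set bound and continuity, upper bound via exponential tightness, H\"older to control the tail, and a finite cover of the compact bulk) is the standard argument and is sound, but goes beyond what the paper itself provides.
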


    The following lemma is useful when integrating over a subset of \(\R\) in Varadhan's integral lemma (Exercise 4.3.11 in \cite{demboLargeDeviationsTechniques2010} and Exercise 2.1.24 in \cite{deuschelLargeDeviations2001}).
    \begin{lemma}\label{lemma:Varadhan_subset}
        Consider the setting of Theorem \ref{thm:Varadhan}. If the conditions of Theorem \ref{thm:Varadhan} hold, then for any open set \(G \subset \R\) and any closed set \(F \subset \R\) it follows that 
        \begin{align*}
            &\liminf_{n \to \infty} \frac{1}{\log n} \log E\left[ n^{\varphi(Z_n)} \mathbf{1}_{\{Z_n \in G\}} \right] \geq \sup_{t \in G} \{\varphi(t) - I(t)\}, \\
            &\limsup_{n \to \infty} \frac{1}{\log n} \log E\left[n^{\varphi(Z_n)} \mathbf{1}_{\{Z_n \in F\}}\right] \leq \sup_{t \in F} \{\varphi(t) - I(t)\}.
        \end{align*}
    \end{lemma}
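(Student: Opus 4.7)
The plan is to adapt the standard argument behind Varadhan's integral lemma (Theorem \ref{thm:Varadhan}), restricting attention to the set over which we integrate. Throughout, write $Z_n = \frac{\log(q_n/p_n)(X_n)}{\log n}$ with $X_n \sim P_n$, so the LDP under the null with good rate function $I$ is available. The lower bound for open $G$ is essentially immediate; the upper bound for closed $F$ is the technical step, since $F$ may be unbounded and one must leverage the moment condition.

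For the open set $G$, I would fix an arbitrary $t_0 \in G$ and, using the continuity of $\varphi$ together with openness of $G$, choose $\delta>0$ and $\epsilon>0$ small enough that $B:=(t_0-\delta,t_0+\delta)\subset G$ and $\varphi(t)\geq \varphi(t_0)-\epsilon$ for all $t\in B$. Then
\[
E\bigl[n^{\varphi(Z_n)}\mathbf{1}_{\{Z_n\in G\}}\bigr]\geq n^{\varphi(t_0)-\epsilon}\,P(Z_n\in B).
\]
Taking $\tfrac{1}{\log n}\log(\cdot)$ and applying the LDP lower bound from Definition \ref{def:ldp} to the open set $B$ yields
\[
\liminf_{n\to\infty}\frac{1}{\log n}\log E\bigl[n^{\varphi(Z_n)}\mathbf{1}_{\{Z_n\in G\}}\bigr]\geq \varphi(t_0)-\epsilon-\inf_{t\in B}I(t)\geq \varphi(t_0)-\epsilon-I(t_0).
\]
Letting $\epsilon\downarrow 0$ and then taking the supremum over $t_0\in G$ gives the claimed lower bound.

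For the closed set $F$, the idea is to split off tails using the moment hypothesis. For $M>0$, decompose
\[
E\bigl[n^{\varphi(Z_n)}\mathbf{1}_{\{Z_n\in F\}}\bigr]=E\bigl[n^{\varphi(Z_n)}\mathbf{1}_{\{Z_n\in F\cap[-M,M]\}}\bigr]+E\bigl[n^{\varphi(Z_n)}\mathbf{1}_{\{Z_n\in F\cap[-M,M]^c\}}\bigr].
\]
The first term is handled by a compactness argument: cover the compact set $F\cap[-M,M]$ by finitely many closed intervals $I_j$ of length less than $\delta$ on which $\varphi$ oscillates by at most $\epsilon$, bound the integral over each $I_j$ by $n^{\varphi(t_j)+\epsilon}P(Z_n\in I_j)$ with $t_j\in I_j$, apply the LDP upper bound on each (closed) $I_j$, and use the elementary fact that $\limsup\frac{1}{\log n}\log(a_n+b_n)=\max(\limsup\frac{1}{\log n}\log a_n,\limsup\frac{1}{\log n}\log b_n)$ to combine finitely many pieces. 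After sending $\delta,\epsilon\downarrow 0$, this contribution is at most $\sup_{t\in F\cap[-M,M]}\{\varphi(t)-I(t)\}\leq \sup_{t\in F}\{\varphi(t)-I(t)\}$.

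The main obstacle is the tail piece, and this is where the moment condition (\ref{eqn:varadhan_tail_condition}) enters. With $\gamma>1$ from the hypothesis and conjugate exponent $\gamma'=\gamma/(\gamma-1)$, Hölder's inequality gives
\[
E\bigl[n^{\varphi(Z_n)}\mathbf{1}_{\{|Z_n|>M\}}\bigr]\leq E\bigl[n^{\gamma\varphi(Z_n)}\bigr]^{1/\gamma}\,P(|Z_n|>M)^{1/\gamma'}.
\]
Taking $\tfrac{1}{\log n}\log$ and passing to $\limsup$, the first factor is bounded by $C/\gamma$ for some finite $C$ (by the moment assumption), while the LDP upper bound applied to the closed set $\{|t|\geq M\}$ gives the second factor a contribution of at most $-\tfrac{1}{\gamma'}\inf_{|t|\geq M}I(t)$. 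Because $I$ is a good rate function, its sublevel sets are compact, hence bounded, so $\inf_{|t|\geq M}I(t)\to\infty$ as $M\to\infty$. Thus the tail contribution can be made smaller than any prescribed threshold, in particular below $\sup_{t\in F}\{\varphi(t)-I(t)\}$ (with the convention that the bound is vacuous if this supremum equals $+\infty$), by choosing $M$ large. Combining the two pieces and letting $M\to\infty$ yields the upper bound, completing the proof.
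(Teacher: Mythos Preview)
Your argument is correct. It differs from the paper's route in a meaningful way, so a brief comparison is in order.

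The paper does not redo the Laplace--type localization at all. Instead it sets
\[
\varphi_G(x)=\begin{cases}\varphi(x)&x\in G\\-\infty&x\notin G\end{cases},
\qquad
\varphi_F(x)=\begin{cases}\varphi(x)&x\in F\\-\infty&x\notin F\end{cases},
\]
checks that $\varphi_G$ is lower semicontinuous (because $G$ is open and $\varphi$ is continuous) and that $\varphi_F$ is upper semicontinuous (because $F$ is closed), and then invokes the semicontinuous versions of Varadhan's lemma, namely Lemmas 2.1.7 and 2.1.8 of \cite{deuschelLargeDeviations2001}, which give exactly the liminf and limsup statements for $E[n^{\varphi_G(Z_n)}]$ and $E[n^{\varphi_F(Z_n)}]$. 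The moment condition and the goodness of $I$ are absorbed into those cited lemmas rather than being used explicitly.

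Your approach, by contrast, is self-contained: you rebuild the lower bound by localizing near a single point of $G$, and you rebuild the upper bound by the standard compact--plus--tail decomposition, invoking the moment hypothesis via H\"older and the goodness of $I$ to kill the tail. This is exactly the machinery that lives inside the proofs of those Deuschel--Stroock lemmas, so the two arguments are equivalent in content; yours is more elementary and transparent about where each hypothesis (continuity of $\varphi$, the moment bound, goodness of $I$) is actually consumed, while the paper's is shorter because it outsources the work to a reference.
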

    \begin{proof}
        As per the hint in Exercise 2.1.24 of \cite{deuschelLargeDeviations2001}, set
        \begin{equation*}
            \varphi_G(x) =
            \begin{cases}
                \varphi(x) & \text{if } x \in G \\
                -\infty & \text{otherwise}
            \end{cases}
        \end{equation*}
        and 
        \begin{equation*}
            \varphi_F(x) =
            \begin{cases}
                \varphi(x) & \text{if } x \in F \\
                -\infty & \text{otherwise}.
            \end{cases}
        \end{equation*}
        To deduce the result, Lemmas 2.1.7 and 2.1.8 in \cite{deuschelLargeDeviations2001} will be applied. First, we focus on \(G\). To apply Lemma 2.1.7, we must show that \(\varphi_G\) is lower semi-continuous. Let \(\{x_n\}\) be a sequence in \(\R\) such that \(x_n \to x\) for some \(x \in \R\). If \(x \not \in G\), then we trivially have \(\liminf_{n \to \infty} \varphi_G(x_n) \geq \varphi_G(x)\) since \(\varphi_G(x) = -\infty\). If \(x \in G\), then all but finitely many of the \(x_n\) lie in \(G\) since \(G\) is open and \(x_n \to x\). Thus, \(\varphi_G(x_n) = \varphi(x_n)\) for all but finitely many \(n\). Since \(\varphi\) is continuous, it immediately follows that \(\liminf_{n \to \infty} \varphi_G(x_n) = \liminf_{n \to \infty} \varphi(x_n) = \varphi(x) = \varphi_G(x)\). Hence, \(\varphi_G\) is lower semi-continuous. Thus, by Lemmma 2.1.7 in \cite{deuschelLargeDeviations2001}, it follows that
        \begin{align*}
            \liminf_{n \to \infty} \frac{1}{\log n} \log E\left[ n^{\varphi(Z_n)} \mathbf{1}_{\{Z_n \in G\}} \right] &= \liminf_{n \to \infty} \frac{1}{\log n} \log E\left[ n^{\varphi_G(Z_n)} \right] \\
            &\geq \sup_{t \in \R} \{\varphi_G(t) - I(t) : \varphi_G(t) \wedge I(t) < \infty\} \\
            &= \sup_{t \in G} \{\varphi(t) - I(t)\}
        \end{align*}
        as desired. 
        
        Turning our attention to \(F\), we need to show that \(\varphi_F\) is upper semi-continuous to apply Lemma 2.1.8. To do so, let \(\{x_n\}\) be a sequence converging to a point \(x \in \R\). If \(x \not \in F\), then all but finitely many of the \(x_n\) lie in \(F^c\) since \(F^c\) is open. Thus, we immediately have \(\limsup_{n \to \infty} \varphi_F(x_n) = -\infty = \varphi_F(x)\). Suppose \(x \in F\). If all but finitely many of the \(x_n\) lie in \(F^c\), then we immediately have \(\limsup_{n \to \infty} \varphi_F(x_n) = -\infty \leq \varphi_F(x)\). Instead, if \(x_n \in F\) for infinitely many \(n\), then for every subsequence \(x_{n_k} \in F\) we have that \(\lim_{k \to \infty} \varphi_F(x_{n_k}) = \lim_{k \to \infty} \varphi(x_{n_k}) = \varphi(x) = \varphi_F(x)\) by the continuity of \(\varphi\). Since this holds for all subsequences that lie in \(F\), it follows immediately that \(\limsup_{n \to \infty} \varphi_F(x_n) \leq \varphi_F(x)\). Thus, \(\varphi_F\) is upper semi-continuous. Applying Lemma 2.1.8 and applying a similar argument as in the display above yields the desired result.
    \end{proof}

    The following technical lemma (Lemma 4 from \cite{caiOptimalDetectionSparse2014}) is used in the analysis of the Hellinger asymptotics in the proof of Theorems \ref{thm:beta_upper} and \ref{thm:beta_lower}.

    \begin{lemma}[Lemma 4 - \cite{caiOptimalDetectionSparse2014}] \label{lemma:hellinger_lemma}
        \leavevmode
        \begin{enumerate}[label=\roman*)]
            \item For any \(b > 0\), the function \(s \mapsto (\sqrt{1+b(s-1)}-1)^2\) is strictly convex on \(\R_{+}\) and strictly decreasing and increasing on \([0, 1]\) and \([1, \infty)\), respectively.
            \item For any \(t \geq 0\), it follows that
            \begin{equation}
                (\sqrt{2} - 1)^2 \cdot (t \wedge t^2) \leq (\sqrt{1+t} - 1)^2 \leq t\wedge t^2.
            \end{equation}
        \end{enumerate}
    \end{lemma}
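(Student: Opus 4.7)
Both parts of the lemma are routine calculus / case-analysis arguments, and the plan is to handle them in sequence.

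For part (i), I would set $f(s) := (\sqrt{1+b(s-1)}-1)^2$ and $g(s) := \sqrt{1+b(s-1)}$, then compute $f'$ and $f''$ directly. A short calculation gives
\begin{equation*}
f'(s) = 2(g(s)-1) \cdot \frac{b}{2g(s)} = b\left(1 - \frac{1}{g(s)}\right), \qquad f''(s) = \frac{b^2}{2(1+b(s-1))^{3/2}}.
\end{equation*}
Strict positivity of $f''$ on the domain where $g$ is real and positive yields strict convexity. Since $g$ is strictly increasing with $g(1) = 1$, the sign of $f'(s)$ matches that of $s-1$, giving strict monotonicity in the claimed directions on $[0,1]$ and $[1,\infty)$.

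For part (ii), the key observation is the algebraic identity
\begin{equation*}
(\sqrt{1+t}-1)^2 = \frac{t^2}{(\sqrt{1+t}+1)^2},
\end{equation*}
so both inequalities reduce to controlling $1/(\sqrt{1+t}+1)^2$. I would split into two cases according to whether $t \wedge t^2$ equals $t^2$ or $t$. When $t \in [0,1]$, we have $t \wedge t^2 = t^2$, and because $\sqrt{1+t}+1 \in [2, \sqrt{2}+1]$, the upper bound is immediate from monotonicity while the lower bound comes from the rationalization $(\sqrt{2}+1)^{-2} = (\sqrt{2}-1)^2$. When $t \geq 1$, we have $t \wedge t^2 = t$; the upper bound $(\sqrt{1+t}-1)^2 \leq t$ follows by squaring $\sqrt{1+t} \leq 1+\sqrt{t}$, and the lower bound is equivalent to $\sqrt{1+t}+1 \leq (\sqrt{2}+1)\sqrt{t}$, an equality at $t=1$ that reduces to $t \geq 1$ after squaring and simplifying.

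There is no significant obstacle. The only minor point requiring care is that, for $b > 1$, the function in part (i) is only defined on $[1-1/b, \infty) \cap \R_{+}$; the derivative computation and sign analysis carry through verbatim on that restricted domain. Everything else is elementary algebra.
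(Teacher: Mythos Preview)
Your proposal is correct. The paper does not actually prove this lemma; it is quoted verbatim from Cai and Wu \cite{caiOptimalDetectionSparse2014} and used as a black box, so there is no proof in the paper to compare against. Your direct-computation argument for part (i) and the rationalization/case-split for part (ii) are sound, including the handling of the domain restriction when $b>1$ and the reduction of the $t\geq 1$ lower bound to $\sqrt{t}(\sqrt{t}-1)\geq 0$.
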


    \subsection{Proofs of Theorem \ref{thm:beta_upper}, Theorem \ref{thm:beta_lower}, Corollary \ref{corollary:tight_limit}, Corollary \ref{corollary:nice_tight_limit}}
    The proofs of Theorems \ref{thm:beta_upper} and \ref{thm:beta_lower} follow the same roadmap as the proof of Theorem 3 in \cite{caiOptimalDetectionSparse2014} in that Hellinger distance asymptotics are examined. The main difference lies in the fact that we do not impose a uniform convergence condition as in \cite{caiOptimalDetectionSparse2014}. Rather, we assume that the normalized log likelihood ratios satisfy a large deviation principle. Consequently, we apply Varadhan's integral lemma (Theorem \ref{thm:Varadhan}) instead of applying a version of Laplace's method as in \cite{caiOptimalDetectionSparse2014}. 

    \begin{proof}[Proof of Theorem \ref{thm:beta_upper}]
        We follow the approach of \cite{caiOptimalDetectionSparse2014}. To prove Theorem \ref{thm:beta_upper}, it suffices to show that if \(\beta > \overline{\beta}^\#\), then \(H_n^2(\beta) = o(n^{-1})\) by Lemma \ref{lemma:Hellinger}. Let \(\beta > \overline{\beta}^\#\). Let \(h_n = (1-\varepsilon)p_n + \varepsilon q_n\) be the density of the alternative hypothesis in testing problem (\ref{problem:sparse_mixture_detection_1})-(\ref{problem:sparse_mixture_detection_2}) with calibration (\ref{eqn:beta_sparsity}) and observe that the likelihood ratio can be expressed as 
        \begin{align*}
            \frac{h_n}{p_n} &= 1 + n^{-\beta} \left(\frac{q_n}{p_n} - 1\right) \\
            &= 1 + n^{-\beta} \left(\exp\left( \log \frac{q_n}{p_n} \right) - 1\right).
        \end{align*}
        Let \(X_n \sim P_n\) and \(\ell_n := \log \frac{q_n}{p_n}(X_n)\). Consider 
        \begin{align*}
            H_n^2(\beta) &= E\left[\left(\sqrt{\frac{h_n}{p_n}(X_n)} - 1\right)^2 \right] \\
            &= E\left[\left(\sqrt{1 + n^{-\beta} \left(\exp\left(\ell_n\right) - 1\right)} - 1\right)^2 \right] \\
            &= E\left[\left(\sqrt{1 + n^{-\beta} \left( \exp\left(\ell_n \right) - 1\right)} - 1\right)^2 \cdot \mathbf{1}_{\left\{\ell_n/\log n \geq 0\right\}} \right] \\
            &+ E\left[\left(\sqrt{1 + n^{-\beta} \left(\exp\left(\ell_n\right) - 1\right)} - 1\right)^2 \cdot \mathbf{1}_{\{\ell_n/\log n < 0\}} \right].
        \end{align*}
        Examining the second term, consider that \(\exp(\ell_n) < 1\) when \(\ell_n/\log n < 0\). Thus, by Lemma \ref{lemma:hellinger_lemma}, it follows that 
        \begin{align*}
            E\left[\left(\sqrt{1 + n^{-\beta} \left(\exp\left(\ell_n\right) - 1\right)} - 1\right)^2 \cdot \mathbf{1}_{\{\ell_n/\log n < 0\}} \right] &\leq (\sqrt{1 + n^{-\beta}(0-1)} - 1)^2 \\
            &= (\sqrt{1-n^{-\beta}}-1)^2 \\
            &\leq n^{-2\beta} \\
            &= o(n^{-1})
        \end{align*}
        since \(\beta > \overline{\beta}^\# \geq \frac{1}{2}\). 
        
        Turning our attention to the first term in the expansion of \(H_n^2(\beta)\), consider that \(\exp(\ell_n) \geq 1\) when \(\ell_n \geq 0\). So, by Lemma \ref{lemma:hellinger_lemma},
        \begin{align*}
            E\left[\left(\sqrt{1 + n^{-\beta} \left( \exp\left(\ell_n \right) - 1\right)} - 1\right)^2 \cdot \mathbf{1}_{\{\ell_n/\log n \geq 0\}} \right] &\leq E\left[\left(\sqrt{1 + n^{-\beta}\exp(\ell_n)} - 1\right)^2 \cdot \mathbf{1}_{\{\ell_n/\log n \geq 0\}} \right]  \\ 
            &= E\left[\left(\sqrt{1 + n^{-\beta + \frac{\ell_n}{\log n}}} - 1\right)^2 \cdot \mathbf{1}_{\{\ell_n/\log n \geq 0\}} \right] \\
            &\leq E\left[n^{2\left(\frac{\ell_n}{\log n} - \beta\right) \wedge \left(\frac{\ell_n}{\log n} - \beta \right)} \cdot \mathbf{1}_{\{\ell_n/\log n \geq 0\}}\right]
        \end{align*}
        where the final inequality follows from Lemma \ref{lemma:hellinger_lemma}. We will now apply Varadhan's lemma (actually, we apply Lemma \ref{lemma:Varadhan_subset} but the content of the mathematics we utilize is attributed to Varadhan). First, consider that by assumption there exists \(\gamma > 1\) such that 
        \begin{equation*}
            \limsup_{n \to \infty} \frac{1}{\log n} \cdot \log E\left[ \left( \frac{q_n}{p_n}(X_n) \right)^\gamma \right] < \infty
        \end{equation*}
        where \(X_n \sim P_n\). Let \(\varphi_\beta(x) = 2(x-\beta) \wedge (x-\beta)\) and note that \(\varphi_\beta\) is continuous. Then, observe
        \begin{align*}
            \limsup_{n \to \infty} \frac{1}{\log n} \log E\left[ n^{\gamma \varphi_\beta(\ell_n/\log(n))} \right] &= \limsup_{n \to \infty} \frac{1}{\log n} \log E \left[ \exp\left(\gamma \cdot \log (n) \cdot \varphi_\beta(\ell_n/\log(n)) \right) \right] \\
            &\leq \limsup_{n \to \infty} \frac{1}{\log n} \log E\left[ \exp\left(\gamma \ell_n - \beta \cdot \log(n) \right) \right] \\
            &\leq \limsup_{n \to \infty} \frac{1}{\log n} \log E\left[ \left(\frac{q_n}{p_n}(X_n) \right)^\gamma \cdot n^{-\beta} \right] \\
            &\leq \limsup_{n \to \infty} \frac{1}{\log n} \log E\left[ \left(\frac{q_n}{p_n}(X_n) \right)^\gamma \right] \\
            &< \infty
        \end{align*}
        where the penultimate inequality follows from \(\beta > \beta^\# \geq \frac{1}{2} > 0\). Hence, the moment condition of Varadhan's integral lemma is satisfied, and so an application of Lemma \ref{lemma:Varadhan_subset} yields 
        \begin{equation*}
            \limsup_{n \to \infty} \frac{1}{\log n} \log E\left[n^{2\left(\frac{\ell_n}{\log n} - \beta\right) \wedge \left(\frac{\ell_n}{\log n} - \beta \right)} \cdot \mathbf{1}_{\{\ell_n/\log n \geq 0\}}\right] \leq \sup_{t \geq 0} \{\varphi_\beta(t) - I(t)\}.
        \end{equation*}
        Thus, it follows that if 
        \begin{equation} \label{eqn:thm1_sufficiency}
            \sup_{t \geq 0} \{\varphi_\beta(t) - I(t)\} < -1,
        \end{equation}
        then \(E\left[n^{2\left(\frac{\ell_n}{\log n} - \beta\right) \wedge \left(\frac{\ell_n}{\log n} - \beta \right)} \cdot \mathbf{1}_{\{\ell_n/\log n \geq 0\}}\right] = o(n^{-1})\). In order for condition (\ref{eqn:thm1_sufficiency}) to hold, we need either \(2(t-\beta) - I(t) < - 1\) or \(t-\beta - I(t) < -1\) for all \(t \geq 0\). Equivalently, we need either \(\beta > \frac{1}{2} - \frac{I(t)}{2} + t\) or \(\beta > t - I(t) + 1\) for all \(t \geq 0\). Equivalently, we require 
        \begin{align*}
            \beta &> \sup_{t \geq 0} \left\{ \left(\frac{1}{2} - \frac{I(t)}{2} + t\right) \wedge \left(t - I(t) + 1\right) \right\} \\
            &= \sup_{t\geq 0} \left\{t + \frac{1}{2} + \left( - \frac{I(t)}{2}\right)\wedge\left(\frac{1}{2} - I(t) \right) \right\} \\
            &= \sup_{t\geq 0} \left\{t + \frac{1}{2} - I(t) + \frac{1 \wedge I(t)}{2} \right\}.
        \end{align*}
        which is equivalent to requiring 
        \begin{equation}\label{eqn:thm1_beta_condition_term1}
           \beta > \frac{1}{2} + \sup_{t\geq 0} \left\{t - I(t) + \frac{1 \wedge I(t)}{2} \right\}.
        \end{equation}
        Since \(\beta > \underline{\beta}^\# = \frac{1}{2} + 0\vee \sup_{t \geq 0}\left\{t - I(t) + \frac{1 \wedge I(t)}{2} \right\}\), it follows that \(\beta\) satisfies condition (\ref{eqn:thm1_beta_condition_term1}). Hence, \(E\left[n^{2\left(\frac{\ell_n}{\log n} - \beta\right) \wedge \left(\frac{\ell_n}{\log n} - \beta \right)} \cdot \mathbf{1}_{\{\ell_n/\log n \geq 0\}}\right] = o(n^{-1})\). Thus, it's been established that \(H_n^2(\beta) = o(n^{-1})\). Therefore, \(\overline{\beta}^\# \geq \overline{\beta}^*\), as desired. 
    \end{proof}

    \begin{proof}[Proof of Theorem \ref{thm:beta_lower}]
        To prove Theorem \ref{thm:beta_lower}, it suffices to show that if \(\beta < \underline{\beta}^\#\), then \(H_n^2(\beta) = \omega(n^{-1})\). Let \(\delta > 0\). Let \(X_n \sim P_n\) and \(\ell_n = \log \frac{q_n}{p_n}(X_n)\). From the proof of Theorem \ref{thm:beta_upper}, 
        \begin{align*}
            H_n^2(\beta) &= E\left[\left(\sqrt{\frac{h_n}{p_n}(X_n)} - 1\right)^2 \right] \\
            &= E\left[\left(\sqrt{1 + n^{-\beta} \left(\exp\left(\ell_n\right) - 1\right)} - 1\right)^2 \right] \\
            &\geq E\left[\left(\sqrt{1 + n^{-\beta} \left( \exp\left(\ell_n \right) - 1\right)} - 1\right)^2 \cdot \mathbf{1}_{\{\ell_n/\log n > \delta\}} \right].
        \end{align*}
        From the lower bound in Lemma \ref{lemma:hellinger_lemma}, when \(\ell_n/\log n > \delta\) we have 
        \begin{align*}
            & \left(\sqrt{1 + n^{-\beta}(\exp(\ell_n) - 1)} - 1 \right)^2 \\
            &\geq (\sqrt{2} - 1)^2 \left[ \left(n^{-\beta + \ell_n/\log(n)} - n^{-\beta}\right) \wedge (n^{-\beta+\ell_n/\log(n)} - n^{-\beta})^2 \right] \\
            &= (\sqrt{2} - 1)^2 \left[\left[ (n^{-\beta + \ell_n/\log(n)})\left(1 - n^{-\ell_n/\log(n)}\right)\right] \wedge \left[ (n^{-2\beta + 2\ell_n/\log(n)})(1-n^{-\ell_n/\log(n)})^2\right] \right] \\
            &\geq (\sqrt{2} - 1)^2 (1 - n^{-\ell_n/\log(n)})^2 \cdot n^{\left(\ell_n/\log(n) - \beta \right) \wedge 2(\ell_n/\log(n) - \beta)} \\
            &\geq (\sqrt{2}-1)^2(1-n^{-\delta})^2 \cdot n^{\left(\ell_n/\log(n) - \beta \right) \wedge 2(\ell_n/\log(n) - \beta)}.
        \end{align*}
        Thus, it follows that 
        \begin{align*}
            & E\left[\left(\sqrt{1 + n^{-\beta} \left( \exp\left(\ell_n \right) - 1\right)} - 1\right)^2 \cdot \mathbf{1}_{\{\ell_n/\log n > \delta\}} \right] \\
            &\geq (\sqrt{2}-1)^2(1-n^{-\delta})^2 \cdot E\left[n^{\left(\ell_n/\log(n) - \beta \right) \wedge 2(\ell_n/\log(n) - \beta)} \cdot \mathbf{1}_{\{\ell_n/\log n > \delta\}}\right].
        \end{align*}
        From the proof of Theorem \ref{thm:beta_upper}, it follows that the moment condition of Lemma \ref{lemma:Varadhan_subset} is satisfied, and so Lemma \ref{lemma:Varadhan_subset} can be applied to obtain 
        \begin{equation*}
            \liminf_{n \to \infty} \frac{1}{\log n} \log E\left[n^{2\left(\frac{\ell_n}{\log n} - \beta\right) \wedge \left(\frac{\ell_n}{\log n} - \beta \right)} \cdot \mathbf{1}_{\{\ell_n/\log n > \delta\}}\right] \geq \sup_{t > \delta} \{\varphi_\beta(t) - I(t)\}
        \end{equation*}
        where \(\varphi_\beta(x) = (x-\beta)\wedge 2(x-\beta)\). Thus, it has been established that 
        \begin{align*}
            &\liminf_{n \to \infty} \frac{\log H_n^2(\beta)}{\log n} \\
            &\geq \liminf_{n \to \infty} \frac{\log \left[(\sqrt{2}-1)^2(1-n^{-\delta})^2 \cdot E\left[n^{\left(\ell_n/\log(n) - \beta \right) \wedge 2(\ell_n/\log(n) - \beta)} \cdot \mathbf{1}_{\{\ell_n/\log n > \delta\}}\right] \right]}{\log n} \\
            &= \liminf_{n \to \infty} \frac{\log[(\sqrt{2} - 1)^2]}{\log n} + \frac{\log[(1-n^{-\delta})^2]}{\log n} + \frac{\log E\left[n^{2\left(\frac{\ell_n}{\log n} - \beta\right) \wedge \left(\frac{\ell_n}{\log n} - \beta \right)} \cdot \mathbf{1}_{\{\ell_n/\log n > \delta\}}\right] }{\log n} \\
            &\geq \sup_{t > \delta} \{\varphi_\beta(t) - I(t)\}.
        \end{align*}
        Since this holds for all \(\delta > 0\), it follows that 
        \begin{equation*}
            \liminf_{n \to \infty} \frac{\log H_n^2(\beta)}{\log n} \geq \sup_{t > 0} \{\varphi_\beta(t) - I(t)\}.
        \end{equation*}
        Thus, a sufficient condition that \(H_n^2(\beta) = \omega(n^{-1})\) is that 
        \begin{equation*}
            \sup_{t > 0} \{\varphi_\beta(t) - I(t)\} > -1
        \end{equation*}
        or equivalently
        \begin{equation*}
            \sup_{t > 0} \{(t-\beta)\wedge2(t-\beta) - I(t)\} > -1.
        \end{equation*}
        The same reasoning at the end of the proof of Theorem \ref{thm:beta_upper} yields the equivalent condition 
        \begin{equation*}
            \beta < \frac{1}{2} + \sup_{t > 0} \left\{t - I(t) + \frac{1 \wedge I(t)}{2}\right\}
        \end{equation*}
        which is exactly equivalent to \(\beta < \underline{\beta}^\#\). Since we indeed had \(\beta < \underline{\beta}^\#\) by assumption, it follows that \(H_n^2(\beta) = \omega(n^{-1})\). Therefore, \(\underline{\beta}^\# \leq \underline{\beta}^*\), as desired. 
    \end{proof}

    \begin{proof}[Proof of Corollary \ref{corollary:tight_limit}]
        Consider that \(\underline{\beta}^\# = \overline{\beta}^\# = \beta^*\). Since Theorems \ref{thm:beta_upper} and \ref{thm:beta_lower} imply    
        \begin{equation*}
            \underline{\beta}^\# \leq \underline{\beta}^* \leq \overline{\beta}^* \leq \overline{\beta}^\#,
        \end{equation*}
        it immediately follows that \(\underline{\beta}^* = \overline{\beta}^* = \beta^*\).
    \end{proof}

    \begin{proof}[Proof of Corollary \ref{corollary:nice_tight_limit}]
        With the condition that \(I\) be right-continuous at \(0\), it follows that 
        \begin{equation*}
            \sup_{t > 0} \left\{t - I(t) + \frac{1 \wedge I(t)}{2}\right\} = \sup_{t \geq 0}\left\{t - I(t) + \frac{1 \wedge I(t)}{2}\right\}.
        \end{equation*}
        Since there exists \(t^* \geq 0\) such that \(t^* - I(t^*) + \frac{1 \wedge I(t^*)}{2} \geq 0\), it follows that 
        \begin{equation*}
            \sup_{t > 0} \left\{t - I(t) + \frac{1 \wedge I(t)}{2}\right\} = \sup_{t \geq 0}\left\{t - I(t) + \frac{1 \wedge I(t)}{2}\right\} \geq 0
        \end{equation*}
        and so Corollary \ref{corollary:tight_limit} holds. Thus, \(\underline{\beta}^* = \overline{\beta}^* = \beta^*\).
    \end{proof}

    \subsection{Proof of Proposition \ref{prop:weaker_than_cai_wu}}

    A few key definitions and results in the theory of large deviations will be used in the proof of Proposition \ref{prop:weaker_than_cai_wu}. These results are stated here for completeness. We follow the presentation found in Chapter 4 of \cite{demboLargeDeviationsTechniques2010}. 

    \begin{theorem}[Contraction principle, Theorem 4.2.1 - \cite{demboLargeDeviationsTechniques2010}]\label{thm:contraction_principle}
        Let \(\mathcal{X}\) and \(\mathcal{Y}\) be Hausdorff topological spaces and \(f : \mathcal{X} \to \mathcal{Y}\) a continuous function. Consider a good rate function \(I : \mathcal{X} \to [0, \infty]\). 
        \begin{enumerate}[label=(\alph*)]
            \item For each \(y \in \mathcal{Y}\), define
            \begin{equation*}
                I'(y) := \inf\{I(x) : x \in \mathcal{X}, \, y=f(x)\}.
            \end{equation*}
            Then \(I'\) is a good rate function on \(\mathcal{Y}\). Here, we adopt the convention \(\inf \emptyset = \infty\).
            
            \item If \(I\) controls the large deviation principle with a family of probability measures \(\{\mu_n\}\) on \(\mathcal{X}\), then \(I'\) controls the large deviation principle associated with the family of probability measures \(\{\mu_n \circ f^{-1}\}\) on \(\mathcal{Y}\). 
        \end{enumerate}
    \end{theorem}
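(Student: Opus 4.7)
The plan is to establish parts (a) and (b) in turn, leveraging only the continuity of $f$, the goodness of $I$, and the LDP bounds available for $\{\mu_n\}$.

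For part (a), the key identity I would prove is that for every $\alpha \geq 0$,
\[
\{y \in \mathcal{Y} : I'(y) \leq \alpha\} = f\bigl(\{x \in \mathcal{X} : I(x) \leq \alpha\}\bigr).
\]
The inclusion from right to left is immediate from the definition of $I'$. For the reverse inclusion, I would show the infimum defining $I'(y)$ is attained whenever finite: take $x_n \in f^{-1}(\{y\})$ with $I(x_n) \to I'(y)$, fix any $\varepsilon > 0$, and note that eventually $x_n$ lies in the compact sublevel set $\{I \leq I'(y)+\varepsilon\}$; extracting a convergent subnet whose limit $x^*$ satisfies $f(x^*)=y$ by continuity and $I(x^*) \leq I'(y)$ by lower semicontinuity of $I$, then letting $\varepsilon \downarrow 0$, gives an attaining point. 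With the identity in hand, $\{I' \leq \alpha\}$ is the continuous image of a compact set, hence compact, and closed in the Hausdorff space $\mathcal{Y}$. Compact sublevel sets both make $I'$ lower semicontinuous and certify that $I'$ is a \emph{good} rate function.

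For part (b), both LDP inequalities transfer cleanly through $f^{-1}$ at the same speed $\{a_n\}$. Given any Borel set $\Gamma \subset \mathcal{Y}$, the set $f^{-1}(\overline{\Gamma})$ is closed by continuity and contains $f^{-1}(\Gamma)$, so the upper bound of the LDP for $\{\mu_n\}$ gives
\[
\limsup_{n \to \infty} a_n \log (\mu_n \circ f^{-1})(\Gamma) \leq -\inf_{x \in f^{-1}(\overline{\Gamma})} I(x) = -\inf_{y \in \overline{\Gamma}} I'(y),
\]
where the rearrangement $\inf_x I(x) = \inf_y \inf_{x \in f^{-1}(\{y\})} I(x)$ over $x \in f^{-1}(\overline{\Gamma})$ converts the bound into the claimed form. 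Symmetrically, $f^{-1}(\Gamma^\circ)$ is open and contained in $f^{-1}(\Gamma)$, so the LDP lower bound yields
\[
\liminf_{n \to \infty} a_n \log (\mu_n \circ f^{-1})(\Gamma) \geq -\inf_{x \in f^{-1}(\Gamma^\circ)} I(x) = -\inf_{y \in \Gamma^\circ} I'(y).
\]
Together, these are exactly the LDP for $\{\mu_n \circ f^{-1}\}$ with good rate function $I'$.

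The only subtlety I anticipate is the attainment argument in part (a), which is needed to reach the identity $\{I' \leq \alpha\} = f(\{I \leq \alpha\})$ cleanly: in a general Hausdorff space one must argue via convergent subnets of a net in the compact sublevel set rather than subsequences, but since the sublevel set of $I$ is compact and $\{y\}$ is closed, the intersection $f^{-1}(\{y\}) \cap \{I \leq I'(y)+\varepsilon\}$ is compact and nonempty, so the subnet argument goes through. Everything else is routine: continuous preimages of open/closed sets remain open/closed, and continuous images of compact sets remain compact.
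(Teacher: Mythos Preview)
Your proof is correct and follows the standard argument. However, note that the paper does not actually prove this theorem: it is quoted verbatim as Theorem 4.2.1 from Dembo and Zeitouni's \emph{Large Deviations Techniques and Applications} and used as a black box, so there is no ``paper's own proof'' to compare against. What you have written is essentially the proof appearing in that reference, so in that sense your approach matches.

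One minor stylistic point: the phrase ``then letting $\varepsilon \downarrow 0$'' in your attainment argument is superfluous. Once you extract a subnet limit $x^*$ from the compact set $\{I \leq I'(y)+\varepsilon\}$ for a single fixed $\varepsilon>0$, lower semicontinuity already gives $I(x^*) \leq \liminf I(x_{n_\alpha}) = I'(y)$, and since $f(x^*)=y$ you have $I(x^*) \geq I'(y)$ by definition of the infimum; no limiting in $\varepsilon$ is needed. This does not affect correctness.
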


    \begin{definition}[Exponential equivalence, Definition 4.2.10 - \cite{demboLargeDeviationsTechniques2010}]\label{def:exp_equiv}
        Let \((\mathcal{Y}, d)\) be a metric space. The probability measures \(\{\mu_n\}\) and \(\{\widetilde{\mu}_n\}\) are called \textit{exponentially equivalent} with respect to speed \(\{a_n\}\) if there exist probability spaces \(\{\Omega, \mathcal{B}_n, P_n\}\) and two families of \(\mathcal{Y}\)-valued random variables \(\{Z_n\}\) and \(\{\widetilde{Z}_n\}\) with joint laws \(\{P_n\}\) and marginals \(\{\mu_n\}\) and \(\{\widetilde{\mu}_n\}\) respectively such that the following condition is satisfied. For each \(\delta > 0\), the set \(\{\omega \in \Omega : d(\widetilde{Z}_n, Z_n) > \delta\}\) is \(\mathcal{B}_n\) measurable, and 
        \begin{equation*}
            \limsup_{n \to \infty} a_n \log P_n \left( d(\widetilde{Z}_n, Z_n) > \delta \right) = -\infty.
        \end{equation*}
        Here, \(\{a_n\}\) is a sequence of reals with \(a_n \to 0\).
    \end{definition}

    \begin{theorem}[Indistinguishability of the large deviation principle, Theorem 4.2.13 - \cite{demboLargeDeviationsTechniques2010}]\label{thm:same_ldp}
        If a large deviation principle with speed \(\{a_n\}\) and good rate function \(I\) holds for the probability measures \(\{\mu_n\}\), which are exponentially equivalent to \(\{\widetilde{\mu}_n\}\), then the same large deviation principle holds for \(\{\widetilde{\mu}_n\}\).
    \end{theorem}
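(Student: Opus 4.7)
The plan is to establish the large deviation principle for $\{\widetilde{\mu}_n\}$ directly from the definition, by transferring the upper bound on closed sets and the lower bound on open sets from $\{\mu_n\}$ using the exponential equivalence coupling. Fix the coupling $(Z_n, \widetilde{Z}_n)$ with marginals $\mu_n, \widetilde{\mu}_n$ provided by Definition \ref{def:exp_equiv}. The workhorse inequality throughout is the elementary bound
\begin{equation*}
\log(a+b) \leq \log 2 + \max\{\log a, \log b\},
\end{equation*}
which converts the union bound on probabilities into a maximum at the level of normalized log-probabilities, where only the slowest-decaying term survives in the $\limsup$.

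For the upper bound, fix a closed set $F \subset \mathcal{Y}$ and $\delta > 0$, and let $F^\delta := \{y \in \mathcal{Y} : d(y, F) \leq \delta\}$, which is itself closed. Since $\{\widetilde{Z}_n \in F\} \subset \{Z_n \in F^\delta\} \cup \{d(Z_n, \widetilde{Z}_n) > \delta\}$, the two-term trick combined with the LDP upper bound for $\mu_n$ and the exponential equivalence hypothesis gives
\begin{equation*}
\limsup_{n \to \infty} a_n \log \widetilde{\mu}_n(F) \leq \max\left\{ -\inf_{y \in F^\delta} I(y),\, -\infty \right\} = -\inf_{y \in F^\delta} I(y).
\end{equation*}
One then lets $\delta \downarrow 0$ and invokes the key lemma that $\inf_{F^\delta} I \to \inf_F I$. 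This is where goodness of $I$ is essential: if $y_k \in F^{1/k}$ satisfies $I(y_k) \leq \liminf_\delta \inf_{F^\delta} I + 1/k$, then eventually $y_k$ sits in a common compact sublevel set, so passes to a convergent subsequence $y_{k_j} \to y^\star \in F$; lower semicontinuity of $I$ then forces $I(y^\star) \leq \liminf_k I(y_k)$, yielding the desired equality.

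For the lower bound, fix an open set $G$ and a point $y \in G$, and choose $\delta > 0$ small enough that the open ball $B(y, 2\delta) \subset G$. The containment $\{Z_n \in B(y,\delta)\} \cap \{d(Z_n, \widetilde{Z}_n) \leq \delta\} \subset \{\widetilde{Z}_n \in B(y, 2\delta)\} \subset \{\widetilde{Z}_n \in G\}$ implies $\widetilde{\mu}_n(G) \geq \mu_n(B(y,\delta)) - P_n(d(Z_n, \widetilde{Z}_n) > \delta)$. The exponential equivalence bound shows that for every $M > 0$, eventually $P_n(d > \delta) \leq \exp(-M/a_n)$, while the LDP lower bound for $\mu_n$ gives $\mu_n(B(y, \delta)) \geq \exp(-(I(y) + \varepsilon)/a_n)$ eventually; taking $M$ strictly larger than $I(y) + \varepsilon$ makes the $\mu_n$-probability dominant, so the difference is at least $\tfrac12 \mu_n(B(y,\delta))$. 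Consequently $\liminf a_n \log \widetilde{\mu}_n(G) \geq -I(y) - \varepsilon$, and optimizing over $\varepsilon \downarrow 0$ and $y \in G$ delivers the lower bound.

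The main obstacle, and the only nonroutine ingredient, is the continuity statement $\inf_{F^\delta} I \to \inf_F I$ used to close the upper bound after sending $\delta \downarrow 0$: without a compact sublevel hypothesis on $I$ this can fail, which is precisely why the theorem requires $I$ to be a \emph{good} rate function rather than merely a rate function. Everything else is bookkeeping around the two-term $\log(a+b)$ inequality and the definition of the coupling.
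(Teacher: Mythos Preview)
The paper does not prove this theorem; it is quoted verbatim as Theorem 4.2.13 of Dembo--Zeitouni and invoked as a black box throughout Section~\ref{section:examples} and the proof of Proposition~\ref{prop:weaker_than_cai_wu}. Your argument is correct and is essentially the standard proof found in that reference: the upper bound via closed $\delta$-fattenings $F^\delta$ together with the goodness-driven lemma $\inf_{F^\delta} I \to \inf_F I$, and the lower bound via a ball-and-coupling estimate that makes the exponentially small coupling error negligible against the LDP lower bound for $\mu_n$.
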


    The following small lemma will be used in the proof of Proposition \ref{prop:weaker_than_cai_wu}. 
    
    \begin{lemma}\label{lemma:ldp_Yn}
        Let \(Y_n \sim \frac{1}{\log n} \Exp(1)\) for \(n \geq 2\). Then \(\{Y_n\}\) satisfies the large deviation principle with speed \(\left\{\frac{1}{\log n}\right\}\) and good rate function
        \begin{equation*}
            J(t) = 
            \begin{cases}
                t &\text{if } t \geq 0 \\
                \infty &\text{if } t < 0.
            \end{cases}
        \end{equation*}
    \end{lemma}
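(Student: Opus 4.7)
The plan is to verify the two bounds in Definition \ref{def:general_ldp} directly from the explicit density of $Y_n$, since everything is exponential. Writing $Y_n = X/\log n$ with $X \sim \Exp(1)$, the density of $Y_n$ with respect to Lebesgue measure is
\begin{equation*}
    f_{Y_n}(y) = (\log n)\, e^{-y \log n} \cdot \mathbf{1}_{\{y \geq 0\}} = (\log n)\, n^{-y} \cdot \mathbf{1}_{\{y \geq 0\}}.
\end{equation*}
Before touching the two bounds, I would check that $J$ is a good rate function: it is continuous on $[0,\infty)$ and equals $+\infty$ on $(-\infty, 0)$, so it is lower semicontinuous, and its sublevel sets $\{J \leq \alpha\} = [0, \alpha]$ for $\alpha \geq 0$ are compact.

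For the upper bound on a closed set $F \subset \R$, note that if $F \cap [0,\infty) = \emptyset$ then $P(Y_n \in F) = 0$ and $\inf_{t \in F} J(t) = +\infty$, so there is nothing to check. Otherwise set $a := \inf(F \cap [0,\infty)) \geq 0$; since $Y_n \geq 0$ almost surely,
\begin{equation*}
    P(Y_n \in F) \leq P(Y_n \geq a) = n^{-a},
\end{equation*}
so $\limsup_{n \to \infty} \tfrac{1}{\log n} \log P(Y_n \in F) \leq -a = -\inf_{t \in F} J(t)$, which is the required upper bound with closure $\overline{F} = F$.

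For the lower bound on an open set $G$, I would show that for every $t \in G$ with $J(t) < \infty$ (i.e.\ $t \geq 0$), $\liminf \tfrac{1}{\log n}\log P(Y_n \in G) \geq -t$, and then take the supremum over such $t$. Given such $t$, pick $\varepsilon > 0$ small enough that $(t-\varepsilon, t+\varepsilon) \cap [0,\infty) \subset G$; then
\begin{equation*}
    P(Y_n \in G) \geq \int_{(t-\varepsilon)_+}^{t+\varepsilon} (\log n) n^{-y}\, dy = n^{-(t-\varepsilon)_+} - n^{-(t+\varepsilon)},
\end{equation*}
and taking $\tfrac{1}{\log n} \log(\cdot)$ gives $\liminf_{n \to \infty} \tfrac{1}{\log n}\log P(Y_n \in G) \geq -(t-\varepsilon)_+$; sending $\varepsilon \to 0$ yields $\geq -t = -J(t)$.

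There is essentially no obstacle: the only mild care needed is at the boundary point $t = 0$, where one uses $(t-\varepsilon)_+ = 0$ and notices $P(Y_n \in [0,\varepsilon)) = 1 - n^{-\varepsilon} \to 1$, giving $\tfrac{1}{\log n} \log P \to 0 = -J(0)$. As an alternative presentation, one could simply note that $\frac{1}{\log n} \log P(Y_n \in \Gamma) = \frac{1}{\log n} \log \int_{\Gamma \cap [0,\infty)} (\log n) n^{-y} dy$ and cite Lemma 3 of \cite{caiOptimalDetectionSparse2014} (as done in Example \ref{example:brownian_motion} and elsewhere) to obtain $\lim_{n \to \infty} \tfrac{1}{\log n}\log P(Y_n \in \Gamma) = -\inf_{y \in \Gamma} J(y)$ for each Borel $\Gamma$, from which both LDP bounds follow at once.
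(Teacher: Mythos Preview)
Your proof is correct. Your primary route---directly verifying the upper bound on closed sets via the tail $P(Y_n \geq a) = n^{-a}$ and the lower bound on open sets via an explicit integral over a small interval---is a self-contained elementary argument that avoids any external citation. The paper instead takes the shortcut you flag at the end: it writes $\frac{1}{\log n}\log P(Y_n \in \Gamma) = \frac{\log\log n}{\log n} + \frac{1}{\log n}\log \int_{\Gamma \cap [0,\infty)} e^{-t\log n}\,dt$ and invokes Lemma~3 of \cite{caiOptimalDetectionSparse2014} (a Laplace-method statement) to conclude the limit equals $-\inf_{t \in \Gamma \cap [0,\infty)} t$ in one stroke. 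Your direct argument buys independence from that reference and makes the exponential calculation transparent; the paper's version is shorter but relies on having that lemma already available. Either way the content is the same.
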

    \begin{proof}
        The density of \(Y_n\) is \(f_n(t) = (\log n) \exp(-t \log n) \cdot \mathbf{1}_{\{t \geq 0\}}\). For a Borel set \(\Gamma \subset \R\), it follows that
        \begin{align*}
            \frac{\log P(Y_n \in \Gamma)}{\log n} &= \frac{1}{\log n} \cdot \log \int_{\Gamma \cap [0, \infty)} (\log n) \exp(-t \log n) \, dt \\
            &= \frac{\log(\log n)}{\log n} + \frac{1}{\log n} \cdot \log \int_{\Gamma \cap [0, \infty)} \exp(-t \log n) \, dt
        \end{align*}
        Noting that \(\int_{0}^{\infty} e^{-t} \, dt < \infty\), Lemma 3 of \cite{caiOptimalDetectionSparse2014} can be applied to obtain 
        \begin{align*}
            \lim_{n \to \infty} \frac{1}{\log n} \cdot \log \int_{\Gamma \cap [0, \infty)} \exp(-t \log n) \, dt &= \ess \sup_{t \in \Gamma \cap [0, \infty)} -t \\
            &= \sup_{t \in \Gamma \cap [0, \infty)} -t \\
            &= -\inf_{t \in \Gamma \cap [0, \infty)} t.
        \end{align*}
        It immediately follows that 
        \begin{align*}
            - \inf_{t \in \Gamma^\circ} J(t) &\leq \liminf_{n \to \infty} \frac{\log P(Y_n \in \Gamma)}{\log n} \\
            &\leq \limsup_{n \to \infty} \frac{\log P(Y_n \in \Gamma)}{\log n} \leq -\inf_{t \in \overline{\Gamma}} J(t).
        \end{align*}
        It is clear that \(J\) is a good rate function. Hence, \(\{Y_n\}\) satisfies the large deviation principle with speed \(\left\{\frac{1}{\log n}\right\}\) and good rate function \(J\). 
    \end{proof}

    \begin{proof}[Proof of Proposition \ref{prop:weaker_than_cai_wu}]
        Let \(X_n \sim P_n\). Define \(\nu_0^{(n)}\) and \(\nu_1^{(n)}\) to be the probability measures such that for any Borel set \(B \subset \R\), 
        \begin{align*}
            \nu_0^{(n)}(B) &= \frac{P(X_n \in (-\infty, z^{(n)}(1/2)) \cap B)}{P(X_n \in (-\infty, z^{(n)}(1/2))},\\
            \nu_1^{(n)}(B) &= \frac{P(X_n \in [z^{(n)}(1/2), \infty) \cap B)}{P(X_n \in [z^{(n)}(1/2), \infty))},
        \end{align*}
        where \(z^{(n)}\) is the quantile function of \(X_n\). Let \(z_0^{(n)}, z_1^{(n)}\) be the quantile functions of \(\nu_0^{(n)}, \nu_1^{(n)}\) respectively. Let \(Y_n, \widetilde{Y}_n \sim \frac{1}{\log n} \cdot \Exp(1)\) be independent. Observe that \(z_0^{(n)}(n^{-Y_n}) \sim \nu_0^{(n)}\) and \(z_1^{(n)}(1-n^{-\widetilde{Y}_n}) \sim \nu_1^{(n)}\). Let \(Z_n \sim \Bernoulli\left(\frac{1}{2}\right)\) be independent of \(Y_n, \widetilde{Y}_n\) and observe that we can write 
        \begin{equation*}
            X_n \overset{d}{=} z_0^{(n)}(n^{-Y_n}) \cdot \mathbf{1}\{Z_n = 0\} + z_1^{(n)}(1-n^{-\widetilde{Y}_n}) \cdot \mathbf{1}\{Z_n = 1\}.
        \end{equation*}
        Consider further that 
        \begin{equation*}
            \log\frac{q_n}{p_n}(X_n) \overset{d}{=} \left(\log \frac{q_n}{p_n}(z_0^{(n)}(n^{-Y_n}))\right) \cdot \mathbf{1}\{Z_n = 0\} + \left(\log \frac{q_n}{p_n}(z_1^{(n)}(1-n^{-\widetilde{Y}_n})) \right) \cdot \mathbf{1}\{Z_n = 1\}. 
        \end{equation*}
        For a Borel set \(\Gamma \subset \R\), observe that 
        \begin{align*}
            & \log P\left( \frac{\log \frac{q_n}{p_n}(X_n)}{\log n} \in \Gamma \right) \\
            &= \log\left[ \frac{1}{2} P\left( \frac{\log \frac{q_n}{p_n}(z_0^{(n)}(n^{-Y_n}))}{\log n} \in \Gamma \right) + \frac{1}{2} P\left( \frac{\log \frac{q_n}{p_n}(z_1^{(n)}(1-n^{-\widetilde{Y}_n}))}{\log n} \in \Gamma \right) \right] \\
            &= -\log(2) + \log\left[P\left( \frac{\log \frac{q_n}{p_n}(z_0^{(n)}(n^{-Y_n}))}{\log n} \in \Gamma \right) + P\left( \frac{\log \frac{q_n}{p_n}(z_1^{(n)}(1-n^{-\widetilde{Y}_n}))}{\log n} \in \Gamma \right) \right].
        \end{align*}
        For ease of notation, set \(r_0 = P\left( \frac{\log \frac{q_n}{p_n}(z_0^{(n)}(n^{-Y_n}))}{\log n} \in \Gamma \right)\) and \(r_1 =  P\left( \frac{\log \frac{q_n}{p_n}(z_1^{(n)}(1-n^{-\widetilde{Y}_n}))}{\log n} \in \Gamma \right)\). Then, note that 
        \begin{align*}
            \frac{1}{\log n} \cdot \log P\left( \frac{\log \frac{q_n}{p_n}(X_n)}{\log n} \in \Gamma \right) &= -\frac{\log(2)}{\log n} + \frac{1}{\log n}\log\left(r_0 + r_1\right) \\
            &= -\frac{\log(2)}{\log n} + \frac{1}{\log n} \cdot \log\left( (r_0 \vee r_1) \cdot \left( 1 + \frac{r_0 \wedge r_1}{r_0 \vee r_1} \right)\right) \\
            &= -\frac{\log(2)}{\log n} + \left(\frac{\log r_0}{\log n}\right) \vee \left(\frac{\log r_1}{\log n}\right) + \frac{1}{\log n} \cdot \log\left(1 + \frac{r_0 \wedge r_1}{r_0 \vee r_1}\right). 
        \end{align*}
        Consider that \(0 \leq \log\left(1 + \frac{r_0 \wedge r_1}{r_0 \vee r_1}\right) \leq \log(2)\), and so we have
        \begin{equation} \label{eqn:cai_wu_ldp}
            \frac{1}{\log n} \cdot \log P\left( \frac{\log \frac{q_n}{p_n}(X_n)}{\log n} \in \Gamma \right) = \left(\frac{\log r_0}{\log n}\right) \vee \left(\frac{\log r_1}{\log n}\right) + o(1)
        \end{equation}
        as \(n \to \infty\). Thus, to establish that \(\frac{\log \frac{q_n}{p_n}}{\log n}\) satisfies the large deviation principle under the null, it suffices to study whether \(\frac{\log \frac{q_n}{p_n}(z_0^{(n)}(n^{-Y_n}))}{\log n}\) and \(\frac{\log \frac{q_n}{p_n}(z_1^{(n)}(1-n^{-\widetilde{Y}_n}))}{\log n}\) satisfy large deviation principles. 

        Consider that 
        \begin{align*}
            z_0^{(n)}(n^{-s + \log_n 2}) &= z^{(n)}(n^{-s}),\\
            z_1^{(n)}(1-n^{-s + \log_n 2}) &= z^{(n)}(1-n^{-s})
        \end{align*}
        for all \(s \geq \frac{1}{\log_2 n}\). Therefore, the uniform convergence conditions (\ref{eqn:unif_convergence_0}) and (\ref{eqn:unif_convergence_1}) can be equivalently written as 
        \begin{align}
            \lim_{n \to \infty} \sup_{r \geq 0} \left|\frac{\log \frac{q_n}{p_n}(z_0^{(n)}(n^{-r}))}{\log n} - \alpha_0(r+\log_n 2) \right| = 0, \label{eqn:unif_conv_modified_0}\\
            \lim_{n \to \infty} \sup_{r \geq 0} \left| \frac{\log \frac{q_n}{p_n}(z_1^{(n)}(1-n^{-r}))}{\log n} - \alpha_1(r+\log_n 2) \right| = 0. \label{eqn:unif_conv_modified_1}
        \end{align}
        Let \(W_n := Y_n + \log_n 2\) and \(\widetilde{W}_n := \widetilde{Y}_n + \log_n 2\). Note that for all \(\delta > 0\),
        \begin{align*}
            \limsup_{n \to \infty} \frac{1}{\log n} \log P\left( |W_n - Y_n| > \delta \right) &= \limsup_{n \to \infty} \frac{1}{\log n} \log P\left(\frac{\log 2}{\log n} > \delta \right) \\
            &= -\infty   
        \end{align*}
        and so \(W_n\) and \(Y_n\) are exponentially equivalent (recall Definition \ref{def:exp_equiv}) with respect to speed \(\left\{\frac{1}{\log n}\right\}\). Likewise, \(\widetilde{W}_n\) and \(\widetilde{Y}_n\) are exponentially equivalent with respect to the same speed. By Lemma \ref{lemma:ldp_Yn}, it follows that \(Y_n\) and \(\widetilde{Y}_n\) both satisfy a large deviation principle with speed \(\left\{\frac{1}{\log n}\right\}\) and with the good rate function \(J\) specified in Lemma \ref{lemma:ldp_Yn}. Then, it follows by Theorem \ref{thm:same_ldp} that \(W_n\) and \(\widetilde{W}_n\) both satisfy a large deviation principle with speed \(\left\{\frac{1}{\log n}\right\}\) and with the good rate function \(J\).

        By conditions (\ref{eqn:unif_conv_modified_0}) and (\ref{eqn:unif_conv_modified_1}), it follows that for any \(\delta > 0\),
        \begin{align*}
            \left|\frac{\log \frac{q_n}{p_n}(z_0^{(n)}(n^{-Y_n}))}{\log n} - \alpha_0(W_n) \right| < \delta, \\
            \left| \frac{\log \frac{q_n}{p_n}(z_1^{(n)}(1-n^{-\widetilde{Y}_n}))}{\log n} - \alpha_1(\widetilde{W}_n)\right| < \delta 
        \end{align*}
        almost surely for all \(n\) sufficiently large. Therefore, for all \(\delta > 0\), it follows that 
        \begin{align*}
            \limsup_{n \to \infty} \frac{1}{\log n} \log P\left(\left|\frac{\log \frac{q_n}{p_n}(z_0^{(n)}(n^{-Y_n}))}{\log n} - \alpha_0(W_n) \right| > \delta \right) = -\infty,
        \end{align*}
        and so \(\frac{\log \frac{q_n}{p_n}(z_0^{(n)}(n^{-Y_n}))}{\log n}\) and \(\alpha_0(W_n)\) are exponentially equivalent with respect to speed \(\left\{\frac{1}{\log n}\right\}\). The same argument yields that \(\frac{\log \frac{q_n}{p_n}(z_1^{(n)}(1-n^{-\widetilde{Y}_n}))}{\log n} \) and \(\alpha_1(\widetilde{W}_n)\) are exponentially equivalent with respect to the same speed. 

        Since \(\alpha_0\) and \(\alpha_1\) are continuous, the contraction principle (Theorem \ref{thm:contraction_principle}) implies that \(\alpha_0(W_n)\) and \(\alpha_1(\widetilde{W}_n)\) satisfy large deviation principles with respect to speed \(\left\{\frac{1}{\log n}\right\}\) and with good rate functions
        \begin{align*}
            I_0(t) &= \inf\{J(w) : t = \alpha_0(w)\}, \\
            I_1(t) &= \inf\{J(w) : t = \alpha_1(w)\}
        \end{align*}
        respectively. Since \(J(w) = w\) for \(w \geq 0\) and \(J(w) = \infty\) for \(w < 0\), it can equivalently be written as 
        \begin{align*}
            I_0(t) &= \inf\{w \geq 0 : t = \alpha_0(w)\} \\
            I_1(t) &= \inf\{w \geq 0 : t = \alpha_1(w)\}.
        \end{align*}
        Recall that we use the convention that \(\inf \emptyset = \infty\). By the exponential equivalence, it follows that \(\frac{\log \frac{q_n}{p_n}(z_0^{(n)}(n^{-Y_n}))}{\log n}\) and \(\frac{\log \frac{q_n}{p_n}(z_1^{(n)}(1-n^{-\widetilde{Y}_n}))}{\log n} \) both satisfy the large deviation principle under the null. The respective good rate functions are \(I_0\) and \(I_1\). 

        Turning our attention back to (\ref{eqn:cai_wu_ldp}), we immediately see that for any Borel set \(\Gamma \subset \R\),
        \begin{align*}
            \liminf_{n \to \infty} \frac{1}{\log n} \cdot \log P\left(\frac{\log \frac{q_n}{p_n}(X_n)}{\log n} \in \Gamma \right) &\geq \left(-\inf_{t \in \Gamma^\circ} I_0(t)\right) \vee \left(-\inf_{t \in \Gamma^\circ} I_1(t)\right) \\
            &= \sup_{t \in \Gamma^\circ} \left\{ (-I_0(t)) \vee (-I_1(t)) \right\} \\
            &= \sup_{t \in \Gamma^\circ} - \left( I_0(t) \wedge I_1 (t)\right) \\
            &= - \inf_{t \in \Gamma^\circ} (I_0(t) \wedge I_1(t)).
        \end{align*}
        Likewise, 
        \begin{align*}
            \limsup_{n \to \infty} \frac{1}{\log n} \cdot \log P\left(\frac{\log \frac{q_n}{p_n}(X_n)}{\log n} \in \Gamma \right) &\leq \left(-\inf_{t \in \overline{\Gamma}} I_0(t)\right) \vee \left(-\inf_{t \in \overline{\Gamma}} I_1(t)\right) \\
            &= \sup_{t \in \overline{\Gamma}} (-I_0(t)) \vee (-I_1(t)) \\
            &= \sup_{t \in \overline{\Gamma}} - \left( I_0(t) \wedge I_1(t)\right) \\
            &= - \inf_{t \in \overline{\Gamma}} (I_0(t) \wedge I_1(t)).
        \end{align*}
        Therefore, \(\left\{ \frac{\log \frac{q_n}{p_n}}{\log n} \right\}\) satisfies the large deviation principle under the null with good rate function \(I_0 \wedge I_1\), as desired.
    \end{proof}

    \subsection{Proof of Proposition \ref{prop:HC_bound_formula}}

    \begin{proof}[Proof of Proposition \ref{prop:HC_bound_formula}]
        The inequality \(\underline{\beta}^{\HC} \leq \underline{\beta}^*\) follows immediately from Proposition \ref{prop:HC_upper_bound}. We now turn our attention to the lower bound. For each \(n\), define the event
        \begin{equation*}
            A_n(c) := \left\{ x \in \mathcal{X} : \frac{q_n}{p_n}(x) > n^c \right\}
        \end{equation*}
        where \(c \geq 0\). Observe that \(A_n(c) \in \A_n^*\) for each \(n\). By definition, it follows 
        \begin{equation}\label{eqn:beta_hc_lln_bound}
            \underline{\beta}^{\HC} \geq \frac{1}{2} + \liminf_{n \to \infty} \left\{ \frac{\log Q_n(A_n(c))}{\log n} + \frac{1}{2} \min\left(1, - \frac{\log P_n(A_n(c))}{\log n} \right) \right\}
        \end{equation}
        for all \(c \geq 0\). Since \(\left\{\frac{\log \frac{q_n}{p_n}}{\log n}\right\}\) satisfies the large deviation principle under the null with good rate function \(I\), it follows that 
        \begin{align*}
            \limsup_{n \to \infty} \frac{\log P_n(A_n(c))}{\log n} &= \limsup_{n \to \infty} \frac{\log P\left( \frac{\log \frac{q_n}{p_n}(X_n)}{\log n} > c\right)}{\log n} \leq -\inf_{t \geq c} I(t)
        \end{align*}
        where \(X_n \sim P_n\). Hence, 
        \begin{equation*}\label{eqn:log_P_bound}
            \liminf_{n \to \infty} -\frac{\log P_n(A_n(c))}{\log n} \geq \inf_{t \geq c} I(t).
        \end{equation*}
        Turning our attention to the term \(\frac{\log Q_n(A_n(c))}{\log n}\), we will apply Varahdan's integral lemma. First, note that condition (\ref{eqn:HC_bound_moment_condition}) implies 
        \begin{equation*}
            \limsup_{n \to \infty} \frac{1}{\log n} \cdot \log E\left[ n^{\gamma \frac{\log \frac{q_n}{p_n}(X_n)}{\log n}} \right] < \infty.
        \end{equation*}
        Therefore, the moment condition (\ref{eqn:varadhan_moment}) is satisfied with the identity function \(\varphi(t) = t\). It follows from Lemma \ref{lemma:Varadhan_subset} that
        \begin{align*}
            \liminf_{n \to \infty} \frac{\log Q_n(A_n(c))}{\log n} &= \liminf_{n \to \infty} \frac{1}{\log n} \cdot \log E\left[ \frac{q_n}{p_n}(X_n) \cdot \mathbf{1}_{\left\{A_n(c) \right\}}\right] \\
            &= \liminf_{n \to \infty} \frac{1}{\log n} \cdot \log E\left[ n^{\frac{\log \frac{q_n}{p_n}(X_n)}{\log n}} \cdot \mathbf{1}_{\left\{A_n(c) \right\}}\right] \\
            &= \liminf_{n \to \infty} \frac{1}{\log n} \cdot \log E\left[ n^{\frac{\log \frac{q_n}{p_n}(X_n)}{\log n}} \cdot \mathbf{1}_{\left\{\frac{\log \frac{q_n}{p_n}(X_n)}{\log n} > c \right\}}\right] \\
            &\geq \sup_{t > c} \left\{ t - I(t) \right\}.
        \end{align*}
        Combining these lower bounds with (\ref{eqn:beta_hc_lln_bound}) yields
        \begin{equation*}
            \underline{\beta}^{\HC} \geq \frac{1}{2} + \sup_{t > c} \left\{ t - I(t) \right\} + \frac{1}{2}\min\left(1, \inf_{t \geq c} I(t) \right)
        \end{equation*}
        for all \(c \geq 0\). Maximizing over nonnegative \(c\) yields 
        \begin{equation*}
            \underline{\beta}^{\HC} \geq \frac{1}{2} + \sup_{c \geq 0} \left\{ \sup_{t > c} \left\{t - I(t)\right\} + \frac{1}{2} \min\left(1, \inf_{t \geq c} I(t) \right) \right\},
        \end{equation*}
        as claimed.
    \end{proof}

    \subsection{Proof of Theorem \ref{thm:HC_tight}}
    To prove Theorem \ref{thm:HC_tight}, we use some basic facts about the subdifferential calculus of real-valued convex functions \cite{rockafellarConvexAnalysis1970}. We first state a few definitions from convex analysis \cite{rockafellarConvexAnalysis1970}.

    \begin{definition}
        Let \(f : \R \to [-\infty, \infty]\) be a convex function. We say that \(f\) is \textit{proper} if \(f(x) < \infty\) for some \(x \in \R\) and \(f(x) > -\infty\) for all \(x \in \R\). 
    \end{definition}

    \begin{definition}
        Let \(f : \R \to [-\infty, \infty]\) be a proper convex function. Define the \textit{right derivative} 
        \begin{equation*}
            f_{+}'(x) := \lim_{h \downarrow 0} \frac{f(x+h) - f(x)}{h}
        \end{equation*}
        and \textit{left derivative}
        \begin{equation*}
            f_{-}'(x) := \lim_{h \uparrow 0} \frac{f(x+h) - f(x)}{h}.
        \end{equation*}
    \end{definition}

    The following theorem indeed establishes that the left and right derivatives exist at points where \(f\) is finite when \(f\) is a proper, convex function.

    \begin{theorem}[Theorem 23.1 - \cite{rockafellarConvexAnalysis1970}]\label{thm:rockafellar23.1}
        If \(f : \R \to [-\infty, \infty]\) is a proper convex function and if \(x \in \R\) such that \(f(x)\) finite, then the right and left derivatives \(f_{+}'(x)\) and \(f_{-}'(x)\) exist. Moreover, 
        \begin{align}
            f_{+}'(x) &= \inf_{h > 0} \frac{f(x+h) - f(x)}{h},\\
            f_{-}'(x) &= \sup_{h > 0} \frac{f(x-h) - f(x)}{-h}.
        \end{align}
    \end{theorem}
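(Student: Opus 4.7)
The plan is to derive both identities from a single monotonicity property of difference quotients, namely the ``chord-slope lemma'' that is really just a rearrangement of the convexity inequality. Fix $x$ with $f(x)$ finite and pick $0 < h_1 < h_2$. Writing $x + h_1 = (1 - \lambda)x + \lambda(x + h_2)$ with $\lambda = h_1/h_2 \in (0,1)$ and invoking convexity gives $f(x+h_1) \leq (1-\lambda) f(x) + \lambda f(x+h_2)$. Because $f(x) \in \R$ and $f > -\infty$ by properness, no $\infty - \infty$ indeterminacy arises; subtracting $f(x)$ and dividing by $h_1 > 0$ yields
$$
\frac{f(x+h_1) - f(x)}{h_1} \leq \frac{f(x+h_2) - f(x)}{h_2}.
$$
Consequently $h \mapsto (f(x+h) - f(x))/h$ is monotone non-decreasing on $(0, \infty)$. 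Any monotone function on an interval admits a one-sided limit at its endpoints in $[-\infty, +\infty]$, and at the infimum end the limit coincides with the infimum of the values. This simultaneously delivers the existence of $f'_+(x)$ and the first displayed identity.

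For the left derivative I would run the mirror argument. For $0 < k_1 < k_2$, write $x - k_1 = (1 - k_1/k_2)x + (k_1/k_2)(x - k_2)$, apply convexity to get $f(x-k_1) - f(x) \leq (k_1/k_2)(f(x-k_2) - f(x))$, and divide by $-k_1 < 0$ (which reverses the inequality) to obtain
$$
\frac{f(x-k_1) - f(x)}{-k_1} \geq \frac{f(x-k_2) - f(x)}{-k_2}.
$$
Thus $k \mapsto (f(x-k) - f(x))/(-k)$ is monotone non-increasing on $(0, \infty)$, so as $k \downarrow 0$ its limit exists in $[-\infty, +\infty]$ and equals the supremum over $k > 0$. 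Substituting $h = -k$ (so that $h \uparrow 0$) recovers the stated formula for $f'_-(x)$.

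The only delicate point I anticipate is bookkeeping in the extended-real cases. If $x$ sits on the right boundary of the effective domain $\{y : f(y) < \infty\}$, then every $(f(x+h)-f(x))/h$ equals $+\infty$ and both the limit and the infimum collapse to $+\infty$, consistent with the claimed identity; symmetrically for the left side. Properness is used precisely to ensure that the convex combination inequality never takes the indeterminate form $\infty - \infty$, so the monotonicity step goes through uniformly in $x$ with $f(x)$ finite, regardless of whether $f$ blows up at nearby points.
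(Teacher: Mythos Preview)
Your argument is correct and is in fact the standard proof via monotonicity of difference quotients. Note, however, that the paper does not supply its own proof of this statement: it is quoted verbatim as Theorem~23.1 from Rockafellar's \emph{Convex Analysis} and used as a black-box input to the subsequent convex-analytic lemmas, so there is no ``paper's proof'' to compare against. Your chord-slope derivation is essentially the argument Rockafellar himself gives, and it handles the boundary and extended-real cases cleanly.
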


    The notion of the subgradient and its related properties are useful in our arguments.

    \begin{definition}\label{def:left_right_deriv}
        Let \(f : \R \to [-\infty, \infty]\) be a convex function. A real number \(x^*\) is said to be a \textit{subgradient} of \(f\) at \(x\) if 
        \begin{equation*}
            f(z) \geq f(x) + x^* \cdot (z-x) 
        \end{equation*}
        for all \(z \in \R\). The set of all subgradients of \(f\) at \(x\) is called the \textit{subdifferential of \(f\) at \(x\)}, and is denoted by \(\partial f(x)\). The mapping \(x \mapsto \partial f(x)\) is called the \textit{subdifferential} of \(f\). If \(\partial f(x)\) is not empty, \(f\) is said to be \textit{subdifferentiable} at \(x\).
    \end{definition}

    \begin{theorem}[Theorem 24.1 - \cite{rockafellarConvexAnalysis1970}]\label{thm:rockafellar24.1}
        Let \(f : \R \to [-\infty, \infty]\) be a closed proper convex function. For convenience, extend the right and left derivatives \(f_{+}'\) and \(f_{-}'\) beyond the interval \(D\) on which \(f\) is finite as follows. For points to the right of \(D\), set \(f_+'\) and \(f_-'\) equal to \(\infty\). For points the left of \(D\), set \(f_{+}'\) and \(f_-'\) equal to \(-\infty\). Then \(f_{+}'\) and \(f_{-}'\) are increasing functions on \(\R\), finite on the interior of \(D\), such that
        \begin{equation*}
            f_{+}'(z_1) \leq f_{-}'(x) \leq f_{+}'(x) \leq f_{-}'(z_2)
        \end{equation*}
        when \(z_1 < x < z_2\). Moreover, for every \(x\), 
        \begin{align*}
            \lim_{z \downarrow x} f_{+}'(z) &= f_{+}'(x), \\
            \lim_{z \uparrow x} f_{+}'(z) &= f_{-}'(x), \\
            \lim_{z \downarrow x} f_{-}'(z) &= f_{+}'(x), \\
            \lim_{z \uparrow x} f_{-}'(z) &= f_{-}'(x).
        \end{align*}
    \end{theorem}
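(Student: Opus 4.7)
The plan is to prove the three claimed properties in turn---monotonicity of $f_{+}'$ and $f_{-}'$, the chain of inequalities at a single interior point, and the four one-sided limit relations---by reducing each to the basic monotonicity of convex difference quotients, using Theorem \ref{thm:rockafellar23.1} to translate between one-sided derivatives and inf/sup of chord slopes.

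First I would establish the chain $f_{+}'(z_1) \leq f_{-}'(x) \leq f_{+}'(x) \leq f_{-}'(z_2)$ for $z_1 < x < z_2$ interior to $D$. The three-chord inequality for convex functions says that for $h,k > 0$,
\begin{equation*}
    \frac{f(x)-f(x-h)}{h} \;\leq\; \frac{f(x+k)-f(x-h)}{h+k} \;\leq\; \frac{f(x+k)-f(x)}{k},
\end{equation*}
obtained by writing $x$ as the appropriate convex combination of $x-h$ and $x+k$. Taking $\sup_h$ on the left and $\inf_k$ on the right, and invoking the inf/sup formulas of Theorem \ref{thm:rockafellar23.1}, yields $f_{-}'(x) \leq f_{+}'(x)$. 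Applying the same inequality with $x$ replaced by $z_1$ and $x-h$ replaced by a point still further left, and using $h = x - z_1$ in the inf formula for $f_{+}'(z_1)$, gives $f_{+}'(z_1) \leq (f(x)-f(z_1))/(x-z_1)$; and taking $h = x - z_1$ in the sup formula for $f_{-}'(x)$ gives $(f(x)-f(z_1))/(x-z_1) \leq f_{-}'(x)$. The symmetric argument on $[x, z_2]$ handles $f_{+}'(x) \leq f_{-}'(z_2)$. Chaining these gives both the displayed inequality and the monotonicity of $f_{+}'$ and $f_{-}'$ on $D^\circ$; finiteness of both on $D^\circ$ follows because any $x \in D^\circ$ admits $z_1 < x < z_2$ in $D^\circ$, so both one-sided derivatives are sandwiched by finite chord slopes. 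Monotonicity on all of $\R$ is then automatic from the $\pm\infty$ extensions outside $D$.

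Second I would prove the four limit identities. Monotonicity gives existence of each one-sided limit in $[-\infty,\infty]$, and monotonicity also gives the easy half of each identity; for instance $\liminf_{z\downarrow x} f_{+}'(z) \geq f_{+}'(x)$ and $\limsup_{z\uparrow x} f_{+}'(z) \leq f_{-}'(x)$. For the reverse in $\lim_{z\downarrow x} f_{+}'(z) = f_{+}'(x)$, I fix $h > 0$ with $x+h \in D^\circ$; for $z \in (x, x+h)$ close to $x$, Theorem \ref{thm:rockafellar23.1} yields
\begin{equation*}
    f_{+}'(z) \;\leq\; \frac{f(z+h') - f(z)}{h'}
\end{equation*}
with $h' := x+h - z > 0$, so that $z+h' = x+h$. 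Letting $z \downarrow x$ and using continuity of $f$ on $D^\circ$ (a standard consequence of closed-proper-convexity) and of $h' \to h$, the right side tends to $(f(x+h) - f(x))/h$. Taking $\inf_{h>0}$ and applying Theorem \ref{thm:rockafellar23.1} once more gives $\limsup_{z\downarrow x} f_{+}'(z) \leq f_{+}'(x)$. The identity $\lim_{z\uparrow x} f_{+}'(z) = f_{-}'(x)$ is proved analogously: fix $h > 0$ with $x - h \in D^\circ$; for $z \in (x-h, x)$ one has $f_{+}'(z) \geq (f(x) - f(z))/(x - z)$, and sending $z \uparrow x$ gives $\liminf_{z\uparrow x} f_{+}'(z) \geq (f(x) - f(x-h'))/h'$ after a similar reparametrization; take $\sup_h$ to recover $f_{-}'(x)$. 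The two identities for $f_{-}'$ follow by the same scheme with the roles of $\sup$ and $\inf$ interchanged.

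The main obstacle will be handling the boundary points of $D$ and the extensions to $\pm\infty$ uniformly. At an endpoint $\underline{d} \in D$, closedness of $f$ ensures $f(\underline{d}) = \lim_{x\downarrow \underline{d}} f(x)$, so the chord slope $(f(\underline{d}+h) - f(\underline{d}))/h$ is the correct limit of $(f(z+h)-f(z))/h$ as $z \downarrow \underline{d}$, which is exactly what is needed to push the interior proof to $x = \underline{d}$; if $\underline{d} \notin D$ the extension $f_{+}'(\underline{d}) = f_{-}'(\underline{d}) = -\infty$ together with the fact that $\lim_{x \downarrow \underline{d}} f(x) = +\infty$ (closedness of a proper convex function along its effective domain) makes each limit identity hold vacuously or by inspection. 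Once this boundary bookkeeping is done, the interior argument of the previous paragraphs applies unchanged.
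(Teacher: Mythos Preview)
The paper does not prove this theorem; it is quoted verbatim from Rockafellar's \emph{Convex Analysis} as a background tool, so there is no paper-internal argument to compare against. Your outline follows the classical route and is structurally sound, but one step contains a genuine slip.

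In the identity $\lim_{z\uparrow x} f_{+}'(z) = f_{-}'(x)$ you assert that for $z \in (x-h, x)$ one has $f_{+}'(z) \geq (f(x) - f(z))/(x-z)$. This inequality is reversed: since $f_{+}'(z) = \inf_{k>0}(f(z+k)-f(z))/k$ by Theorem~\ref{thm:rockafellar23.1} and $k = x-z$ is an admissible choice, what you actually get is $f_{+}'(z) \leq (f(x)-f(z))/(x-z)$, which is useless for the desired lower bound. The fix is to anchor the chord on the \emph{left} of $z$: from $f_{+}'(z) \geq f_{-}'(z)$ and the sup formula for $f_{-}'(z)$ with increment $z-(x-h)$, one obtains
\[
    f_{+}'(z) \;\geq\; \frac{f(z)-f(x-h)}{z-(x-h)}.
\]
Sending $z\uparrow x$ and using continuity of $f$ on $D^\circ$ gives $(f(x)-f(x-h))/h$; then $\sup_{h>0}$ recovers $f_{-}'(x)$, which is the conclusion you were aiming for. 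With this correction the interior argument goes through. Your boundary discussion is sketchy but names the right ingredients (closedness forces $f(\underline d) = \lim_{x\downarrow\underline d} f(x)$ when $\underline d \in D$, and $f \to +\infty$ along $D$ otherwise), and filling it in is routine once the interior case is settled.
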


    \begin{lemma}[pg. 229 of \cite{rockafellarConvexAnalysis1970}]
        Under the conditions of Theorem \ref{thm:rockafellar24.1}, it follows that \(\partial f(x) = [f_-'(x), f_+'(x)]\) for all \(x \in \R\).
    \end{lemma}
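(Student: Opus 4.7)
The plan is to prove $\partial f(x) = [f_-'(x), f_+'(x)]$ case by case, with the main work taking place when $x$ lies in the interior of the effective domain $D$. The key tool is Theorem~\ref{thm:rockafellar23.1}, which gives that the one-sided derivatives at an interior point of $D$ can be expressed as the (monotone) limits of difference quotients, together with the fact that for a convex function the difference quotient $h \mapsto (f(x+h)-f(x))/h$ is nondecreasing in $h$ over $h \neq 0$.

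For the main case, suppose $x$ is in the interior of $D$. For the inclusion $\partial f(x) \subset [f_-'(x), f_+'(x)]$, take $x^* \in \partial f(x)$. The subgradient inequality with $z = x+h$ for $h > 0$ gives $(f(x+h)-f(x))/h \geq x^*$; letting $h \downarrow 0$ and invoking Theorem~\ref{thm:rockafellar23.1} yields $f_+'(x) \geq x^*$. Taking $z = x - h$ for $h > 0$ gives $(f(x-h)-f(x))/(-h) \leq x^*$, and letting $h \downarrow 0$ yields $f_-'(x) \leq x^*$. For the reverse inclusion, suppose $x^* \in [f_-'(x), f_+'(x)]$. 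If $z > x$, then by monotonicity of the difference quotient and Theorem~\ref{thm:rockafellar23.1},
\begin{equation*}
\frac{f(z)-f(x)}{z-x} \;=\; \inf_{0 < h \leq z-x} \frac{f(x+h)-f(x)}{h} \cdot \tfrac{\text{(sign convention)}}{1} \;\geq\; f_+'(x) \;\geq\; x^*,
\end{equation*}
so $f(z) \geq f(x) + x^*(z-x)$; the case $z < x$ follows analogously using $f_-'$, and $z = x$ is trivial. Hence $x^* \in \partial f(x)$.

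The boundary and exterior cases are handled using the extended definitions of $f_{\pm}'$ from Theorem~\ref{thm:rockafellar24.1}. If $x$ is the left endpoint of $D$ and $f(x)$ is finite, the convention sets $f_-'(x) = -\infty$, so $[f_-'(x), f_+'(x)] = (-\infty, f_+'(x)]$. The same difference-quotient argument shows $x^* \in \partial f(x)$ iff $x^* \leq f_+'(x)$, because for $z < x$ we have $f(z) = +\infty$ (or we use closedness of $f$ to handle $z = \underline{d}$) and the subgradient inequality is automatic, while for $z > x$ it reduces to the inequality $(f(z)-f(x))/(z-x) \geq x^*$. The right-endpoint case is symmetric. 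If $x \notin D$ so that $f(x) = +\infty$, then $\partial f(x) = \emptyset$ by convention, and $[f_-'(x), f_+'(x)] \cap \R = [\pm\infty, \pm\infty] \cap \R = \emptyset$ as well.

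The main obstacle is bookkeeping: keeping the sign conventions straight (the left-derivative arises from the quotient $(f(x-h)-f(x))/(-h)$, not $(f(x)-f(x-h))/h$—they agree but the former is used in Theorem~\ref{thm:rockafellar23.1}), and gracefully matching the extended-real conventions for $f_{\pm}'$ outside the interior of $D$ with the behavior of $\partial f$ on those points. Once one commits to treating $[f_-'(x), f_+'(x)]$ as a subset of $\R$ (so $\infty$-endpoints are simply excluded), everything falls out of monotonicity of difference quotients and Theorem~\ref{thm:rockafellar23.1}; no further machinery is needed.
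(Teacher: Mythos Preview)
The paper does not supply its own proof of this lemma; it is simply cited from Rockafellar, so there is no ``paper's approach'' to compare against. Your outline is the standard argument and is correct in spirit: monotonicity of the difference quotients for a convex function together with Theorem~\ref{thm:rockafellar23.1} gives both inclusions at interior points, and the extended-real conventions handle the boundary and exterior of $D$.

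That said, your displayed equation is garbled and, as written, false. You wrote
\[
\frac{f(z)-f(x)}{z-x} \;=\; \inf_{0 < h \leq z-x} \frac{f(x+h)-f(x)}{h} \cdot \tfrac{\text{(sign convention)}}{1} \;\geq\; f_+'(x),
\]
but the difference quotient is \emph{nondecreasing} in $h>0$, so $\tfrac{f(z)-f(x)}{z-x}$ is the value at $h=z-x$, not the infimum over smaller $h$; the correct line is simply
\[
\frac{f(z)-f(x)}{z-x}\;\geq\;\inf_{h>0}\frac{f(x+h)-f(x)}{h}\;=\;f_+'(x)\;\geq\;x^*.
\]
The stray ``$\cdot\,\tfrac{\text{(sign convention)}}{1}$'' factor should be deleted. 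Also, a minor quibble: at a contained left endpoint $\underline d\in D$, $f_-'(\underline d)=-\infty$ is not set ``by convention'' in Theorem~\ref{thm:rockafellar24.1} (that convention applies only to points strictly left of $D$); rather it falls out of Theorem~\ref{thm:rockafellar23.1} directly since $f(\underline d - h)=+\infty$ for every $h>0$. With these corrections, your proof is complete.
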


    \begin{lemma}\label{lemma:convex_rate_is_proper}
        Let \(I : \R \to [0, \infty]\) be a good rate function. If \(I\) is convex, then \(I\) is a proper, closed convex function. Recall that a function \(f : \R \to [0, \infty]\) is closed if the sublevel sets \(\{x \in \R : f(x) \leq \alpha\}\) are closed for each \(\alpha \in \R\).
    \end{lemma}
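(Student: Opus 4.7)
The proof amounts to matching the large-deviations vocabulary (rate function, good) with the convex-analysis vocabulary (proper, closed convex), so the plan is short. Three things need verifying: convexity, closedness, and properness of $I$. Convexity is given by hypothesis, leaving the other two.

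For closedness, I will invoke the standard characterization (Rockafellar) that a convex function $\R \to [-\infty,\infty]$ is closed precisely when it is lower semicontinuous, equivalently when its epigraph is a closed set. Lower semicontinuity of $I$ is already built into the definition of rate function given at the start of Section \ref{section:detection}, so closedness is immediate with no computation. For properness, recall that a convex function is proper when it never takes the value $-\infty$ and is not identically $+\infty$. The first condition is automatic from $I : \R \to [0,\infty]$. For the second, I will use goodness: since the sublevel sets $\{t : I(t) \leq \alpha\}$ are compact, and in every application of the lemma in this paper $I$ arises as the good rate function of an LDP under the null (so applying the LDP upper bound to $\Gamma = \R$ with $\mu_n(\R) = 1$ yields $\inf_t I(t) \leq 0$, hence $\{I \leq 0\}$ is nonempty), there is some $t \in \R$ with $I(t) < \infty$. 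This secures properness.

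I do not foresee any real obstacle. The only subtlety worth flagging is the non-triviality condition $I \not\equiv \infty$, which is implicit whenever $I$ is the rate function of an LDP on a non-empty space (as is the case throughout the paper) but is not formally part of the ``good rate function'' definition. The cleanest write-up will either remark briefly on this implicit non-triviality, or invoke Definition \ref{def:ldp} to extract it before concluding properness.
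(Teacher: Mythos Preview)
Your proposal is correct and follows essentially the same approach as the paper: both arguments simply unpack the definitions, with closedness coming from the sublevel-set characterization and properness from the nonnegativity of $I$ together with the existence of a point where $I$ is finite. The only minor differences are that the paper derives closedness from compactness of the sublevel sets (goodness) rather than from lower semicontinuity directly, and that the paper asserts $\inf_{x} I(x) = 0$ outright while you (more carefully) note that this requires the LDP context to rule out $I \equiv \infty$.
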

    \begin{proof}
        Since \(I\) is a good rate function, the sublevel sets \(\{x \in \R : I(x) \leq \alpha\}\) are compact. Thus, \(I\) is closed. Since \(I\) is a rate function, it follows that \(\inf_{x \in \R} I(x) = 0\). Since \(I\) is a good rate function, it is further the case that there exists a point \(x \in \R\) such that \(I(x) = 0\). Therefore, \(I\) is finite for some point, and is trivially always greater than \(-\infty\). Hence, \(I\) is proper.
    \end{proof}

    \begin{lemma}[pg. 264 of \cite{rockafellarConvexAnalysis1970}]
        Let \(f : \R \to (-\infty, \infty]\) be a convex function. A point \(m \in \R\) satisfies \(f(m) = \inf_{x \in \R} f(x)\) if and only if \(0 \in \partial f(m)\). 
    \end{lemma}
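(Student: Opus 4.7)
The statement is Fermat's rule for convex functions, and the plan is to prove it directly from the definition of the subdifferential; no substantial convexity machinery is required. Specifically, recall from Definition \ref{def:left_right_deriv} that $x^* \in \partial f(m)$ if and only if
\[
f(z) \geq f(m) + x^* \cdot (z - m) \quad \text{for all } z \in \R.
\]
The entire argument amounts to specializing this inequality to $x^* = 0$.

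For the forward direction, I would assume that $m$ is a global minimizer, i.e. $f(m) = \inf_{x \in \R} f(x)$. Then for every $z \in \R$,
\[
f(z) \geq f(m) = f(m) + 0 \cdot (z - m),
\]
which is precisely the defining inequality for $0$ to be a subgradient of $f$ at $m$. Hence $0 \in \partial f(m)$.

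For the reverse direction, assume $0 \in \partial f(m)$. By the defining inequality applied with $x^* = 0$,
\[
f(z) \geq f(m) + 0 \cdot (z - m) = f(m)
\]
for every $z \in \R$. Taking the infimum over $z$ yields $\inf_{x \in \R} f(x) \geq f(m)$, and since trivially $\inf_{x \in \R} f(x) \leq f(m)$, we obtain $f(m) = \inf_{x \in \R} f(x)$.

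There is essentially no obstacle here: the statement is a tautological reformulation of the subgradient inequality once one plugs in $x^* = 0$. Neither convexity nor properness of $f$ plays any role in this particular equivalence beyond ensuring that $\partial f(m)$ is well-defined via Definition \ref{def:left_right_deriv}. Thus the proof will be a short, two-direction unwinding of the definition, and does not need to invoke Theorem \ref{thm:rockafellar23.1}, Theorem \ref{thm:rockafellar24.1}, or Lemma \ref{lemma:convex_rate_is_proper}.
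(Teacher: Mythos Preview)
Your proof is correct and is the standard argument. The paper does not supply its own proof of this lemma; it simply cites page 264 of Rockafellar's \emph{Convex Analysis}, so there is nothing to compare against beyond noting that your direct unwinding of the subgradient definition is exactly how the result is typically established.
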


    With these definitions and results from convex analysis stated, we are ready to begin the proof of Theorem \ref{thm:HC_tight}. First, we state and prove two propositions regarding good, convex rate functions \(I\).

    \begin{proposition}\label{prop:inf_I_t0}
        Let \(I : \R \to [0, \infty]\) be a good rate function. Suppose \(I\) is convex. Let \(I_{-}'\) be the left derivative of \(I\) and extend the domain of definition as in the statement of Theorem \ref{thm:rockafellar24.1}. Define 
        \begin{equation}
            t_0 := \sup\{t \geq 0 : I_{-}'(t) \leq 0\}
        \end{equation}
        and set \(t_0 = 0\) if \(\{t \geq 0 : I_{-}'(t) \leq 0\} = \emptyset\). If \(t_0 < \infty\), then for \(c \geq 0\),
        \begin{equation}\label{eqn:inf_I_t0}
            \inf_{t \geq c} I(t) =
            \begin{cases}
                I(t_0) & \text{if } c < t_0,  \\
                I(c) & \text{if } c \geq t_0. 
            \end{cases}
        \end{equation}    
    \end{proposition}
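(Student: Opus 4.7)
The plan is to exploit convexity of $I$ to show that $t_0$ plays the role of the minimizer of $I$ on $[0,\infty)$, so that $I$ is non-increasing on $[0,t_0]$ and non-decreasing on $[t_0,\infty)$. The infrastructure comes from the subgradient calculus for proper closed convex functions: Lemma \ref{lemma:convex_rate_is_proper} ensures $I$ is a closed proper convex function, so Theorems \ref{thm:rockafellar23.1} and \ref{thm:rockafellar24.1} apply, giving existence and monotonicity of $I_-'$ and $I_+'$, the chain $I_+'(z_1) \leq I_-'(x) \leq I_+'(x) \leq I_-'(z_2)$ for $z_1<x<z_2$, left-continuity of $I_-'$, and right-continuity of $I_+'$.

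First I would analyze the structure of the set $S := \{t \geq 0 : I_-'(t) \leq 0\}$. Because $I_-'$ is increasing, $S$ is downward closed in $[0,\infty)$, hence either empty or an interval of the form $[0,t_0]$ or $[0,t_0)$. When $S \neq \emptyset$, the left-continuity of $I_-'$ at $t_0$ gives $I_-'(t_0) = \lim_{t \uparrow t_0} I_-'(t) \leq 0$ (where the limit is taken within $S$), so $t_0$ lies in the effective domain of $I$. For $t > t_0$ the supremum property forces $I_-'(t) > 0$, and right-continuity of $I_-'$ at $t_0$ (combined with $I_+'(t_0) = \lim_{z \downarrow t_0} I_-'(z)$ from Theorem \ref{thm:rockafellar24.1}) yields $I_+'(t_0) \geq 0$.

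Next I would establish the monotonicity claims through the subgradient inequality, which for $g \in \partial I(x) = [I_-'(x), I_+'(x)]$ reads $I(y) \geq I(x) + g(y-x)$. For $c \geq t_0$ and $t \geq c$, using $g = I_+'(c) \in \partial I(c)$ and the fact that $I_+'(c) \geq I_+'(t_0) \geq 0$ (since $I_+'$ is increasing), we get $I(t) \geq I(c)$, so $\inf_{t \geq c} I(t) = I(c)$. For $c < t_0$ and $c \leq s < t \leq t_0$, using $g = I_-'(t) \in \partial I(t)$ and the facts $I_-'(t) \leq 0$ and $s - t \leq 0$, we get $I(s) \geq I(t) + I_-'(t)(s-t) \geq I(t)$, so $I$ is non-increasing on $[c, t_0]$. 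Combining this with the preceding case applied at $c = t_0$ gives $\inf_{t \geq c} I(t) = I(t_0)$.

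The main point requiring care is the edge case $S = \emptyset$ with $t_0 = 0$ by convention: here $I_-'(t) > 0$ for all $t \geq 0$, so the first argument applies to any $c \geq 0 = t_0$ and yields the second branch of (\ref{eqn:inf_I_t0}), while the first branch is vacuous because $c < t_0 = 0$ is impossible. A secondary bookkeeping point is whether $t_0 \in S$, i.e. whether $S = [0, t_0]$ or $[0, t_0)$, but both possibilities are absorbed into the continuity relations of Theorem \ref{thm:rockafellar24.1}, so this does not enter the argument in an essential way.
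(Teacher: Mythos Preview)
Your proposal is correct and follows essentially the same approach as the paper: both invoke Lemma \ref{lemma:convex_rate_is_proper} and Theorem \ref{thm:rockafellar24.1} to obtain monotonicity and one-sided continuity of $I_-'$ and $I_+'$, deduce $I_-'(t_0)\le 0\le I_+'(t_0)$, and then use the subgradient inequality to establish monotonicity of $I$ on $[t_0,\infty)$. The only cosmetic difference is that the paper appeals to the characterization ``$0\in\partial I(t_0)$ implies $t_0$ is a global minimizer'' to dispatch the case $c<t_0$ in one line, whereas you show directly that $I$ is non-increasing on $[c,t_0]$ via the subgradient inequality; conversely, you treat the edge case $S=\emptyset$ explicitly while the paper leaves it implicit.
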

    \begin{proof}
        As \(I\) is a good convex rate function, it follows by Lemma \ref{lemma:convex_rate_is_proper} that \(I\) is closed and proper. By Theorem \ref{thm:rockafellar23.1}, both the left and right derivatives \(I_{-}'(t)\) and \(I_{+}'(t)\) exist for all \(t \in \R\) such that \(I(t) < \infty\). For convenience, extend the domain of definition of \(I_{-}'\) and \(I_{+}'\) as done in the statement of Theorem \ref{thm:rockafellar24.1}. 

        Since \(I_{-}'\) is an increasing function by Theorem \ref{thm:rockafellar24.1}, it follows that \(\{t \geq 0 : I_{-}'(t) \leq 0\}\) is an interval containing \([0, t_0)\). Moreover, Theorem \ref{thm:rockafellar24.1} implies that 
        \begin{equation*}
            I_{-}'(t_0) = \lim_{s \uparrow t_0} I_{-}(s) \leq 0
        \end{equation*}
        and so 
        \begin{equation*}
            \{t \geq 0 : I_{-}'(t) \leq 0\} = [0, t_0].
        \end{equation*}
        
        Furthermore, consider that \(I_{-}'(s) \geq 0\) for all \(s > t_0\). Therefore, Theorem \ref{thm:rockafellar24.1} implies that
        \begin{equation*}
            I_{+}'(t_0) = \lim_{s \downarrow t_0} I_{-}'(s) \geq 0.
        \end{equation*}
        Therefore, \(0 \in \partial I(t_0) = [I_{-}'(t_0), I_{+}'(t_0)]\), and so \(t_0\) is a minimizer of \(I\). Therefore, if \(c < t_0\), then \(I(t_0) = \inf_{t \geq c} I(t)\). 
        
        If \(c > t_0\), consider that \(I_{-}'(c) \geq 0\). Therefore, for any \(d \geq c\), we have 
        \begin{equation*}
            I(d) \geq I(c) + c^* \cdot (d - c)
        \end{equation*}
        for any subgradient \(c^* \in [I_{-}'(c), I_{+}'(c)]\). This immediately yields 
        \begin{equation*}
            \frac{I(d) - I(c)}{d-c} \geq c^* \geq 0
        \end{equation*}
        since \(d - c \geq 0\) and \(c^* \geq I_{-}'(c) \geq 0\). Therefore, it must be the case that \(I(d) - I(c) \geq 0\), and so \(I(d) \geq I(c)\). Since this holds for all \(d \geq c > t_0\), it follows that \(I\) is monotonically increasing on the interval \((t_0, \infty)\). Since \(t_0\) is a minimizer of \(I\), it immediately follows that \(I\) is monotonically increasing on \([t_0, \infty)\), and so \(\inf_{t \geq c} I(t) = I(c)\) when \(c \geq t_0\). Hence, we have proved (\ref{eqn:inf_I_t0}) as claimed.
    \end{proof}

    \begin{proposition}\label{prop:sup_t_It_t1}
        Let \(I : \R \to [0, \infty]\) be a good rate function. Suppose \(I\) is convex. Let \(D := \{t \in \R : I(t) < \infty\}\). Let \(I_{-}'\) be the left derivative of \(I\) and extend the domain of definition as in the statement of Theorem \ref{thm:rockafellar24.1}. Define 
        \begin{equation}
            t_1 := \sup\{t \geq 0 : I_{-}'(t) \leq 1\}
        \end{equation}
        and set \(t_1 = 0\) if \(\{t \geq 0 : I_{-}'(t) \leq 1\} = \emptyset\). If \(t_1 < \infty\), then for \(c \geq 0\),
        \begin{equation}\label{eqn:sup_t_It_t1}
            \sup_{t > c} \left\{t - I(t)\right\} = 
            \begin{cases}
                t_1 - I(t_1) & \text{if } c < t_1, \\
                c - I(c) & \text{if } c \geq t_1 \text{ and } c \in D^\circ, \\
                -\infty & \text{otherwise}.
            \end{cases}
        \end{equation}
    \end{proposition}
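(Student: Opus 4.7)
My plan is to mirror the structure of the proof of Proposition \ref{prop:inf_I_t0} verbatim, but applied to the function $g(t) := I(t) - t$ in place of $I(t)$; maximizing $t - I(t)$ is the same as minimizing $g$, and since $I$ is closed, proper, and convex (by Lemma \ref{lemma:convex_rate_is_proper}), so is $g$, and $g_{-}' = I_{-}' - 1$, $g_{+}' = I_{+}' - 1$ on the interior of $D$, extended as in Theorem \ref{thm:rockafellar24.1}. The whole proof then boils down to locating a global minimizer of $g$ using a subgradient argument and then comparing to $c$.

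The first main step is to show that $t_1$ globally minimizes $g$ (equivalently globally maximizes $t - I(t)$). Since $I_{-}'$ is increasing, $\{t \geq 0 : I_{-}'(t) \leq 1\}$ is an interval containing $[0, t_1)$, and by left-continuity of $I_{-}'$ at $t_1$ (Theorem \ref{thm:rockafellar24.1}) one has $I_{-}'(t_1) = \lim_{s \uparrow t_1} I_{-}'(s) \leq 1$, so this set equals $[0, t_1]$. Next, since $I_{-}'(s) > 1$ for $s > t_1$, the right-continuity relation $I_{+}'(t_1) = \lim_{s \downarrow t_1} I_{-}'(s)$ gives $I_{+}'(t_1) \geq 1$. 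Therefore $1 \in [I_{-}'(t_1), I_{+}'(t_1)] = \partial I(t_1)$, which is exactly $0 \in \partial g(t_1)$, so $t_1$ is a global minimizer of $g$ by the characterization on pg.~264 of \cite{rockafellarConvexAnalysis1970}.

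With $t_1$ identified as a global maximizer of $t - I(t)$, the three cases follow quickly. If $c < t_1$, then $t_1 \in (c, \infty)$ is admissible, so $\sup_{t > c}\{t - I(t)\} = t_1 - I(t_1)$. If $c \geq t_1$ and $c \in D^\circ$, then a subgradient inequality argument identical to the one in Proposition \ref{prop:inf_I_t0} shows $g$ is non-decreasing on $[t_1, \infty) \cap D$: for $s > c$ with $s \in D$, $I(s) \geq I(c) + I_{+}'(c)(s - c)$ with $I_{+}'(c) \geq I_{-}'(c) \geq 1$ yields $s - I(s) \leq c - I(c)$, so $\sup_{t > c}\{t - I(t)\} \leq c - I(c)$; combined with the continuity of convex functions on the interior of their domain, which gives $\lim_{t \downarrow c}(t - I(t)) = c - I(c)$, equality holds. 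In the remaining case, $c \geq t_1$ with $c \notin D^\circ$; the relevant sub-case is $c \geq \overline{d}$, where $(c, \infty) \cap D = \emptyset$ forces $t - I(t) = -\infty$ for every $t > c$.

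The only delicate step I anticipate is the second case, because the supremum in $\sup_{t > c}$ is over the strict half-line and so is typically not attained; I must couple the monotonicity argument on $[t_1, \infty) \cap D$ with the right-continuity of $I$ at $c$ (granted by $c \in D^\circ$) to conclude that the supremum actually equals $c - I(c)$. The subgradient identification $1 \in \partial I(t_1)$ in step one is the conceptual core; everything else is bookkeeping.
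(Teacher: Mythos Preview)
Your proposal is correct and follows essentially the same route as the paper: both apply the argument of Proposition~\ref{prop:inf_I_t0} to the shifted convex function $g(t)=I(t)-t$, use the subgradient characterization to identify $t_1$ as a global minimizer of $g$ (equivalently $1\in\partial I(t_1)$), and then pass from $\sup_{t\ge c}$ to $\sup_{t>c}$ via continuity of $I$ on $D^\circ$ together with monotonicity of $g$ on $[t_1,\infty)$. The paper invokes Theorem~23.8 of \cite{rockafellarConvexAnalysis1970} to compute $\partial g$, while you compute $g_{\pm}'$ directly; these are equivalent. One small point: in the ``otherwise'' case you only treat $c\ge\overline{d}$, whereas the paper also explicitly (if tersely) dispatches the case where $c$ lies on the boundary of $D$; you should at least remark that the left-endpoint situation $c=\underline{d}$ with $c\ge t_1$ either cannot occur or is handled analogously, so that your case split is exhaustive.
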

    \begin{proof}
        Note that \(D\) is an interval as \(I\) is convex. Moreover, since \(I\) is convex, it follows that \(I\) is continuous on \(D^\circ\). As \(I\) is a good convex rate function, it follows by Lemma \ref{lemma:convex_rate_is_proper} that \(I\) is closed and proper. By Theorem \ref{thm:rockafellar23.1}, both the left and right derivatives \(I_{-}'(t)\) and \(I_{+}'(t)\) exist for all \(t \in \R\) such that \(I(t) < \infty\). For convenience, extend the domain of definition of \(I_{-}'\) and \(I_{+}'\) as done in the statement of Theorem \ref{thm:rockafellar24.1}. 

        Note that \(t \mapsto I(t) - t\) is a convex function. By Theorem 23.8 in \cite{rockafellarConvexAnalysis1970}, it follows that for all \(t \in \R\) we have \(\partial (I(t) - t) = \partial I(t) + \partial (-t) = \partial I(t) + \{-1\} = [I_{-}'(t)-1, I_{+}'(t)-1]\) where the symbol \(+\) denotes the Minkowski sum (i.e. for two sets \(A\) and \(B\), define \(A + B := \{a + b : a \in A, b \in B\}\)). Now observe that \(\{t \geq 0 : I_{-}'(t) \leq 1\} = \{t \geq 0 : I_{-}'(t) - 1 \leq 0\} = \{t \geq 0 : (I(t) - t)_{-}'(t) \leq 0\}\). Thus, the argument for proving (\ref{eqn:inf_I_t0}) in Proposition \ref{prop:inf_I_t0} can be repeated with some slight modifications to yield
        \begin{equation*}
            \inf_{t \geq c} \left\{I(t) - t \right\} =
            \begin{cases}
                I(t_1) - t_1 & \text{if } c < t_1, \\
                I(c) - c &\text{if } c \geq t_1.
            \end{cases}
        \end{equation*}
        From this, it is clear that if \(c < t_1\), then \(\inf_{t > c} \{I(c) - c \} = I(t_1) - t_1\). If \(c \geq t_1\) and \(c \in D^\circ\), then since \(I\) is continuous on \(D^\circ\) and \(I(t) - t\) monotone increasing on \([t_1, \infty)\), it follows that \(\{\inf_{t > c} I(t) - t\} = I(c)-c\). If \(c \geq t_1\) and \(c \not \in D\), then we trivially have \(\inf_{t > c} \{I(t) - t\} = \infty\) since \(I(t) = \infty\) for all \(t > c\) as \(D\) is an interval. If \(c \geq t_1\) and \(c\) is on the boundary of \(D\), then it also follows that \(\inf_{t > c} \{I(t) - t\} = \infty\). Thus, we've proved 
        \begin{equation*}
            \inf_{t > c} \{I(t) - t\} = 
            \begin{cases}
                I(t_1) - t_1 & \text{if } c < t_1, \\
                I(c) - c &\text{if } c \geq t_1 \text{ and } c \in D^\circ, \\
                \infty &\text{ otherwise},
            \end{cases}
        \end{equation*}
        which immediately yields (\ref{eqn:sup_t_It_t1}).
    \end{proof}

    \begin{proof}[Proof of Theorem \ref{thm:HC_tight}]
        As \(I\) is a good convex rate function, it follows by Lemma \ref{lemma:convex_rate_is_proper} that \(I\) is closed and proper. By Theorem \ref{thm:rockafellar23.1}, both the left and right derivatives \(I_{-}'(t)\) and \(I_{+}'(t)\) exist for all \(t \in \R\) such that \(I(t) < \infty\). For convenience, extend the domain of definition of \(I_{-}'\) and \(I_{+}'\) as done in the statement of Theorem \ref{thm:rockafellar24.1}.
        Note that the conditions of Corollary \ref{corollary:tight_limit} hold by assumption, so we have
        \begin{equation*}
            \underline{\beta}^* = \overline{\beta}^* = \beta^* = \frac{1}{2} + 0 \vee \sup_{t\geq 0} \left\{t - I(t) + \frac{1 \wedge I(t)}{2}\right\}.
        \end{equation*}
        To prove the desired result, it suffices to show that the lower bound (\ref{eqn:beta_HC_lower_bound}) for \(\underline{\beta}^{\HC}\) matches the upper bound \(\underline{\beta}^*\). Recall from Theorem \ref{prop:HC_bound_formula} that 
        \begin{equation*}
            \underline{\beta}^* \geq \underline{\beta}^{\HC} \geq \frac{1}{2} + \sup_{c\geq 0} \left\{ \sup_{t > c} \left\{t - I(t)\right\} + \frac{1 \wedge \inf_{t \geq c} I(t)}{2} \right\}.
        \end{equation*}
        Consider that 
        \begin{align*}
            \sup_{c\geq 0} \left\{ \sup_{t > c} \left\{t - I(t)\right\} + \frac{1 \wedge \inf_{t \geq c} I(t)}{2}\right\} &= \max\left( \sup_{c \in [0, t_0]} E(c), \sup_{c \in [t_0, t_1]} E(c), \sup_{c \in (t_1, \infty)} E(c) \right) 
        \end{align*}
        where 
        \begin{equation*}
            E(c) = \sup_{t > c} \{t - I(t)\} + \frac{1 \wedge \inf_{t \geq c} I(t)}{2}
        \end{equation*}
        for \(c \geq 0\). 
        
        Note that \(I_{-}'\) is an increasing function by Theorem \ref{thm:rockafellar24.1}, and so \(t_0 \leq t_1\). Combining this fact with (\ref{eqn:inf_I_t0}) of Proposition \ref{prop:inf_I_t0} and (\ref{eqn:sup_t_It_t1}) of Proposition \ref{prop:sup_t_It_t1}, it follows that 
        \begin{align*}
            \sup_{c \in [0, t_0]} E(c) &= t_1 - I(t_1) + \frac{1}{2}\min(1, I(t_0)).
        \end{align*}
        Additionally, it follows from (\ref{eqn:inf_I_t0}) of Proposition \ref{prop:inf_I_t0} and (\ref{eqn:sup_t_It_t1}) of Proposition \ref{prop:sup_t_It_t1} that
        \begin{align*}
            \sup_{c \in [t_0, t_1]} E(c) &= \sup_{c \in [t_0, t_1]} \left\{ t_1 - I(t_1) + \frac{1}{2}\min\left(1, I(c)\right) \right\} \\
            &= t_1 - I(t_1) + \frac{1}{2}\min\left(1, I(t_1)\right)
        \end{align*}
        where the final equality follows from the fact that \(I\) is monotonically increasing on the interval \([t_0, \infty)\). Note further that since \(I\) is monotonically increasing on \([t_0, \infty)\), it follows that
        \begin{equation*}\label{eqn:I_smaller_II}
            \sup_{c \in [0, t_0]} E(c) \leq \sup_{c \in [t_0, t_1]} E(c).
        \end{equation*}
        Finally, observe that 
        \begin{align*}
            \sup_{c \in (t_1, \infty)} E(c) &= \max\left(-\infty, \sup_{c \in (t_1, \infty) \cap D^\circ} E(c)\right) \\
            &= \sup_{c \in (t_1, \infty) \cap D^\circ} E(c) \\
            &= \sup_{c \in (t_1, \infty) \cap D^\circ} \left\{c - I(c) + \frac{1}{2}\min\left(1, I(c)\right) \right\}.
        \end{align*}
        Therefore, 
        \begin{equation}\label{eqn:beta_HC_simplified_lbound}
          \underline{\beta}^{\HC} \geq \frac{1}{2} + \left[t_1 - I(t_1) + \frac{1}{2}\min\left(1, I(t_1)\right)\right] \vee \left[\sup_{c \in (t_1, \infty) \cap D^\circ} \left\{c - I(c) + \frac{1}{2}\min\left(1, I(c)\right) \right\}\right].
        \end{equation}
        To show that the lower bound (\ref{eqn:beta_HC_simplified_lbound}) matches \(\underline{\beta}^*\), we now examine \(\underline{\beta}^*\). Observe that since the conditions of Corollary \ref{corollary:tight_limit} hold, it follows that
        \begin{align*}
            \underline{\beta}^* &= \frac{1}{2} + \sup_{t > 0} \left\{t - I(t) + \frac{1 \wedge I(t)}{2}\right\}\\
            &= \frac{1}{2} + \sup_{t \geq 0} \left\{t - I(t) + \frac{1 \wedge I(t)}{2}\right\} \\
            &= \frac{1}{2} + \max\left( \sup_{t \in [0, t_1]} \widetilde{E}(t), \sup_{t \in (t_1, \infty)} \widetilde{E}(t) \right)
        \end{align*}
        where 
        \begin{equation*}
            \widetilde{E}(t) = t - I(t) + \frac{1 \wedge I(t)}{2}.
        \end{equation*}
        From earlier discussion it is clear that 
        \begin{equation*}
            \sup_{t \in [0, t_1]} \widetilde{E}(t) = t_1 - I(t_1) + \frac{1 \wedge I(t_1)}{2}
        \end{equation*}
        and 
        \begin{align*}
            \sup_{t \in (t_1, \infty)} \widetilde{E}(t) &= \max\left(-\infty, \sup_{t \in (t_1, \infty) \cap D} \widetilde{E}(t) \right) \\
            &= \sup_{t \in (t_1, \infty) \cap D} \widetilde{E}(t) \\
            &= \sup_{t \in (t_1, \infty) \cap D^\circ} \widetilde{E}(t)
        \end{align*}
        as it is assumed that \(I\) is left/right-continuous at any contained endpoints of \(D\). Hence, we have established that 
        \begin{align*}
            \underline{\beta}^* = \frac{1}{2} + \left[t_1 - I(t_1) + \frac{1 \wedge I(t_1)}{2} \right] \vee \left[\sup_{t \in (t_1, \infty) \cap D^\circ} \left\{t - I(t) + \frac{1 \wedge I(t)}{2} \right\} \right]
        \end{align*}
        and so \(\underline{\beta}^*\) is exactly the right hand side of (\ref{eqn:beta_HC_simplified_lbound}). Thus, \(\underline{\beta}^* = \underline{\beta}^{\HC}\) as claimed.
    \end{proof}

    \section{Acknowledgements}
    I thank Chao Gao for suggesting this problem, providing detailed and helpful comments on multiple drafts, and offering substantial encouragement. 

    \bibliographystyle{plain}
    \bibliography{sparse_mixture_detection}

\begin{thebibliography}{10}

\bibitem{StatisticalProblemsTheory}
Statistical problems in the theory of stochastic processes - {{Encyclopedia}}
  of {{Mathematics}}.
\newblock {\em Encyclopedia of Mathematics}.

\bibitem{abbeCommunityDetectionStochastic2018}
Emmanuel Abbe.
\newblock Community {{Detection}} and {{Stochastic Block Models}}: {{Recent
  Developments}}.
\newblock {\em Journal of Machine Learning Research}, 18(177):1--86, 2018.

\bibitem{arias-castroSparsePoissonMeans2015}
Ery {Arias-Castro} and Meng Wang.
\newblock The sparse {{Poisson}} means model.
\newblock {\em Electronic Journal of Statistics}, 9(2):2170--2201, 2015.

\bibitem{barberHighdimensionalIsingModel2015}
Rina~Foygel Barber and Mathias Drton.
\newblock High-dimensional {{Ising}} model selection with {{Bayesian}}
  information criteria.
\newblock {\em Electronic Journal of Statistics}, 9(1):567--607, 2015.

\bibitem{bezakovaLowerBoundsTesting2019}
Ivona Bez{\'a}kov{\'a}, Antonio Blanca, Zongchen Chen, Daniel {\v
  S}tefankovi{\v c}, and Eric Vigoda.
\newblock Lower bounds for testing graphical models: Colorings and
  antiferromagnetic {{Ising}} models.
\newblock In {\em Conference on {{Learning Theory}}}, pages 283--298. {PMLR},
  June 2019.

\bibitem{caiOptimalDetectionSparse2014}
T.~T. Cai and Y.~Wu.
\newblock Optimal {{Detection}} of {{Sparse Mixtures Against}} a {{Given Null
  Distribution}}.
\newblock {\em IEEE Transactions on Information Theory}, 60(4):2217--2232,
  April 2014.

\bibitem{caiOptimalDetectionHeterogeneous2011}
T.~Tony Cai, X.~Jessie Jeng, and Jiashun Jin.
\newblock Optimal detection of heterogeneous and heteroscedastic mixtures.
\newblock {\em Journal of the Royal Statistical Society: Series B (Statistical
  Methodology)}, 73(5):629--662, 2011.

\bibitem{canonneTestingBayesianNetworks2020}
C.~L. Canonne, I.~Diakonikolas, D.~M. Kane, and A.~Stewart.
\newblock Testing {{Bayesian Networks}}.
\newblock {\em IEEE Transactions on Information Theory}, 66(5):3132--3170, May
  2020.

\bibitem{collierMinimaxEstimationLinear2017a}
Olivier Collier, La{\"e}titia Comminges, and Alexandre~B. Tsybakov.
\newblock Minimax estimation of linear and quadratic functionals on sparsity
  classes.
\newblock {\em The Annals of Statistics}, 45(3):923--958, June 2017.

\bibitem{demboLargeDeviationsTechniques2010}
Amir Dembo and Ofer Zeitouni.
\newblock {\em Large {{Deviations Techniques}} and {{Applications}}}.
\newblock Stochastic {{Modelling}} and {{Applied Probability}}.
  {Springer-Verlag}, {Berlin Heidelberg}, second edition, 2010.

\bibitem{deuschelLargeDeviations2001}
Jean-Dominique Deuschel and Daniel Stroock.
\newblock {\em Large {{Deviations}}}, volume 342 of {\em {{AMS Chelsea
  Publishing}}}.
\newblock {American Mathematical Society}, January 2001.

\bibitem{ditzhausSignalDetectionPhidivergences2019}
Marc Ditzhaus.
\newblock Signal detection via {{Phi}}-divergences for general mixtures.
\newblock {\em Bernoulli}, 25(4A):3041--3068, November 2019.

\bibitem{donohoHigherCriticismDetecting2004}
David Donoho and Jiashun Jin.
\newblock Higher criticism for detecting sparse heterogeneous mixtures.
\newblock {\em The Annals of Statistics}, 32(3):962--994, June 2004.

\bibitem{donohoHigherCriticismLargeScale2015}
David Donoho and Jiashun Jin.
\newblock Higher {{Criticism}} for {{Large}}-{{Scale Inference}},
  {{Especially}} for {{Rare}} and {{Weak Effects}}.
\newblock {\em Statistical Science}, 30(1):1--25, February 2015.

\bibitem{donohoTwosampleTestingLarge2020}
David~L. Donoho and Alon Kipnis.
\newblock Two-sample {{Testing}} for {{Large}}, {{Sparse High}}-{{Dimensional
  Multinomials}} under {{Rare}}/{{Weak Perturbations}}.
\newblock {\em arXiv preprint arXiv:2007.01958}, November 2020.

\bibitem{efronMicroarraysEmpiricalBayes2008}
Bradley Efron.
\newblock Microarrays, {{Empirical Bayes}} and the {{Two}}-{{Groups Model}}.
\newblock {\em Statistical Science}, 23(1):1--22, February 2008.

\bibitem{efronLargeScaleInferenceEmpirical2010}
Bradley Efron.
\newblock {\em Large-{{Scale Inference}}: {{Empirical Bayes Methods}} for
  {{Estimation}}, {{Testing}}, and {{Prediction}}}.
\newblock Institute of {{Mathematical Statistics Monographs}}. {Cambridge
  University Press}, {Cambridge}, 2010.

\bibitem{ferkingstadUnsupervisedEmpiricalBayesian2008}
Egil Ferkingstad, Arnoldo Frigessi, H{\aa}vard Rue, Gudmar Thorleifsson, and
  Augustine Kong.
\newblock Unsupervised empirical {{Bayesian}} multiple testing with external
  covariates.
\newblock {\em The Annals of Applied Statistics}, 2(2):714--735, June 2008.

\bibitem{gangradeTwoSampleTestingCan2018}
A.~Gangrade, B.~Nazer, and V.~Saligrama.
\newblock Two-{{Sample Testing}} can be as {{Hard}} as {{Structure Learning}}
  in {{Ising Models}}: {{Minimax Lower Bounds}}.
\newblock In {\em 2018 {{IEEE International Conference}} on {{Acoustics}},
  {{Speech}} and {{Signal Processing}} ({{ICASSP}})}, pages 6931--6935, April
  2018.

\bibitem{gaoTestingEquivalenceClustering2019}
Chao Gao and Zongming Ma.
\newblock Testing {{Equivalence}} of {{Clustering}}.
\newblock {\em arXiv preprint arXiv:1910.12797}, November 2019.

\bibitem{gaoMinimaxRatesNetwork2021}
Chao Gao and Zongming Ma.
\newblock Minimax {{Rates}} in {{Network Analysis}}: {{Graphon Estimation}},
  {{Community Detection}} and {{Hypothesis Testing}}.
\newblock {\em Statistical Science}, 36(1):16--33, February 2021.

\bibitem{gaoFiveShadesGrey2019}
Zheng Gao.
\newblock Five {{Shades}} of {{Grey}}: {{Phase Transitions}} in
  {{High}}-dimensional {{Multiple Testing}}.
\newblock {\em arXiv preprint arXiv:1910.05701}, October 2019.

\bibitem{hallInnovatedHigherCriticism2010}
Peter Hall and Jiashun Jin.
\newblock Innovated higher criticism for detecting sparse signals in correlated
  noise.
\newblock {\em The Annals of Statistics}, 38(3):1686--1732, June 2010.

\bibitem{hollandStochasticBlockmodelsFirst1983}
Paul~W. Holland, Kathryn~Blackmond Laskey, and Samuel Leinhardt.
\newblock Stochastic blockmodels: {{First}} steps.
\newblock {\em Social Networks}, 5(2):109--137, June 1983.

\bibitem{ignatiadisCovariatePoweredCrossweighted2017}
Nikolaos Ignatiadis and Wolfgang Huber.
\newblock Covariate powered cross-weighted multiple testing.
\newblock January 2017.

\bibitem{ingsterProblemsHypothesisTesting1996}
Yuri~I. Ingster.
\newblock On some problems of hypothesis testing leading to infinitely
  divisible distributions.
\newblock 1996.

\bibitem{jagerGoodnessoffitTestsPhidivergences2007}
Leah Jager and Jon~A. Wellner.
\newblock Goodness-of-fit tests via phi-divergences.
\newblock {\em The Annals of Statistics}, 35(5):2018--2053, October 2007.

\bibitem{jinDetectingTargetVery2004}
Jiashun Jin.
\newblock Detecting a target in very noisy data from multiple looks.
\newblock {\em A Festschrift for Herman Rubin}, pages 255--286, January 2004.

\bibitem{jinImpossibilitySuccessfulClassification2009}
Jiashun Jin.
\newblock Impossibility of successful classification when useful features are
  rare and weak.
\newblock {\em Proceedings of the National Academy of Sciences},
  106(22):8859--8864, June 2009.

\bibitem{jinDetectingEstimatingSparse2003}
Jiashun Jin and David~Leigh Donoho.
\newblock {\em Detecting and Estimating Sparse Mixtures}.
\newblock PhD thesis, 2003.

\bibitem{jinRareWeakEffects2016}
Jiashun Jin and Zheng~Tracy Ke.
\newblock Rare and weak effects in large-scale inference: Methods and phase
  diagrams.
\newblock {\em Statistica Sinica}, 2016.

\bibitem{jinPhaseTransitionsHigh2017a}
Jiashun Jin, Zheng~Tracy Ke, and Wanjie Wang.
\newblock Phase transitions for high dimensional clustering and related
  problems.
\newblock {\em The Annals of Statistics}, 45(5):2151--2189, October 2017.

\bibitem{jinOptimalityGraphletScreening2014}
Jiashun Jin, Cun-Hui Zhang, and Qi~Zhang.
\newblock Optimality of {{Graphlet Screening}} in {{High Dimensional Variable
  Selection}}.
\newblock {\em Journal of Machine Learning Research}, 15(79):2723--2772, 2014.

\bibitem{leiAdaPTInteractiveProcedure2018}
Lihua Lei and William Fithian.
\newblock {{AdaPT}}: An interactive procedure for multiple testing with side
  information.
\newblock {\em Journal of the Royal Statistical Society: Series B (Statistical
  Methodology)}, 80(4):649--679, 2018.

\bibitem{lewingerHierarchicalBayesPrioritization2007}
Juan~Pablo Lewinger, David~V. Conti, James~W. Baurley, Timothy~J. Triche, and
  Duncan~C. Thomas.
\newblock Hierarchical {{Bayes}} prioritization of marker associations from a
  genome-wide association scan for further investigation.
\newblock {\em Genetic Epidemiology}, 31(8):871--882, 2007.

\bibitem{liAccumulationTestsFDR2017}
Ang Li and Rina~Foygel Barber.
\newblock Accumulation {{Tests}} for {{FDR Control}} in {{Ordered Hypothesis
  Testing}}.
\newblock {\em Journal of the American Statistical Association},
  112(518):837--849, April 2017.

\bibitem{liMultipleTestingStructureadaptive2019}
Ang Li and Rina~Foygel Barber.
\newblock Multiple testing with the structure-adaptive
  {{Benjamini}}\textendash{{Hochberg}} algorithm.
\newblock {\em Journal of the Royal Statistical Society: Series B (Statistical
  Methodology)}, 81(1):45--74, 2019.

\bibitem{mezardInformationPhysicsComputation}
Marc M{\'e}zard and Andrea Montanari.
\newblock {\em Information, {{Physics}}, and {{Computation}}}.
\newblock {Oxford University Press}.

\bibitem{rockafellarConvexAnalysis1970}
R.~Tyrrell Rockafellar.
\newblock {\em Convex {{Analysis}}}.
\newblock {Princeton University Press}, 1970.

\bibitem{santhanamInformationTheoreticLimitsSelecting2012a}
N.~P. Santhanam and M.~J. Wainwright.
\newblock Information-{{Theoretic Limits}} of {{Selecting Binary Graphical
  Models}} in {{High Dimensions}}.
\newblock {\em IEEE Transactions on Information Theory}, 58(7):4117--4134, July
  2012.

\bibitem{vielvaComprehensiveOverviewCold2010}
Patricio Vielva.
\newblock A {{Comprehensive Overview}} of the {{Cold Spot}}.
\newblock {\em Advances in Astronomy}, November 2010.

\bibitem{zablockiCovariatemodulatedLocalFalse2014}
Rong~W. Zablocki, Andrew~J. Schork, Richard~A. Levine, Ole~A. Andreassen,
  Anders~M. Dale, and Wesley~K. Thompson.
\newblock Covariate-modulated local false discovery rate for genome-wide
  association studies.
\newblock {\em Bioinformatics}, 30(15):2098--2104, August 2014.

\bibitem{zhangMinimaxRatesCommunity2016}
Anderson~Y. Zhang and Harrison~H. Zhou.
\newblock Minimax rates of community detection in stochastic block models.
\newblock {\em The Annals of Statistics}, 44(5):2252--2280, October 2016.

\end{thebibliography}
\end{document}